\numberwithin{equation}{section}
\numberwithin{figure}{section}
\theoremstyle{plain}
\newtheorem{theorem}{\protect\theoremname}[section]
\theoremstyle{plain}
\newtheorem{assumption}[theorem]{\protect\assumptionname}
\theoremstyle{definition}
\newtheorem{definition}[theorem]{\protect\definitionname}
\theoremstyle{remark}
\newtheorem{remark}[theorem]{\protect\remarkname}
\theoremstyle{plain}
\newtheorem{proposition}[theorem]{\protect\propositionname}
\theoremstyle{plain}
\newtheorem{lemma}[theorem]{\protect\lemmaname}
\theoremstyle{plain}
\newtheorem{corollary}[theorem]{\protect\corollaryname}
\theoremstyle{remark}
\newtheorem*{acknowledgement*}{\protect\acknowledgementname}
\providecommand{\leftsquigarrow}{%
  \mathrel{\mathpalette\reflect@squig\relax}%
}
\newcommand{\reflect@squig}[2]{%
  \reflectbox{$\m@th#1\rightsquigarrow$}%
}
\providecommand{\acknowledgementname}{Acknowledgement}
\providecommand{\assumptionname}{Assumption}
\providecommand{\corollaryname}{Corollary}
\providecommand{\definitionname}{Definition}
\providecommand{\lemmaname}{Lemma}
\providecommand{\propositionname}{Proposition}
\providecommand{\remarkname}{Remark}
\providecommand{\theoremname}{Theorem}
\begin{document}
\title{Mixing of metastable diffusion processes with Gibbs invariant distribution}
\author{Jungkyoung Lee}
\begin{abstract}
In this article, we study the mixing properties of metastable diffusion
processes which possess a Gibbs invariant distribution. For systems
with multiple stable equilibria, so-called metastable transitions
between these equilibria are required for mixing since the unique
invariant distribution is concentrated on these equilibria. Consequently,
these systems exhibit slower mixing compared to those with a unique
stable equilibrium, as analyzed in Barrera and Jara (Ann. Appl. Probab.
30:1164--1208, 2020). Our proof is based on the theory of metastability,
which is a primary tool for studying systems with multiple stable
equilibria. Within this framework, we compute the total variation
distance between the distribution of the diffusion process and its
invariant distribution for any time scale larger than $\epsilon^{-1}$.
Finally, we derive precise asymptotics for the mixing time.
\end{abstract}

\address{June E Huh Center for Mathematical Challenges, Korea Institute for Advanced Study, Republic
of Korea., e-mail: \texttt{jklee@kias.re.kr}}
\maketitle

\section{Introduction}

Let $E$ be a certain space (a subset of $\mathbb{R}^{d}$ or finite
set) and let $\mu$, $\nu$ be probability measures on $E$. The \textit{total
variation distance} between $\mu$ and $\nu$ is defined as
\[
{\color{blue}d_{{\rm TV}}(\mu,\,\nu)}:=\sup_{A\subset E}\left|\,\mu(A)-\nu(A)\,\right|\,,
\]
where the supremum is taken over all measurable subsets $A\subset E$.
If $E$ is finite, the total variation distance can be written as
\[
d_{{\rm TV}}(\mu,\,\nu)=\frac{1}{2}\,\sum_{x\in E}|\,\mu(x)-\nu(x)\,|\,.
\]
For a stochastic process $\{X(t)\}_{t\ge0}$ on $E$, a probability
measure $\mu$, and $s\ge0$, denote by \textcolor{blue}{$X(s;\,\mu)$}
the distribution of $X(s)$ with initial distribution $\mu$. If $\mu$
is a dirac mass {\color{blue}$\delta_{x}$} on $x\in E$, we write ${\color{blue}X(s;\,x)}:=X(s;\,\delta_{x})$.
Suppose that the Markov process $X(\cdot)$ has a unique invariant
distribution $\nu$. By ergodic property, the distribution of $X(t)$
converges to $\nu$ as $t\to\infty$, i.e., the total variation distance
between $X(t,\,x)$ and $\nu$ converges to $0$ as $t\to\infty$,
regardless of $x\in E$. This phenomenon is referred to as \emph{mixing}.
We refer to monograph \cite{LPW} for a comprehensive review on the
study of mixing.

For $\delta>0$, the $\delta$-mixing time of the ergodic process
$X(\cdot)$, which measures the time required to be close to the invariant
distribution, is defined as
\[
{\color{blue}T^{{\rm mix}}(\delta)}:=\inf\,\{\,t\ge0\,:\,\sup_{x\in E}d_{{\rm TV}}(X(t;\,x),\,\nu)\le\delta\,\}\,.
\]
In the context of applied mathematics, since the mixing time indicates
the number of steps required to approximate the target measure $\nu$
from the distribution of $X(\cdot)$, it is crucial for evaluating
the performance of Markov chain Monte Carlo (MCMC) algorithms (see
\cite{Brooks} and references therein for more information on MCMC).
Consequently, the mixing time has attracted the attention of many
researchers not only in mathematics but also in engineering, computer
science, and many others.

\subsection{Metastable diffusion processes}

Let us consider a stochastic differential equation (SDE) in $\mathbb{R}^{d}$
of the form
\begin{equation}
d\bm{x}_{\epsilon}(t)=\bm{b}(\bm{x}_{\epsilon}(t))\,dt\,+\,\sqrt{2\epsilon}d\bm{w}_{t}\,,\label{e: SDE}
\end{equation}
where $\bm{b}:\mathbb{R}^{d}\to\mathbb{R}^{d}$ is a smooth vector
field, $(\bm{w}_{t})_{t\ge0}$ represents a $d$-dimensional Brownian
motion, and $\epsilon>0$ is a small parameter corresponding to the
temperature. We assume that the process $\bm{x}_{\epsilon}(\cdot)$
has a unique invariant distribution $\mu_{\epsilon}$ given by\footnote{Without confusion, we abuse notation as $\mu_{\epsilon}(\boldsymbol{x})=\frac{1}{Z_{\epsilon}}e^{-U(\boldsymbol{x})/\epsilon}$.}
\begin{equation}
\mu_{\epsilon}(d\boldsymbol{x})=\frac{1}{Z_{\epsilon}}\,e^{-U(\boldsymbol{x})/\epsilon}\,d\boldsymbol{x}\label{e_Gibbs}
\end{equation}
for some smooth function $U:\mathbb{R}^{d}\to\mathbb{R}$, where $Z_{\epsilon}:=\int_{\mathbb{R}^{d}}e^{-U(\bm{x})/\epsilon}d\bm{x}$
is a normalization constant. This model has been studied not only
in pure mathematics \cite{FW} but also in various applied mathematics
fields including deep learning \cite{GSS} and MCMC \cite{SGMCMC}.
In this article, we study the mixing of the class of the diffusion
processes given by \eqref{e: SDE} with multiple stable equilibria
and the invariant distribution \eqref{e_Gibbs} in the small temperature
regime. The main result of this article provides sharp asymptotics
for the mixing time as well.

\subsection{Fast mixing }

The process $\bm{x}_{\epsilon}(\cdot)$ is regarded as a small random
perturbation of the dynamical system given by an ordinary differential
equation (ODE) of the form
\begin{equation}
d\bm{x}(t)=\bm{b}(\bm{x}(t))\,dt\,.\label{e: ODE}
\end{equation}
The behavior of $\bm{x}_{\epsilon}(\cdot)$ varies depending on the
number of stable equilibria of the dynamical system described by the
ODE \eqref{e: ODE}. When the dynamical system \eqref{e: ODE} has
only one equilibrium, it exhibits \emph{fast mixing}. Denote by $\bm{0}=(0,\,\dots,\,0)\in\mathbb{R}^{d}$
the origin. Suppose that $\bm{0}$ is the unique equilibrium of the
dynamical system \eqref{e: ODE} and there exist positive constants
$\delta,\,c_{0},\,c_{1}>0$ such that
\begin{equation}
\begin{aligned}\bm{x}\cdot D\bm{b}(\bm{y})\, \bm{x} & \le-\delta|\bm{x}|^{2}\ \ \ \ \text{for any}\,\bm{x},\,\bm{y}\in\mathbb{R}^{d}\,,\\
|\bm{b}(\bm{x})| & \le c_{0}e^{c_{1}|\bm{x}|^{2}}\ \text{for any}\,\bm{x}\in\mathbb{R}^{d}\,,
\end{aligned}
\label{e: BJ}
\end{equation}
where {\color{blue}$|\bm{x}|$} represents the Euclidean norm of $\boldsymbol{x}\in\mathbb{R}^{d}$
and {\color{blue}$D\bm{A}(\bm{x})$} represents the Jacobian matrix of vector field
$\bm{A}:\mathbb{R}^{d}\to\mathbb{R}^{d}$ at $\bm{x}\in\mathbb{R}^{d}$
(cf. Conditions (H) and (G) in \cite{BarJara20}). Then, the solution
to the ODE \eqref{e: ODE} starting at a fixed point converges to
the unique equilibrium $\bm{0}$ exponentially fast. Consequently,
the small-perturbed dynamics $\bm{x}_{\epsilon}(\cdot)$ reaches a neighborhood
of the equilibrium rapidly as well. Since the invariant distribution
$\mu_{\epsilon}$ is concentrated on the neighborhood of the unique
equilibrium, the distribution
of $\bm{x}_{\epsilon}(\cdot)$ converges to $\mu_{\epsilon}$ very
quickly.

Under the condition \eqref{e: BJ}, Barrera and Jara \cite{BarJara20}
proved that the process $\bm{x}_{\epsilon}(\cdot)$ exhibits not only
fast mixing but also a cut-off phenomenon. More precisely, they found
a time scale\footnote{For two positive sequences $(\alpha_{\epsilon})_{\epsilon>0}$ and
$(\beta_{\epsilon})_{\epsilon>0}$, write $\alpha_{\epsilon}\prec\beta_{\epsilon}$
or $\beta_{\epsilon}\succ\alpha_{\epsilon}$ if $\lim_{\epsilon\to0}\alpha_{\epsilon}/\beta_{\epsilon}=0$.} $1\prec t_{\epsilon}\prec\epsilon^{-1}$ and proved that before time
$t_{\epsilon}$, the total variation distance between the distribution
of the process $\bm{x}_{\epsilon}(\cdot)$ and the invariant distribution
$\mu_{\epsilon}$ is close to $1$, and that shortly after time $t_{\epsilon}$,
the distance is close to $0$. It is noteworthy that the invariant
distribution $\mu_{\epsilon}$ does not necessarily have to be a Gibbs
measure under the condition \eqref{e: BJ}. For further details, see
\cite{BarJara20}.

\subsection{Slow mixing and metastability}

\begin{figure}
\centering
\includegraphics[scale=0.25]{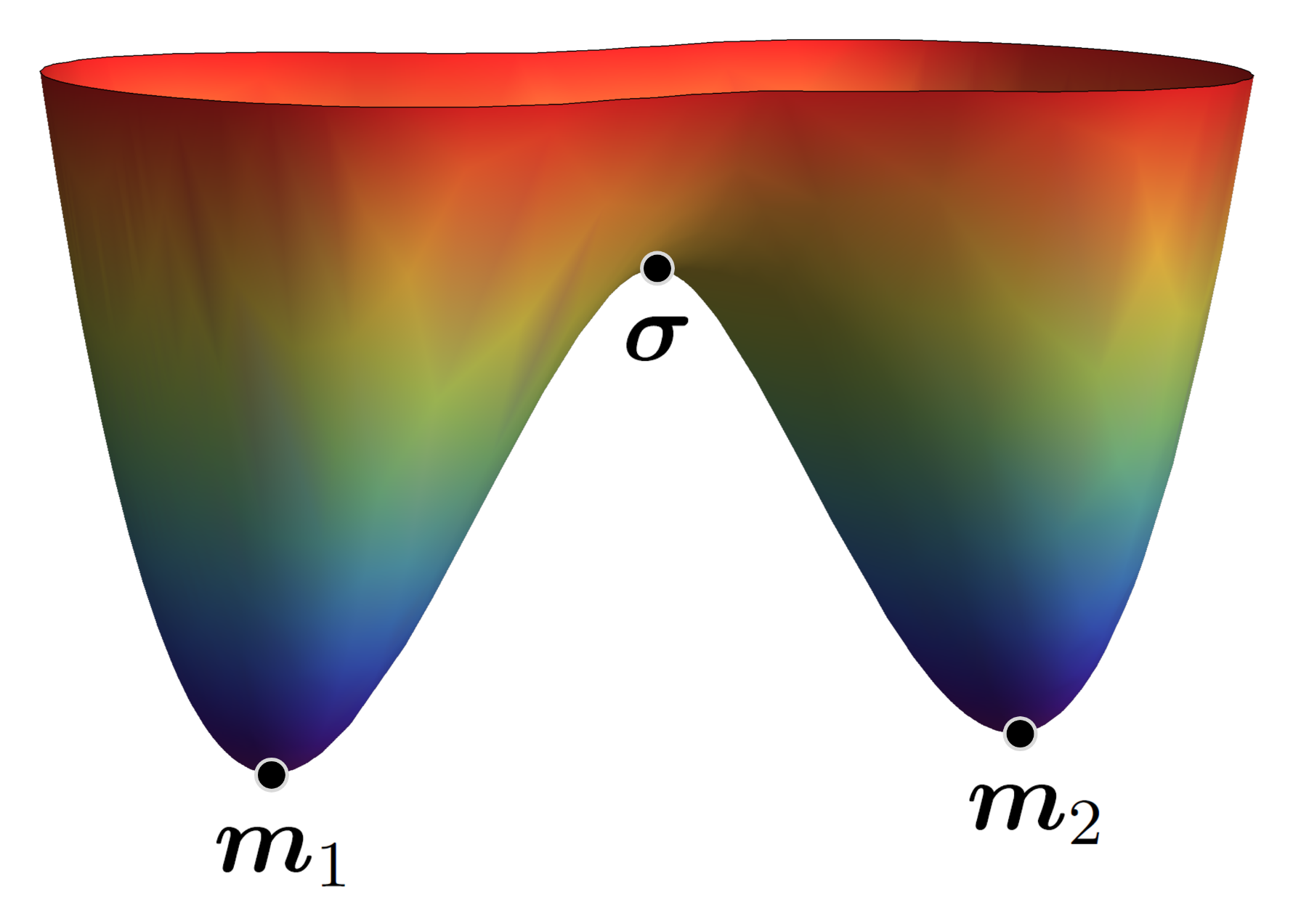}
\caption{Example of potential $U$ with two local minima $\bm{m}_{1}$, $\bm{m}_{2}$ and a saddle point $\bm{\sigma}$.}
\label{fig: double-well}
\end{figure}

From now on, assume that the vector field $\boldsymbol{b}:\mathbb{R}^{d}\rightarrow\mathbb{R}^{d}$
has a decomposition of the form 
\begin{equation}
\boldsymbol{b}=-(\nabla U+\boldsymbol{\ell})\text{\ \;where\ \;}\nabla\cdot\boldsymbol{\ell}\equiv0\;\ \text{and}\;\;\nabla U\cdot\boldsymbol{\ell}\equiv0\,,\label{e_decb}
\end{equation}
for some smooth vector field $\boldsymbol{\ell}\colon\mathbb{R}^{d}\to\mathbb{R}^{d}$,
and that the dynamical system \eqref{e: ODE} has more than one stable
equilibrium. It has been shown in \cite{LS-22} that the diffusion
process $\boldsymbol{x}_{\epsilon}(\cdot)$ has the invariant distribution
\eqref{e_Gibbs} if, and only if, the vector field $\bm{b}(\cdot)$
is decomposed as \eqref{e_decb}. This dynamics is a generalization
of the reversible overdamped Langevin dynamics, which is the process
$\bm{x}_{\epsilon}(\cdot)$ in the special case $\bm{\ell}=\bm{0}$.
For the sake of simplicity, assume that $\bm{\ell}=\bm{0}$ and there
are two local minima $\bm{m}_{1}$, $\bm{m}_{2}$, as well as one
saddle point $\bm{\sigma}$ of $U$ as in Figure \ref{fig: double-well},
and denote by $D=U(\bm{\sigma})-U(\bm{m}_{1})$. Suppose that $U(\bm{m}_{1})\ge U(\bm{m}_{2})$
and that $\bm{x}_{\epsilon}(\cdot)$ starts at $\bm{m}_{1}$. Since
the measure $\mu_{\epsilon}$ has a mass on a neighborhood of $\bm{m}_{2}$,
the process $\bm{x}_{\epsilon}(\cdot)$ has to make a transition to
$\bm{m}_{2}$ to mix. However, since $\bm{m}_{1}$ is stable and the
drift $\bm{b}$ pushes the system toward $\bm{m}_{1}$, the process
$\bm{x}_{\epsilon}(\cdot)$ remains near $\bm{m}_{1}$ for a very
long time. Eventually, as the small noise $\sqrt{2\epsilon}d\bm{w}_{t}$
accumulates sufficiently over time, the process escapes the domain
of attraction of $\bm{m}_{1}$ and makes a transition to $\bm{m}_{2}$.
It is well known from Freidlin and Wentzell \cite{FW} that an exponentially
long time $e^{D/\epsilon}$ is required for such a transition to occur,
causing the process $\bm{x}_{\epsilon}(\cdot)$ to exhibit \emph{slow mixing}. 

The transition phenomenon between stable states is referred to as
metastability. Now, assume that $U$ has more than one but finitely
many local minima and denote by \textcolor{blue}{$\mathcal{M}_{0}$}
the set of local minima of $U$. It has been shown in \cite{LS-22}
that $\mathcal{M}_{0}$ is the set of stable equilibria of the dynamical
system \eqref{e: ODE}. In \cite{LS-22b,RS}, it was proven that the
metastable behavior of process $\bm{x}_{\epsilon}(\cdot)$ between
global minima is described by a certain Markov chain whose state space
is a partition of the set of global minima. In \cite{LLS-1st,LLS-2nd},
this result was extended to the metastability between all local minima.
More precisely, there exist a positive integer $\mathfrak{q}$, time
scales $\theta_{\epsilon}^{(1)}\prec\cdots\prec\theta_{\epsilon}^{(\mathfrak{q})}$
and Markov chains ${\bf y}^{(1)}(\cdot),\,\dots,\,{\bf y}^{(\mathfrak{q})}(\cdot)$
whose state spaces are partitions of subsets of $\mathcal{M}_{0}$
such that the Markov chain\footnote{For $a\le b$, let $\llbracket a,\,b\rrbracket:=[a,\,b]\cap\mathbb{Z}$.}
${\bf y}^{(p)}(\cdot)$, $p\in\llbracket1,\,\mathfrak{q}\rrbracket$,
describes the metastable behavior of $\bm{x}_{\epsilon}(\cdot)$ in
time scale $\theta_{\epsilon}^{(p)}$. Since the Markov chains describing
metastable behavior of the original process $\bm{x}_{\epsilon}(\cdot)$
are much simpler, this approach is referred to as \emph{Markov chain model
reduction} \cite{BeltranLandim1,BeltranLandim11,BeltranLandim2}. Additionally, since the mixing of the simpler Markov chains
is more tractable, the theory of Markov chain model reduction is the
most essential tool of studying slow mixing of the process $\bm{x}_{\epsilon}(\cdot)$.

\subsection{Main contribution of this article}

In this article, we express and prove the slow mixing of the process
$\bm{x}_{\epsilon}(\cdot)$ with Gibbs invariant distribution in the
language of metastability. Firstly, we extend the theory of metastability
developed in \cite{BeltranLandim1,LLM} to the mixing property by
generalizing the \emph{convergence in finite-dimensional distribution} obtained
in \cite{LLS-1st,LLS-2nd} (cf. Definition \ref{def_FC}) to the \emph{convergence
in total variation distance} (cf. Definition \ref{def_TV}) in Theorem
\ref{t: main_TV}. Specifically, at the time scale $\theta_{\epsilon}^{(p)}$,
$p\in\llbracket1,\,\mathfrak{q}\rrbracket$, we show that the distance
between the distribution of $\bm{x}_{\epsilon}(\cdot)$ and a lifting
of the distribution of a Markov chain ${\bf y}^{(p)}(\cdot)$ converges
to $0$ as $\epsilon\to0$.

Using the triangle inequality, at the time scale $\theta_{\epsilon}^{(p)}$,
$p\in\llbracket1,\,\mathfrak{q}\rrbracket$, we then reduce the distance
between the distribution of $\bm{x}_{\epsilon}(\cdot)$ and $\mu_{\epsilon}$
to the distance between the distribution of the Markov chain ${\bf y}^{(p)}(\cdot)$
and $\mu_{\epsilon}$. Since the lifting of the distribution of ${\bf y^{(p)}}(\cdot)$
is a convex combination of conditioned measures of $\mu_{\epsilon}$,
the latter distance can be computed explicitly (cf. Lemma \ref{l: TV_density}).
By the monotonicity of the total variation distance (cf. Lemma \ref{l: d_TV}),
the total variation distance in any intermediate time scale $\theta_{\epsilon}^{(p-1)}\prec\varrho_{\epsilon}\prec\theta_{\epsilon}^{(p)}$
is computed as well. This result is presented in Theorem \ref{t: main}.

Finally, we derive a sharp asymptotics for the mixing time of the
process $\bm{x}_{\epsilon}(\cdot)$ in Theorem \ref{t: mixing_time}.
First, we prove that the total variation distance between finite ergodic
Markov chains and their invariant distribution decreases strictly in
time (cf. Lemma \ref{l: d_TV-2}). This refines the well-known property
of the monotonicity of the total variation distance and is crucial
to prove the upper bound of mixing time (cf. Lemma \ref{l: mix_up}).
The mixing time of $\bm{x}_{\epsilon}(\cdot)$ is expressed as the
product of the mixing time of the last Markov chain ${\bf y}^{(\mathfrak{q})}(\cdot)$
and the last time scale $\theta_{\epsilon}^{(\mathfrak{q})}$. Since
the last Markov chain ${\bf y}^{(\mathfrak{q})}(\cdot)$ describes
the transitions between global minima, on which the invariant measure
$\mu_{\epsilon}$ is concentrated, it is a natural result one can
expect. To the best of the author's knowledge, this is the first result
on the precise asymptotics for the mixing time of metastable diffusion processes
which exhibit slow mixing.

Metastability is a main tool to treat stochastic systems with complex
potentials. In this article, we develop the theory of metastability
to study slow mixing of such systems. We emphasize that the main argument
(cf. Section \ref{sec: meta_TV}) is \emph{model-independent}. We
only assume the following conditions
\begin{itemize}
\item Convergence in finite-dimensional distributions (cf. Condition \ref{def_FC}).
\item Mixing condition (cf. Condition \ref{def_mixing}).
\item Generalization of starting points (cf. Proposition \ref{p: Cond_H}).
\end{itemize}
Therefore, one can apply this approach to study mixing of metastable
systems such as particle systems \cite{Beltran Landim-ZRP,Kim-IP3rd,Seo -ZRP},
ferromagnetic systems \cite{BC,Cuff DLLPS,Kim Seo-IsingPotts1,Kim Seo-IsingPotts2,Lee-Potts},
machine learning \cite{BDM}, and others.

The metastability of the process \eqref{e: SDE} in the general case, without assumption \eqref{e_decb}, has attracted considerable attention.
However, since the quantitative study of the metastability of general processes remains beyond our current reach, the slow mixing of such processes has remained an open problem. Nevertheless, since our methodology is model-independent, we expect that the slow mixing of metastable diffusion processes with non-Gibbs invariant distributions can be derived provided that convergence in finite-dimensional distribution is achieved.

\subsection{Related works}

Fast mixing is widely observed in various stochastic systems with
a unique equilibrium. Aldous and Diaconis \cite{AldousDiacon} demonstrated
the cut-off phenomenon in the card-shuffling model, which has drawn
significant interest in the study of this phenomenon. For further
exploration of this phenomenon, we refer to Cuff et al. \cite{Cuff DLLPS}
for the Curie--Weiss--Potts model, Barrera and Pardo \cite{BarP}
for Ornstein--Uhlenbeck processes driven by L\'evy processes, and Lee,
Ramil, and Seo \cite{LRS} for the underdamped Langevin dynamics.
It is worth noting that fast mixing can occur without cut-off. For
Langevin dynamics with gradual fast mixing, see \cite{BarJara22}.

In contrast, when a system has multiple equilibria, mixing occurs exponentially slowly. For instance, in the low-temperature regime, Glauber dynamics for mean-field Ising model exhibits slow mixing \cite{DingLP}. This phenomenon arises due to metastability. Comprehensive summaries of research on metastability can be
found in the monographs \cite{BH,ov}. The Eyring--Kramers formula, which provides precise asymptotics
for the expected value of the transition time, was first derived by Eyring \cite{Ey} and Kramers \cite{Kra} who computed it for one-dimensional diffusion processes. Generalizations to higher dimensions
can be found in \cite{BEGK,LMS2,LS-22}.
The Eyring--Kramers formula for the optimal constant in the Poincar\'{e} and logarithmic Sobolev inequality was studied in \cite{MenzSch}.

Another aspect of metastability involves successive transitions between metastable states, rather than a single transition. The general theory for describing hopping dynamics between metastable states using simpler Markov chains has been developed recently in \cite{BeltranLandim1,BeltranLandim11,BeltranLandim2,LLM,LMS} and such approach for diffusion processes was studied in \cite{BBDiG, LS-22b, RS}.
For further discussion, we refer to \cite{BGK2, BroDiG, DiG, HKN, LM, SchSlowik} for sharp asymptotics for
the low-lying spectra of the process $\bm{x}_{\epsilon}(\cdot)$, \cite{QSD1,QSD2,QSD3} for a quasi-stationary distribution approach, \cite{bgl2,GesuMariani,LMS-Gamma} for Gamma expansions of large deviation rate functionals.

\section{Model and results}

\subsection{Model}

Assume that the drift is decomposed to $\bm{b}=-(\nabla U+\bm{\ell})$
for some $U\in C^{3}(\mathbb{R}^{d})$ and $\bm{\ell}\in C^{2}(\mathbb{R}^{d};\,\mathbb{R}^{d})$
satisfying \eqref{e_decb}, where $C^{k}(\mathbb{R}^{d})$ and $C^{k}(\mathbb{R}^{d};\,\mathbb{R}^{d})$
represent the spaces of $k$-times differentiable functions and vector
fields on $\mathbb{R}^{d}$, respectively. Furthermore, we assume
that $U$ satisfies the following growth condition:
\begin{equation}
\begin{gathered}\lim_{n\to\infty}\inf_{|\boldsymbol{x}|\geq n}\frac{U(\boldsymbol{x})}{|\boldsymbol{x}|}=\infty\;,\quad\lim_{|\boldsymbol{x}|\to\infty}\frac{\boldsymbol{x}}{|\boldsymbol{x}|}\cdot\nabla U(\boldsymbol{x})=\infty\,,\\
\lim_{|\boldsymbol{x}|\to\infty}\big\{\,|\nabla U(\boldsymbol{x})|\,-\,2\,\Delta U(\boldsymbol{x})\,\big\}=\infty\,.
\end{gathered}
\label{e: growth}
\end{equation}
Denote by {\color{blue}$\mathcal{C}_{0}$} the set of critical points of $U$ and by
{\color{blue}$\nabla^{2}U(\bm{x})$} the Hessian of $U$ at $\bm{x}\in\mathbb{R}^{d}$.
Assume that $U$ is a Morse function (cf. \cite[Definition 1.7]{Nic18}),
i.e., all critical points $\bm{c}\in\mathcal{C}_{0}$ are nondegenerate
in the sense that $\nabla^{2}U(\bm{c})$ is invertible. These assumptions
are generic as in \cite{LS-22,LS-22b} guaranteeing the following.
Recall that $\mathcal{M}_{0}$ is the set of local minima of $U$
and $\mu_{\epsilon}$ is the Gibbs distribution given in \eqref{e_Gibbs}.
\begin{theorem}[{\cite[Theorems 2.1 and 2.2]{LS-22}}]
Suppose that $\bm{b}=-(\nabla U+\bm{\ell})$ for some $U\in C^{3}(\mathbb{R}^{d})$
and $\bm{\ell}\in C^{2}(\mathbb{R}^{d};\,\mathbb{R}^{d})$ satisfying
\eqref{e_decb} and \eqref{e: growth}. Then, we have the following.
\begin{enumerate}
\item The set $\mathcal{C}_{0}$ is exactly the set of all the equilibria of
the dynamical system \eqref{e: ODE}, and the set $\mathcal{M}_{0}$
is exactly the set of all the stable equilibria of \eqref{e: ODE}. 
\item The process $\bm{x}_{\epsilon}(\cdot)$ is positive recurrent and $\mu_{\epsilon}$
is the unique stationary distribution of the process $\bm{x}_{\epsilon}(\cdot)$.
\end{enumerate}
\end{theorem}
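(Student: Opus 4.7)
I would treat the two claims separately. Part (1) is essentially a differential-geometric consequence of the decomposition \eqref{e_decb} combined with the Morse hypothesis, whereas part (2) is a standard Foster--Lyapunov argument exploiting the growth conditions \eqref{e: growth}, followed by a direct verification that $\mu_{\epsilon}$ is invariant.

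For part (1), I would first prove $\bm{b}(\bm{x})=\bm{0}\Leftrightarrow \nabla U(\bm{x})=\bm{0}$. The implication $\Rightarrow$ is immediate: taking the inner product of $\nabla U(\bm{x})+\bm{\ell}(\bm{x})=\bm{0}$ with $\nabla U(\bm{x})$ and using $\nabla U\cdot\bm{\ell}\equiv 0$ yields $|\nabla U(\bm{x})|^{2}=0$. The converse is the subtle step, since $\bm{\ell}$ could a priori be nonzero at a critical point of $U$. Differentiating the identity $\nabla U\cdot\bm{\ell}\equiv 0$ gives $\nabla^{2}U\,\bm{\ell}+(D\bm{\ell})^{T}\nabla U\equiv\bm{0}$; evaluating at any $\bm{c}\in\mathcal{C}_{0}$ and using the invertibility of $\nabla^{2}U(\bm{c})$ forces $\bm{\ell}(\bm{c})=\bm{0}$, hence $\bm{b}(\bm{c})=\bm{0}$. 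For the stability dichotomy I would use the Lyapunov identity
\[
\frac{d}{dt}U(\bm{x}(t))=\nabla U(\bm{x}(t))\cdot\bm{b}(\bm{x}(t))=-|\nabla U(\bm{x}(t))|^{2},
\]
which holds along every trajectory of \eqref{e: ODE} precisely because $\bm{\ell}\cdot\nabla U\equiv 0$. For $\bm{m}\in\mathcal{M}_{0}$, $U$ is a strict local Lyapunov function at $\bm{m}$, giving Lyapunov stability, and LaSalle's invariance principle together with the isolation of critical points (by Morse) upgrades this to asymptotic stability. Conversely, if $\bm{c}\in\mathcal{C}_{0}\setminus\mathcal{M}_{0}$, then arbitrarily close to $\bm{c}$ there exist points $\bm{x}_{0}$ with $U(\bm{x}_{0})<U(\bm{c})$; along the trajectory from $\bm{x}_{0}$, $U$ stays strictly below $U(\bm{c})$ and $|\nabla U|^{2}$ is bounded below away from $\bm{c}$ on a punctured neighborhood (by Morse), ruling out a return to $\bm{c}$ and exhibiting instability.

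For part (2), the Lyapunov function of choice is $U$ itself. Using the generator $\mathcal{L}_{\epsilon}f=-(\nabla U+\bm{\ell})\cdot\nabla f+\epsilon\Delta f$ and $\bm{\ell}\cdot\nabla U\equiv 0$, I compute $\mathcal{L}_{\epsilon}U=-|\nabla U|^{2}+\epsilon\Delta U$. The second condition in \eqref{e: growth} gives $|\nabla U(\bm{x})|\to\infty$ as $|\bm{x}|\to\infty$; combined with the third condition $|\nabla U|-2\Delta U\to\infty$, this yields $\mathcal{L}_{\epsilon}U\to-\infty$ as $|\bm{x}|\to\infty$. A standard Foster--Lyapunov criterion together with the irreducibility coming from the non-degenerate noise $\sqrt{2\epsilon}\,d\bm{w}_{t}$ then gives positive recurrence and a unique invariant probability measure. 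The first condition in \eqref{e: growth} ensures $Z_{\epsilon}<\infty$ so that $\mu_{\epsilon}$ from \eqref{e_Gibbs} is a bona fide probability measure, and a direct Fokker--Planck computation, in which the conditions $\nabla\cdot\bm{\ell}=0$ and $\nabla U\cdot\bm{\ell}=0$ provide the precise cancellations needed, shows $\mathcal{L}_{\epsilon}^{*}\mu_{\epsilon}=0$; consequently $\mu_{\epsilon}$ must coincide with the unique invariant measure. The main obstacle I anticipate is the instability direction in (1): with nontrivial rotational term $\bm{\ell}$, the Jacobian $D\bm{b}(\bm{c})=-\nabla^{2}U(\bm{c})-D\bm{\ell}(\bm{c})$ is non-symmetric and a direct spectral argument is awkward, which is exactly why I prefer the Lyapunov-function route above, which bypasses linearization entirely.
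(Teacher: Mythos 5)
The paper does not prove this theorem: it is a stated import of Theorems 2.1 and 2.2 from \cite{LS-22}, so there is no in-paper argument to compare against. Your reconstruction is nevertheless correct and matches the route one would expect from \cite{LS-22}.

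A few remarks on the soundness of your argument. In part (1), the only genuinely delicate step is the converse implication, i.e., that $\bm{\ell}$ vanishes at every critical point of $U$; differentiating the orthogonality constraint $\nabla U\cdot\bm{\ell}\equiv 0$ to obtain $\nabla^2 U\,\bm{\ell}+(D\bm{\ell})^{T}\nabla U\equiv\bm{0}$ and then using $\nabla U(\bm{c})=\bm{0}$ together with the Morse hypothesis (invertibility of $\nabla^{2}U(\bm{c})$) is exactly the right move, and it makes transparent why the Morse assumption is essential here. Treating stability via $U$ as a Lyapunov function, with $\frac{d}{dt}U(\bm{x}(t))=-|\nabla U(\bm{x}(t))|^{2}$ and LaSalle, is a clean way to sidestep the non-symmetry of $D\bm{b}(\bm{c})=-\nabla^{2}U(\bm{c})-D\bm{\ell}(\bm{c})$; your instability argument for non-minima is correct provided you phrase it, as you implicitly do, through the $\omega$-limit set being forced to consist of critical points at a $U$-level strictly below $U(\bm{c})$, which cannot happen inside a small ball containing only $\bm{c}$. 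In part (2), the Foster--Lyapunov estimate $\mathcal{L}_{\epsilon}U=-|\nabla U|^{2}+\epsilon\Delta U\to-\infty$ follows from the last two conditions of \eqref{e: growth} exactly as you sketch, the first condition ensures $Z_{\epsilon}<\infty$, and the Fokker--Planck verification that $-\nabla\cdot(\bm{b}\,e^{-U/\epsilon})+\epsilon\Delta e^{-U/\epsilon}=0$ holds precisely because $\nabla\cdot\bm{\ell}=0$ removes the divergence term and $\nabla U\cdot\bm{\ell}=0$ removes the cross term. This is the standard and correct argument; I see no gaps.
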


Since we assumed multiple stable equilibria, we have $|\mathcal{M}_{0}|\ge2$.
For $\bm{c}_{1},\,\bm{c}_{2}\in\mathcal{C}_{0}$, a \textit{\textcolor{blue}{heteroclinic
orbit}} $\phi$ from $\bm{c}_{1}$ to $\bm{c}_{2}$ is a smooth path
$\phi:\mathbb{R}\to\mathbb{R}^{d}$ satisfying $\dot{\phi}(t)=\bm{b}(\phi(t))$
for all $t\in\mathbb{R}$ and
\[
\lim_{t\to-\infty}\phi(t)=\bm{c}_{1}\ ,\ \ \lim_{t\to\infty}\phi(t)=\bm{c}_{2}\,.
\]
Denote by {\color{blue}$\mathcal{S}_{0}$} the set of saddle points of $U$. Since
$U$ is a Morse function, $\mathcal{S}_{0}$ is the set of all $\bm{\sigma}\in\mathcal{C}_{0}$
where the Hessian $\nabla^{2}U(\bm{\sigma})$ has one negative eigenvalue
and $d-1$ positive eigenvalues. In particular, for all $\bm{\sigma}\in\mathcal{S}_{0}$,
by the Hartman-Grobman theorem (cf. \cite[Section 2.8]{Perko}), there
exist two heteroclinic orbits $\phi$ satisfying $\lim_{t\to-\infty}\phi(t)=\bm{\sigma}$.
Now, the following is the main assumption as in \cite{LLS-1st,LLS-2nd}.
\begin{assumption}
\label{assu: hetero}For all $\boldsymbol{\sigma}\in\mathcal{S}_{0}$,
two heteroclinic orbits are connected to local minima, i.e., letting
$\phi_{\pm}$ be two different heteroclinic orbits satisfying $\lim_{t\to-\infty}\phi_{\pm}(t)=\bm{\sigma}$,
we have $\lim_{t\to\infty}\phi_{\pm}(t)\in\mathcal{M}_{0}$.
\end{assumption}

\subsection{Tree-structure}

In this subsection , we recall the tree-structure introduced in \cite{LLS-2nd}.
The rigorous definitions of depths and Markov chains are presented
in Appendix \ref{app: Tree}.
\begin{definition}[Tree-structure]
\label{def:tree}The tree-structure consists of the following objects.
\begin{enumerate}
\item A positive integer ${\color{blue}\mathfrak{q}\ge1}$ denoting the
number of scales. 
\item A finite sequence of depths $0<d^{(1)}<\cdots<d^{(\mathfrak{q})}<\infty$
and time-scales
\[
{\color{blue}\theta_{\epsilon}^{(p)}}:=\exp\frac{d^{(p)}}{\epsilon}\;\;;\ \;p\in\llbracket1,\,\mathfrak{q}\rrbracket\,.
\]
\item A sequence of partitions $\mathscr{V}^{(p)}\cup\mathscr{N}^{(p)}$,
$p\in\llbracket1,\,\mathfrak{q}\rrbracket$, of $\mathcal{M}_{0}$.
\item A sequence of finite-state Markov chains $\widehat{{\bf y}}^{(p)}(\cdot)$
and ${\bf y}^{(p)}(\cdot)$, $p\in\llbracket1,\,\mathfrak{q}\rrbracket$,
on $\mathscr{V}^{(p)}\cup\mathscr{N}^{(p)}$ and $\mathscr{V}^{(p)}$,
respectively.
\end{enumerate}
\end{definition}

\begin{remark}
For $p\in\llbracket1,\,\mathfrak{q}\rrbracket$, $\theta_{\epsilon}^{(p)}$
is the critical time scale at which the asymptotic behavior of $\bm{x}_{\epsilon}(\cdot)$
undergoes a quantitative change. $\mathscr{V}^{(p)}$ represents metastable states with energy barriers large enough to trap the process $\bm{x}_{\epsilon}(\cdot)$ at the time scale $\theta_\epsilon^{(p)}$, while the Markov chain ${\bf y}^{(p)}(\cdot)$
captures the hopping dynamics of $\bm{x}_{\epsilon}(\cdot)$ within $\mathscr{V}^{(p)}$
at the same time scale, $\theta_{\epsilon}^{(p)}$ (cf. \cite[Theorem 3.1]{LLS-2nd} and Theorem \ref{t: main_TV}).
\end{remark}

Let
\[
{\color{blue}\mathscr{V}^{(1)}}:=\left\{ \,\{\boldsymbol{m}\}:\boldsymbol{m}\in\mathcal{M}_{0}\,\right\} \;\ \text{and}\ \;{\color{blue}\mathscr{N}^{(1)}}:=\varnothing\,.
\]
Let \textcolor{blue}{$\{{\bf y}^{(1)}(t)\}_{t\ge0}$} be a $\mathscr{V}^{(1)}$-valued
Markov chain defined in Appendix \ref{subsec: MC1}. Denote by $\mathfrak{n}_{1}$
the number of irreducible classes of ${\bf y}^{(1)}(\cdot)$ and by
${\color{blue}\mathscr{R}_{1}^{(1)},\dots,\mathscr{R}_{\mathfrak{n}_{1}}^{(1)}}$,
${\color{blue}\mathscr{T}^{(1)}}$ the closed irreducible classes
and the transient states of the Markov chain ${\bf y}^{(1)}(\cdot)$,
respectively.

Fix $n\ge1$ and assume that the construction has been carried up
to layer $n$. Denote by $\mathfrak{n}_{n}$ the number of irreducible
classes of the Markov chain ${\bf y}^{(n)}(\cdot)$ and by ${\color{blue}\mathscr{R}_{1}^{(n)},\dots,\mathscr{R}_{\mathfrak{n}_{n}}^{(n)}}$,
${\color{blue}\mathscr{T}^{(n)}}$ the closed irreducible classes
and the transient states of the Markov chain ${\bf y}^{(n)}(\cdot)$,
respectively. If $\mathfrak{n}_{n}=1$, the construction is complete
and we obtain $\mathfrak{q}=n$. On the other hand, if $\mathfrak{n}_{n}>1$
we add a new layer to the construction as follows. For $j\in\llbracket1,\,\mathfrak{n}_{n}\rrbracket$,
let
\begin{equation}
{\color{blue}\mathcal{M}_{j}^{(n+1)}}:=\bigcup_{\mathcal{M}\in\mathscr{R}_{j}^{(n)}}\mathcal{M}\;,\ \ {\color{blue}\mathscr{V}^{(n+1)}}:=\left\{ \,\mathcal{M}_{1}^{(n+1)}\,\dots,\mathcal{M}_{\mathfrak{n}_{n}}^{(n+1)}\,\right\} \;,\ \ {\color{blue}\mathscr{N}^{(n+1)}}:=\mathscr{N}^{(n)}\cup\mathscr{T}^{(n)}\,,\label{e: M_j-R_j}
\end{equation}
and let ${\color{blue}\mathscr{S}^{(n+1)}}:=\mathscr{V}^{(n+1)}\cup\mathscr{N}^{(n+1)}$.
Let \textcolor{blue}{$\{\widehat{{\bf y}}^{(n+1)}(t)\}_{t\ge0}$}
be a $\mathscr{S}^{(n+1)}$-valued Markov chain defined in Appendix
\ref{subsec: MC2}. Then, we define \textcolor{blue}{$\{{\bf y}^{(n+1)}(t)\}_{t\ge0}$}
as a trace of ${\widehat{\bf y}}^{(n+1)}(\cdot)$ on $\mathscr{V}^{(n+1)}$
(cf. Appendix \ref{app: trace}).

By Proposition \ref{p: tree}-(1), there exists $\mathfrak{q}\in\mathbb{N}$
such that $\mathfrak{n}_{1}>\cdots>\mathfrak{n}_{\mathfrak{q}}=1$
so that the constructions ends at $n=\mathfrak{q}$. Finally, we extend
the definition \eqref{e: M_j-R_j} to $n=\mathfrak{q}$ as
\[
{\color{blue}\mathcal{M}_{1}^{(\mathfrak{q}+1)}}:=\bigcup_{\mathcal{M}\in\mathscr{R}_{1}^{(\mathfrak{q})}}\mathcal{M}\ ,\ \ \mathscr{V}^{(\mathfrak{q}+1)}:=\{\,\mathcal{M}_{1}^{(\mathfrak{q}+1)}\,\}\ ,\ \ \mathscr{N}^{(\mathfrak{q}+1)}:=\mathscr{N}^{(\mathfrak{q})}\cup\mathscr{T}^{(\mathfrak{q})}\,.
\]
From the construction, it follows that
\begin{itemize}
\item If $A\in\mathscr{V}^{(n)}$, $n\in\llbracket1,\,\mathfrak{q}\rrbracket$,
then either $A\in\mathscr{N}^{(n+1)}$ or there exists a unique $B\in\mathscr{V}^{(n+1)}$
such that $A\subset B$. In contrast, if $A\in\mathscr{N}^{(n)}$
then $A\in\mathscr{N}^{(n+1)}$.
\item For each $n\in\llbracket1,\,\mathfrak{q}+1\rrbracket$, $\mathscr{V}^{(n)}\cap\mathscr{N}^{(n)}=\varnothing$
and the elements of $\mathscr{S}^{(n)}$ form a partition of $\mathcal{M}_{0}$.
\item For $n\in\llbracket1,\,\mathfrak{q}\rrbracket$, the transient states
of the chain $\mathbf{y}^{(n)}(\cdot)$ are the sets in $\mathscr{V}^{(n)}$
which are transferred to $\mathscr{N}^{(n+1)}$ and the closed irreducible
classes form the elements of $\mathscr{V}^{(n+1)}$.
\end{itemize}
Denote by ${\color{blue}\mathcal{M}_{\star}}$ the set of global minima
of $U$. By \cite[Proposition 4.11-(2)]{LLS-2nd},
\begin{equation}
\mathcal{M}_{\star}=\mathcal{M}_{1}^{(\mathfrak{q}+1)}=\bigcup_{\mathcal{M}\in\mathscr{R}_{1}^{(\mathfrak{q})}}\mathcal{M}\,.\label{e: min}
\end{equation}
This tree-structure has the following properties. 
\begin{itemize}
\item $\mathcal{M}_{0}$ is the root of the tree.
\item There are $\mathfrak{q}+1$ generations and vetices of each generation
form a partition of $\mathcal{M}_{0}$.
\item For $n\in\llbracket1,\,\mathfrak{q}+1\rrbracket$, the elements of
$\mathscr{S}^{(\mathfrak{q}+2-n)}=\mathscr{V}^{(\mathfrak{q}+2-n)}\cup\mathscr{N}^{(\mathfrak{q}+2-n)}$
are vertices of the $n$-th generation. In particular,
\begin{itemize}
\item $\mathcal{M}_{\star}\in\mathscr{V}^{(\mathfrak{q}+1)}$ and the elements
of $\mathscr{N}^{(\mathfrak{q}+1)}=\mathscr{T}^{(\mathfrak{q})}\cup\mathscr{N}^{(\mathfrak{q})}$
are vertices of the first generation,
\item the elements of $\mathscr{V}^{(1)}=\mathcal{M}_{0}$, which are singletons
are the leaves of the tree, i.e., vertices of the last generation.
\end{itemize}
\end{itemize}
An example of the tree-structure and its visualization is presented
in Appendix \ref{subsec: example}.

\subsection{Main result on metastability}

In this subsection, we present the result on metastability. First,
we recall the notions from \cite{LLS-2nd}. For $H\in\mathbb{R}$,
define
\[
{\color{blue}\{U<H\}}:=\{\boldsymbol{x}\in\mathbb{R}^{d}:U(\boldsymbol{x})<H\}\;\;\text{and \;\;}{\color{blue}\{U\le H\}}:=\{\boldsymbol{x}\in\mathbb{R}^{d}:U(\boldsymbol{x})\le H\}\,.
\]
For each $\boldsymbol{m}\in\mathcal{M}_{0}$, denote by ${\color{blue}\mathcal{W}^{r}(\boldsymbol{m})}$
the connected component of the set $\{U\le U(\boldsymbol{m})+r\}$
containing $\boldsymbol{m}$. Fix $r_0 > 0$ small enough so that
\begin{equation}
r_{0}\in\left(0,\,\frac{1}{3}\min\{d^{(1)},\,d^{(2)}-d^{(1)},\,\dots,\,d^{(\mathfrak{q})}-d^{(\mathfrak{q}-1)}\}\right)\label{e: r_0-0} \, ,
\end{equation}
and for all $\bm{m}\in\mathcal{M}_{0}$, condition
(a)-(e) at the paragraph before the display (2.12) in \cite{LLS-1st}
are satisfied. In particular,
\begin{equation}
\bm{m}\ \text{is the only critical point in}\ \mathcal{W}^{3r_{0}}(\bm{m})\,.\label{e: r_0}
\end{equation}
 Let us define the\emph{ valley }around $\boldsymbol{m}$ as 
\begin{equation}
{\color{blue}\mathcal{E}(\boldsymbol{m}})=\mathcal{W}^{r_{0}}(\boldsymbol{m})\,,\label{e_Em}
\end{equation}
and for $\mathcal{M}\subset\mathcal{M}_{0}$, define
\begin{equation}
{\color{blue}\mathcal{E}(\mathcal{M})}:=\bigcup_{\boldsymbol{m}\in\mathcal{M}}\mathcal{E}(\boldsymbol{m})\,.\label{e_E_M}
\end{equation}

For $p\in\llbracket1,\,\mathfrak{q}\rrbracket$, let {\color{blue}$\widehat{\mathcal{Q}}_{\mu}^{(p)}$} and {\color{blue}$\mathcal{Q}_{\nu}^{(p)}$} be the law of the processes $\widehat{{\bf y}}^{(p)}(\cdot)$
and ${\bf y}^{(p)}(\cdot)$ starting from probability measures $\mu$ and $\nu$ on $\mathscr{S}^{(p)}$ and $\mathscr{V}^{(p)}$, respectively. If $\mu$ or $\nu$ is a dirac measure $\delta_\mathcal{M}$ for some $\mathcal{M}\in\mathscr{S}^{(p)}$, we write $\widehat{\mathcal{Q}}_{\mu}^{(p)}=\widehat{\mathcal{Q}}_{\mathcal{M}}^{(p)}$ or $\mathcal{Q}_{\nu}^{(p)}=\mathcal{Q}_{\mathcal{M}}^{(p)}$.
For $\boldsymbol{m}\in\mathcal{M}_{0}$ and $p\in\llbracket1,\,\mathfrak{q}\rrbracket$,
denote by ${\color{blue}\mathcal{M}(p,\,\boldsymbol{m})}$ the element
in $\mathscr{S}^{(p)}$ which contains $\boldsymbol{m}$. For $p\in\llbracket1,\,\mathfrak{q}\rrbracket$,
$\mathcal{M}\in\mathscr{V}^{(p)}$ and $\boldsymbol{m}\in\mathcal{M}_{0}$,
let $\mathfrak{a}_{\bm{m}}^{(p-1)}(\mathcal{M})$ be the probability
that the auxiliary Markov chain $\widehat{\mathbf{y}}^{(p)}(\cdot)$
starting from $\mathcal{M}(p,\,\boldsymbol{m})$ firstly reaches an
element of $\mathscr{V}^{(p)}$ exactly at $\mathcal{M}$: 
\begin{equation}
{\color{blue}\mathfrak{a}_{\bm{m}}^{(p-1)}(\mathcal{M})}:=\widehat{\mathcal{Q}}_{\mathcal{M}(p,\,\boldsymbol{m})}^{(p)}\left[\,\tau_{\mathscr{V}^{(p)}}=\tau_{\mathcal{M}}\,\right]\,,\label{e: def_a}
\end{equation}
where $\tau_{A}$, $A\subset\mathscr{V}^{(p)}\cup\mathscr{N}^{(p)}$,
represents the hitting time\footnote{There is an abuse of notation since we write $\tau_{\mathcal{M}}$
for $\tau_{\{\mathcal{M}\}}$. Also, we abuse notation of hitting
time of the diffusion process $\bm{x}_{\epsilon}(\cdot)$ as $\tau_{\mathcal{A}}:=\inf\left\{ \,t>0\,:\,\bm{x}_{\epsilon}(t)\in\mathcal{A}\,\right\} $
for $\mathcal{A}\subset\mathbb{R}^{d}$.} of the set $A$.
\[
{\color{blue}\tau_{A}}:=\inf\,\left\{ \,t>0\,:\,\widehat{\mathbf{y}}^{(p)}(t)\in A\,\right\} \,.
\]
By the definition, for $p\in\llbracket1,\,\mathfrak{q}\rrbracket$
and $\bm{m}\in\mathcal{M}_{0}$, $\mathfrak{a}_{\bm{m}}^{(p-1)}$
is a probability measure on $\mathscr{V}^{(p)}$. Clearly, for $\boldsymbol{m}\in\mathcal{M}_{0}$,
$\mathfrak{a}_{\bm{m}}^{(0)}=\delta_{\bm{m}}$ and if $\bm{m}\in\mathcal{M}$
for some $\mathcal{M}\in\mathscr{V}^{(p)}$, $\mathfrak{a}_{\bm{m}}^{(p-1)}=\delta_{\mathcal{M}}$.

For $p\in\llbracket1,\,\mathfrak{q}\rrbracket$, $n\in\mathbb{N}$
and $0<t_{1}<\cdots<t_{n}$, let $\mu_{\epsilon,\,\bm{x}}^{(p),\,t_{1},\,\dots,\,t_{n}}(\cdot)$
be the joint law of $\left(\boldsymbol{x}_{\epsilon}(\theta_{\epsilon}^{(p)}\,t_{1}),\,\dots,\,\boldsymbol{x}_{\epsilon}(\theta_{\epsilon}^{(p)}\,t_{n})\right)$
starting at $\bm{x}$ and let $\mu_{\epsilon}^{\mathcal{A}}$, $\mathcal{A}\subset\mathbb{R}^{d}$,
be a conditioned measure on $\mathcal{A}$ defined as
\[
{\color{blue}\mu_{\epsilon}^{\mathcal{A}}(d\bm{x})}:=\frac{1}{\mu_{\epsilon}(\mathcal{A})}\mathcal{X}_{\mathcal{A}}(\bm{x})\,\mu_{\epsilon}(d\bm{x})\,,
\]
where {\color{blue}$\mathcal{X}_{\mathcal{A}}$} is a characteristic function of
$\mathcal{A}$.
For $\bm{m}\in\mathcal{M}_0$, denote by {\color{blue}$\mathcal{D}(\bm{m})$} the domain of attraction of $\bm m$:
$$
\mathcal{D}(\bm m) := \{\bm x \in \mathbb{R}^d : \text{The ODE \eqref{e: ODE} starting from }\bm x \, \text{converges to } \bm m \}\, .
$$

Now, we present the main result on metastability,
which states that the Markov chain ${\bf y}^{(p)}(\cdot)$ describes
the metastable behavior of the diffusion process $\bm{x}_{\epsilon}(\cdot)$
in time scale $\theta_{\epsilon}^{(p)}$.
\begin{theorem}[Convergence in total variation]
\label{t: main_TV}Suppose that $\bm{b}=-(\nabla U+\bm{\ell})$ for
some $U\in C^{3}(\mathbb{R}^{d})$ and $\bm{\ell}\in C^{2}(\mathbb{R}^{d};\,\mathbb{R}^{d})$
satisfying \eqref{e_decb}, \eqref{e: growth} and that Assumption
\ref{assu: hetero} is in force. Let $p\in\llbracket1,\,\mathfrak{q}\rrbracket$,
$\bm{m}\in\mathcal{M}_{0}$, $n\in\mathbb{N}$ and $0<t_{1}<\cdots<t_{n}$.
Then, for any $\bm{x}\in\mathcal{D}(\bm{m})$ and any sequences $(t_{j,\,\epsilon})_{\epsilon>0}$,
$j\in\llbracket1,\,n\rrbracket$, such that $t_{j,\,\epsilon}\to t_{j}$,
\[
\lim_{\epsilon\to0}d_{{\rm TV}}\left(\,\mu_{\epsilon,\,\bm{x}}^{(p),\,t_{1,\,\epsilon},\,\dots,\,t_{n,\,\epsilon}},\,\sum_{\mathcal{M}_{1},\,\dots,\,\mathcal{M}_{n}\in\mathscr{V}^{(p)}}\mathcal{Q}_{\mathfrak{a}_{\bm{m}}^{(p-1)}}^{(p)}\left[\,\bigcap_{j=1}^{n}\{\mathbf{y}^{(p)}(t_{j})=\mathcal{M}_{j}\}\,\right]\,\mu_{\epsilon}^{\mathcal{M}_{1},\,\dots,\,\mathcal{M}_{n}}\,\right)=0\,,
\]
where $\mu_{\epsilon}^{\mathcal{M}_{1},\,\dots,\,\mathcal{M}_{n}}=\mu_{\epsilon}^{\mathcal{E}(\mathcal{M}_{1})}\times\cdots\times\mu_{\epsilon}^{\mathcal{E}(\mathcal{M}_{n})}$
is a product measure of $\mu_{\epsilon}^{\mathcal{E}(\mathcal{M}_{1})},\,\dots,\,\mu_{\epsilon}^{\mathcal{E}(\mathcal{M}_{n})}$.
\end{theorem}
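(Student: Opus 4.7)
The plan is to prove Theorem \ref{t: main_TV} in a model-independent manner by combining three ingredients: (a) Condition \ref{def_FC} (the \emph{convergence in finite-dimensional distribution} of \cite{LLS-1st,LLS-2nd}), which supplies the correct Markov-chain weights $\mathcal{Q}_{\mathfrak{a}_{\bm{m}}^{(p-1)}}^{(p)}[\mathbf{y}^{(p)}(t_j)=\mathcal{M}_j,\,\forall j]$ at the level of the partition $\mathscr{V}^{(p)}\cup\mathscr{N}^{(p)}$; (b) Condition \ref{def_mixing} (a \emph{local mixing} estimate) asserting that inside each valley $\mathcal{E}(\mathcal{M})$ the process relaxes to $\mu_\epsilon^{\mathcal{E}(\mathcal{M})}$ in total variation on some intermediate scale $1\prec r_\epsilon\prec\theta_\epsilon^{(p)}$; and (c) Proposition \ref{p: Cond_H} (\emph{generalization of starting points}), which lets us replace an arbitrary $\bm{x}\in\mathcal{D}(\bm{m})$ by the minimum $\bm{m}$ itself, up to $o(1)$ error at time scale $\theta_\epsilon^{(p)}$. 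Since (a) already fixes the mixture weights, the proof must produce the local equilibration within each visited valley and the product structure $\mu_\epsilon^{\mathcal{E}(\mathcal{M}_1)}\otimes\cdots\otimes\mu_\epsilon^{\mathcal{E}(\mathcal{M}_n)}$.

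\textbf{Single-time case.} I would first treat $n=1$. For $s_\epsilon\to s$ and an auxiliary scale $1\prec r_\epsilon\prec\theta_\epsilon^{(p)}$, apply the Markov property at $\theta_\epsilon^{(p)}s_\epsilon-r_\epsilon$. By (a), together with a Laplace estimate yielding $\mu_\epsilon\bigl((\bigcup_\mathcal{M}\mathcal{E}(\mathcal{M}))^c\bigr)=o(1)$, the law at this earlier time decomposes as $\sum_{\mathcal{M}\in\mathscr{V}^{(p)}}(q_\mathcal{M}+o(1))\,\nu_\mathcal{M}^\epsilon$ for some probability measures $\nu_\mathcal{M}^\epsilon$ supported on $\mathcal{E}(\mathcal{M})$, where $q_\mathcal{M}:=\mathcal{Q}_{\mathfrak{a}_{\bm{m}}^{(p-1)}}^{(p)}[\mathbf{y}^{(p)}(s)=\mathcal{M}]$. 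Propagating each $\nu_\mathcal{M}^\epsilon$ forward by time $r_\epsilon$, the Freidlin--Wentzell exit estimate shows exit from $\mathcal{E}(\mathcal{M})$ during $[0,r_\epsilon]$ has probability $o(1)$, and (b) then forces the resulting law to be within $o(1)$ $TV$-distance of $\mu_\epsilon^{\mathcal{E}(\mathcal{M})}$. Summing over $\mathcal{M}$ yields the $n=1$ bound.

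\textbf{Inductive step.} Assuming the claim for $n-1$, condition on $\bm{x}_\epsilon(\theta_\epsilon^{(p)}t_{1,\epsilon})$ via the Markov property. The one-time marginal, by the $n=1$ case, is $\sum_{\mathcal{M}_1}q_{\mathcal{M}_1}\mu_\epsilon^{\mathcal{E}(\mathcal{M}_1)}+o(1)$. For $\mu_\epsilon^{\mathcal{E}(\mathcal{M}_1)}$-typical $\bm{x}'\in\mathcal{E}(\mathcal{M}_1)$, use Proposition \ref{p: Cond_H} to replace $\bm{x}'$ by a minimum $\bm{m}'\in\mathcal{M}_1$; the identity $\mathfrak{a}_{\bm{m}'}^{(p-1)}=\delta_{\mathcal{M}_1}$ noted after \eqref{e: def_a}, combined with the Markov property of $\mathbf{y}^{(p)}(\cdot)$, gives
\[
q_{\mathcal{M}_1}\,\mathcal{Q}_{\mathcal{M}_1}^{(p)}\Bigl[\bigcap_{j=2}^{n}\{\mathbf{y}^{(p)}(t_j-t_1)=\mathcal{M}_j\}\Bigr]=\mathcal{Q}_{\mathfrak{a}_{\bm{m}}^{(p-1)}}^{(p)}\Bigl[\bigcap_{j=1}^{n}\{\mathbf{y}^{(p)}(t_j)=\mathcal{M}_j\}\Bigr],
\]
reconstructing the target weights. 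The product structure is automatic: the first coordinate is distributed according to $\mu_\epsilon^{\mathcal{E}(\mathcal{M}_1)}$, and the inductive hypothesis furnishes $\mu_\epsilon^{\mathcal{E}(\mathcal{M}_2)}\otimes\cdots\otimes\mu_\epsilon^{\mathcal{E}(\mathcal{M}_n)}$ for the remaining coordinates, conditionally independently of the first because equilibration inside $\mathcal{E}(\mathcal{M}_1)$ erases the memory of $\bm{x}'$.

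\textbf{Main obstacle.} The hardest piece is ingredient (b): local $TV$-mixing within each valley on a subexponential scale, uniform in the starting point. The scale separation between a polynomial local spectral gap and an exponentially small exit rate is favorable, but upgrading a spectral $L^2$-bound to a $TV$-bound that is uniform up to $\partial\mathcal{E}(\mathcal{M})$ requires careful heat-kernel or hypercontractivity estimates for the Dirichlet semigroup on $\mathcal{E}(\mathcal{M})$, avoiding blow-up near points where the potential is nearly level. A secondary delicacy lies in Proposition \ref{p: Cond_H}, which relies on Freidlin--Wentzell descent estimates to show that the deterministic plunge from $\bm{x}\in\mathcal{D}(\bm{m})$ into $\mathcal{E}(\bm{m})$ occurs in time $o(\theta_\epsilon^{(p)})$ uniformly on compact subsets of $\mathcal{D}(\bm{m})$; once (b) and (c) are in place, the assembly above is essentially algebraic.
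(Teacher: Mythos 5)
Your ingredients (a), (b), (c) are the right ones, and your single-time argument (using the Markov property at $\theta_\epsilon^{(p)}s_\epsilon - r_\epsilon$ and then propagating forward by $r_\epsilon$) matches the paper's Proposition~\ref{p: TV} closely. The problem is in how you propose to obtain ingredient~(b), and in your diagnosis of the ``main obstacle.'' You treat the mixing condition~$\mathfrak{M}^{(p)}$ as a standalone local-equilibration statement to be proved by ``heat-kernel or hypercontractivity estimates for the Dirichlet semigroup on $\mathcal{E}(\mathcal{M})$'' at a ``subexponential scale.'' That picture is correct only for $p=1$. For $p\ge 2$, the relevant wells $\mathcal{U}^{(p)}(\mathcal{M})$ (with $\mathcal{M}\in\mathscr{V}^{(p)}$) contain \emph{several} local minima separated by energy barriers of depth $d^{(p-1)}$, so relaxation to $\mu_\epsilon^{\mathcal{U}^{(p)}(\mathcal{M})}$ genuinely requires an exponential time $\rho_\epsilon = e^{A/\epsilon}$ with $A>d^{(p-1)}$. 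No purely local spectral or hypercontractivity bound will equilibrate the mass between sub-valleys; the bottleneck is the metastable hopping, not the diffusive spreading.

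The key structural idea you are missing is the bootstrap in Proposition~\ref{p: TV-mix}-(1): $\mathfrak{C}_{\rm TV}^{(p)}$ (convergence in total variation at layer~$p$) is used to \emph{derive} the mixing condition~$\mathfrak{M}^{(p+1)}$ at the next layer. Concretely, one shows that the process started in some $\mathcal{E}(\mathcal{M}')$, $\mathcal{M}'\in\mathscr{R}_i^{(p)}$, cannot leave $\mathcal{U}^{(p+1)}(\mathcal{M}_i^{(p+1)})$ by time $\rho_\epsilon=e^{A/\epsilon}$ (Freidlin--Wentzell), and by $\mathfrak{C}_{\rm TV}^{(p)}$ together with ergodicity of $\mathbf{y}^{(p)}(\cdot)$ on the irreducible class $\mathscr{R}_i^{(p)}$, its law at time $\rho_\epsilon\succ\theta_\epsilon^{(p)}$ is $TV$-close to the mixture $\sum_{\mathcal{M}''\in\mathscr{R}_i^{(p)}}\frac{\nu(\mathcal{M}'')}{\nu(\mathcal{M}_i^{(p+1)})}\mu_\epsilon^{\mathcal{E}(\mathcal{M}'')}\approx\mu_\epsilon^{\mathcal{U}^{(p+1)}(\mathcal{M}_i^{(p+1)})}$. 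Thus the induction runs over the layer index $p$ rather than (or in addition to) the number of time points $n$: $\mathfrak{M}^{(1)}$ is established directly via a local-ergodicity result (Theorem~\ref{t: mix_F}) at scale $\rho_\epsilon=\epsilon^{-1}$, and then $\mathfrak{C}_{\rm fdd}^{(p)}+\mathfrak{M}^{(p)}\Rightarrow\mathfrak{C}_{\rm TV}^{(p)}\Rightarrow\mathfrak{M}^{(p+1)}\Rightarrow\cdots$ climbs the hierarchy. Without this cascade, your plan stalls at $p=2$. Your inductive step on $n$ and the use of Proposition~\ref{p: Cond_H} for enlarging the set of starting points are otherwise reasonable and parallel what the paper does in Lemmas~\ref{l: main1} and~\ref{l: main2}.
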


\begin{remark}
This extends the convergence in finite-dimensional distribution \cite[Theorem 3.1]{LLS-2nd}
to the convergence in total variation distance. Indeed, in \cite[Theorem 3.1]{LLS-2nd},
the value of $\lim_{\epsilon\to0}\mu_{\epsilon,\,\bm{x}}^{(p),\,t_{1,\,\epsilon},\,\dots,\,t_{n,\,\epsilon}}(\mathcal{A})$
was obtained only if $\mathcal{A}=\mathcal{E}(\mathcal{M}_{1})\times\cdots\times\mathcal{E}(\mathcal{M}_{n})$
as follows:
\begin{align*}
 & \lim_{\epsilon\to0}\mu_{\epsilon,\,\bm{x}}^{(p),\,t_{1,\,\epsilon},\,\dots,\,t_{n,\,\epsilon}}(\mathcal{E}(\mathcal{M}_{1})\times\cdots\times\mathcal{E}(\mathcal{M}_{n}))\\
 & =\mathcal{Q}_{\mathfrak{a}_{\bm{m}}^{(p-1)}}^{(p)}\left[\,\bigcap_{j=1}^{n}\{\mathbf{y}^{(p)}(t_{j})=\mathcal{M}_{j}\}\,\right]\,\mu_{\epsilon}^{\mathcal{M}_{1},\,\dots,\,\mathcal{M}_{n}}(\mathcal{E}(\mathcal{M}_{1})\times\cdots\times\mathcal{E}(\mathcal{M}_{n}))\,.
\end{align*}
\end{remark}

\subsection{Main result on mixing}

\subsubsection{Total variation distance}

If $\mathcal{M}\subset\mathcal{M}_{0}$ satisfies
\[
U(\bm{m})=U(\bm{m}')\ \ \text{for all}\ \bm{m},\,\bm{m}'\in\mathcal{M}\,,
\]
$\mathcal{M}$ is said to be \textit{\textcolor{blue}{simple}} and
we denote by \textcolor{blue}{$U(\mathcal{M})$ }the common value.
For $p\in\llbracket1,\,\mathfrak{q}\rrbracket$, define
\[
\mathscr{V}_{\star}^{(p)}:=\{\,\mathcal{M}\in\mathscr{V}^{(p)}\,:\,\mathcal{M}\subset\mathcal{M}_{\star}\,\}\,.
\]
$\mathscr{V}_{\star}^{(p)}$ is well defined since the elements of $\mathscr{V}^{(p)}$
are simple by Proposition \ref{p: tree}-(1). We claim that for all
$p\in\llbracket1,\,\mathfrak{q}\rrbracket$,
\begin{equation}
\mathcal{M}_{\star}=\bigcup_{\mathcal{M}\in\mathscr{V}_{\star}^{(p)}}\mathcal{M}\,.\label{e: M_star-V_star}
\end{equation}
Indeed, the right-hand side is a subset of the left-hand side. On the
other hand, if $\bm{m}\in\mathcal{M}_{\star}$, by \eqref{e: min},
$\mathcal{M}(\mathfrak{q},\,\bm{m})\in\mathscr{V}^{(\mathfrak{q})}$
. Hence, by the construction, for all $p\in\llbracket1,\,\mathfrak{q}\rrbracket$,
$\mathcal{M}(p,\,\bm{m})\in\mathscr{V}_{\star}^{(p)}$ so that $\bm{m}\in\mathcal{M}$
for some $\mathcal{M}\in\mathscr{V}_{\star}^{(p)}$. The weights $\nu(\boldsymbol{m})$
and $\nu(\mathcal{M})$ for $\boldsymbol{m}\in\mathcal{M}_{0}$ and
$\mathcal{M}\subset\mathcal{M}_{0}$ are defined as 
\[
{\color{blue}\nu(\boldsymbol{m})}:=\frac{1}{\sqrt{\det(\nabla^{2}U)(\boldsymbol{m})}}\;,\ \ {\color{blue}\nu(\mathcal{M})}:=\sum_{\boldsymbol{m}\in\mathcal{M}}\nu(\boldsymbol{m})\;,\ \ \nu_{\star}:=\nu(\mathcal{M}_{\star})\,.
\]
Then, for $p\in\llbracket1,\,\mathfrak{q}\rrbracket$, by \eqref{e: M_star-V_star},
a measure \textcolor{blue}{$\nu^{(p)}$} on $\mathscr{V}^{(p)}$ defined
by
\begin{equation}
\nu^{(p)}(\mathcal{M}):=\begin{cases}
\frac{\nu(\mathcal{M})}{\nu_{\star}} & \mathcal{M}\in\mathscr{V}_{\star}^{(p)}\,,\\
0 & \text{otherwise}\,,
\end{cases}\label{e: nu^p}
\end{equation}
is a probability measure. The following is the second main result
of this article. 
\begin{theorem}
\label{t: main}Suppose that $\bm{b}=-(\nabla U+\bm{\ell})$ for some
$U\in C^{3}(\mathbb{R}^{d})$ and $\bm{\ell}\in C^{2}(\mathbb{R}^{d};\,\mathbb{R}^{d})$
satisfying \eqref{e_decb}, \eqref{e: growth}, and that Assumption
\ref{assu: hetero} is in force. For all $\bm{m}\in\mathcal{M}_{0}$,
the following hold.
\begin{enumerate}
\item For all $p\in\llbracket1,\,\mathfrak{q}\rrbracket$, $\bm{x}\in\mathcal{D}(\bm{m})$
and $t>0$,
\[
\lim_{\epsilon\to0}\,d_{{\rm TV}}(\boldsymbol{x}_{\epsilon}(\theta_{\epsilon}^{(p)}t;\,\bm{x}),\,\mu_{\epsilon})=d_{{\rm TV}}\left({\bf y}^{(p)}\left(t;\,\mathfrak{a}_{\bm{m}}^{(p-1)}\right),\,\nu^{(p)}\right)\,.
\]
\item For all $p\in\llbracket1,\,\mathfrak{q}\rrbracket$, $\bm{x}\in\mathcal{D}(\bm{m})$
and sequences $(\varrho_{\epsilon})_{\epsilon>0}$ such that $\theta_{\epsilon}^{(p-1)}\prec\varrho_{\epsilon}\prec\theta_{\epsilon}^{(p)}$,
\[
\lim_{\epsilon\to0}\,d_{{\rm TV}}(\boldsymbol{x}_{\epsilon}(\varrho_{\epsilon};\,\bm{x}),\,\mu_{\epsilon})=d_{{\rm TV}}\left(\mathfrak{a}_{\bm{m}}^{(p-1)},\,\nu^{(p)}\right)
\]
where ${\color{blue}\theta_{\epsilon}^{(0)}}:=\epsilon^{-1}$.
\item For all $\bm{x}\in\mathbb{R}^{d}$ and sequences $(\varrho_{\epsilon})_{\epsilon>0}$
such that $\varrho_{\epsilon}\succ\theta_{\epsilon}^{(\mathfrak{q})}$,
\[
\lim_{\epsilon\to0}d_{{\rm TV}}(\boldsymbol{x}_{\epsilon}(\varrho_{\epsilon};\,\bm{x}),\,\mu_{\epsilon})=0\,.
\]
\end{enumerate}
\end{theorem}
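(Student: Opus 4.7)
The plan is to derive all three statements from Theorem~\ref{t: main_TV} and the monotonicity of the total variation distance stated in Lemma~\ref{l: d_TV}. For part~(1), fix $\bm m$, $p$, $t>0$, $\bm x\in\mathcal D(\bm m)$, and set
\begin{equation*}
P_{\epsilon,t}:=\sum_{\mathcal M\in\mathscr V^{(p)}} q_{\mathcal M}(t)\,\mu_\epsilon^{\mathcal E(\mathcal M)},\qquad q_{\mathcal M}(t):=\mathcal Q^{(p)}_{\mathfrak a_{\bm m}^{(p-1)}}\!\bigl[\mathbf y^{(p)}(t)=\mathcal M\bigr].
\end{equation*}
Theorem~\ref{t: main_TV} with $n=1$ yields $d_{\mathrm{TV}}(\bm x_\epsilon(\theta_\epsilon^{(p)}t;\bm x),P_{\epsilon,t})\to 0$, so by the reverse triangle inequality it suffices to compute $\lim_{\epsilon\to0}d_{\mathrm{TV}}(P_{\epsilon,t},\mu_\epsilon)$. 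Because the sets $\mathcal E(\mathcal M)$, $\mathcal M\in\mathscr V^{(p)}$, are pairwise disjoint and $P_{\epsilon,t}\ll\mu_\epsilon$, Lemma~\ref{l: TV_density} gives
\begin{equation*}
d_{\mathrm{TV}}(P_{\epsilon,t},\mu_\epsilon)=\tfrac12\sum_{\mathcal M\in\mathscr V^{(p)}}\!\bigl|q_{\mathcal M}(t)-\mu_\epsilon(\mathcal E(\mathcal M))\bigr|+\tfrac12\,\mu_\epsilon\!\Bigl(\mathbb R^d\setminus\!\!\!\bigcup_{\mathcal M\in\mathscr V^{(p)}}\!\!\!\mathcal E(\mathcal M)\Bigr).
\end{equation*}
Standard Laplace asymptotics applied to \eqref{e_Gibbs}, together with the simplicity of each $\mathcal M\in\mathscr V^{(p)}$ (Proposition~\ref{p: tree}), force $\mu_\epsilon(\mathcal E(\mathcal M))\to\nu^{(p)}(\mathcal M)$ (with the limit equal to $0$ for $\mathcal M\notin\mathscr V_\star^{(p)}$) and the residual mass to vanish. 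Passing to the limit yields exactly $d_{\mathrm{TV}}(\mathbf y^{(p)}(t;\mathfrak a_{\bm m}^{(p-1)}),\nu^{(p)})$.

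For part~(2) I would argue by a squeeze using that $s\mapsto d_{\mathrm{TV}}(\bm x_\epsilon(s;\bm x),\mu_\epsilon)$ is nonincreasing. For every fixed $t>0$, eventually $\varrho_\epsilon\le\theta_\epsilon^{(p)}t$, so monotonicity and part~(1) give
\begin{equation*}
\liminf_{\epsilon\to0}d_{\mathrm{TV}}(\bm x_\epsilon(\varrho_\epsilon;\bm x),\mu_\epsilon)\ge d_{\mathrm{TV}}(\mathbf y^{(p)}(t;\mathfrak a_{\bm m}^{(p-1)}),\nu^{(p)});
\end{equation*}
letting $t\downarrow 0$ and using right-continuity of $\mathbf y^{(p)}$ at $0$ with value $\mathfrak a_{\bm m}^{(p-1)}$ produces the lower bound $\ge d_{\mathrm{TV}}(\mathfrak a_{\bm m}^{(p-1)},\nu^{(p)})$. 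For the matching upper bound I re-run the argument of part~(1) with $\varrho_\epsilon$ in place of $\theta_\epsilon^{(p)}t$: at the intermediate scale the Markov chain $\mathbf y^{(p)}$ has not yet performed any jump while $\bm x_\epsilon(\cdot)$ has already equilibrated inside each valley element of $\mathscr V^{(p)}$, so its law is TV-close to $\sum_{\mathcal M\in\mathscr V^{(p)}}\mathfrak a_{\bm m}^{(p-1)}(\mathcal M)\,\mu_\epsilon^{\mathcal E(\mathcal M)}$; the same computation with Lemma~\ref{l: TV_density} and Laplace returns $d_{\mathrm{TV}}(\mathfrak a_{\bm m}^{(p-1)},\nu^{(p)})$ and closes the squeeze. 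The boundary case $p=1$, for which no lower level is available, is handled by invoking the fast-mixing result of \cite{BarJara20} inside the valley $\mathcal D(\bm m)$ at scale $\theta_\epsilon^{(0)}=\epsilon^{-1}$.

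Part~(3) then follows from part~(1) applied at $p=\mathfrak q$: monotonicity yields, for any $t>0$ and all small $\epsilon$, $d_{\mathrm{TV}}(\bm x_\epsilon(\varrho_\epsilon;\bm x),\mu_\epsilon)\le d_{\mathrm{TV}}(\bm x_\epsilon(\theta_\epsilon^{(\mathfrak q)}t;\bm x),\mu_\epsilon)\to d_{\mathrm{TV}}(\mathbf y^{(\mathfrak q)}(t;\mathfrak a_{\bm m}^{(\mathfrak q-1)}),\nu^{(\mathfrak q)})$; since $\mathbf y^{(\mathfrak q)}$ has a unique closed irreducible class with invariant distribution $\nu^{(\mathfrak q)}$, the right-hand side tends to $0$ as $t\to\infty$, and the extension from $\bm x\in\mathcal D(\bm m)$ to arbitrary $\bm x\in\mathbb R^d$ is supplied by Proposition~\ref{p: Cond_H}. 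The main obstacle I anticipate is the upper bound in part~(2): Theorem~\ref{t: main_TV} is stated only at the critical scales $\theta_\epsilon^{(p)}t$, so extending the in-valley TV-approximation uniformly in $\varrho_\epsilon$ with $\theta_\epsilon^{(p-1)}\prec\varrho_\epsilon\prec\theta_\epsilon^{(p)}$, and checking that the correct conditional measure is $\mu_\epsilon^{\mathcal E(\mathcal M)}$ rather than an out-of-equilibrium variant, requires rerunning the metastability analysis of Section~\ref{sec: meta_TV} at non-critical scales.
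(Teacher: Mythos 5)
Your treatment of part~(1) and the lower bound in part~(2) matches the paper's proof, and part~(3) is essentially correct (the extension of starting points from $\mathcal D(\bm m)$ to $\mathbb R^d$ in the paper runs through Lemma~\ref{l: main2} and Proposition~\ref{p: hit_min} rather than Proposition~\ref{p: Cond_H}, but the mechanism is the same).

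The genuine gap is the upper bound in part~(2), and you have correctly identified it as the hard point, but the resolution you sketch is not the one the paper uses and would require substantial new work. You propose to rerun the Section~\ref{sec: meta_TV} analysis at the non-critical scale $\varrho_\epsilon$ to show that the law of $\bm x_\epsilon(\varrho_\epsilon;\bm x)$ is TV-close to $\sum_{\mathcal M}\mathfrak a_{\bm m}^{(p-1)}(\mathcal M)\mu_\epsilon^{\mathcal E(\mathcal M)}$. The paper sidesteps this entirely. For $p\ge2$ it uses the sandwich $D_\epsilon(\theta_\epsilon^{(p-1)}t,\bm x)\ge D_\epsilon(\varrho_\epsilon,\bm x)\ge D_\epsilon(\theta_\epsilon^{(p)}s,\bm x)$ and closes it by sending $t\to\infty$ at the \emph{previous} scale $\theta_\epsilon^{(p-1)}$. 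The crucial ingredient is Lemma~\ref{l: 48}, which shows $\lim_{t\to\infty}d_{\rm TV}(\mathbf y^{(p-1)}(t;\mathfrak a_{\bm m}^{(p-2)}),\nu^{(p-1)})=d_{\rm TV}(\mathfrak a_{\bm m}^{(p-1)},\nu^{(p)})$; this in turn rests on the tree recursion (Lemmas~\ref{l: 44}--\ref{l: 46}) relating the absorption probabilities of $\mathbf y^{(p-1)}$ in its closed classes to $\mathfrak a_{\bm m}^{(p-1)}$. Without this identity the sandwich does not close, and your proposal does not reconstruct it.

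The handling of $p=1$ is also not quite right as written. You cannot invoke \cite{BarJara20} directly inside $\mathcal D(\bm m)$: the hypothesis~\eqref{e: BJ} asks for $\bm x\cdot D\bm b(\bm y)\bm x\le-\delta|\bm x|^2$ globally, which fails in the metastable landscape (near a saddle the Hessian of $U$ has a negative eigenvalue). The paper proves a bespoke substitute, Lemma~\ref{l: theta^0}, by coupling $\bm x_\epsilon$ inside $\mathcal U^{(1)}(\bm m)$ with a process $\bm x_\epsilon^{\rm F}$ driven by a globally confining modification $\bm b_0$ of the drift, and then applying the local ergodicity estimate of Theorem~\ref{t: mix_F}. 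Some such localization or truncation is unavoidable here; a bare citation of the fast-mixing theorem does not suffice.
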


\begin{remark}[The lower bound $\theta_{\epsilon}^{(0)}$]
The time scale $\theta_{\epsilon}^{(0)}=\epsilon^{-1}$ is the time
required for mixing in a neighborhood of one minimum (cf. Theorem
\ref{t: mix_F}). However, as proved in \cite[Theorem 2.2]{BarJara20},
since the mixing occurs in time scale
\[
A\log\frac{1}{\epsilon}+B\log\log(\frac{1}{\epsilon})+C
\]
for some $A,\,B,\,C>0$, this time scale is not optimal.
\end{remark}

\begin{remark}
For $\epsilon^{-1}\prec\varrho_{\epsilon}\prec\theta_{\epsilon}^{(1)}$,
since $\mathfrak{a}_{\bm{m}}^{(0)}=\delta_{\bm{m}}$,
\[
\lim_{\epsilon\to0}d_{{\rm TV}}(\bm{x}_{\epsilon}(\varrho_{\epsilon};\,\bm{x}),\,\mu_{\epsilon})=d_{{\rm TV}}\left(\delta_{\bm{m}},\,\nu^{(1)}\right)\,.
\]
In particular, if $\bm{m}\notin\mathcal{M}_{\star}$, $d_{{\rm TV}}\left(\delta_{\bm{m}},\,\nu^{(1)}\right)=1$.
In other words, since $\mu_{\epsilon}$ is negligible on $\bm{m}$
and there is no transition in a time scale smaller than the first
time scale $\theta_{\epsilon}^{(1)}$, there is no mixing at all.
On the other hand, if $\varrho_{\epsilon}\succ\theta_{\epsilon}^{(\mathfrak{q})}$,
for all $\bm{x}\in\mathbb{R}^{d}$,
\[
\lim_{\epsilon\to0}d_{{\rm TV}}(\bm{x}_{\epsilon}(\varrho_{\epsilon};\,\bm{x}),\,\mu_{\epsilon})=0\,.
\]
Therefore, the process is fully mixed in a time scale larger than
the last time scale $\theta_{\epsilon}^{(\mathfrak{q})}$, regardless
of starting points.
\end{remark}

Now, we extend definition of $\mathfrak{a}_{\bm{m}}^{(k)}$ and $\nu^{(k+1)}$
to $k=\mathfrak{q}$ as a probability measure on $\{\mathcal{M}_{1}^{(\mathfrak{q}+1)}\}$,
i.e.,
\[
{\color{blue}\mathfrak{a}_{\bm{m}}^{(\mathfrak{q})}=\nu^{(\mathfrak{q}+1)}}:=\delta_{\mathcal{M}_{1}^{(\mathfrak{q}+1)}}\,.
\]
For $\bm{m}\in\mathcal{M}_{0}$ and $p\in\llbracket1,\,\mathfrak{q}\rrbracket$,
write
\[
f_{\bm{m}}^{(p)}(t):=d_{{\rm TV}}\left({\bf y}^{(p)}\left(t;\,\mathfrak{a}_{\bm{m}}^{(p-1)}\right),\,\nu^{(p)}\right)\,.
\]

\begin{remark}
Since ${\bf y}^{(p)}\left(0;\,\mathfrak{a}_{\bm{m}}^{(p-1)}\right)=\mathfrak{a}_{\bm{m}}^{(p-1)}$,
$\lim_{t\to0}f_{\bm{m}}^{(p)}(t)=d_{{\rm TV}}\left(\mathfrak{a}_{\bm{m}}^{(p-1)},\,\nu^{(p)}\right)$.
Also, by Lemma \ref{l: 48} below, $\lim_{t\to\infty}f_{\bm{m}}^{(p)}(t)=d_{{\rm TV}}\left(\mathfrak{a}_{\bm{m}}^{(p)},\,\nu^{(p+1)}\right)$.
Then, by Theorem \ref{t: main}, for $k\in\llbracket2,\,\mathfrak{q}\rrbracket$,
\begin{align*}
\lim_{t\to0}f_{\bm{m}}^{(1)}(t) & =d_{{\rm TV}}\left(\delta_{\bm{m}},\,\nu^{(1)}\right)\,,\\
\lim_{t\to\infty}f_{\bm{m}}^{(k-1)}(t)=\lim_{t\to0}f_{\bm{m}}^{(k)}(t) & =d_{{\rm TV}}\left(\mathfrak{a}_{\bm{m}}^{(k-1)},\,\nu^{(k)}\right)\,,\\
\lim_{t\to\infty}f_{\bm{m}}^{(\mathfrak{q})}(t) & =0\,.
\end{align*}
\end{remark}

\subsubsection{Mixing time}

Finally, we present the sharp asymptotics for the mixing time. Fix
$H_{0}>0$ sufficiently large so that $\{U<H_{0}\}$ is
connected and $\{U<H_{0}\}$ contains all critical points of $U$.
For $H\ge H_{0}$, define
\[
\mathcal{K}(H):=\{U\le H\}\,.
\]
For each $H\ge H_{0}$ and $\delta\in(0,\,1)$, define the $\delta$-mixing
time ${\color{blue}T_{\epsilon}^{{\rm mix}}(\delta;\,H)}$ as the
time required for the total variation distance between the distribution
of process $\bm{x}_{\epsilon}(\cdot)$ and its invariant distribution
to be at most $\delta$ in the worst case, i.e.,
\[
T_{\epsilon}^{{\rm mix}}(\delta;\,H):=\inf\,\{\,t\ge0\,:\,\sup_{\bm{x}\in\mathcal{K}(H)}d_{{\rm TV}}(\bm{x}_{\epsilon}(t;\,\bm{x}),\,\mu_{\epsilon})\le\delta\,\}\,.
\]
Since ${\bf y}^{(\mathfrak{q})}$ has only one irreducible class $\mathscr{R}_{1}^{(\mathfrak{q})}$,
${\bf y}^{(\mathfrak{q})}$ is ergodic. Therefore, one can define
the $\delta$-mixing time for the Markov chain ${\bf y}^{(\mathfrak{q})}$
as
\[
{\color{blue}\mathfrak{T}^{{\rm mix}}(\delta)}:=\inf\{\,t\ge0\,:\,\max_{\mathcal{M}\in\mathscr{V}^{(\mathfrak{q})}}d_{{\rm TV}}({\bf y}^{(\mathfrak{q})}(t;\,\mathcal{M}),\,\nu^{(\mathfrak{q})})\le\delta\,\}\,.
\]
The last main result of this article reads as follows. 
\begin{theorem}
\label{t: mixing_time}Suppose that $\bm{b}=-(\nabla U+\bm{\ell})$
for some $U\in C^{3}(\mathbb{R}^{d})$ and $\bm{\ell}\in C^{2}(\mathbb{R}^{d};\,\mathbb{R}^{d})$
satisfying \eqref{e_decb}, \eqref{e: growth}, and that Assumption
\ref{assu: hetero} is in force. Then, for all $H\ge H_{0}$ and $\delta>0$,
\[
\lim_{\epsilon\to0}\frac{T_{\epsilon}^{{\rm mix}}(\delta;\,H)}{\theta_{\epsilon}^{(\mathfrak{q})}}=\mathfrak{T}^{{\rm mix}}(\delta)\,.
\]
\end{theorem}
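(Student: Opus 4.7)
The plan is to establish the lower and upper asymptotics of $T_\epsilon^{\rm mix}(\delta;H)/\theta_\epsilon^{(\mathfrak{q})}$ separately, in each case invoking Theorem \ref{t: main}(1) to replace the diffusion TV distance at time $\theta_\epsilon^{(\mathfrak{q})} t$ by the corresponding TV distance for the ergodic reduced chain ${\bf y}^{(\mathfrak{q})}$, and then comparing with $\mathfrak{T}^{\rm mix}(\delta)$.

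For the lower bound $\liminf_{\epsilon\to 0} T_\epsilon^{\rm mix}(\delta;H)/\theta_\epsilon^{(\mathfrak{q})} \ge \mathfrak{T}^{\rm mix}(\delta)$, I fix $t<\mathfrak{T}^{\rm mix}(\delta)$. By definition of $\mathfrak{T}^{\rm mix}$, some $\mathcal{M}^{\star}\in\mathscr{V}^{(\mathfrak{q})}$ satisfies $d_{{\rm TV}}({\bf y}^{(\mathfrak{q})}(t;\mathcal{M}^{\star}),\nu^{(\mathfrak{q})})>\delta$. Picking any $\bm{m}^{\star}\in\mathcal{M}^{\star}$, one has $\mathfrak{a}_{\bm{m}^{\star}}^{(\mathfrak{q}-1)}=\delta_{\mathcal{M}^{\star}}$, and $\bm{m}^{\star}\in\mathcal{D}(\bm{m}^{\star})\cap\mathcal{K}(H)$ since every critical point lies in $\{U<H_0\}\subset\mathcal{K}(H)$. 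Theorem \ref{t: main}(1) applied at $\bm{x}=\bm{m}^{\star}$ then yields $d_{{\rm TV}}(\bm{x}_\epsilon(\theta_\epsilon^{(\mathfrak{q})} t;\bm{m}^{\star}),\mu_\epsilon)\to d_{{\rm TV}}({\bf y}^{(\mathfrak{q})}(t;\mathcal{M}^{\star}),\nu^{(\mathfrak{q})})>\delta$, so the supremum over $\bm{x}\in\mathcal{K}(H)$ at time $\theta_\epsilon^{(\mathfrak{q})} t$ strictly exceeds $\delta$ for all small $\epsilon$. By monotonicity of the worst-case TV distance in $t$, this forces $T_\epsilon^{\rm mix}(\delta;H)>\theta_\epsilon^{(\mathfrak{q})} t$; dividing by $\theta_\epsilon^{(\mathfrak{q})}$, taking $\liminf$, and letting $t\uparrow\mathfrak{T}^{\rm mix}(\delta)$ gives the claim.

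For the upper bound $\limsup_{\epsilon\to 0} T_\epsilon^{\rm mix}(\delta;H)/\theta_\epsilon^{(\mathfrak{q})} \le \mathfrak{T}^{\rm mix}(\delta)$, I fix $t>\mathfrak{T}^{\rm mix}(\delta)$. The strict monotonicity of Lemma \ref{l: d_TV-2}, applied to the ergodic chain ${\bf y}^{(\mathfrak{q})}$, furnishes the decisive \emph{strict} inequality $\max_{\mathcal{M}\in\mathscr{V}^{(\mathfrak{q})}} d_{{\rm TV}}({\bf y}^{(\mathfrak{q})}(t;\mathcal{M}),\nu^{(\mathfrak{q})})<\delta$, hence some $\eta>0$ satisfies the bound $\delta-2\eta$ on this maximum. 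By convexity of $d_{{\rm TV}}(\,\cdot\,,\nu^{(\mathfrak{q})})$ in its first argument and linearity of the law ${\bf y}^{(\mathfrak{q})}(t;\cdot)$ in the initial distribution, the same bound $\delta-2\eta$ holds with initial distribution $\mathfrak{a}_{\bm{m}}^{(\mathfrak{q}-1)}$ for every $\bm{m}\in\mathcal{M}_0$. Combining this with Proposition \ref{p: Cond_H}, which promotes the pointwise convergence of Theorem \ref{t: main}(1) to one that is uniform over $\bm{x}\in\mathcal{K}(H)$ (covering in particular the points on stable manifolds of saddles which belong to no $\mathcal{D}(\bm{m})$), and using the finiteness of $\mathcal{M}_0$, I obtain $\sup_{\bm{x}\in\mathcal{K}(H)} d_{{\rm TV}}(\bm{x}_\epsilon(\theta_\epsilon^{(\mathfrak{q})} t;\bm{x}),\mu_\epsilon)\le\delta-\eta$ for all small $\epsilon$, hence $T_\epsilon^{\rm mix}(\delta;H)\le\theta_\epsilon^{(\mathfrak{q})} t$; letting $t\downarrow\mathfrak{T}^{\rm mix}(\delta)$ completes the proof. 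The central obstacle is precisely this strict form of monotonicity (Lemma \ref{l: d_TV-2}): the classical non-strict monotonicity of Lemma \ref{l: d_TV} would give only a $\delta+o(1)$ bound, and the $o(1)$ error inherited from Theorem \ref{t: main}(1) could then not be absorbed. The secondary difficulty, handled by Proposition \ref{p: Cond_H}, is the uniformity in the starting point over the compact set $\mathcal{K}(H)$.
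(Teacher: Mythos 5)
Your argument has the same global structure as the paper's proof (Section \ref{sec: pf_mixing_time}). The lower bound is fine and matches Proposition \ref{p: mix_low}: pick $\mathcal{M}^{\star}$ with $d(t,\mathcal{M}^\star)>\delta$ at a time $t<\mathfrak{T}^{\rm mix}(\delta)$, start from $\bm{m}^\star\in\mathcal{M}^\star$ (a critical point, hence in $\mathcal{K}(H)$, and trivially in $\mathcal{D}(\bm{m}^\star)$), and use Theorem \ref{t: main}(1) plus monotonicity (Lemma \ref{l: d_TV-1}). The upper bound also follows the paper's strategy: the strict improvement of Lemma \ref{l: d_TV-2} is indeed the decisive ingredient, and your convexity argument for passing from Dirac initial conditions $\delta_\mathcal{M}$ to $\mathfrak{a}_{\bm{m}}^{(\mathfrak{q}-1)}$ is exactly the convex-combination step underlying Lemma \ref{l: mix_up}.

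The one genuine slip is the attribution of the uniformity over $\bm{x}\in\mathcal{K}(H)$ to Proposition \ref{p: Cond_H}. That proposition only controls starting points $\bm{x}\in\mathcal{E}(\mathcal{M})$ for transient classes $\mathcal{M}\in\mathscr{N}^{(p)}$, i.e.\ points already sitting in a valley around a local minimum; it says nothing about starting points away from all valleys, such as the stable-manifold points you explicitly claim it covers. The result that yields the uniform (over compact $\mathcal{K}$) decomposition is Lemma \ref{l: main2}, which first drives an arbitrary $\bm{x}\in\mathcal{K}$ into some valley within time $\epsilon^{-1}$ using Proposition \ref{p: hit_min}(1), and only then appeals to Lemma \ref{l: main1} (which in turn invokes Proposition \ref{p: Cond_H}). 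The paper's Lemma \ref{l: mix_up} then formalizes your convexity step: it substitutes the expansion of Lemma \ref{l: main2}, pulls the supremum inside the convex combination, and identifies the limit via Lemma \ref{l: TV_density}, giving $\limsup_\epsilon\sup_{\bm{x}\in\mathcal{K}(H)}D_\epsilon(\theta_\epsilon^{(\mathfrak{q})}t,\bm{x})\le\max_{\mathcal{M}\in\mathscr{V}^{(\mathfrak{q})}}d(t,\mathcal{M})$, after which Lemma \ref{l: d_TV-2} finishes the job exactly as you describe. With the citation corrected to Lemma \ref{l: main2} (together with Proposition \ref{p: hit_min}), your proof is the paper's.
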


\begin{remark}
We consider the potential $U$ to grow quadratically at infinity in this article. As a result, the starting point in the previous theorem should be restricted to the compact set $\mathcal{K}(H)$ for $H \ge H_0$. However, we expect that if the potential grows sufficiently fast at infinity, since the drift will push the dynamics into the compact set strongly enough, allowing the starting point to be generalized to $\mathbb{R}^d$.
\end{remark}

\begin{remark}
In this article, we assume that the potential $U$ is a Morse function. The metastability of diffusion processes without this assumption, particularly regarding the mean transition time between metastable states, was studied in \cite{Avelin, BG-EK}. We expect that if metastability, in the sense of finite-dimensional cvergence, is also achieved, our result can be extended to diffusion processes with potentials that are not Morse functions.
\end{remark}

\subsection{Double-well case and Eyring--Kramers formula}

The Eyring--Kramers formula denotes a precise asymptotics for the mean transition
time between metastable states. In this subsection, we compare our
result with the Eyring--Kramers formula for a simple case. Suppose
that $\mathcal{M}_{0}=\{\bm{m}_{1},\,\bm{m}_{2}\}$ and there exists
a unique saddle point $\bm{\sigma}$ separating $\bm{m}_{1}$ and
$\bm{m}_{2}$ as in Figure \ref{fig: double-well}. Furthermore, suppose that $U(\bm{m}_{1})>U(\bm{m}_{2})$.
For $\bm{x}\in\mathbb{R}^{d}$ and $r>0$, denote by {\color{blue}$B(\bm{x},\,r)$}
the open ball centred at $\bm{x}$ with radius $r>0$:
\[
B(\bm{x},\,r):=\{\bm{y}\in\mathbb{R}^{d}:|\bm{x}-\bm{y}|<r\}\,.
\]
The sharp asymptotics for the mean transition time $\mathbb{E}_{\bm{m}_{1}}^{\epsilon}[\tau_{B(\bm{m}_{2},\,\epsilon)}]$
was derived in \cite{BEGK, LS-22, MenzSch} as follows.
\begin{theorem}
\label{t_EK}
We have
\[
\mathbb{E}_{\bm{m}_{1}}^{\epsilon}[\tau_{B(\bm{m}_{2},\,\epsilon)}]=[1+a_{\epsilon}]\frac{2\pi}{\mu(\boldsymbol{\sigma})}\sqrt{\frac{-\det(\nabla^{2}U)(\boldsymbol{\sigma})}{\det(\nabla^{2}U)(\bm{m}_{1})}}\exp\frac{U(\bm{\sigma})-U(\bm{m}_{1})}{\epsilon}\,,
\]
for some sequence $(a_{\epsilon})_{\epsilon>0}$ such that $\lim_{\epsilon}a_{\epsilon}=0$.
\end{theorem}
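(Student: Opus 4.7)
The plan is to follow the potential-theoretic approach of \cite{BEGK,LS-22}, which expresses the mean transition time as a ratio involving the equilibrium potential and the capacity. Concretely, for the generator $L_\epsilon$ of $\bm{x}_\epsilon(\cdot)$ and its $L^2(\mu_\epsilon)$-adjoint $L_\epsilon^*$, I would use the identity
\[
\mathbb{E}_{\bm{m}_1}^\epsilon[\tau_{B(\bm{m}_2,\epsilon)}] \;=\; \frac{\int_{\mathbb{R}^d} h^*_\epsilon(\bm{x})\,\mu_\epsilon(d\bm{x})}{\mathrm{cap}_\epsilon\!\left(B(\bm{m}_1,\epsilon),\,B(\bm{m}_2,\epsilon)\right)}\,,
\]
where $h^*_\epsilon$ is the equilibrium potential of the time-reversed process between $B(\bm{m}_1,\epsilon)$ and $B(\bm{m}_2,\epsilon)$, and $\mathrm{cap}_\epsilon$ is the (symmetrized) capacity defined via the Dirichlet form of $L_\epsilon$. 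Because $\bm{\ell}$ is divergence-free and orthogonal to $\nabla U$, the invariant measure is still $\mu_\epsilon$, which is what makes this identity usable even in the non-reversible setting.

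Next I would estimate the numerator. Using that the only stable equilibria are $\bm{m}_1$ and $\bm{m}_2$ and that $h^*_\epsilon$ is $1+o(1)$ on the domain of attraction $\mathcal{D}(\bm{m}_1)$ while $o(1)$ on $\mathcal{D}(\bm{m}_2)$ (by potential-theoretic estimates together with large-deviation bounds), the integral reduces to $\mu_\epsilon(\mathcal{D}(\bm{m}_1))$ up to an exponentially small error. A standard Laplace expansion around the non-degenerate minimum $\bm{m}_1$ then gives
\[
\int_{\mathbb{R}^d} h^*_\epsilon\,d\mu_\epsilon \;=\; [\,1+o(1)\,]\,\frac{(2\pi\epsilon)^{d/2}}{Z_\epsilon\sqrt{\det\nabla^2U(\bm{m}_1)}}\,e^{-U(\bm{m}_1)/\epsilon}\,.
\]

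The capacity asymptotics is the main technical step and the genuine obstacle. In the reversible case $\bm{\ell}=0$ the Dirichlet principle immediately yields a sharp asymptotic via a one-parameter family of affine test functions across $\bm{\sigma}$ in the unstable direction of $\nabla^2 U(\bm{\sigma})$, and a Gaussian integration in the $d-1$ stable directions. With $\bm{\ell}\neq 0$, I would implement the matching upper/lower bound strategy of \cite{LS-22}: the upper bound uses a test function for the Dirichlet principle which is a (mollified) one-dimensional profile adapted to the negative eigendirection of the matrix $\mathbb{M}(\bm{\sigma}):=-(\nabla^2 U(\bm{\sigma})+D\bm{\ell}(\bm{\sigma}))$, and the lower bound uses a divergence-free test flow for the Thomson principle (adjoint side). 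After localizing near $\bm{\sigma}$ and substituting the local quadratic form, the standard Gaussian computation produces
\[
\mathrm{cap}_\epsilon \;=\; [\,1+o(1)\,]\,\frac{\mu(\bm{\sigma})}{2\pi}\cdot\frac{(2\pi\epsilon)^{d/2}}{Z_\epsilon\sqrt{-\det\nabla^2U(\bm{\sigma})}}\,e^{-U(\bm{\sigma})/\epsilon}\,,
\]
where $\mu(\bm{\sigma})$ is the unique positive eigenvalue of $\mathbb{M}(\bm{\sigma})$ (in the reversible case this reduces to the absolute value of the unique negative eigenvalue of $\nabla^2 U(\bm{\sigma})$).

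Finally, taking the ratio causes the common factors $Z_\epsilon^{-1}(2\pi\epsilon)^{d/2}$ to cancel and yields the claimed formula. The conceptually delicate point—and what makes the non-reversible case harder than the classical BEGK argument—is the identification of the correct unstable direction via $\mathbb{M}(\bm{\sigma})$ rather than $\nabla^2 U(\bm{\sigma})$, together with the construction of matching test function/test flow pairs so that the Dirichlet and Thomson bounds agree to leading order. Everything else reduces to Laplace-type asymptotics around non-degenerate critical points, which is guaranteed since $U$ is Morse and $\mathcal{S}_0$ is finite under Assumption~\ref{assu: hetero}.
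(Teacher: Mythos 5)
The paper does not prove Theorem~\ref{t_EK}; it cites it directly from \cite{BEGK,LS-22,MenzSch}, so there is no in-paper proof to compare against. Your sketch is a correct roadmap of the potential-theoretic argument as implemented in \cite{LS-22}: the ratio identity for mean hitting times via the equilibrium potential of the time-reversed process, Laplace asymptotics at the nondegenerate minimum $\bm{m}_1$ for the numerator, and matching Dirichlet-principle upper bounds with Thomson-principle lower bounds (using a divergence-free test flow) for the capacity, with the unstable direction and the eigenvalue $\mu(\bm{\sigma})$ correctly identified from the nonsymmetric matrix $(\nabla^2 U + D\bm{\ell})(\bm{\sigma})$ rather than from $\nabla^2 U(\bm{\sigma})$ alone. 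The final cancellation of $(2\pi\epsilon)^{d/2}/Z_\epsilon$ and the resulting prefactor agree with the statement.

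One point you elide is the passage from the equilibrium-measure average $\nu_{A,B}[\tau_B]=\int h^*_\epsilon\,d\mu_\epsilon/\mathrm{cap}_\epsilon$ to the pointwise expectation $\mathbb{E}^\epsilon_{\bm{m}_1}[\tau_{B(\bm{m}_2,\epsilon)}]$ that the theorem asserts: this requires showing that the mean hitting time is asymptotically constant over $\partial B(\bm{m}_1,\epsilon)$ (a Harnack-type or coupling argument as in \cite{BEGK,LS-22}). This is standard but should be flagged rather than absorbed silently. With that addition, the sketch matches the route taken in the cited references.
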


The mixing time is related to the Eyring--Kramers formula as follows.
\begin{corollary}
For all $H\ge H_{0}$ and $\delta>0$,
\[
T_{\epsilon}^{{\rm mix}}(\delta;\,H)=[1+a_{\epsilon}]\,\mathbb{E}_{\bm{m}_{1}}^{\epsilon}[\tau_{B(\bm{m}_{2},\,\epsilon)}]\,\log\frac{1}{\delta}\,,
\]
for some sequence $(a_{\epsilon})_{\epsilon>0}$ such that $\lim_{\epsilon}a_{\epsilon}=0$.
\end{corollary}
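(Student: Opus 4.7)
The plan is to observe that, in the double-well case under consideration, the tree-structure collapses to a single level and then to explicitly combine Theorems \ref{t: mixing_time} and \ref{t_EK}. Since $\mathcal{M}_{0}=\{\bm{m}_{1},\bm{m}_{2}\}$ with $U(\bm{m}_{1})>U(\bm{m}_{2})$ and a single saddle $\bm{\sigma}$ separating them, the barrier out of $\bm{m}_{1}$ is $d^{(1)}=U(\bm{\sigma})-U(\bm{m}_{1})$, which is strictly smaller than the barrier out of $\bm{m}_{2}$. Thus at the scale $\theta_{\epsilon}^{(1)}=\exp(d^{(1)}/\epsilon)$, $\{\bm{m}_{1}\}$ is transient, $\{\bm{m}_{2}\}$ is absorbing, $\mathfrak{n}_{1}=1$, and therefore $\mathfrak{q}=1$ with $\mathcal{M}_{\star}=\{\bm{m}_{2}\}$. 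In particular, $\mathscr{V}^{(\mathfrak{q})}=\{\{\bm{m}_{1}\},\{\bm{m}_{2}\}\}$, $\mathscr{V}_{\star}^{(\mathfrak{q})}=\{\{\bm{m}_{2}\}\}$, and the invariant measure of the reduced chain is $\nu^{(\mathfrak{q})}=\delta_{\{\bm{m}_{2}\}}$.

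Next, I would identify the jump rate of the two-state continuous-time Markov chain $\mathbf{y}^{(\mathfrak{q})}$. By its construction (Appendix \ref{subsec: MC1} and the references \cite{LLS-1st,LLS-2nd}), the chain has a single nontrivial transition $\{\bm{m}_{1}\}\to\{\bm{m}_{2}\}$ at some rate $\lambda>0$ and $\{\bm{m}_{2}\}$ is absorbing. The Markov chain model reduction and the metastable convergence (Theorem \ref{t: main_TV}) imply that, in the original time scale, $\mathbb{E}_{\bm{m}_{1}}^{\epsilon}[\tau_{B(\bm{m}_{2},\epsilon)}]=[1+o(1)]\,\theta_{\epsilon}^{(1)}/\lambda$, which by Theorem \ref{t_EK} pins down
\[
\frac{1}{\lambda}\,=\,\frac{2\pi}{\mu(\boldsymbol{\sigma})}\sqrt{\frac{-\det(\nabla^{2}U)(\boldsymbol{\sigma})}{\det(\nabla^{2}U)(\bm{m}_{1})}}\,.
\]

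With the chain identified, I would compute $\mathfrak{T}^{\rm mix}(\delta)$ by hand: from $\{\bm{m}_{2}\}$ the TV distance to $\nu^{(\mathfrak{q})}$ is $0$, and from $\{\bm{m}_{1}\}$ the distribution at time $t$ is $(e^{-\lambda t},\,1-e^{-\lambda t})$, whose TV distance to $\delta_{\{\bm{m}_{2}\}}$ equals $e^{-\lambda t}$. Hence
\[
\mathfrak{T}^{\rm mix}(\delta)\,=\,\frac{1}{\lambda}\,\log\frac{1}{\delta}\,.
\]
Plugging this into Theorem \ref{t: mixing_time} gives $T_{\epsilon}^{\rm mix}(\delta;H)=[1+o(1)]\,\theta_{\epsilon}^{(\mathfrak{q})}\,\lambda^{-1}\,\log(1/\delta)$, and inserting the Eyring--Kramers identity $[1+o(1)]\,\theta_{\epsilon}^{(1)}/\lambda=\mathbb{E}_{\bm{m}_{1}}^{\epsilon}[\tau_{B(\bm{m}_{2},\epsilon)}]$ yields the stated formula.

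The only genuine point to justify is the identification of the scalar $1/\lambda$ with the Eyring--Kramers prefactor, i.e., that the mean holding time of $\mathbf{y}^{(1)}$ at $\{\bm{m}_{1}\}$ coincides (to leading order, after rescaling by $\theta_{\epsilon}^{(1)}$) with $\mathbb{E}_{\bm{m}_{1}}^{\epsilon}[\tau_{B(\bm{m}_{2},\epsilon)}]/\theta_{\epsilon}^{(1)}$. This is built into the construction of ${\bf y}^{(1)}$ in \cite{LLS-1st,LLS-2nd}, so the computation above is essentially bookkeeping once the tree-structure is recognised as trivial; the harder analytic input is already supplied by Theorems \ref{t: mixing_time} and \ref{t_EK}.
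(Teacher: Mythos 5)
Your overall structure matches the paper's proof: identify that the tree collapses to a single layer with $\mathfrak{q}=1$, two states $\{\bm m_1\},\{\bm m_2\}$, $\{\bm m_2\}$ absorbing, $\nu^{(1)}=\delta_{\{\bm m_2\}}$; compute $\mathfrak{T}^{\rm mix}(\delta)=\lambda^{-1}\log(1/\delta)$ from the explicit two-state chain; apply Theorem \ref{t: mixing_time}; and invoke Theorem \ref{t_EK} to express the answer in terms of $\mathbb{E}^{\epsilon}_{\bm m_1}[\tau_{B(\bm m_2,\epsilon)}]$. The one place where you diverge from the paper is in how $\lambda$ (i.e.\ the rate $r^{(1)}(\bm m_1,\bm m_2)$) is identified. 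The paper reads it off directly from the construction in Appendix \ref{subsec: MC1}: $r^{(1)}(\bm m_1,\bm m_2)=\omega(\bm\sigma)/\nu(\bm m_1)=\frac{\mu(\bm\sigma)}{2\pi}\sqrt{\det(\nabla^2 U)(\bm m_1)/(-\det(\nabla^2 U)(\bm\sigma))}$. That is a definitional fact, and Theorem \ref{t_EK} is only used once, at the very end, to rewrite $\theta_\epsilon^{(1)}/r^{(1)}(\bm m_1,\bm m_2)$ as $[1+a_\epsilon]\mathbb{E}^{\epsilon}_{\bm m_1}[\tau_{B(\bm m_2,\epsilon)}]$.

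You instead treat $\lambda$ as an abstract positive constant and determine it by asserting the relation $\mathbb{E}^{\epsilon}_{\bm m_1}[\tau_{B(\bm m_2,\epsilon)}]=[1+o(1)]\,\theta_\epsilon^{(1)}/\lambda$, attributed to ``the Markov chain model reduction and the metastable convergence (Theorem \ref{t: main_TV}).'' This is the gap you yourself flag, and the attribution is not quite right: Theorem \ref{t: main_TV} (and $\mathfrak{C}_{\rm fdd}$ more generally) concerns the law of $\bm x_\epsilon(\theta_\epsilon^{(1)}t)$ at fixed macroscopic times, which does not automatically imply convergence of \emph{mean} hitting times; one would need an additional uniform-integrability or moment estimate. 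In fact, the relation you assert \emph{is} the Eyring--Kramers statement once $\lambda=\omega(\bm\sigma)/\nu(\bm m_1)$ is known, so deducing $\lambda$ from that relation together with Theorem \ref{t_EK} is slightly circular. The cleaner route, which the paper takes, is simply to read the explicit rate from the rate table \eqref{eq:rate_3} (or the definition of $r^{(1)}$ in Appendix \ref{subsec: MC1}); then the mean-transition-time identification is a one-line consequence of Theorem \ref{t_EK}. Replacing your appeal to Theorem \ref{t: main_TV} with this explicit formula closes the gap and makes your proof coincide with the paper's.
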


\begin{proof}
In this case, there is a unique time scale $e^{[U(\bm\sigma)-U(\bm{m}_1)]/\epsilon}$ and limiting Markov chain $\bf{y}^{(1)}$ on state space $\{\bm m_1,\, \bm m_2\}$ with jump rates (cf. Appendix \ref{app: Tree})
\[
\begin{aligned}r^{(1)}(\bm{m}_{1},\,\bm{m}_{2}) & =\frac{\omega(\bm{\sigma})}{\nu(\bm{m}_{1})}=\frac{\mu(\bm{\sigma})}{2\pi}\sqrt{\frac{\det(\nabla^{2}U)(\bm{m}_{1})}{-\det(\nabla^{2}U)(\boldsymbol{\sigma})}}\,,\\
r^{(1)}(\bm{m}_{2},\,\bm{m}_{1}) & =0\,.
\end{aligned}
\]
Since $\bm{m}_{2}$ is the only absorbing state, the unique invariant distribution
of ${\bf y}^{(1)}(\cdot)$ is $\delta_{\bm{m}_{2}}$ and $d_{{\rm TV}}({\bf y}^{(1)}(t;\,\bm m_2),\,\delta_{\bm{m}_{2}})=0$ so that
\[
\begin{aligned}\max_{i\in\{1,\,2\}}d_{{\rm TV}}({\bf y}^{(1)}(t;\,\bm m_i),\,\delta_{\bm{m}_{2}}) & =d_{{\rm TV}}({\bf y}^{(1)}(t;\,1),\,\delta_{\bm{m}_{2}})\\
 & =\frac{1}{2}\left|\mathcal{Q}_{\bm{m}_{1}}^{(1)}[{\bf y}^{(1)}(t)=\bm m _1]\right|+\frac{1}{2}\left|1-\mathcal{Q}_{\bm{m}_{1}}^{(1)}[{\bf y}^{(1)}(t)=\bm m_2]\right|\\
 & =\mathcal{Q}_{\bm{m}_{1}}^{(1)}[{\bf y}^{(1)}(t)=\bm m_1]\\
 & =\mathcal{Q}_{\bm{m}_{1}}^{(1)}[\tau_{\bm{m}_{2}}>t]\,.
\end{aligned}
\]
Since $\mathcal{Q}_{\bm{m}_{1}}^{(1)}[\tau_{\bm{m}_{2}}>t]=e^{-r^{(1)}(\bm{m}_{1},\,\bm{m}_{2})t}$,
\[
\mathfrak{T}^{{\rm mix}}(\delta)=\inf\{t\ge0:e^{-r^{(1)}(\bm{m}_{1},\,\bm{m}_{2})t}\le\delta\}=\frac{1}{r^{(1)}(\bm{m}_{1},\,\bm{m}_{2})}\log\frac{1}{\delta}\,.
\]
Therefore, by Theorem \ref{t: mixing_time},
\[
\lim_{\epsilon\to0}\frac{T_{\epsilon}^{{\rm mix}}(\delta;\,H)}{e^{[U(\bm{\sigma})-U(\bm{m}_{1})]/\epsilon}}=\frac{1}{r^{(1)}(\bm{m}_{1},\,\bm{m}_{2})}\log\frac{1}{\delta}=\frac{2\pi}{\mu(\boldsymbol{\sigma})}\sqrt{\frac{-\det(\nabla^{2}U)(\boldsymbol{\sigma})}{\det(\nabla^{2}U)(\bm{m}_{1})}}\log\frac{1}{\delta}\,.
\]
Finally, Theorem \ref{t_EK} completes the proof.
\end{proof}

\subsection*{Organization of the article}

In Section \ref{sec: meta_TV}, we present a general framework to
prove Theorem \ref{t: main_TV} (cf. Definition \ref{def_TV}) assuming
convergence in finite-dimensional distribution (cf. Definition \ref{def_FC})
and mixing condition (cf. Definition \ref{def_mixing}). In Section
\ref{sec: pf_t_main_TV}, we prove Theorem \ref{t: main_TV} assuming
the mixing condition (cf. Proposition \ref{p: TV-mix}). We prove general
properties of the total variation distance in Section \ref{sec: TV_dist},
which are crucial to prove the main theorems. The proofs of Theorems \ref{t: main}
and \ref{t: mixing_time} are presented in Section \ref{sec: pf_t_main}
and Section \ref{sec: pf_mixing_time}, respectively. Finally, in
Section \ref{sec: pf_p_TV-mix}, we prove Proposition \ref{p: TV-mix}.

\section{\label{sec: meta_TV}Convergence in total variation distance}

In this section, we provide a general framework to prove Theorem \ref{t: main_TV}.
The results of this section are independent of the model and can be extended
to general Markov processes. 

Let \textcolor{blue}{$\{\bm{x}_{\epsilon}(t)\}_{t \ge 0}$}, $\epsilon>0$, be a positive-recurrent
diffusion process on $\mathbb{R}^{d}$ and let \textcolor{blue}{$\mu_{\epsilon}$}
be the unique stationary distribution.
Let \textcolor{blue}{$\mathscr{V}$} be a finite set with at least
two elements. The metastable behavior of $\bm{x}_{\epsilon}(\cdot)$
can be characterized by three factors: a time-scale $1\prec{\color{blue}\theta_{\epsilon}}\prec\infty$,
metastable valleys \textcolor{blue}{$\mathscr{E}=\{\mathcal{E}_{i}\,:\,i\in\mathscr{V}\}$},
which is a collection of disjoint sets $\mathcal{E}_{i}\subset\mathbb{R}^{d}$,
and limiting Markov chain \textcolor{blue}{$\{{\bf y}(t)\}_{t\ge0}$}
on $\mathscr{V}$.
\begin{remark}
We develop the approach presented in \cite{LLM}. In particular, the
following assumption
\[
\lim_{\epsilon\to0}\frac{\mu_{\epsilon}(\mathbb{R}^{d}\setminus\bigcup_{i\in\mathscr{V}}\mathcal{E}_{i})}{\mu_{\epsilon}(\mathcal{E}_{j})}=0\ \ ;\ \ j\in\mathscr{V}
\]
as in display (2.10) in \cite{LLM} is not necessary.
\end{remark}

\subsection{Modes of convergence}

In this subsection, we present two ways of describing metastable behavior
and their relation. The first one is convergence in finite-dimensional
distribution, which corresponds to \cite[Definition 3.2]{LLS-2nd}.
Denote by ${\color{blue}\mathcal{Q}_{i}}$ the law of the process ${\bf y}(\cdot)$ starting at $i \in \mathscr{V}$.
\begin{definition}[Finite-dimensional distributions]
\label{def_FC}We say that the condition $\mathfrak{C}_{{\rm fdd}}(\theta_{\epsilon},\,\mathscr{E},\,{\bf y})$
holds if for all $i\in\mathscr{V}$, $n\ge1$, $0<t_{1}<\cdots<t_{n}$,
sequences $(t_{j,\,\epsilon})_{\epsilon>0}$, $j\in\llbracket1,\,n\rrbracket$,
such that $t_{j,\,\epsilon}\to t_{j}$, and $k_{1},\,\dots,\,k_{n}\in\mathscr{V}$,
we have 
\[
\lim_{\epsilon\rightarrow0}\sup_{\boldsymbol{x}\in\mathcal{E}_{i}}\left|\,\mathbb{P}_{\boldsymbol{x}}^{\epsilon}\Big[\,\bigcap_{j=1}^{n}\{\boldsymbol{x}_{\epsilon}(\theta_{\epsilon}\,t_{j,\,\epsilon})\in\mathcal{E}_{k_{j}}\}\,\Big]-\mathcal{Q}_{i}\Big[\,\bigcap_{j=1}^{n}\{{\bf y}(t_{j})=k_{j}\}\,\Big]\,\right|=0\,,
\]
\end{definition}

The other is convergence in total variation distance. For $\bm{x}\in\mathbb{R}^{d}$
and $0<t_{1}<\cdots<t_{n}$, denote by \textcolor{blue}{$\mu_{\epsilon,\,\bm{x}}^{t_{1},\,\dots,\,t_{n}}(\cdot)$}
the joint law of $\left(\boldsymbol{x}_{\epsilon}(\theta_{\epsilon}t_{1}),\,\dots,\,\boldsymbol{x}_{\epsilon}(\theta_{\epsilon}t_{n})\right)$
with $\bm{x}_{\epsilon}(0)=\bm{x}$ and let \textcolor{blue}{$\mu_{\epsilon}^{\mathcal{A}}$},
$\mathcal{A}\subset\mathbb{R}^{d}$, be the conditioned measure of $\mu_\epsilon$ on $\mathcal{A}$.
\begin{definition}[Total variation distance]
\label{def_TV}We say that the condition $\mathfrak{C}_{{\rm TV}}(\theta_{\epsilon},\,\mathscr{E},\,{\bf y})$
holds if for all $i\in\mathscr{V}$, $n\in\mathbb{N}$, $0<t_{1}<\cdots<t_{n}$,
sequences $(t_{j,\,\epsilon})_{\epsilon>0}$, $j\in\llbracket1,\,n\rrbracket$,
such that $t_{j,\,\epsilon}\to t_{j}$, we have 
\[
\lim_{\epsilon\to0}\sup_{\bm{x}\in\mathcal{E}_{i}}d_{{\rm TV}}\left(\,\mu_{\epsilon,\,\bm{x}}^{t_{1,\,\epsilon},\,\dots,\,t_{n,\,\epsilon}}(\cdot),\,\sum_{k_{1},\,\dots,\,k_{n}\in\mathscr{V}}\,\mathcal{Q}_{i}\left[\,\bigcap_{j=1}^{n}\{{\bf y}(t_{j})=k_{j}\}\,\right]\,\mu_{\epsilon}^{\mathcal{E}_{k_{1}}}\times\cdots\times\mu_{\epsilon}^{\mathcal{E}_{k_{n}}}\,\right)=0\ .
\]
\end{definition}

Notice that $\mathfrak{C}_{{\rm TV}}$ is stronger than $\mathfrak{C}_{{\rm fdd}}$.
Now, we introduce another condition, which implies $\mathfrak{C}_{{\rm TV}}$
together with $\mathfrak{C}_{{\rm fdd}}$. Let ${\color{blue}\mathscr{U}:=\{\mathcal{U}_{i}:i\in\mathscr{V}\}}$
be a collection of connected subsets of $\mathbb{R}^{d}$ satisfying
for $i,\,j\in\mathscr{V}$ such that $i\ne j$,
\begin{equation}
\mathcal{E}_{i}\subset\mathcal{U}_{i}\,,\ \overline{\mathcal{U}_{i}}\cap\overline{\mathcal{U}_{j}}=\varnothing\,,\ \lim_{\epsilon\to0}\,\frac{\mu_{\epsilon}(\mathcal{E}_{i})}{\mu_{\epsilon}(\mathcal{U}_{i})}=1\,.\label{e: E-U}
\end{equation}
For a connected subset $\mathcal{A}\subset\mathbb{R}^{d}$, let \textcolor{blue}{$\bm{x}_{\epsilon}^{\mathcal{A}}(\cdot)$}
be a diffusion $\bm{x}_{\epsilon}(\cdot)$ reflected at the boundary
of $\mathcal{A}$ and denote by \textcolor{blue}{$\mathbb{P}_{\bm{x}}^{\epsilon,\,\mathcal{A}}$}
be the law of $\bm{x}_{\epsilon}^{\mathcal{A}}(\cdot)$ starting at
$\bm{x}\in\mathcal{A}$. The following is the mixing condition which corresponds
to condition (M2) in \cite{LLM}.
\begin{definition}[Mixing condition]
\label{def_mixing}We say that the condition $\mathfrak{M}(\theta_{\epsilon},\,\mathscr{E},\,\mathscr{U})$
holds if there exists a time scale $\rho_{\epsilon}\prec\theta_{\epsilon}$
such that for each $i\in\mathscr{V}$,
\begin{enumerate}
\item the process $\bm{x}_{\epsilon}(\cdot)$ starting from the well $\mathcal{E}_{i}$
cannot escape from the well $\mathcal{U}_{i}$ within the time scale
$\rho_{\epsilon}$, i.e.,
\[
\lim_{\epsilon\to0}\sup_{\bm{x}\in\mathcal{E}_{i}}\mathbb{P}_{\bm{x}}^{\epsilon}\left[\tau_{\partial\mathcal{U}_{i}}\le\rho_{\epsilon}\right]=0\,,
\]
\item and we have 
\[
\lim_{\epsilon\to0}\sup_{\bm{x}\in\mathcal{E}_{i}}d_{{\rm TV}}(\bm{x}_{\epsilon}^{\mathcal{U}_{i}}(\rho_{\epsilon};\,\bm{x}),\,\mu_{\epsilon}^{\mathcal{U}_{i}})=0\,.
\]
\end{enumerate}
The following is the main result of this section.
\end{definition}

\begin{proposition}
\label{p: TV}Conditions $\mathfrak{C}_{{\rm fdd}}(\theta_{\epsilon},\,\mathscr{E},\,{\bf y})$
and $\mathfrak{M}(\theta_{\epsilon},\,\mathscr{E},\,\mathscr{U})$
imply condition $\mathfrak{C}_{{\rm TV}}(\theta_{\epsilon},\,\mathscr{E},\,{\bf y})$.
\end{proposition}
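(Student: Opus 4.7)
The plan is to induct on $n$, after first establishing a preliminary uniform in-well mixing estimate that packages both halves of $\mathfrak{M}$: for each $i \in \mathscr{V}$,
\[
\lim_{\epsilon\to0}\sup_{\bm{y}\in\mathcal{E}_{i}}d_{{\rm TV}}(\bm{x}_{\epsilon}(\rho_{\epsilon};\,\bm{y}),\,\mu_{\epsilon}^{\mathcal{E}_{i}})=0\,.
\]
This follows from the triangle inequality applied to three pieces: a pathwise coupling of $\bm{x}_\epsilon(\cdot)$ with the reflected process $\bm{x}_\epsilon^{\mathcal{U}_i}(\cdot)$ starting at the same point (they agree up to $\tau_{\partial\mathcal{U}_i}$, so $d_{{\rm TV}}(\bm{x}_\epsilon(\rho_\epsilon;\bm{y}),\bm{x}_\epsilon^{\mathcal{U}_i}(\rho_\epsilon;\bm{y})) \le \mathbb{P}_{\bm{y}}^\epsilon[\tau_{\partial\mathcal{U}_i}\le\rho_\epsilon]\to0$ by part (1) of $\mathfrak{M}$); part (2) of $\mathfrak{M}$, which yields $d_{{\rm TV}}(\bm{x}_\epsilon^{\mathcal{U}_i}(\rho_\epsilon;\bm{y}),\mu_\epsilon^{\mathcal{U}_i})\to0$; and $\mu_\epsilon(\mathcal{E}_i)/\mu_\epsilon(\mathcal{U}_i)\to1$ from \eqref{e: E-U}, which yields $d_{{\rm TV}}(\mu_\epsilon^{\mathcal{U}_i},\mu_\epsilon^{\mathcal{E}_i})\to0$.

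For the base case $n=1$, set $s_\epsilon:=\rho_\epsilon/\theta_\epsilon\to0$ and condition by the Markov property at the time $\theta_\epsilon(t_{1,\epsilon}-s_\epsilon)$. Decomposing $\mu_{\epsilon,\bm{x}}^{t_{1,\epsilon}}$ according to whether the process at that time lies in some $\mathcal{E}_{k_1}$, the resulting TV distance is bounded by: the mass on $\mathbb{R}^{d}\setminus\bigcup_{k}\mathcal{E}_{k}$, which vanishes by $\mathfrak{C}_{{\rm fdd}}$ together with $\sum_{k_1}\mathcal{Q}_{i}[{\bf y}(t_1)=k_1]=1$; the supremum over $\bm{y}\in\mathcal{E}_{k_1}$ of the TV between $\bm{x}_\epsilon(\rho_\epsilon;\bm{y})$ and $\mu_\epsilon^{\mathcal{E}_{k_1}}$, which vanishes by the preliminary lemma; and $|\mu_{\epsilon,\bm{x}}^{t_{1,\epsilon}-s_\epsilon}(\mathcal{E}_{k_1})-\mathcal{Q}_{i}[{\bf y}(t_1)=k_1]|$, which vanishes by $\mathfrak{C}_{{\rm fdd}}$ applied to the shifted sequence $t_{1,\epsilon}-s_\epsilon\to t_1$.

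For the inductive step from $n-1$ to $n$, factor both measures using the respective Markov properties. The joint law splits by the Markov property of $\bm{x}_\epsilon(\cdot)$ as
\[
\mu_{\epsilon,\bm{x}}^{t_{1,\epsilon},\ldots,t_{n,\epsilon}}(d\bm{z}_{1:n})=\mu_{\epsilon,\bm{x}}^{t_{1,\epsilon}}(d\bm{z}_1)\,\mu_{\epsilon,\bm{z}_1}^{t_{2,\epsilon}-t_{1,\epsilon},\ldots,t_{n,\epsilon}-t_{1,\epsilon}}(d\bm{z}_{2:n})\,,
\]
while the target splits by the Markov property of ${\bf y}$ as $\sum_{k_1}\mathcal{Q}_{i}[{\bf y}(t_1)=k_1]\mu_\epsilon^{\mathcal{E}_{k_1}}(d\bm{z}_1)\,T_{k_1}(d\bm{z}_{2:n})$, where $T_{k}$ denotes the $(n-1)$-time target associated with ${\bf y}$ started at $k$ and times $t_j-t_1$. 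Let $k(\bm{z}_1)$ be the index of the well containing $\bm{z}_1$ (arbitrary on the complement). Inserting the intermediate measure $\mu_{\epsilon,\bm{x}}^{t_{1,\epsilon}}(d\bm{z}_1)\,T_{k(\bm{z}_1)}(d\bm{z}_{2:n})$, the triangle inequality bounds the total TV by two terms: one is controlled by the inductive hypothesis applied uniformly over $\bm{z}_1\in\mathcal{E}_{k_1}$ with the shifted sequence $t_{j,\epsilon}-t_{1,\epsilon}\to t_j-t_1$ (the mass on $\mathbb{R}^{d}\setminus\bigcup_k\mathcal{E}_k$ being absorbed as before), and the other is bounded, via the standard estimate $d_{{\rm TV}}(\mu K,\nu K)\le d_{{\rm TV}}(\mu,\nu)$ applied to the common kernel $\bm{z}_1\mapsto T_{k(\bm{z}_1)}$, by $d_{{\rm TV}}(\mu_{\epsilon,\bm{x}}^{t_{1,\epsilon}},\sum_{k_1}\mathcal{Q}_{i}[{\bf y}(t_1)=k_1]\mu_\epsilon^{\mathcal{E}_{k_1}})$, which vanishes by the base case.

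The main obstacle is the preliminary in-well mixing lemma: one must set up the pathwise coupling with the reflected process and combine both parts of $\mathfrak{M}$ with \eqref{e: E-U} in the correct order, since neither condition alone yields TV closeness of $\bm{x}_\epsilon(\rho_\epsilon;\bm{y})$ to $\mu_\epsilon^{\mathcal{E}_i}$. Once this is in hand, the remainder of the argument is essentially bookkeeping; the only subtlety is the need to propagate uniformity in the starting point across the integration against the random marginals produced by intermediate Markov conditioning, while consistently absorbing the vanishing mass on $\mathbb{R}^{d}\setminus\bigcup_k\mathcal{E}_k$ via $\mathfrak{C}_{{\rm fdd}}$.
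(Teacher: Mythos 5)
Your proposal is correct and takes essentially the same route as the paper: your preliminary in-well mixing estimate is precisely the paper's Lemma~\ref{l: conv_TV}, proved by the identical coupling-with-the-reflected-process plus $\eqref{e: E-U}$ argument, and your base case matches the paper's proof verbatim (decompose at time $\theta_\epsilon t_\epsilon - \rho_\epsilon$, apply the Markov property, then substitute in Lemma~\ref{l: conv_TV} and $\mathfrak{C}_{\rm fdd}$ for the two factors). The only difference is that you spell out the induction on $n$ via the kernel estimate $d_{\rm TV}(\mu K,\nu K)\le d_{\rm TV}(\mu,\nu)$, a step the paper compresses to the bare assertion that ``it suffices to prove the 1-dimensional case''; your filling-in of that reduction is valid and arguably makes the argument more complete.
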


\subsection{Proof of Proposition \ref{p: TV}}

We need the following lemma which is derived from $\mathfrak{M}$.
From now on in this article, all subsets of $\mathbb{R}^{d}$ are
Lebesgue measurable.
\begin{lemma}
\label{l: conv_TV}Suppose that $\mathfrak{M}(\theta_{\epsilon},\,\mathscr{E},\,\mathscr{U})$
holds and let $\rho_{\epsilon}\prec\theta_{\epsilon}$ be given in
$\mathfrak{M}(\theta_{\epsilon},\,\mathscr{E},\,\mathscr{U})$. Then,
for any $i\in\mathscr{V}$,
\[
\limsup_{\epsilon\to0}\sup_{\bm{x}\in\mathcal{E}_{i}}\sup_{\mathcal{A}\in\mathscr{B}(\mathbb{R}^{d})}\left|\,\mathbb{P}_{\bm{x}}\left[\bm{x}_{\epsilon}(\rho_{\epsilon})\in\mathcal{A}\right]-\mu_{\epsilon}^{\mathcal{E}_{i}}(\mathcal{A})\,\right|=0\,.
\]
\end{lemma}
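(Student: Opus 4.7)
The plan is to combine the two parts of the mixing condition $\mathfrak{M}$ with an elementary comparison between the conditioned measures $\mu_{\epsilon}^{\mathcal{U}_{i}}$ and $\mu_{\epsilon}^{\mathcal{E}_{i}}$, and then apply the triangle inequality for the total variation distance. Write
\[
\big|\mathbb{P}_{\bm{x}}[\bm{x}_{\epsilon}(\rho_{\epsilon})\in\mathcal{A}]-\mu_{\epsilon}^{\mathcal{E}_{i}}(\mathcal{A})\big|\le I_{1}(\bm{x},\mathcal{A})+I_{2}(\bm{x},\mathcal{A})+I_{3}(\mathcal{A})\,,
\]
where $I_{1}$ is the difference between the original and reflected processes at time $\rho_{\epsilon}$, $I_{2}$ is the distance between the reflected process and $\mu_{\epsilon}^{\mathcal{U}_{i}}$, and $I_{3}$ is the distance between $\mu_{\epsilon}^{\mathcal{U}_{i}}$ and $\mu_{\epsilon}^{\mathcal{E}_{i}}$. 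Each term needs to vanish uniformly in $\bm{x}\in\mathcal{E}_{i}$ and $\mathcal{A}\in\mathscr{B}(\mathbb{R}^{d})$ as $\epsilon\to0$.

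For $I_{1}$, the standard coupling between $\bm{x}_{\epsilon}(\cdot)$ and the reflected process $\bm{x}_{\epsilon}^{\mathcal{U}_{i}}(\cdot)$ driven by the same Brownian motion agrees on $\{\tau_{\partial\mathcal{U}_{i}}>\rho_{\epsilon}\}$, so $I_{1}(\bm{x},\mathcal{A})\le\mathbb{P}_{\bm{x}}^{\epsilon}[\tau_{\partial\mathcal{U}_{i}}\le\rho_{\epsilon}]$, which tends to $0$ uniformly on $\mathcal{E}_{i}$ by part (1) of $\mathfrak{M}$. For $I_{2}$, part (2) of $\mathfrak{M}$ gives exactly the uniform vanishing of $d_{{\rm TV}}(\bm{x}_{\epsilon}^{\mathcal{U}_{i}}(\rho_{\epsilon};\bm{x}),\mu_{\epsilon}^{\mathcal{U}_{i}})$.

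The remaining term $I_{3}$ is purely about the invariant distribution. Setting $r_{\epsilon}:=\mu_{\epsilon}(\mathcal{E}_{i})/\mu_{\epsilon}(\mathcal{U}_{i})$, which tends to $1$ by \eqref{e: E-U}, the inclusion $\mathcal{E}_{i}\subset\mathcal{U}_{i}$ yields the decomposition
\[
\mu_{\epsilon}^{\mathcal{U}_{i}}(\mathcal{A})=r_{\epsilon}\,\mu_{\epsilon}^{\mathcal{E}_{i}}(\mathcal{A})+\frac{\mu_{\epsilon}(\mathcal{A}\cap(\mathcal{U}_{i}\setminus\mathcal{E}_{i}))}{\mu_{\epsilon}(\mathcal{U}_{i})}\,,
\]
from which $|\mu_{\epsilon}^{\mathcal{U}_{i}}(\mathcal{A})-\mu_{\epsilon}^{\mathcal{E}_{i}}(\mathcal{A})|\le(1-r_{\epsilon})+(1-r_{\epsilon})=2(1-r_{\epsilon})$ uniformly in $\mathcal{A}$. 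Assembling the three bounds and taking the supremum over $\bm{x}\in\mathcal{E}_{i}$ and $\mathcal{A}\in\mathscr{B}(\mathbb{R}^{d})$ before letting $\epsilon\to0$ gives the lemma. I do not anticipate any real obstacle here: part (1) of $\mathfrak{M}$ is only used to justify the coupling, part (2) is exactly what absorbs the bulk of the work, and the comparison between $\mu_{\epsilon}^{\mathcal{U}_{i}}$ and $\mu_{\epsilon}^{\mathcal{E}_{i}}$ is an unconditional estimate that follows from \eqref{e: E-U} alone.
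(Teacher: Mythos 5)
Your proposal is correct and takes essentially the same route as the paper: both use the coupling of $\bm{x}_\epsilon$ with the reflected process $\bm{x}_\epsilon^{\mathcal{U}_i}$ together with $\mathfrak{M}$-(1) to control $I_1$, invoke $\mathfrak{M}$-(2) directly for $I_2$, and use the asymptotic $\mu_\epsilon(\mathcal{E}_i)/\mu_\epsilon(\mathcal{U}_i)\to 1$ from \eqref{e: E-U} to control $I_3$. The only difference is cosmetic: you package the estimate as a single three-term triangle inequality up front, whereas the paper inserts remainder terms $R_\epsilon^{(1)},R_\epsilon^{(2)}$ step by step; the underlying bounds coincide.
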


\begin{proof}
We have
\[
\mathbb{P}_{\bm{x}}^{\epsilon}\left[\bm{x}_{\epsilon}(\rho_{\epsilon})\in\mathcal{A}\right]=\mathbb{P}_{\bm{x}}^{\epsilon}\left[\bm{x}_{\epsilon}(\rho_{\epsilon})\in\mathcal{A},\,\tau_{\mathcal{U}_{i}}>\rho_{\epsilon}\right]+R_{\epsilon}^{(1)}(\bm{x},\,\mathcal{A})\,,
\]
where
\[
|\,R_{\epsilon}^{(1)}(\bm{x},\,\mathcal{A})\,|\le\mathbb{P}_{\bm{x}}^{\epsilon}\left[\tau_{\mathcal{U}_{i}}\le\rho_{\epsilon}\right]\,.
\]
By the coupling of $\bm{x}_{\epsilon}$ and $\bm{x}_{\epsilon}^{\mathcal{U}_{i}}$,
\begin{align*}
\mathbb{P}_{\bm{x}}^{\epsilon}\left[\bm{x}_{\epsilon}(\rho_{\epsilon})\in\mathcal{A},\,\tau_{\mathcal{U}_{i}}>\rho_{\epsilon}\right] & =\mathbb{P}_{\bm{x}}^{\epsilon,\,\mathcal{U}_{i}}\left[\bm{x}_{\epsilon}^{\mathcal{U}_{i}}(\rho_{\epsilon})\in\mathcal{A},\,\tau_{\mathcal{U}_{i}}>\rho_{\epsilon}\right]\\
 & =\mathbb{P}_{\bm{x}}^{\epsilon,\,\mathcal{U}_{i}}\left[\bm{x}_{\epsilon}^{\mathcal{U}_{i}}(\rho_{\epsilon})\in\mathcal{A}\right]+R_{\epsilon}^{(2)}(\bm{x},\,\mathcal{A})\,,
\end{align*}
where
\[
|\,R_{\epsilon}^{(2)}(\bm{x},\,\mathcal{A})\,|\le\mathbb{P}_{\bm{x}}^{\epsilon,\,\mathcal{U}_{i}}\left[\tau_{\mathcal{U}_{i}}\le\rho_{\epsilon}\right]=\mathbb{P}_{\bm{x}}^{\epsilon}\left[\tau_{\mathcal{U}_{i}}\le\rho_{\epsilon}\right]\,.
\]
Therefore, by $\mathfrak{M}(\theta_{\epsilon},\,\mathscr{E},\,\mathscr{U})$-(1),
\[
\limsup_{\epsilon\to0}\sup_{\bm{x}\in\mathcal{E}_{i}}\sup_{\mathcal{A}\subset\mathbb{R}^{d}}\left|\,\mathbb{P}_{\bm{x}}^{\epsilon}\left[\bm{x}_{\epsilon}(\rho_{\epsilon})\in\mathcal{A}\right]-\mathbb{P}_{\bm{x}}^{\epsilon,\,\mathcal{U}_{i}}\left[\bm{x}_{\epsilon}^{\mathcal{U}_{i}}(\rho_{\epsilon})\in\mathcal{A}\right]\,\right|=0\,.
\]
Now, by $\mathfrak{M}(\theta_{\epsilon},\,\mathscr{E},\,\mathscr{U})$-(2)
and triangle inequality, we have
\[
\limsup_{\epsilon\to0}\sup_{\bm{x}\in\mathcal{E}_{i}}\sup_{\mathcal{A}\subset\mathbb{R}^{d}}\left|\,\mathbb{P}_{\bm{x}}\left[\bm{x}_{\epsilon}(\rho_{\epsilon})\in\mathcal{A}\right]-\mu_{\epsilon}^{\mathcal{U}_{i}}(\mathcal{A})\,\right|=0\,.
\]
Therefore, it suffices to prove
\[
\limsup_{\epsilon\to0}\sup_{\mathcal{A}\subset\mathbb{R}^{d}}\left|\,\mu_{\epsilon}^{\mathcal{U}_{i}}(\mathcal{A})-\mu_{\epsilon}^{\mathcal{E}_{i}}(\mathcal{A})\,\right|=0\,.
\]
This is obvious from \eqref{e: E-U} and the fact that
\[
\left|\,\mu_{\epsilon}^{\mathcal{U}_{i}}(\mathcal{A}\cap\mathcal{U}_{i})-\mu_{\epsilon}^{\mathcal{U}_{i}}(\mathcal{A}\cap\mathcal{E}_{i})\,\right|\le\mu_{\epsilon}^{\mathcal{U}_{i}}(\mathcal{U}_{i}\setminus\mathcal{E}_{i})\,,
\]
and
\[
\left|\,\mu_{\epsilon}^{\mathcal{U}_{i}}(\mathcal{A}\cap\mathcal{E}_{i})-\mu_{\epsilon}^{\mathcal{E}_{i}}(\mathcal{A}\cap\mathcal{E}_{i})\,\right|=\mu_{\epsilon}(\mathcal{A}\cap\mathcal{E}_{i})\left|\,\frac{1}{\mu_{\epsilon}(\mathcal{U}_{i})}-\frac{1}{\mu_{\epsilon}(\mathcal{E}_{i})}\,\right|\le\frac{\mu_{\epsilon}(\mathcal{U}_{i}\setminus\mathcal{E}_{i})}{\mu_{\epsilon}(\mathcal{U}_{i})}\,.
\]
\end{proof}
Now, we are ready to prove Proposition \ref{p: TV}.
\begin{proof}[Proof of Proposition \ref{p: TV}]
Fix $i\in\mathscr{V}$. Let $\bm{x}\in\mathcal{E}_{i}$ and $\mathcal{A}\subset\mathbb{R}^{d}$.
It suffices to prove for $1$ dimensional case. Fix $t>0$ and sequence
$t_{\epsilon}\to t$. Let $\rho_{\epsilon}\prec\theta_{\epsilon}$
be given in $\mathfrak{M}(\theta_{\epsilon},\,\mathscr{E},\,\mathscr{U})$.
Then,
\begin{align}
 & \mathbb{P}_{\bm{x}}^{\epsilon}\left[\,\bm{x}_{\epsilon}(\theta_{\epsilon}\,t_{\epsilon})\in\mathcal{A}\,\right]\label{e: pf_t_TV-1}\\
 & =\sum_{j\in\mathscr{V}}\mathbb{P}_{\bm{x}}^{\epsilon}\left[\,\bm{x}_{\epsilon}(\theta_{\epsilon}\,t_{\epsilon}-\rho_{\epsilon})\in\mathcal{E}_{j},\,\bm{x}_{\epsilon}(\theta_{\epsilon}\,t_{\epsilon})\in\mathcal{A}\,\right]+R_{\epsilon}^{(1)}(\bm{x},\,\mathcal{A})\,,\nonumber 
\end{align}
where
\[
R_{\epsilon}^{(1)}(\bm{x},\,\mathcal{A})=\mathbb{P}_{\bm{x}}^{\epsilon}\left[\,\bm{x}_{\epsilon}(\theta_{\epsilon}\,t_{\epsilon}-\rho_{\epsilon})\notin\bigcup_{k\in\mathscr{V}}\mathcal{E}_{k},\,\bm{x}_{\epsilon}(\theta_{\epsilon}\,t_\epsilon)\in\mathcal{A}\,\right]\,.
\]
Since $t_{\epsilon}-\rho_{\epsilon}/\theta_{\epsilon}\to t$, by $\mathfrak{C}_{{\rm fdd}}(\theta_{\epsilon},\,\mathscr{E},\,{\bf y})$,
\[
\limsup_{\epsilon\to0}\sup_{\bm{x}\in\mathcal{E}_{i}}\sup_{\mathcal{A}\subset\mathbb{R}^{d}}\,|\,R_{\epsilon}^{(1)}(\bm{x},\,\mathcal{A})\,|\le\limsup_{\epsilon\to0}\sup_{\bm{x}\in\mathcal{E}_{i}}\mathbb{P}_{\bm{x}}^{\epsilon}\left[\,\bm{x}_{\epsilon}(\theta_{\epsilon}\,t_{\epsilon}-\rho_{\epsilon})\notin\bigcup_{k\in\mathscr{V}}\mathcal{E}_{k}\right]=0\,.
\]

By the Markov property, the summation in the right-hand side of \eqref{e: pf_t_TV-1}
can be written as
\begin{equation}
\sum_{j\in\mathscr{V}}\mathbb{E}_{\bm{x}}^{\epsilon}\left[{\bf 1}\left\{ \bm{x}_{\epsilon}(\theta_{\epsilon}\,t_{\epsilon}-\rho_{\epsilon})\in\mathcal{E}_{j}\right\} \mathbb{P}_{\bm{x}_{\epsilon}(\theta_{\epsilon}\,t_{\epsilon}-\rho_{\epsilon})}^{\epsilon}\left[\bm{x}_{\epsilon}(\rho_{\epsilon})\in\mathcal{A}\right]\right]\,.\label{e: pf_t_TV-2}
\end{equation}
By Lemma \ref{l: conv_TV}, we have
\[
\limsup_{\epsilon\to0}\sup_{\mathcal{A}\subset\mathbb{R}^{d}}\sup_{\bm{x}\in\mathcal{E}_{i}}\left|\,\mathbb{P}_{\bm{x}}^{\epsilon}\left[\bm{x}_{\epsilon}(\rho_{\epsilon})\in\mathcal{A}\right]-\mu_{\epsilon}^{\mathcal{E}_{i}}(\mathcal{A})\,\right|=0\,,
\]
so that \eqref{e: pf_t_TV-2} can be written as
\begin{equation}
\sum_{j\in\mathscr{V}}\mathbb{P}_{\bm{x}}^{\epsilon}\left[\bm{x}_{\epsilon}(\theta_{\epsilon}\,t_{\epsilon}-\rho_{\epsilon})\in\mathcal{E}_{j}\right]\,\mu_{\epsilon}^{\mathcal{E}_{j}}(\mathcal{A})+R_{\epsilon}^{(2)}(\bm{x},\,\mathcal{A})\label{e: pf_t_TV-3}
\end{equation}
where
\[
\limsup_{\epsilon\to0}\sup_{\bm{x}\in\mathcal{E}_{i}}\sup_{\mathcal{A}\subset\mathbb{R}^{d}}\,|\,R_{\epsilon}^{(2)}(\bm{x},\,\mathcal{A})\,|=0\,.
\]
The sum of \eqref{e: pf_t_TV-3} can be written as
\[
\sum_{j\in\mathscr{V}}\mathcal{Q}_{i}\left[{\bf y}(t)=j\right]\mu_{\epsilon}^{\mathcal{E}_{j}}(\mathcal{A})+R_{\epsilon}^{(3)}(\bm{x},\,\mathcal{A})\,,
\]
where
\[
\sup_{\bm{x}\in\mathcal{E}_{i}}|\,R_{\epsilon}^{(3)}(\bm{x},\,\mathcal{A})\,|\le\sup_{\bm{x}\in\mathcal{E}_{i}}\left|\,\mathbb{P}_{\bm{x}}^{\epsilon}\left[\bm{x}_{\epsilon}(\theta_{\epsilon}\,t_{\epsilon}-\rho_{\epsilon})\in\mathcal{E}_{j}\right]-\mathcal{Q}_{i}\left[{\bf y}(t)=j\right]\,\right|\,.
\]
Since $t_{\epsilon}-\rho_{\epsilon}/\theta_{\epsilon}\to t$, by $\mathfrak{C}_{{\rm fdd}}(\theta_{\epsilon},\,\mathscr{E},\,{\bf y})$,
\[
\limsup_{\epsilon\to0}\sup_{\bm{y}\in\mathcal{E}_{i}}\sup_{\mathcal{B}\subset\mathbb{R}^{d}}\,|\,R_{\epsilon}^{(3)}(\bm{y},\,\mathcal{B})\,|=0\,.
\]

So far, we obtain
\[
\mathbb{P}_{\bm{x}}^{\epsilon}\left[\,\bm{x}_{\epsilon}(\theta_{\epsilon}\,t_{\epsilon})\in\mathcal{A}\,\right]=\sum_{j\in\mathscr{V}}\mathcal{Q}_{i}\left[{\bf y}(t)=j\right]\,\mu_{\epsilon}^{\mathcal{E}_{j}}(\mathcal{A})+R_{\epsilon}(\bm{x},\,\mathcal{A})
\]
where
\[
\limsup_{\epsilon\to0}\sup_{\bm{x}\in\mathcal{E}_{i}}\sup_{\mathcal{A}\subset\mathbb{R}^{d}}\,|\,R_{\epsilon}(\bm{x},\,\mathcal{A})\,|=0\,.
\]
Therefore, we conclude
\[
\lim_{\epsilon\to0}\sup_{\bm{x}\in\mathcal{E}_{i}}d_{{\rm TV}}\left(\,\mu_{\epsilon,\,\bm{x}}^{t_{\epsilon}},\,\sum_{j\in\mathscr{V}}\,\mathcal{Q}_{i}\left[\,{\bf y}(t)=j\,\right]\,\mu_{\epsilon}^{\mathcal{E}_{j}}\,\right)=0\,.
\]
\end{proof}

\section{\label{sec: pf_t_main_TV}Proof of Theorem \ref{t: main_TV}}

In this section, we prove Theorem \ref{t: main_TV} based on the argument
of the previous section.

\subsection{Convergence in total variation distance}

Let ${\color{blue}R^{(1)}}:=2r_{0}$. Then, by \eqref{e: r_0-0} and
\eqref{e: r_0}, $R^{(1)}<d^{(1)}$ and there is no critical point
$\bm{c}$ satisfying $U(\bm{c})=U(\bm{m})+R^{(1)}$ for some $\bm{m}\in\mathcal{M}_{0}$.
For $n\in\llbracket2,\,\mathfrak{q}\rrbracket$, fix a positive number ${\color{blue}R^{(n)}}\in(d^{(n-1)}+r_{0},\,d^{(n)})$
such that there is no critical point $\bm{c}$
satisfying $U(\bm{c})=U(\bm{m})+R^{(n)}$ for some $\bm{m}\in\mathcal{M}_{0}$.
Since there are finitely many critical points, by \eqref{e: r_0-0},
such $R^{(n)}$ exists. 
\begin{lemma}
\label{l: U_M-0}Fix $n\in\llbracket1,\,\mathfrak{q}\rrbracket$ and
$\mathcal{M}\in\mathscr{V}^{(n)}$. Then, there exists a connected
component of $\{U<U(\mathcal{M})+R^{(n)}\}$ containing $\mathcal{M}$.
\end{lemma}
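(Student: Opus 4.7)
The plan is to proceed by induction on $n \in \llbracket 1, \mathfrak{q} \rrbracket$.

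For the base case $n=1$, recall that $\mathscr{V}^{(1)} = \{\{\bm{m}\} : \bm{m} \in \mathcal{M}_{0}\}$, so $\mathcal{M} = \{\bm{m}\}$ for some $\bm{m} \in \mathcal{M}_{0}$. Since $U(\bm{m}) < U(\bm{m}) + R^{(1)}$, the point $\bm{m}$ lies in the open set $\{U < U(\bm{m}) + R^{(1)}\}$, hence in one of its connected components, and the conclusion holds trivially.

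For the inductive step, fix $n \ge 2$ and assume the lemma at level $n-1$. By the construction \eqref{e: M_j-R_j}, $\mathcal{M} = \bigcup_{\mathcal{M}' \in \mathscr{R}_{j}^{(n-1)}} \mathcal{M}'$ for some $j \in \llbracket 1, \mathfrak{n}_{n-1} \rrbracket$. By Proposition \ref{p: tree}-(1), both $\mathcal{M} \in \mathscr{V}^{(n)}$ and each $\mathcal{M}' \in \mathscr{R}_{j}^{(n-1)} \subset \mathscr{V}^{(n-1)}$ are simple; since $\mathcal{M}' \subset \mathcal{M}$, this forces $U(\mathcal{M}') = U(\mathcal{M})$. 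The inductive hypothesis then provides, for each $\mathcal{M}'$, a connected component $\mathcal{C}(\mathcal{M}')$ of $\{U < U(\mathcal{M}) + R^{(n-1)}\}$ containing $\mathcal{M}'$. From the choice of $R^{(n-1)} < d^{(n-1)} < d^{(n-1)} + r_{0} < R^{(n)}$, we have $\mathcal{C}(\mathcal{M}') \subset \{U < U(\mathcal{M}) + R^{(n)}\}$, so $\mathcal{C}(\mathcal{M}')$ is contained in a unique connected component $\mathcal{D}(\mathcal{M}')$ of the larger open set.

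It therefore remains to show $\mathcal{D}(\mathcal{M}') = \mathcal{D}(\mathcal{M}'')$ for any two $\mathcal{M}', \mathcal{M}'' \in \mathscr{R}_{j}^{(n-1)}$. Since $\mathscr{R}_{j}^{(n-1)}$ is an irreducible class of ${\bf y}^{(n-1)}(\cdot)$, one can find a finite chain $\mathcal{M}' = \mathcal{M}_{0}, \mathcal{M}_{1}, \dots, \mathcal{M}_{k} = \mathcal{M}''$ with a positive transition rate at each step. Unfolding the definition of the jump rates of ${\bf y}^{(n-1)}(\cdot)$ in Appendix \ref{app: Tree}, a positive rate from $\mathcal{M}_{i}$ to $\mathcal{M}_{i+1}$ encodes the existence of a continuous path in $\mathbb{R}^{d}$ joining $\mathcal{M}_{i}$ to $\mathcal{M}_{i+1}$ whose supremum of $U$ is at most $U(\mathcal{M}) + d^{(n-1)}$, hence strictly below $U(\mathcal{M}) + R^{(n)}$. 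Concatenating these paths shows that all $\mathcal{D}(\mathcal{M}_{i})$ coincide, completing the induction.

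The main obstacle is the final translation between the \emph{combinatorial} fact that $\mathcal{M}_{i}$ and $\mathcal{M}_{i+1}$ communicate in the reduced chain and the \emph{analytic} fact that they lie in a common sub-levelset at height $U(\mathcal{M}) + d^{(n-1)} + o(1)$. This step relies on the explicit construction of the jump rates in Appendix \ref{app: Tree} (built on \cite{LLS-1st, LLS-2nd}), where $d^{(n-1)}$ is characterized precisely as the lowest communication depth between wells in $\mathscr{V}^{(n-1)}$ that remain separated at scale $\theta_{\epsilon}^{(n-2)}$ but merge at scale $\theta_{\epsilon}^{(n-1)}$.
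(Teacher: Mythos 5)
Your proof takes a genuinely different route from the paper. The paper dispatches the lemma in one shot: by Proposition~\ref{p: tree}-(2), $\mathcal{M}$ is \emph{bound}, meaning $\max_{\bm{m},\,\bm{m}'\in\mathcal{M}}\Theta(\bm{m},\bm{m}')<\Theta(\mathcal{M},\widetilde{\mathcal{M}})$; combining this with~\cite[Lemma~5.3]{LLS-2nd} and Proposition~\ref{p: tree}-(3) yields $\max_{\bm{m},\bm{m}'\in\mathcal{M}}\Theta(\bm{m},\bm{m}')-U(\mathcal{M})\le d^{(n-1)}<R^{(n)}$, and~\cite[Lemma~B.9]{LLS-2nd} then produces the connected component. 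Your induction on $n$ with a direct combinatorial argument is an attempt to reconstruct this cited inequality from scratch, rather than invoking it.

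The difficulty is precisely in the step you flag as the ``main obstacle,'' and it is a genuine gap rather than a routine translation. You claim that a positive jump rate of ${\bf y}^{(n-1)}$ from $\mathcal{M}_i$ to $\mathcal{M}_{i+1}$ encodes a path at height at most $U(\mathcal{M})+d^{(n-1)}$. But for $n-1\ge 2$, ${\bf y}^{(n-1)}$ is the \emph{trace} of $\widehat{\bf y}^{(n-1)}$ on $\mathscr{V}^{(n-1)}$, so a positive trace rate can arise from a chain of jumps of $\widehat{\bf y}^{(n-1)}$ that detours through transient states in $\mathscr{N}^{(n-1)}$. The first jump out of $\mathcal{M}_i\in\mathscr{V}^{(n-1)}$ indeed uses a saddle at height $U(\mathcal{M})+d^{(n-1)}$ (Case~3 of the rate construction). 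But the intermediate jumps between transient states use the inherited rates $\widehat{r}^{(j)}$ with $j<n-1$ (Cases~1--2), and their saddles sit at heights of the form $U(\mathcal{M}'')+d^{(j)}$ where $\mathcal{M}''\in\mathscr{N}^{(n-1)}$. From the first step one only has $U(\mathcal{M}'')<U(\mathcal{M})+d^{(n-1)}$, which does not immediately bound $U(\mathcal{M}'')+d^{(j)}$ by $U(\mathcal{M})+d^{(n-1)}$. Establishing that these intermediate saddles nonetheless stay below $U(\mathcal{M})+d^{(n-1)}$ is exactly the nontrivial content of~\cite[Lemma~5.3]{LLS-2nd}, which you gesture at but do not actually prove; the induction on $n$, incidentally, is not doing essential work here, since the bottleneck is this translation at the final scale, not the layering of sublevel sets.

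Your base case, the observation that simplicity forces $U(\mathcal{M}')=U(\mathcal{M})$ for $\mathcal{M}'\subset\mathcal{M}$, and the inequality $R^{(n-1)}<d^{(n-1)}<d^{(n-1)}+r_0<R^{(n)}$ are all correct as stated.
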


For $n\in\llbracket1,\,\mathfrak{q}\rrbracket$ and $\mathcal{M}\in\mathscr{V}^{(n)}$,
denote by \textcolor{blue}{$\mathcal{U}^{(n)}(\mathcal{M})$} the connected
component of $\{U<U(\mathcal{M})+R^{(n)}\}$ containing $\mathcal{M}$.
Then, by Proposition \ref{p_FW}, for all $A<R^{(n)}-r_{0}$,
\begin{equation}
\limsup_{\epsilon\rightarrow0}\,\sup_{\boldsymbol{x}\in\mathcal{E}(\mathcal{M})}\,\mathbb{P}_{\boldsymbol{x}}^{\epsilon}\left[\,\tau_{\partial\mathcal{U}^{(n)}}<e^{A/\epsilon}\,\right]=0\,.\label{e: FW_U}
\end{equation}
 
\begin{lemma}
\label{l: U_M}For $n\in\llbracket1,\,\mathfrak{q}\rrbracket$ and
$\mathcal{M},\,\mathcal{M}'\in\mathscr{V}^{(n)}$ such that $\mathcal{M}\ne\mathcal{M}'$,
\[
\mathcal{E}(\mathcal{M})\subset\mathcal{U}^{(n)}(\mathcal{M})\,,\ \overline{\mathcal{U}^{(n)}(\mathcal{M})}\cap\overline{\mathcal{U}^{(n)}(\mathcal{M}')}=\varnothing\,,\ \lim_{\epsilon\to0}\,\frac{\mu_{\epsilon}(\mathcal{E}(\mathcal{M}))}{\mu_{\epsilon}(\mathcal{U}^{(n)}(\mathcal{M}))}=1\,.
\]
\end{lemma}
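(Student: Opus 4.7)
The plan is to handle the three assertions in order, with the first reducing to set-theoretic unwrapping and the latter two hinging on structural properties of the tree recalled in Appendix \ref{app: Tree}.

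For the inclusion $\mathcal{E}(\mathcal{M}) \subset \mathcal{U}^{(n)}(\mathcal{M})$, I would first observe that $r_0 < R^{(n)}$ for every $n\ge1$: indeed, $R^{(1)} = 2r_0 > r_0$ and, for $n\ge 2$, $R^{(n)} > d^{(n-1)} + r_0 > r_0$. Hence each $\mathcal{W}^{r_0}(\bm m)$ with $\bm m\in\mathcal{M}$ is a connected subset of $\{U\le U(\mathcal{M}) + r_0\} \subset \{U < U(\mathcal{M}) + R^{(n)}\}$ meeting $\mathcal{U}^{(n)}(\mathcal{M})$ at $\bm m$, so it lies inside the connected component $\mathcal{U}^{(n)}(\mathcal{M})$; taking the union over $\bm m\in\mathcal{M}$ gives the inclusion.

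For the disjointness of closures in the second assertion, I would argue by contradiction. Suppose $\bm x \in \overline{\mathcal{U}^{(n)}(\mathcal{M})} \cap \overline{\mathcal{U}^{(n)}(\mathcal{M}')}$ and assume, without loss of generality, $U(\mathcal{M}) \le U(\mathcal{M}')$. Then $V := \overline{\mathcal{U}^{(n)}(\mathcal{M})} \cup \overline{\mathcal{U}^{(n)}(\mathcal{M}')}$ is a connected closed set contained in $\{U \le U(\mathcal{M}') + R^{(n)}\}$ and meeting both $\mathcal{M}$ and $\mathcal{M}'$. Since $R^{(n)}$ was chosen so that $U(\mathcal{M}') + R^{(n)}$ is not a critical value, the level set $\{U = U(\mathcal{M}') + R^{(n)}\}$ is a smooth submanifold and the connected components of $\{U < U(\mathcal{M}') + R^{(n)}\}$ are open and path-connected with closures also path-connected (their boundaries being smooth). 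Joining $\bm m \in \mathcal{M}$ to $\bm m' \in \mathcal{M}'$ inside the component containing $V$ yields a continuous path $\gamma$ with $\max_t U(\gamma(t)) \le U(\mathcal{M}') + R^{(n)} < U(\mathcal{M}') + d^{(n)}$. This contradicts the tree-structure property that distinct elements of $\mathscr{V}^{(n)}$ are separated by a communication height at least $\max\{U(\mathcal{M}), U(\mathcal{M}')\} + d^{(n)}$, which is built into the construction of $\mathscr{V}^{(n)}$ recalled in Appendix \ref{app: Tree} and formalized in Proposition \ref{p: tree}.

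For the mass comparison in the third assertion, I would apply Laplace's method combined with the tree-structural fact that every local minimum in $\mathcal{U}^{(n)}(\mathcal{M}) \setminus \mathcal{M}$ has $U$-value strictly greater than $U(\mathcal{M})$; otherwise such a minimum together with a minimizing path back to $\mathcal{M}$ would either violate the disjointness just proved or contradict the construction of the partition. Granted this gap, standard Laplace asymptotics give
\[
\mu_\epsilon(\mathcal{E}(\mathcal{M})) = (1+o(1))\,(2\pi\epsilon)^{d/2}\,\frac{\nu(\mathcal{M})}{Z_\epsilon}\,e^{-U(\mathcal{M})/\epsilon},
\]
whereas $\mu_\epsilon(\mathcal{U}^{(n)}(\mathcal{M}) \setminus \mathcal{E}(\mathcal{M}))$ is bounded by $e^{-(U(\mathcal{M}) + \delta)/\epsilon}$ (up to polynomial factors) for some $\delta>0$, since the integrand is dominated by that rate outside small neighborhoods of the finitely many strictly higher minima. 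Dividing gives the limit $1$. The main obstacle I anticipate is precisely the rigorous invocation of the structural facts used in assertions (2) and (3)---namely that the communication height between distinct elements of $\mathscr{V}^{(n)}$ strictly exceeds $\max\{U(\mathcal{M}), U(\mathcal{M}')\} + R^{(n)}$, and that no minimum outside $\mathcal{M}$ of $U$-value at most $U(\mathcal{M})$ lies in $\mathcal{U}^{(n)}(\mathcal{M})$. Both should be extractable from Appendix \ref{app: Tree}, but the bookkeeping required to trace them through the recursive definition of the tree is where the delicate point lies.
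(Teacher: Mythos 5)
Your route for the closure-disjointness differs from the paper's: you build a path inside the union of closures and appeal to a lower bound on the communication height, whereas the paper first shows the two open components are disjoint (using that $\mathcal{M}^*(\mathcal{U}^{(n)}(\mathcal{M}))=\mathcal{M}$, display \eqref{e: M^*_U}), then invokes \cite[Lemma A.3]{LLS-1st} to conclude that a shared boundary point would be a saddle at level $U(\mathcal{M})+R^{(n)}$, contradicting the non-critical choice of $R^{(n)}$. Your path-connectedness argument is valid in principle (the closures $\overline{\mathcal{U}^{(n)}(\mathcal{M})}$ are bounded with non-critical boundary level, hence compact manifolds with boundary, hence path-connected), and the communication-height inequality you invoke is a true consequence of the construction. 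It is somewhat less clean than the paper's, because it requires you to establish the regularity/path-connectedness of the closures rather than simply citing a known lemma about shared boundary points, but it is a legitimate alternative.

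The genuine gap is in your attributions. Both structural facts you rely on --- (i) $\Theta(\mathcal{M},\mathcal{M}')\ge\max\{U(\mathcal{M}),U(\mathcal{M}')\}+d^{(n)}$ for distinct $\mathcal{M},\mathcal{M}'\in\mathscr{V}^{(n)}$, and (ii) no local minimum in $\mathcal{U}^{(n)}(\mathcal{M})\setminus\mathcal{M}$ has $U$-value $\le U(\mathcal{M})$ --- are \emph{not} contained in Proposition \ref{p: tree} or extractable from Appendix \ref{app: Tree}, which only records the recursive definitions and the simple/bound/monotone-depth properties. They are consequences of \cite[Lemma 5.7]{LLS-2nd} (which yields $d^{(n)}\le\Theta(\mathcal{M},\widetilde{\mathcal{M}})-U(\mathcal{M})$) combined with \cite[Lemma B.9]{LLS-2nd}, which is exactly how the paper obtains display \eqref{e: M^*_U}. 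If you try to trace (i) and (ii) through the recursion in Appendix \ref{app: Tree} alone, you will not find them --- they live in the companion paper. Once you replace your appeal to Proposition \ref{p: tree} with the correct reference, both your assertion (2) and assertion (3) arguments close, and your treatment of (1) and the Laplace estimate in (3) matches the paper's (which just remarks ``by definition'' and ``it is clear from Laplace asymptotics by \eqref{e: M^*_U}'' respectively).
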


The proofs are presented in Appendix \ref{app: pf_l_U_M} since it
needs several notions defined in \cite{LLS-2nd}.

For $p\in\llbracket1,\,\mathfrak{q}\rrbracket$, write
\[
{\color{blue}\mathscr{U}^{(p)}}:=\{\mathcal{U}^{(p)}(\mathcal{M})\,:\,\mathcal{M}\in\mathscr{V}^{(p)}\}\, ,\ {\color{blue}\mathscr{E}^{(p)}}:=\{\mathcal{E}(\mathcal{M})\,:\,\mathcal{M}\in\mathscr{V}^{(p)}\}
\]
and
\[
{\color{blue}\mathfrak{M}^{(p)}}:=\mathfrak{M}(\theta_{\epsilon}^{(p)},\,\mathscr{E}^{(p)},\,\mathscr{U}^{(p)})\;,\ {\color{blue}\mathfrak{C}_{{\rm fdd}}^{(p)}}:=\mathfrak{C}_{{\rm fdd}}(\theta_{\epsilon}^{(p)},\,\mathscr{E}^{(p)},\,{\bf y}^{(p)})\ ,\ {\color{blue}\mathfrak{\mathfrak{C}}_{{\rm TV}}^{(p)}}:=\mathfrak{C}_{{\rm TV}}(\theta_{\epsilon}^{(p)},\,\mathscr{E}^{(p)},\,{\bf y}^{(p)})\,.
\]
By Lemma \ref{l: U_M}, elements of $\mathscr{U}^{(p)}$ satisfy \eqref{e: E-U}.
Now, we have the following proposition whose proof is postponed to
Section \ref{sec: pf_p_TV-mix}.
\begin{proposition}
\label{p: TV-mix}We have the following.
\begin{enumerate}
\item For $p\in\llbracket1,\,\mathfrak{q}-1\rrbracket$, $\mathfrak{C}_{{\rm TV}}^{(p)}$
implies $\mathfrak{M}^{(p+1)}$.
\item Condition $\mathfrak{M}^{(1)}$ holds with $\rho_{\epsilon}=\epsilon^{-1}$.
\end{enumerate}
\end{proposition}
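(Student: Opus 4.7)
The plan is to handle claims (2) and (1) separately, starting with (2) since it is the base case and requires only a single-well mixing estimate. For (2), I take $\rho_\epsilon = \epsilon^{-1}$. The escape bound $\mathfrak{M}^{(1)}$-(1) is immediate from \eqref{e: FW_U} with $n = 1$ and any $A \in (0,\,R^{(1)} - r_0)$, since $\epsilon^{-1} \prec e^{A/\epsilon}$. For $\mathfrak{M}^{(1)}$-(2), note that $\mathcal{U}^{(1)}(\bm{m})$ contains $\bm{m}$ as its unique critical point (by \eqref{e: r_0}), so the reflected diffusion $\bm{x}_\epsilon^{\mathcal{U}^{(1)}(\bm{m})}$ is a small-noise perturbation of a gradient flow with a single global attractor, exactly the Barrera--Jara framework (formalized in the paper as Theorem \ref{t: mix_F}). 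This yields
\[
\lim_{\epsilon \to 0} \sup_{\bm{x} \in \mathcal{E}(\bm{m})} d_{\mathrm{TV}}\!\left(\bm{x}_\epsilon^{\mathcal{U}^{(1)}(\bm{m})}(\epsilon^{-1};\,\bm{x}),\; \mu_\epsilon^{\mathcal{U}^{(1)}(\bm{m})}\right) = 0,
\]
which is the desired estimate.

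For (1), fix $p \in \llbracket 1,\,\mathfrak{q}-1\rrbracket$ and $\mathcal{M} \in \mathscr{V}^{(p+1)}$. By construction, $\mathcal{M}$ is a closed irreducible class of $\mathbf{y}^{(p)}$, so $\mathcal{M} = \bigcup_{\mathcal{M}' \in \mathscr{R}_j^{(p)}} \mathcal{M}'$ for some $j$, and all minima contained in $\mathcal{M}$ share the common value $U(\mathcal{M})$. I take $\rho_\epsilon = \theta_\epsilon^{(p)}\,s_\epsilon$ with $s_\epsilon \to \infty$ slowly enough that $\rho_\epsilon \prec e^{A/\epsilon}$ for some fixed $A \in (d^{(p)},\,R^{(p+1)} - r_0)$; this interval is non-empty because $R^{(p+1)} > d^{(p)} + r_0$ by the choice made in the paragraph preceding \eqref{e: FW_U}. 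With this choice $\rho_\epsilon \prec \theta_\epsilon^{(p+1)}$, and condition $\mathfrak{M}^{(p+1)}$-(1) follows directly from \eqref{e: FW_U} applied to $n = p+1$.

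The core step is $\mathfrak{M}^{(p+1)}$-(2), which I first prove in a fixed-$T$ version. Given $\delta > 0$, pick $T = T(\delta)$ so large that $\mathbf{y}^{(p)}(T)$ is within $\delta$ in TV of its unique invariant distribution $\pi_\mathcal{M}$ on $\mathcal{M}$ from every starting state in $\mathcal{M}$; from the tree construction one has $\pi_\mathcal{M}(\mathcal{M}') = \nu(\mathcal{M}')/\nu(\mathcal{M})$ for $\mathcal{M}' \subset \mathcal{M}$. Given $\bm{x} \in \mathcal{E}(\mathcal{M})$, let $\mathcal{M}'' \in \mathscr{V}^{(p)}$ be the unique element with $\mathcal{M}'' \subset \mathcal{M}$ and $\bm{x} \in \mathcal{E}(\mathcal{M}'')$. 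Applying $\mathfrak{C}_{\mathrm{TV}}^{(p)}$ at time $T$ yields
\[
d_{\mathrm{TV}}\!\left(\mu_{\epsilon,\bm{x}}^{(p),\,T},\; \sum_{\mathcal{M}' \in \mathscr{V}^{(p)}} \mathcal{Q}_{\mathcal{M}''}^{(p)}\!\left[\mathbf{y}^{(p)}(T) = \mathcal{M}'\right] \mu_\epsilon^{\mathcal{E}(\mathcal{M}')}\right) = o_\epsilon(1).
\]
A Laplace expansion yields $\mu_\epsilon(\mathcal{E}(\mathcal{M}'))/\mu_\epsilon(\mathcal{E}(\mathcal{M})) \to \nu(\mathcal{M}')/\nu(\mathcal{M}) = \pi_\mathcal{M}(\mathcal{M}')$, so $\sum_{\mathcal{M}' \subset \mathcal{M}} \pi_\mathcal{M}(\mathcal{M}')\,\mu_\epsilon^{\mathcal{E}(\mathcal{M}')}$ is asymptotically $\mu_\epsilon^{\mathcal{E}(\mathcal{M})}$ in TV; Lemma \ref{l: U_M} further identifies $\mu_\epsilon^{\mathcal{E}(\mathcal{M})}$ with $\mu_\epsilon^{\mathcal{U}^{(p+1)}(\mathcal{M})}$ asymptotically. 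Coupling $\bm{x}_\epsilon$ with $\bm{x}_\epsilon^{\mathcal{U}^{(p+1)}(\mathcal{M})}$ on $\{\tau_{\partial\mathcal{U}^{(p+1)}(\mathcal{M})} > \theta_\epsilon^{(p)} T\}$ (whose complement is $o_\epsilon(1)$ by $\mathfrak{M}^{(p+1)}$-(1)) transfers the bound to the reflected process, giving an error of order $\delta + o_\epsilon(1)$ at time $\theta_\epsilon^{(p)} T$.

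The main obstacle is upgrading this fixed-$T$ bound to the full $\rho_\epsilon = \theta_\epsilon^{(p)}\,s_\epsilon$, since $\mathfrak{C}_{\mathrm{TV}}^{(p)}$ controls only bounded-time distributions. The remedy is to exploit that $\mu_\epsilon^{\mathcal{U}^{(p+1)}(\mathcal{M})}$ is the \emph{exact} invariant law of the reflected dynamics: the monotonicity in $t$ of $d_{\mathrm{TV}}(\bm{x}_\epsilon^{\mathcal{U}^{(p+1)}(\mathcal{M})}(t;\bm{x}),\,\mu_\epsilon^{\mathcal{U}^{(p+1)}(\mathcal{M})})$ (Lemma \ref{l: d_TV} applied to the reflected process) extends the bound from $\theta_\epsilon^{(p)} T(\delta)$ to any later time, in particular to $\rho_\epsilon$ for all sufficiently small $\epsilon$. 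Letting $\delta \downarrow 0$ along a diagonal sequence then yields $\mathfrak{M}^{(p+1)}$-(2), completing the proof of (1).
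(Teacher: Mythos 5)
Your proof of part (1) is correct and essentially matches the paper's route: apply $\mathfrak{C}_{\mathrm{TV}}^{(p)}$ at a fixed horizon $T$, identify the limiting mixture with $\mu_\epsilon^{\mathcal{E}(\mathcal{M})}$ via Laplace asymptotics and Lemma \ref{l: U_M}, couple the free and reflected processes (using the escape estimate $\mathfrak{M}^{(p+1)}$-(1) from \eqref{e: FW_U}), and then stretch from the fixed horizon $\theta_\epsilon^{(p)}T$ to $\rho_\epsilon$ via monotonicity of the total variation distance to the \emph{reflected} invariant law. In fact your last step is cleaner than the paper's: you apply monotonicity to $\bm{x}_\epsilon^{\mathcal{U}^{(p+1)}(\mathcal{M})}$, for which $\mu_\epsilon^{\mathcal{U}^{(p+1)}(\mathcal{M})}$ genuinely is invariant, whereas the paper invokes Lemma \ref{l: d_TV-1} to bound $d_{\mathrm{TV}}(\bm{x}_\epsilon(\rho_\epsilon;\bm{y}),\mu_\epsilon^{\mathcal{U}^{(p+1)}(\mathcal{M})})$ by the same quantity at time $\theta_\epsilon^{(p)}t$, and $\mu_\epsilon^{\mathcal{U}^{(p+1)}(\mathcal{M})}$ is not invariant for the unreflected $\bm{x}_\epsilon$, so that step as written needs the same rerouting through the coupling that you do.

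The gap is in part (2). You assert that, because $\bm{m}$ is the unique critical point in $\mathcal{U}^{(1)}(\bm{m})$, the reflected diffusion $\bm{x}_\epsilon^{\mathcal{U}^{(1)}(\bm{m})}$ is ``exactly the Barrera--Jara framework (formalized in the paper as Theorem \ref{t: mix_F})'' and that this theorem yields $d_{\mathrm{TV}}(\bm{x}_\epsilon^{\mathcal{U}^{(1)}(\bm{m})}(\epsilon^{-1};\bm{x}),\mu_\epsilon^{\mathcal{U}^{(1)}(\bm{m})})\to 0$ directly. It does not: Theorem \ref{t: mix_F} concerns the \emph{unreflected} process $\bm{x}_\epsilon^{\mathrm{F}}$ on all of $\mathbb{R}^d$, driven by the globally modified drift $\bm{b}_0$, and its conclusion is convergence to $\mu_\epsilon^{\mathrm{F}}$, which is a Gaussian-type measure on $\mathbb{R}^d$, not the conditioned measure $\mu_\epsilon^{\mathcal{U}^{(1)}(\bm{m})}$. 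To deduce $\mathfrak{M}^{(1)}$-(2) you still need a three-term triangle inequality comparing $\bm{x}_\epsilon^{\mathcal{U}^{(1)}(\bm{m})}(\rho_\epsilon)$ to $\bm{x}_\epsilon^{\mathrm{F}}(\rho_\epsilon)$ (controlled by a coupling on the event $\{\tau_{\partial\mathcal{U}^{(1)}(\bm{m})}>\rho_\epsilon\}$, whose complement is small by \eqref{e: FW_U}), then to $\mu_\epsilon^{\mathrm{F}}$ (Theorem \ref{t: mix_F}), and finally $\mu_\epsilon^{\mathrm{F}}$ to $\mu_\epsilon^{\mathcal{U}^{(1)}(\bm{m})}$ (a Laplace-type comparison, which is display (4.10) in \cite{LLS-1st}). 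This is the same flavor of coupling you spell out in part (1), so the fix is routine, but as written the claim ``this yields the desired estimate'' skips a genuine argument.
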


\begin{corollary}
\label{c: TV}Conditions $\mathfrak{C}_{{\rm TV}}^{(1)},\,\dots,\,\mathfrak{C}_{{\rm TV}}^{(\mathfrak{q})}$
hold.
\end{corollary}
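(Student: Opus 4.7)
The plan is to prove the corollary by induction on $p \in \llbracket 1,\,\mathfrak{q}\rrbracket$, combining Proposition \ref{p: TV} (which converts finite-dimensional convergence plus mixing into total variation convergence) with the two parts of Proposition \ref{p: TV-mix}. The only extra ingredient needed is that $\mathfrak{C}_{\mathrm{fdd}}^{(p)}$ holds for every $p \in \llbracket 1,\,\mathfrak{q}\rrbracket$, which is exactly \cite[Theorem 3.1]{LLS-2nd} (the convergence in finite-dimensional distributions that underlies the whole tree construction), so I would simply invoke it at the start.

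For the base case $p=1$, I would apply Proposition \ref{p: TV-mix}(2) to get $\mathfrak{M}^{(1)}$ directly (with $\rho_{\epsilon} = \epsilon^{-1} \prec \theta_{\epsilon}^{(1)}$, since $d^{(1)} > 0$). Together with $\mathfrak{C}_{\mathrm{fdd}}^{(1)}$, Proposition \ref{p: TV} immediately yields $\mathfrak{C}_{\mathrm{TV}}^{(1)}$.

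For the inductive step, assume $\mathfrak{C}_{\mathrm{TV}}^{(p)}$ holds for some $p \in \llbracket 1,\,\mathfrak{q}-1\rrbracket$. Then by Proposition \ref{p: TV-mix}(1), $\mathfrak{M}^{(p+1)}$ holds. Combining this with $\mathfrak{C}_{\mathrm{fdd}}^{(p+1)}$ and applying Proposition \ref{p: TV} once more, we obtain $\mathfrak{C}_{\mathrm{TV}}^{(p+1)}$. This closes the induction.

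There is essentially no obstacle here beyond bookkeeping: all the analytical content has been offloaded to Proposition \ref{p: TV} (in Section \ref{sec: meta_TV}) and Proposition \ref{p: TV-mix} (proved later in Section \ref{sec: pf_p_TV-mix}), together with the finite-dimensional convergence already available from \cite{LLS-1st,LLS-2nd}. The only thing worth double-checking is the compatibility of the hypotheses of Proposition \ref{p: TV}: that $\mathscr{U}^{(p)}$ satisfies \eqref{e: E-U}, which is precisely Lemma \ref{l: U_M}, and that the time scales $\theta_{\epsilon}^{(p)}$ and valleys $\mathscr{E}^{(p)}$ used in $\mathfrak{C}_{\mathrm{fdd}}^{(p)}$, $\mathfrak{M}^{(p)}$, and $\mathfrak{C}_{\mathrm{TV}}^{(p)}$ coincide — both of which are built into the definitions given just before the statement of Proposition \ref{p: TV-mix}.
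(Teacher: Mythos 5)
Your proposal is correct and follows exactly the same argument as the paper: invoke \cite[Theorem 3.1]{LLS-2nd} for the finite-dimensional convergences, establish the base case via Proposition \ref{p: TV-mix}-(2) and Proposition \ref{p: TV}, and close the induction via Proposition \ref{p: TV-mix}-(1) and Proposition \ref{p: TV}. You have simply spelled out the induction (and the compatibility check via Lemma \ref{l: U_M}) more explicitly than the paper's terse proof does.
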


\begin{proof}
By \cite[Theorem 3.1]{LLS-2nd}, $\mathfrak{C}_{{\rm fdd}}^{(1)},\,\dots,\,\mathfrak{C}_{{\rm fdd}}^{(\mathfrak{q})}$
hold. By Propositions \ref{p: TV-mix}-(2) and \ref{p: TV}, $\mathfrak{C}_{{\rm TV}}^{(1)}$
holds. Finally, the induction argument and Propositions \ref{p: TV-mix},
\ref{p: TV} complete the proof.
\end{proof}

\subsection{Generalization of starting points}

Note that in $\mathfrak{C}_{{\rm TV}}^{(p)}$, $p\in\llbracket1,\,\mathfrak{q}\rrbracket$,
the starting points are restricted to $\mathcal{E}(\mathcal{M})$,
$\mathcal{M}\in\mathscr{V}^{(p)}$. In this subsection, we generalize
the starting points. We first recall the following condition corresponding
to \cite[Definition 3.10]{LLS-2nd}, which is proved in \cite[Proposition 3.13]{LLS-2nd}.
For $p\in\llbracket1,\,\mathfrak{q}\rrbracket$, denote by
\[
{\color{blue}\mathcal{E}^{(p)}}:=\bigcup_{\mathcal{M}\in\mathscr{V}^{(p)}}\mathcal{E}(\mathcal{M})\,.
\]

\begin{proposition}[{\cite[Proposition 3.13]{LLS-2nd}}]
\label{p: Cond_H}For $p\in\llbracket1,\,\mathfrak{q}\rrbracket$,
we have the following.
\begin{enumerate}
\item For all sequence $(\alpha_{\epsilon})_{\epsilon>0}$ such that $\alpha_{\epsilon}\succ\theta_{\epsilon}^{(p-1)}$,
\begin{equation}
\lim_{\epsilon\to0}\max_{\mathcal{M}\in\mathscr{N}^{(p)}}\sup_{\boldsymbol{x}\in\mathcal{E}(\mathcal{M})}\mathbb{P}_{\boldsymbol{x}}^{\epsilon}\left[\tau_{\mathcal{E}^{(p)}}>\alpha_{\epsilon}\right]=0\,.\label{eq:Hp-1}
\end{equation}
\item For all $\mathcal{M}\in\mathscr{N}^{(p)}$ and $\mathcal{M}'\in\mathscr{V}^{(p)}$,
\begin{equation}
\lim_{\epsilon\to0}\sup_{\boldsymbol{x}\in\mathcal{E}(\mathcal{M})}\left|\,\mathbb{P}_{\boldsymbol{x}}^{\epsilon}[\tau_{\mathcal{E}^{(p)}}=\tau_{\mathcal{E}(\mathcal{M}')}]-\widehat{\mathcal{Q}}_{\mathcal{M}}^{(p)}[\tau_{\mathscr{V}^{(p)}}=\tau_{\mathcal{M}'}\,]\,\right|=0\,.\label{eq:Hp-2}
\end{equation}
\end{enumerate}
\end{proposition}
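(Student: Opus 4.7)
The plan is to prove both assertions by induction on $p$, leveraging the hierarchical tree-structure together with the finite-dimensional convergence $\mathfrak{C}_{\rm fdd}^{(p)}$ already established in \cite[Theorem 3.1]{LLS-2nd}. The base case $p=1$ holds vacuously since $\mathscr{N}^{(1)}=\varnothing$. In the inductive step for $p+1$, one writes $\mathscr{N}^{(p+1)}=\mathscr{N}^{(p)}\cup\mathscr{T}^{(p)}$ and treats the two kinds of transient states separately.

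For a state $\mathcal{M}\in\mathscr{T}^{(p)}\subset\mathscr{V}^{(p)}$, I would apply $\mathfrak{C}_{\rm fdd}^{(p)}$ at a fixed but sufficiently large time $t>0$. Since ${\bf y}^{(p)}$ is a finite Markov chain, from any transient state its mass leaks into $\bigcup_{j}\mathscr{R}_{j}^{(p)}$ exponentially fast in $t$, so for $t$ large the chain lies in a closed irreducible class with probability at least $1-\eta$; and by construction \eqref{e: M_j-R_j} these closed classes are precisely the elements of $\mathscr{V}^{(p+1)}$, giving $\bigcup_{\mathcal{M}_{1}\in\bigcup_{j}\mathscr{R}_{j}^{(p)}}\mathcal{E}(\mathcal{M}_{1})\subseteq\mathcal{E}^{(p+1)}$. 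Combining with $\alpha_{\epsilon}\succ\theta_{\epsilon}^{(p)}\ge\theta_{\epsilon}^{(p)}t$ yields $\mathbb{P}_{\bm x}^{\epsilon}[\tau_{\mathcal{E}^{(p+1)}}\le\alpha_{\epsilon}]\ge1-\eta-o(1)$, uniformly in $\bm x\in\mathcal{E}(\mathcal{M})$ because of the supremum in $\mathfrak{C}_{\rm fdd}^{(p)}$, and sending $\eta\downarrow 0$ completes this case.

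For a state $\mathcal{M}\in\mathscr{N}^{(p)}$, I invoke the inductive hypothesis with an auxiliary scale $\beta_{\epsilon}$ satisfying $\theta_{\epsilon}^{(p-1)}\prec\beta_{\epsilon}\prec\theta_{\epsilon}^{(p)}$ to conclude that $\bm x_{\epsilon}(\tau_{\mathcal{E}^{(p)}})\in\mathcal{E}(\mathcal{M}')$ for some $\mathcal{M}'\in\mathscr{V}^{(p)}$ by time $\beta_{\epsilon}$ with probability $1-o(1)$. The strong Markov property at $\tau_{\mathcal{E}^{(p)}}$, followed by the previous case applied at $\mathcal{M}'$ (trivial if $\mathcal{M}'$ already belongs to a closed irreducible class), then fits inside the remaining budget $\alpha_{\epsilon}-\beta_{\epsilon}\succ\theta_{\epsilon}^{(p)}$.

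Part (2) is handled by the same inductive scheme, but now exploiting the fact that $\widehat{\bf y}^{(p)}$ is constructed so that its first entry distribution into $\mathscr{V}^{(p)}$ asymptotically matches that of $\bm x_{\epsilon}$ on $\mathcal{E}^{(p)}$. Concretely, one analyses the successive visits of $\bm x_{\epsilon}$ to valleys $\mathcal{E}(\mathcal{M}'')$, $\mathcal{M}''\in\mathscr{S}^{(p)}$, until the first time a valley in $\mathscr{V}^{(p)}$ is reached, and uses an $\mathfrak{C}_{\rm fdd}$-type statement for $\widehat{\bf y}^{(p)}$ together with the strong Markov property to identify the trace of $\bm x_{\epsilon}$ on these valleys with $\widehat{\bf y}^{(p)}$ up to the first hit of $\mathscr{V}^{(p)}$. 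The main obstacle is controlling the number of oscillations between transient valleys in $\mathscr{N}^{(p)}$ within the time window under consideration; one must show that with probability $1-o(1)$ this number is bounded by a constant depending only on the tree-structure and not on $\epsilon$, which ultimately rests on the lower-level mixing condition $\mathfrak{M}^{(p-1)}$ (cf. Proposition \ref{p: TV-mix}) and on sharp capacity estimates between transient and stable valleys.
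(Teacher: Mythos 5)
The paper does not prove this proposition. It is imported verbatim from the companion work: the bracketed attribution \textup{[\cite[Proposition 3.13]{LLS-2nd}]} in the proposition header, and the sentence immediately preceding it (``which is proved in \cite[Proposition 3.13]{LLS-2nd}''), make clear that the statement is cited as a black box, not established here. So there is no in-paper proof to compare your sketch against; you are attempting to re-derive a result that this article treats as an external input.

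Assessed on its own terms, your sketch has a sensible skeleton --- the induction on $p$, the vacuous base case $\mathscr{N}^{(1)}=\varnothing$, the split $\mathscr{N}^{(p+1)}=\mathscr{N}^{(p)}\cup\mathscr{T}^{(p)}$, and the use of $\mathfrak{C}_{\rm fdd}^{(p)}$ at a large fixed $t$ to handle $\mathcal{M}\in\mathscr{T}^{(p)}$ --- and all of that is plausible for part~(1). The problems are in part~(2) and in the logical dependencies you invoke. First, part~(2) is where essentially all the content lies, and your paragraph names the difficulty (identifying the trace of $\bm{x}_\epsilon$ on the union of valleys with $\widehat{\bf y}^{(p)}$ up to $\tau_{\mathscr{V}^{(p)}}$, controlling the number of oscillations among transient valleys) rather than resolving it; there is no mechanism offered for why the first-entry distribution into $\mathscr{V}^{(p)}$ should match $\widehat{\mathcal Q}^{(p)}_{\mathcal M}[\tau_{\mathscr{V}^{(p)}}=\tau_{\mathcal M'}]$, and $\mathfrak{C}_{\rm fdd}^{(p)}$ is a statement about ${\bf y}^{(p)}$ on $\mathscr{V}^{(p)}$, not about the auxiliary chain $\widehat{\bf y}^{(p)}$ on $\mathscr{S}^{(p)}$, so it does not apply as stated to starting points in $\mathcal{E}(\mathcal{M})$ with $\mathcal{M}\in\mathscr{N}^{(p)}$. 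Second, your final appeal to ``the lower-level mixing condition $\mathfrak{M}^{(p-1)}$ (cf. Proposition~\ref{p: TV-mix})'' is circular within the present paper's architecture: Proposition~\ref{p: TV-mix} feeds into Corollary~\ref{c: TV}, whose proof at level $p$ needs exactly Proposition~\ref{p: Cond_H} (it is the input for generalizing starting points in Lemma~\ref{l: main1}). If you want a self-contained proof, you would need to go back to the potential-theoretic/capacity estimates of \cite{LLS-2nd} rather than lean on downstream results from this paper.
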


Now, we extend the starting point to all neighborhoods of local minima.
\begin{lemma}
\label{l: main1}Let $p\in\llbracket1,\,\mathfrak{q}\rrbracket$,
$\bm{m}\in\mathcal{M}_{0}$, and $t>0$. Then, for all $\bm{x}\in\mathcal{E}(\bm{m})$
and any sequence $(t_{\epsilon})_{\epsilon>0}$ such that $t_{\epsilon}\to t$
as $\epsilon\to0$,
\[
\mathbb{P}_{\bm{x}}^{\epsilon}\left[\bm{x}_{\epsilon}(\theta_{\epsilon}^{(p)}\,t_{\epsilon})\in\mathcal{A}\right]=\sum_{\mathcal{M}'\in\mathscr{V}^{(p)}}\mathcal{Q}_{\mathfrak{a}_{\bm{m}}^{(p-1)}}^{(p)}\left[\,{\bf y}^{(p)}(t)=\mathcal{M}'\,\right]\,\mu_{\epsilon}^{\mathcal{E}(\mathcal{M}')}(\mathcal{A})+R_{\epsilon}(\bm{x},\,\mathcal{A})\,,
\]
where
\[
\limsup_{\epsilon\to0}\sup_{\bm{x}\in\mathcal{E}(\bm{m})}\sup_{\mathcal{A}\subset\mathbb{R}^{d}}|\,R_{\epsilon}(\bm{x},\,\mathcal{A})\,|=0\,.
\]
\end{lemma}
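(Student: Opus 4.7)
Set $\mathcal{M} := \mathcal{M}(p,\bm{m})$. The plan is to split according to whether $\mathcal{M} \in \mathscr{V}^{(p)}$ or $\mathcal{M} \in \mathscr{N}^{(p)}$. In the first case, $\mathfrak{a}_{\bm{m}}^{(p-1)} = \delta_{\mathcal{M}}$ and $\mathcal{E}(\bm{m}) \subset \mathcal{E}(\mathcal{M})$, so the claim reduces to Corollary \ref{c: TV} applied with initial valley $\mathcal{E}(\mathcal{M})$. From now on, assume $\mathcal{M} \in \mathscr{N}^{(p)}$; the idea is that we first must wait for the process to reach a deep valley $\mathcal{E}(\mathcal{M}')$, $\mathcal{M}' \in \mathscr{V}^{(p)}$, before $\mathfrak{C}_{\mathrm{TV}}^{(p)}$ becomes applicable.

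Fix a sequence $\alpha_{\epsilon}$ with $\theta_{\epsilon}^{(p-1)} \prec \alpha_{\epsilon} \prec \theta_{\epsilon}^{(p)}$ and set $\tau := \tau_{\mathcal{E}^{(p)}}$. By Proposition \ref{p: Cond_H}-(1), $\mathbb{P}_{\bm{x}}^{\epsilon}[\tau > \alpha_{\epsilon}] = o(1)$ uniformly in $\bm{x} \in \mathcal{E}(\bm{m}) \subset \mathcal{E}(\mathcal{M})$. Using the strong Markov property at $\tau$ and absorbing the event $\{\tau > \alpha_{\epsilon}\}$ into an error, I would decompose
\[
\mathbb{P}_{\bm{x}}^{\epsilon}[\bm{x}_{\epsilon}(\theta_{\epsilon}^{(p)} t_{\epsilon}) \in \mathcal{A}] = \sum_{\mathcal{M}' \in \mathscr{V}^{(p)}} \mathbb{E}_{\bm{x}}^{\epsilon}\bigl[ {\bf 1}\{\tau \le \alpha_{\epsilon},\, \tau = \tau_{\mathcal{E}(\mathcal{M}')}\} \, \mathbb{P}_{\bm{x}_{\epsilon}(\tau)}^{\epsilon}[\bm{x}_{\epsilon}(\theta_{\epsilon}^{(p)} t_{\epsilon} - \tau) \in \mathcal{A}] \bigr] + o(1).
\]
On the event $\{\tau \le \alpha_{\epsilon}\}$, the remaining time equals $\theta_{\epsilon}^{(p)} s_{\epsilon}$ with $s_{\epsilon} := t_{\epsilon} - \tau/\theta_{\epsilon}^{(p)}$ lying in the deterministic shrinking window $[t_{\epsilon} - \alpha_{\epsilon}/\theta_{\epsilon}^{(p)},\, t_{\epsilon}]$ around $t$, while $\bm{x}_{\epsilon}(\tau) \in \mathcal{E}(\mathcal{M}')$.

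A standard subsequence argument upgrades Corollary \ref{c: TV} from convergence along each fixed deterministic sequence $s_{\epsilon} \to t$ to convergence uniform over all sequences contained in such a shrinking window; combined with continuity of $s \mapsto \mathcal{Q}_{\mathcal{M}'}^{(p)}[{\bf y}^{(p)}(s) = \mathcal{M}'']$ at $s=t$, this yields, uniformly in $\bm{y} \in \mathcal{E}(\mathcal{M}')$, in $s$ within that window and in $\mathcal{A}$,
\[
\mathbb{P}_{\bm{y}}^{\epsilon}[\bm{x}_{\epsilon}(\theta_{\epsilon}^{(p)} s) \in \mathcal{A}] = \sum_{\mathcal{M}'' \in \mathscr{V}^{(p)}} \mathcal{Q}_{\mathcal{M}'}^{(p)}\bigl[{\bf y}^{(p)}(t) = \mathcal{M}''\bigr] \, \mu_{\epsilon}^{\mathcal{E}(\mathcal{M}'')}(\mathcal{A}) + o(1).
\]
Plugging this in, combining with $\mathbb{P}_{\bm{x}}^{\epsilon}[\tau \le \alpha_{\epsilon},\, \tau = \tau_{\mathcal{E}(\mathcal{M}')}] = \mathfrak{a}_{\bm{m}}^{(p-1)}(\mathcal{M}') + o(1)$ from Proposition \ref{p: Cond_H}-(2), and swapping the two finite sums via
\[
\sum_{\mathcal{M}' \in \mathscr{V}^{(p)}} \mathfrak{a}_{\bm{m}}^{(p-1)}(\mathcal{M}')\, \mathcal{Q}_{\mathcal{M}'}^{(p)}\bigl[{\bf y}^{(p)}(t) = \mathcal{M}''\bigr] = \mathcal{Q}_{\mathfrak{a}_{\bm{m}}^{(p-1)}}^{(p)}\bigl[{\bf y}^{(p)}(t) = \mathcal{M}''\bigr],
\]
produces the stated identity with a uniform $o(1)$ error.

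The main technical obstacle is upgrading $\mathfrak{C}_{\mathrm{TV}}^{(p)}$ from convergence along each fixed sequence to uniform control accommodating the random argument $s_{\epsilon}(\tau)$. This is resolved by the a priori bound $\tau \le \alpha_{\epsilon} \ll \theta_{\epsilon}^{(p)}$, which confines $s_{\epsilon}(\tau)$ to a deterministic window shrinking to $\{t\}$ and thereby reduces the random issue to a deterministic uniform supremum handled by a routine subsequence extraction.
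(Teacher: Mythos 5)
Your proof is correct and follows essentially the same route as the paper: split on whether $\mathcal{M}(p,\bm{m})\in\mathscr{V}^{(p)}$ (direct from $\mathfrak{C}_{\rm TV}^{(p)}$) or $\mathcal{M}(p,\bm{m})\in\mathscr{N}^{(p)}$, and in the latter case apply the strong Markov property at $\tau_{\mathcal{E}^{(p)}}$, then use Proposition \ref{p: Cond_H}-(1),(2) together with $\mathfrak{C}_{\rm TV}^{(p)}$, and finally recognize the double sum as $\mathcal{Q}_{\mathfrak{a}_{\bm{m}}^{(p-1)}}^{(p)}$ via the definition \eqref{e: def_a}. The only difference is that you make explicit (via the subsequence/shrinking-window argument) the justification for applying $\mathfrak{C}_{\rm TV}^{(p)}$ with the random residual time $t_{\epsilon}-\tau_{\mathcal{E}^{(p)}}/\theta_{\epsilon}^{(p)}$, a point the paper's proof invokes silently; this is a welcome clarification and does not change the substance of the argument.
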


\begin{proof}
Let $\mathcal{M}=\mathcal{M}(p,\,\bm{m})$. If $\mathcal{M}\in\mathscr{V}^{(p)}$, since $\mathfrak{a}_{\bm{m}}^{(p-1)}(\mathcal{M}'')={\bf 1}\{\mathcal{M}''=\mathcal{M}(p,\,\bm{m})\}$, the claim holds.
Now, suppose that $\mathcal{M}\in\mathscr{N}^{(p)}$ and $\bm{x}\in\mathcal{E}(\bm{m})$.
Let $\mathcal{A}\subset\mathbb{R}^{d}$ be a Lebesgue measurable subset.
It suffices to prove for $1$ dimensional case. Fix $t>0$ and a sequence
$t_{\epsilon}\to t$. Let $\theta_{\epsilon}^{(p-1)}\prec\alpha_{\epsilon}\prec\theta_{\epsilon}^{(p)}$.
Then, we have
\begin{equation}
\mathbb{P}_{\bm{x}}^{\epsilon}\left[\bm{x}_{\epsilon}(\theta_{\epsilon}^{(p)}\,t_{\epsilon})\in\mathcal{A}\right]=\mathbb{P}_{\bm{x}}^{\epsilon}\left[\bm{x}_{\epsilon}(\theta_{\epsilon}^{(p)}\,t_{\epsilon})\in\mathcal{A},\,\tau_{\mathcal{E}^{(p)}}\le\alpha_{\epsilon}\right]+R_{\epsilon}^{(1)}(\bm{x},\,\mathcal{A})\label{e: pf_l_main-1}
\end{equation}
where
\[
|\,R_{\epsilon}^{(1)}(\bm{x},\,\mathcal{A})\,|\le\mathbb{P}_{\boldsymbol{x}}^{\epsilon}\left[\tau_{\mathcal{E}^{(p)}}>\alpha_{\epsilon}\right]\,.
\]
By the strong Markov property, the last probability of \eqref{e: pf_l_main-1}
is equal to
\begin{align*}
 & \mathbb{E}_{\bm{x}}^{\epsilon}\left[\mathbb{P}_{\bm{x}_{\epsilon}(\tau_{\mathcal{E}^{(p)}})}^{\epsilon}\left[\,\bm{x}_{\epsilon}(\theta_{\epsilon}^{(p)}\,t_{\epsilon}-\tau_{\mathcal{E}^{(p)}})\in\mathcal{A}\,\right]\,{\bf 1}\{\,\tau_{\mathcal{E}^{(p)}}\le\alpha_{\epsilon}\,\}\right]\\
 & =\sum_{\mathcal{M}''\in\mathscr{V}^{(p)}}\mathbb{E}_{\bm{x}}^{\epsilon}\left[\mathbb{P}_{\bm{x}_{\epsilon}(\tau_{\mathcal{E}(\mathcal{M}'')})}^{\epsilon}\left[\,\bm{x}_{\epsilon}(\theta_{\epsilon}^{(p)}\,t_{\epsilon}-\tau_{\mathcal{E}^{(p)}})\in\mathcal{A}\,\right]\,{\bf 1}\{\,\tau_{\mathcal{E}^{(p)}}=\tau_{\mathcal{E}(\mathcal{M}'')}\le\alpha_{\epsilon}\,\}\right]
\end{align*}
By $\mathfrak{C}_{{\rm TV}}^{(p)}$, the sum of the above displayed
equation can be written as
\begin{equation}
\begin{aligned} & \sum_{\mathcal{M}''\in\mathscr{V}^{(p)}}\mathbb{E}_{\bm{x}}^{\epsilon}\left[\sum_{\mathcal{M}'\in\mathscr{V}^{(p)}}\mathcal{Q}_{\mathcal{M}''}^{(p)}\left[\,{\bf y}^{(p)}(t)=\mathcal{M}'\,\right]\,\mu_{\epsilon}^{\mathcal{E}(\mathcal{M}')}(\mathcal{A})\,{\bf 1}\{\mathcal{B}_\epsilon(\mathcal{M}'')\}\right]+R_{\epsilon}^{(2)}(\bm{x},\,\mathcal{A})\\
 & =\sum_{\mathcal{M}''\in\mathscr{V}^{(p)}}\mathbb{P}_{\bm{x}}^{\epsilon}\left[\mathcal{B}_\epsilon(\mathcal{M}'')\right]\sum_{\mathcal{M}'\in\mathscr{V}^{(p)}}\mathcal{Q}_{\mathcal{M}''}^{(p)}\left[\,{\bf y}^{(p)}(t)=\mathcal{M}'\,\right]\,\mu_{\epsilon}^{\mathcal{E}(\mathcal{M}')}(\mathcal{A})+R_{\epsilon}^{(2)}(\bm{x},\,\mathcal{A})\,,
\end{aligned}
\label{e: pf_l_main-2}
\end{equation}
where $\mathcal{B}_\epsilon(\mathcal{M}''):=\{\tau_{\mathcal{E}^{(p)}}=\tau_{\mathcal{E}(\mathcal{M}'')}\le\alpha_{\epsilon}\}$ and
\begin{equation}
\limsup_{\epsilon\to0}\sup_{\bm{x}\in\mathcal{E}(\bm{m})}\sup_{\mathcal{A}\subset\mathbb{R}^{d}}\,|\,R_{\epsilon}^{(2)}(\bm{x},\,\mathcal{A})\,|=0\,.\label{e: pf_l_main-3}
\end{equation}
 By \eqref{eq:Hp-2}, the last sum of \eqref{e: pf_l_main-2} can
be written as
\begin{align*}
 & \sum_{\mathcal{M}''\in\mathscr{V}^{(p)}}\widehat{\mathcal{Q}}_{\mathcal{M}}^{(p)}[\tau_{\mathscr{V}^{(p)}}=\tau_{\mathcal{M}''}\,]\sum_{\mathcal{M}'\in\mathscr{V}^{(p)}}\mathcal{Q}_{\mathcal{M}''}^{(p)}\left[\,{\bf y}^{(p)}(t)=\mathcal{M}'\,\right]\,\mu_{\epsilon}^{\mathcal{E}(\mathcal{M}')}(\mathcal{A})+R_{\epsilon}^{(3)}(\bm{x},\,\mathcal{A})\\
 & =\sum_{\mathcal{M}'\in\mathscr{V}^{(p)}}\mathcal{Q}_{\mathfrak{a}_{\bm{m}}^{(p-1)}}^{(p)}\left[\,{\bf y}^{(p)}(t)=\mathcal{M}'\,\right]\,\mu_{\epsilon}^{\mathcal{E}(\mathcal{M}')}(\mathcal{A})+R_{\epsilon}^{(3)}(\bm{x},\,\mathcal{A})\,,
\end{align*}
where
\[
|\,R_{\epsilon}^{(3)}(\bm{x},\,\mathcal{A})\,|\le\mathbb{P}_{\bm{x}}^{\epsilon}\left[\tau_{\mathcal{E}^{(p)}}>\alpha_{\epsilon}\right]\,.
\]

So far, we obtain
\[
\mathbb{P}_{\bm{x}}^{\epsilon}\left[\bm{x}_{\epsilon}(\theta_{\epsilon}^{(p)}\,t_{\epsilon})\in\mathcal{A}\right]=\sum_{\mathcal{M}'\in\mathscr{V}^{(p)}}\mathcal{Q}_{\mathfrak{a}_{\bm{m}}^{(p-1)}}^{(p)}\left[\,{\bf y}^{(p)}(t)=\mathcal{M}'\,\right]\,\mu_{\epsilon}^{\mathcal{E}(\mathcal{M}')}(\mathcal{A})+R_{\epsilon}(\bm{x},\,\mathcal{A})
\]
where
\[
|\,R_{\epsilon}(\bm{x},\,\mathcal{A})\,|\le2\mathbb{P}_{\boldsymbol{x}}^{\epsilon}\left[\tau_{\mathcal{E}^{(p)}}>\alpha_{\epsilon}\right]+|\,R_{\epsilon}^{(2)}(\bm{x},\,\mathcal{A})\,|\,.
\]
By \eqref{eq:Hp-1} and \eqref{e: pf_l_main-3}, we have
\[
\limsup_{\epsilon\to0}\sup_{\bm{x}\in\mathcal{E}(\bm{m})}\sup_{\mathcal{A}\subset\mathbb{R}^{d}}|\,R_{\epsilon}(\bm{x},\,\mathcal{A})\,|=0\,,
\]
which completes the proof.
\end{proof}
Next, we generalize the starting point to the domain of attraction. From
now on, for $p\in\llbracket1,\thinspace\mathfrak{q}\rrbracket$, $\bm{m}\in\mathcal{M}_{0}$,
$t>0$, and $\mathcal{M}\in\mathscr{V}^{(p)}$, let us write
\begin{equation}
{\color{blue}P_{t}^{(p),\,\bm{m}}(\mathcal{M})}:=\mathcal{Q}_{\mathfrak{a}_{\bm{m}}^{(p-1)}}^{(p)}\left[\,{\bf y}^{(p)}(t)=\mathcal{M}\,\right]\,.\label{e: P_t-Q}
\end{equation}

\begin{lemma}
\label{l: main2}Fix $p\in\llbracket1,\,\mathfrak{q}\rrbracket$,
compact subset $\mathcal{K}\subset\mathbb{R}^{d}$, and $t>0$. Then,
for all $\bm{x}\in\mathcal{K}$, subset $\mathcal{A}\subset\mathbb{R}^{d}$,
and any sequence $(t_{\epsilon})_{\epsilon>0}$ such that $t_{\epsilon}\to t$
as $\epsilon\to0$, we have
\begin{align*}
 & \mathbb{P}_{\bm{x}}^{\epsilon}\left[\bm{x}_{\epsilon}(\theta_{\epsilon}^{(p)}\,t_{\epsilon})\in\mathcal{A}\right]\\
 & =\sum_{\mathcal{M}\in\mathscr{V}^{(p)}}\sum_{\bm{m}\in\mathcal{M}_{0}}\mathbb{P}_{\bm{x}}^{\epsilon}\left[\tau_{\mathcal{E}(\mathcal{M}_{0})}=\tau_{\mathcal{E}(\bm{m})}\right]P_{t}^{(p),\,\bm{m}}(\mathcal{M})\,\mu_{\epsilon}^{\mathcal{E}(\mathcal{M})}(\mathcal{A})+R_{\epsilon}(\bm{x},\,\mathcal{A})\,,
\end{align*}
where 
\[
\limsup_{\epsilon\to0}\sup_{\bm{x}\in\mathcal{K}}\sup_{\mathcal{A}\subset\mathbb{R}^{d}}|\,R_{\epsilon}(\bm{x},\,\mathcal{A})\,|=0\,.
\]
\end{lemma}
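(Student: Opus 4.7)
The plan is to let the process first hit the union of valleys $\mathcal{E}(\mathcal{M}_{0}):=\bigcup_{\bm m\in\mathcal{M}_{0}}\mathcal{E}(\bm m)$ within a short time scale $\alpha_{\epsilon}\prec\theta_{\epsilon}^{(p)}$, split according to which valley $\mathcal{E}(\bm m)$ is entered first, and then apply Lemma \ref{l: main1} on the remaining window, whose length is still essentially $\theta_{\epsilon}^{(p)}t$. Concretely, fix any sequence with $1\prec\alpha_{\epsilon}\prec\theta_{\epsilon}^{(p)}$, write $\tau_{0}:=\tau_{\mathcal{E}(\mathcal{M}_{0})}$, and apply the strong Markov property at $\tau_{0}$ to obtain
\begin{align*}
\mathbb{P}_{\bm x}^{\epsilon}\!\left[\bm x_{\epsilon}(\theta_{\epsilon}^{(p)}t_{\epsilon})\in\mathcal{A}\right]
 &= \sum_{\bm m\in\mathcal{M}_{0}}\mathbb{E}_{\bm x}^{\epsilon}\!\left[{\bf 1}\{\tau_{0}=\tau_{\mathcal{E}(\bm m)}\le\alpha_{\epsilon}\}\,\mathbb{P}_{\bm x_{\epsilon}(\tau_{0})}^{\epsilon}\!\left[\bm x_{\epsilon}(\theta_{\epsilon}^{(p)}t_{\epsilon}-\tau_{0})\in\mathcal{A}\right]\right] \\
 &\quad + R_{\epsilon}^{(1)}(\bm x,\mathcal{A})\,,
\end{align*}
with $|R_{\epsilon}^{(1)}(\bm x,\mathcal{A})|\le\mathbb{P}_{\bm x}^{\epsilon}[\tau_{0}>\alpha_{\epsilon}]$.

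Two uniform estimates then close the argument. The first is the entry bound
\[
\lim_{\epsilon\to 0}\sup_{\bm x\in\mathcal{K}}\mathbb{P}_{\bm x}^{\epsilon}\big[\tau_{0}>\alpha_{\epsilon}\big]=0\,,
\]
which controls $R_{\epsilon}^{(1)}$ and is a Freidlin--Wentzell-type small-noise statement on the compact set $\mathcal{K}$. The second is a uniform version of Lemma \ref{l: main1} in the time parameter: on the event $\{\tau_{0}\le\alpha_{\epsilon}\}$, the random quantity $s_{\epsilon}:=t_{\epsilon}-\tau_{0}/\theta_{\epsilon}^{(p)}$ lies in the deterministic interval $I_{\epsilon}:=[t_{\epsilon}-\alpha_{\epsilon}/\theta_{\epsilon}^{(p)},\,t_{\epsilon}]$, which collapses to $\{t\}$, so I would show
\[
\lim_{\epsilon\to 0}\sup_{s\in I_{\epsilon}}\,\sup_{\bm y\in\mathcal{E}(\bm m),\,\mathcal{A}\subset\mathbb{R}^{d}}\left|\mathbb{P}_{\bm y}^{\epsilon}[\bm x_{\epsilon}(\theta_{\epsilon}^{(p)}s)\in\mathcal{A}]-\sum_{\mathcal{M}\in\mathscr{V}^{(p)}}P_{t}^{(p),\bm m}(\mathcal{M})\,\mu_{\epsilon}^{\mathcal{E}(\mathcal{M})}(\mathcal{A})\right|=0\,.
\]
This follows from Lemma \ref{l: main1} by a routine subsequence extraction: were it to fail, one would produce a deterministic sequence $s_{\epsilon_{k}}\to t$ and points $(\bm y_{\epsilon_{k}},\mathcal{A}_{\epsilon_{k}})$ contradicting that lemma. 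Applying this bound inside the expectation, the factor $\sum_{\mathcal{M}}P_{t}^{(p),\bm m}(\mathcal{M})\,\mu_{\epsilon}^{\mathcal{E}(\mathcal{M})}(\mathcal{A})$ is deterministic and can be pulled out, leaving the weight $\mathbb{P}_{\bm x}^{\epsilon}[\tau_{0}=\tau_{\mathcal{E}(\bm m)}\le\alpha_{\epsilon}]$, which equals $\mathbb{P}_{\bm x}^{\epsilon}[\tau_{\mathcal{E}(\mathcal{M}_{0})}=\tau_{\mathcal{E}(\bm m)}]$ up to a further error bounded by $\mathbb{P}_{\bm x}^{\epsilon}[\tau_{0}>\alpha_{\epsilon}]$. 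Rearranging the double sum yields the claimed identity.

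The hard part will be the uniform entry estimate over $\mathcal{K}$. Pointwise the conclusion is classical, but $\mathcal{K}$ may intersect stable manifolds of saddles or other non-minimum critical points of $U$, where the deterministic flow \eqref{e: ODE} does not converge to $\mathcal{M}_{0}$, so uniformity cannot be read off from the deterministic picture alone. The proposed route is to cover $\mathcal{K}$ by finitely many open sets on each of which either (i) the ODE trajectories uniformly enter some $\mathcal{E}(\bm m)$ in bounded deterministic time, so classical large-deviation bounds give stochastic entry with probability $1-e^{-c/\epsilon}$; or (ii) one is close to a non-minimum critical point and a local small-noise escape estimate forces the process off the stable manifold and into some basin within any time window $\alpha_{\epsilon}\succ 1$. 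The only other subtle step, upgrading Lemma \ref{l: main1} from a single sequence $t_{\epsilon}\to t$ to uniformity in $s\in I_{\epsilon}$, is handled by the standard diagonal extraction alluded to above.
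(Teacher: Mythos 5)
Your decomposition at $\tau_{\mathcal{E}(\mathcal{M}_{0})}$ on the event $\{\tau_{\mathcal{E}(\mathcal{M}_{0})}\le\alpha_{\epsilon}\}$, followed by Lemma \ref{l: main1} applied on the remaining window and a collection of the first-hitting weights, is precisely the paper's argument, with the paper fixing $\alpha_{\epsilon}=\epsilon^{-1}$. What you flag as the hard part---uniform entry into $\mathcal{E}(\mathcal{M}_{0})$ over $\bm{x}\in\mathcal{K}$---is already in hand: it is exactly Proposition \ref{p: hit_min}-(1), cited from \cite[Corollary 6.2]{LLS-1st}, so your proposed covering argument near non-minimum critical points is superfluous; note also that this proposition controls $\mathbb{P}_{\bm{x}}^{\epsilon}[\tau_{\mathcal{E}(\mathcal{M}_{0})}>C/\epsilon]$, so choosing an arbitrary $\alpha_{\epsilon}\succ1$ is too generous and you should fix $\alpha_{\epsilon}=\epsilon^{-1}$ as the paper does. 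Your subsequence argument upgrading Lemma \ref{l: main1} from fixed sequences $t_{\epsilon}\to t$ to uniformity over $s\in I_{\epsilon}$ is the right justification of a step the paper performs silently at the random time $t_{\epsilon}-\tau_{\mathcal{E}(\mathcal{M}_{0})}/\theta_{\epsilon}^{(p)}$.
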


\begin{proof}
Fix a compact subset $\mathcal{K}\subset\mathbb{R}^{d}$, $p\in\llbracket1,\,\mathfrak{q}\rrbracket$,
and $t>0$. Let $\bm{x}\in\mathcal{K}$ and $\mathcal{A}\subset\mathbb{R}^{d}$.
Then, we have
\begin{equation}
\mathbb{P}_{\bm{x}}^{\epsilon}\left[\bm{x}_{\epsilon}(\theta_{\epsilon}^{(p)}\,t_{\epsilon})\in\mathcal{A}\right]=\mathbb{P}_{\bm{x}}^{\epsilon}\left[\bm{x}_{\epsilon}(\theta_{\epsilon}^{(p)}\,t_{\epsilon})\in\mathcal{A},\,\tau_{\mathcal{E}(\mathcal{M}_{0})}\le\epsilon^{-1}\right]+R_{\epsilon}^{(1)}(\bm{x},\,\mathcal{A})\label{e: pf_l_main2-1}
\end{equation}
where
\[
|\,R_{\epsilon}^{(1)}(\bm{x},\,\mathcal{A})\,|\le\mathbb{P}_{\boldsymbol{x}}^{\epsilon}\left[\tau_{\mathcal{E}(\mathcal{M}_{0})}>\epsilon^{-1}\right]\ .
\]
By the strong Markov property, the last probability of \eqref{e: pf_l_main2-1}
is equal to
\begin{align*}
 & \mathbb{E}_{\bm{x}}^{\epsilon}\left[\mathbb{P}_{\bm{x}_{\epsilon}(\tau_{\mathcal{E}(\mathcal{M}_{0})})}^{\epsilon}\left[\,\bm{x}_{\epsilon}(\theta_{\epsilon}^{(p)}\,t_{\epsilon}-\tau_{\mathcal{E}(\mathcal{M}_{0})})\in\mathcal{A}\,\right]{\bf 1}\{\tau_{\mathcal{E}(\mathcal{M}_{0})}\le\epsilon^{-1}\}\right]\\
 & =\sum_{\bm{m}\in\mathcal{M}_{0}}\mathbb{E}_{\bm{x}}^{\epsilon}\left[\mathbb{P}_{\bm{x}_{\epsilon}(\tau_{\mathcal{E}(\bm{m})})}^{\epsilon}\left[\,\bm{x}_{\epsilon}(\theta_{\epsilon}^{(p)}\,t_{\epsilon}-\tau_{\mathcal{E}(\mathcal{M}_{0})})\in\mathcal{A}\,\right]\,{\bf 1}\{\tau_{\mathcal{E}(\mathcal{M}_{0})}=\tau_{\mathcal{E}(\bm{m})}\le\epsilon^{-1}\}\right]
\end{align*}
By Lemma \ref{l: main1}, the sum of the above displayed equation
can be written as
\begin{align}
 & \sum_{\bm{m}\in\mathcal{M}_{0}}\mathbb{E}_{\bm{x}}^{\epsilon}\left[\sum_{\mathcal{M}\in\mathscr{V}^{(p)}}P_{t}^{(p),\,\bm{m}}(\mathcal{M})\,\mu_{\epsilon}^{\mathcal{E}(\mathcal{M})}(\mathcal{A})\,{\bf 1}\{\tau_{\mathcal{E}(\mathcal{M}_{0})}=\tau_{\mathcal{E}(\bm{m})}\le\epsilon^{-1}\}\right]+R_{\epsilon}^{(2)}(\bm{m},\,\mathcal{A})\nonumber \\
 & =\sum_{\bm{m}\in\mathcal{M}_{0}}\mathbb{P}_{\bm{x}}^{\epsilon}\left[\tau_{\mathcal{E}(\mathcal{M}_{0})}=\tau_{\mathcal{E}(\bm{m})}\le\epsilon^{-1}\right]\sum_{\mathcal{M}\in\mathscr{V}^{(p)}}P_{t}^{(p),\,\bm{m}}(\mathcal{M})\,\mu_{\epsilon}^{\mathcal{E}(\mathcal{M})}(\mathcal{A})+R_{\epsilon}^{(2)}(\bm{x},\,\mathcal{A})\,,\label{e: pf_l_main2-2}
\end{align}
where
\begin{equation}
\limsup_{\epsilon\to0}\sup_{\mathcal{A}\subset\mathbb{R}^{d}}|\,R_{\epsilon}^{(2)}(\bm{m},\,\mathcal{A})\,|=0\,.\label{e: pf_l_main2-3}
\end{equation}
The sum of \eqref{e: pf_l_main2-2} can be written as
\[
\sum_{\mathcal{M}\in\mathscr{V}^{(p)}}\sum_{\bm{m}\in\mathcal{M}_{0}}\mathbb{P}_{\bm{x}}^{\epsilon}\left[\tau_{\mathcal{E}(\mathcal{M}_{0})}=\tau_{\mathcal{E}(\bm{m})}\right]P_{t}^{(p),\,\bm{m}}(\mathcal{M})\,\mu_{\epsilon}^{\mathcal{E}(\mathcal{M})}(\mathcal{A})+R_{\epsilon}^{(3)}(\bm{x},\,\mathcal{A})
\]
where
\[
|\,R_{\epsilon}^{(3)}(\bm{x},\,\mathcal{A})\,|\le\mathbb{P}_{\bm{x}}\left[\tau_{\mathcal{E}(\mathcal{M}_{0})}>\epsilon^{-1}\right]\,.
\]
So far, we obtain
\begin{align*}
 & \mathbb{P}_{\bm{x}}^{\epsilon}\left[\bm{x}_{\epsilon}(\theta_{\epsilon}^{(p)}\,t_{\epsilon})\in\mathcal{A}\right]\\
 & =\sum_{\mathcal{M}\in\mathscr{V}^{(p)}}\sum_{\bm{m}\in\mathcal{M}_{0}}\mathbb{P}_{\bm{x}}^{\epsilon}\left[\tau_{\mathcal{E}(\mathcal{M}_{0})}=\tau_{\mathcal{E}(\bm{m})}\right]P_{t}^{(p),\,\bm{m}}(\mathcal{M})\,\mu_{\epsilon}^{\mathcal{E}(\mathcal{M})}(\mathcal{A})+R_{\epsilon}(\bm{x},\,\mathcal{A})
\end{align*}
where
\[
|\,R_{\epsilon}(\bm{x},\,\mathcal{A})\,|\le2\mathbb{P}_{\bm{x}}\left[\tau_{\mathcal{E}(\mathcal{M}_{0})}>\epsilon^{-1}\right]+\max_{\bm{m}\in\mathcal{M}_{0}}|\,R_{\epsilon}^{(2)}(\bm{m},\,\mathcal{A})\,|
\]
Therefore, by Proposition \ref{p: hit_min}-(1) and \eqref{e: pf_l_main2-3},
we obtain
\[
\limsup_{\epsilon\to0}\sup_{\bm{x}\in\mathcal{K}}\sup_{\mathcal{A}\subset\mathbb{R}^{d}}|\,R_{\epsilon}(\bm{x},\,\mathcal{A})\,|=0\,.
\]
\end{proof}
Now, we are ready to prove Theorem \ref{t: main_TV}.
\begin{proof}[Proof of Theorem \ref{t: main_TV}]
It suffices to prove for $1$ dimensional case. Fix $p\in\llbracket1,\,\mathfrak{q}\rrbracket$,
$\bm{m}\in\mathcal{M}_{0}$, $\bm{x}\in\mathcal{D}(\bm{m})$, $t>0$
and $t_{\epsilon}\to t$. Recall the notation \eqref{e: P_t-Q}. By
Lemma \ref{l: main2}, we have
\begin{align*}
 & \mathbb{P}_{\bm{x}}^{\epsilon}\left[\bm{x}_{\epsilon}(\theta_{\epsilon}^{(p)}\,t_{\epsilon})\in\mathcal{A}\right]\\
 & =\sum_{\mathcal{M}\in\mathscr{V}^{(p)}}\sum_{\bm{m}'\in\mathcal{M}_{0}}\mathbb{P}_{\bm{x}}^{\epsilon}\left[\tau_{\mathcal{E}(\mathcal{M}_{0})}=\tau_{\mathcal{E}(\bm{m}')}\right]\,P_{t}^{(p),\,\bm{m}'}(\mathcal{M})\,\mu_{\epsilon}^{\mathcal{E}(\mathcal{M})}(\mathcal{A})+R_{\epsilon}^{(1)}(\mathcal{A})
\end{align*}
where
\[
\limsup_{\epsilon\to0}\sup_{\mathcal{A}\subset\mathbb{R}^{d}}|\,R_{\epsilon}^{(1)}(\mathcal{A})\,|=0\,.
\]
Therefore, since $\bm{x}\in\mathcal{D}(\bm{m})$, we obtain
\[
\mathbb{P}_{\bm{x}}^{\epsilon}\left[\bm{x}_{\epsilon}(\theta_{\epsilon}^{(p)}\,t_{\epsilon})\in\mathcal{A}\right]=\sum_{\mathcal{M}\in\mathscr{V}^{(p)}}P_{t}^{(p),\,\bm{m}}(\mathcal{M})\,\mu_{\epsilon}^{\mathcal{E}(\mathcal{M})}(\mathcal{A})+R_{\epsilon}(\mathcal{A})
\]
where
\[
|\,R_{\epsilon}(\mathcal{A})\,|\le\mathbb{P}_{\bm{x}}^{\epsilon}\left[\tau_{\mathcal{E}(\mathcal{M}_{0})}\ne\tau_{\mathcal{E}(\bm{m})}\right]+|\,R_{\epsilon}^{(1)}(\mathcal{A})\,|\,.
\]
By Proposition \ref{p: hit_min}-(2),
\[
\limsup_{\epsilon\to0}\sup_{\mathcal{A}\subset\mathbb{R}^{d}}|\,R_{\epsilon}(\mathcal{A})\,|=0\,.
\]
Hence, we conclude
\[
\lim_{\epsilon\to0}d_{{\rm TV}}\left(\bm{x}_{\epsilon}(\theta_{\epsilon}^{(p)}t_{\epsilon};\,\bm{x}),\,\sum_{\mathcal{M}\in\mathscr{V}^{(p)}}P_{t}^{(p),\,\bm{m}}(\mathcal{M})\,\mu_{\epsilon}^{\mathcal{E}(\mathcal{M})}\right)=0\,.
\]
\end{proof}

\section{\label{sec: TV_dist}Total variation distance}

In this section, we prove general properties of total variation distance.
Let $E$ be a certain space (a subset of $\mathbb{R}^{d}$ or finite
set). For any measure $\mu$ on $E$ and a subset $F\subset E$, denote
by $\mu^{F}$ the conditioned measure of $\mu$ on $F$, i.e.,
\[
\mu^{F}(A):=\frac{1}{\mu(F)}\mu(A\cap F)\ \ ;\ \ A\subset E\,.
\]
The first result gives a formula for total variation distance between
conditioned measures. This is crucial to prove the total variation
distance for reduced Markov chain.
\begin{lemma}
\label{l: TV_density}Let $(\pi_{\epsilon})_{\epsilon>0}$ be a sequence
of measure on $E$. Let $A_{i}\subset E$, $i\in\llbracket1,\,n\rrbracket$,
be disjoint sets and let $B\subset E$ satisfy $\cup_{i=1}^{n}A_{i}\subset B$.
Suppose that
\begin{equation}
\nu_{i}:=\lim_{\epsilon\to0}\,\frac{\pi_{\epsilon}(A_{i})}{\pi_{\epsilon}(B)}\in[0,\,1)\,.\label{e: TV_density}
\end{equation}
Then, for any $a_{i}$, $i\in\llbracket1,\,n\rrbracket$, such that
$a_{i}\ge0$ and $\sum_{i=1}^{n}a_{i}=1$, we have
\[
\lim_{\epsilon\to0}d_{{\rm TV}}\left(\sum_{i=1}^{n}a_{i}\,\pi_{\epsilon}^{A_{i}},\,\pi_{\epsilon}^{B}\right)=\sum_{i:a_{i}>\nu_{i}}(a_{i}-\nu_{i})\,.
\]
In addition, if $\sum_{i=1}^{n}\nu_{i}=1$,
\[
\lim_{\epsilon\to0}d_{{\rm TV}}\left(\sum_{i=1}^{n}a_{i}\,\pi_{\epsilon}^{A_{i}},\,\pi_{\epsilon}^{B}\right)=\frac{1}{2}\sum_{i=1}^{n}|\,a_{i}-\nu_{i}\,|\,.
\]
\end{lemma}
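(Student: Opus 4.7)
The plan is to pass to Radon--Nikodym densities with respect to $\pi_{\epsilon}^{B}$. Since the $A_{i}$ are disjoint subsets of $B$, the mixture $\mu_{\epsilon}:=\sum_{i=1}^{n}a_{i}\,\pi_{\epsilon}^{A_{i}}$ is absolutely continuous with respect to $\pi_{\epsilon}^{B}$, with density
$$\frac{d\mu_{\epsilon}}{d\pi_{\epsilon}^{B}}(x)\;=\;\sum_{i=1}^{n}\frac{a_{i}}{\nu_{i,\epsilon}}\,\mathbf{1}_{A_{i}}(x),\qquad \nu_{i,\epsilon}:=\frac{\pi_{\epsilon}(A_{i})}{\pi_{\epsilon}(B)}\,\xrightarrow[\epsilon\to 0]{}\,\nu_{i}\,,$$
while $\pi_{\epsilon}^{B}$ puts mass $\rho_{\epsilon}:=1-\sum_{i}\nu_{i,\epsilon}$ on $B\setminus\bigcup_{i}A_{i}$, on which $\mu_{\epsilon}$ vanishes.

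Applying the standard identity $d_{{\rm TV}}(\mu_{\epsilon},\pi_{\epsilon}^{B})=\tfrac{1}{2}\int|d\mu_{\epsilon}/d\pi_{\epsilon}^{B}-1|\,d\pi_{\epsilon}^{B}$ and splitting the integral over the pieces $A_{1},\dots,A_{n}$ and $B\setminus\bigcup_{i}A_{i}$ yields
$$d_{{\rm TV}}(\mu_{\epsilon},\pi_{\epsilon}^{B})\;=\;\tfrac{1}{2}\Big[\sum_{i=1}^{n}\Big|\tfrac{a_{i}}{\nu_{i,\epsilon}}-1\Big|\,\nu_{i,\epsilon}\,+\,\rho_{\epsilon}\Big]\;=\;\tfrac{1}{2}\Big[\sum_{i=1}^{n}|a_{i}-\nu_{i,\epsilon}|\,+\,\rho_{\epsilon}\Big]\,.$$
Using $\sum_{i}a_{i}=1$ and $\sum_{i}\nu_{i,\epsilon}+\rho_{\epsilon}=1$ gives the signed identity $\sum_{i}(a_{i}-\nu_{i,\epsilon})=\rho_{\epsilon}$, which, after separating positive and negative parts, reads $\sum_{i}|a_{i}-\nu_{i,\epsilon}|=2\sum_{i:\,a_{i}>\nu_{i,\epsilon}}(a_{i}-\nu_{i,\epsilon})-\rho_{\epsilon}$. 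Substituting back collapses the formula to
$$d_{{\rm TV}}(\mu_{\epsilon},\pi_{\epsilon}^{B})\;=\;\sum_{i:\,a_{i}>\nu_{i,\epsilon}}(a_{i}-\nu_{i,\epsilon})\,.$$

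Finally, I take $\epsilon\to 0$. For indices with $a_{i}\neq\nu_{i}$ the sign of $a_{i}-\nu_{i,\epsilon}$ stabilizes and the summand converges to $a_{i}-\nu_{i}$; indices with $a_{i}=\nu_{i}$ contribute at most $|\nu_{i,\epsilon}-\nu_{i}|\to 0$, regardless of which side of the threshold they land on. This proves the first formula. When $\sum_{i}\nu_{i}=1$ one has $\rho_{\epsilon}\to 0$, hence $\sum_{i:a_{i}>\nu_{i}}(a_{i}-\nu_{i})=\sum_{i:a_{i}<\nu_{i}}(\nu_{i}-a_{i})$, which rearranges to $\tfrac{1}{2}\sum_{i}|a_{i}-\nu_{i}|$, giving the second claim. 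The only delicate point is precisely the borderline indices $\{i:a_{i}=\nu_{i}\}$, but their vanishing contribution makes this a non-obstacle; the entire argument is a direct, one-shot density computation with no metastability input required.
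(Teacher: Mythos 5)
Your proof is correct and reaches the same explicit intermediate identity $d_{{\rm TV}}\bigl(\sum_i a_i\pi_\epsilon^{A_i},\pi_\epsilon^B\bigr)=\sum_{i:a_i>\nu_{i,\epsilon}}(a_i-\nu_{i,\epsilon})$ as the paper does, after which both arguments pass to the limit identically. The route to that identity is different, though. The paper starts from the supremum definition $\sup_F|\cdot|$, splits the difference over the pieces $A_1,\dots,A_n$ and $B\setminus\bigcup_iA_i$, and then exhibits the optimizing sets $F_\epsilon=\bigcup_{i:a_i>\nu_{i,\epsilon}}A_i$ (for the positive part) and $F_\epsilon=\bigcup_{i:a_i<\nu_{i,\epsilon}}A_i\cup(B\setminus\bigcup_iA_i)$ (for the negative part), then verifies that the two resulting values agree using $\sum_ia_i=1$. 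You instead pass to the Radon--Nikodym density of the mixture with respect to $\pi_\epsilon^B$ and use the $L^1$ representation $d_{{\rm TV}}=\frac12\int|d\mu_\epsilon/d\pi_\epsilon^B-1|\,d\pi_\epsilon^B$; the integral splits automatically into the same pieces and the same algebra (using $\sum_ia_i=1$ and $\sum_i\nu_{i,\epsilon}+\rho_\epsilon=1$) collapses it. The trade-off: the paper's approach requires one to guess the extremizing set, which is instructive but not automatic; your density computation is mechanical and avoids the need to verify that the $\max$ of two candidate expressions are equal. You are also a bit more explicit than the paper about the borderline indices $\{i:a_i=\nu_i\}$ when passing to the limit, which is a genuine (if minor) point of care, since such an index can hop in and out of the summation set as $\epsilon\to0$; the observation that its contribution is bounded by $|\nu_{i,\epsilon}-\nu_i|\to0$ regardless is exactly what justifies the limit. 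One small implicit hypothesis (present in both proofs) is that $\pi_\epsilon(A_i)>0$ for all $i$ and small $\epsilon$, without which the conditioned measures $\pi_\epsilon^{A_i}$ and your densities $a_i/\nu_{i,\epsilon}$ are undefined; this is harmless in the application but worth noting.
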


\begin{proof}
By definition, we have
\begin{align*}
d_{{\rm TV}}\left(\sum_{i=1}^{n}a_{i}\,\pi_{\epsilon}^{A_{i}},\,\pi_{\epsilon}^{B}\right) & =\sup_{F\subset E}\left|\,\sum_{i=1}^{n}a_{i}\,\pi_{\epsilon}^{A_{i}}(F)-\pi_{\epsilon}^{B}(F)\,\right|\,.\\
 & =\sup_{F\subset E}\left|\,\sum_{i=1}^{n}a_{i}\,\frac{\pi_{\epsilon}(F\cap A_{i})}{\pi_{\epsilon}(A_{i})}-\frac{\pi_{\epsilon}(F\cap B)}{\pi_{\epsilon}(B)}\,\right|\,.
\end{align*}
Since $A_{i}\subset B$ and $A_{i}$ are disjoint, we obtain
\begin{align*}
 & \sum_{i=1}^{n}a_{i}\,\frac{\pi_{\epsilon}(F\cap A_{i})}{\pi_{\epsilon}(A_{i})}-\frac{\pi_{\epsilon}(F\cap B)}{\pi_{\epsilon}(B)}\\
 & =\sum_{i=1}^{n}\frac{\pi_{\epsilon}(F\cap A_{i})}{\pi_{\epsilon}(A_{i})}\left(a_{i}\,-\frac{\pi_{\epsilon}(A_{i})}{\pi_{\epsilon}(B)}\right)-\frac{\pi_{\epsilon}(F\cap(B\setminus\bigcup_{i}A_{i}))}{\pi_{\epsilon}(B)}\\
 & =\sum_{i=1}^{n}\frac{\pi_{\epsilon}(F\cap A_{i})}{\pi_{\epsilon}(A_{i})}\left(a_{i}\,-\frac{\pi_{\epsilon}(A_{i})}{\pi_{\epsilon}(B)}\right)-\frac{\pi_{\epsilon}(F\cap(B\setminus\bigcup_{i}A_{i}))}{\pi_{\epsilon}(B\setminus\bigcup_{i}A_{i})}\left(1-\sum_{i=1}^{n}\frac{\pi_{\epsilon}(A_{i})}{\pi_{\epsilon}(B)}\right)\\
 & =\sum_{i=1}^{n}\frac{\pi_{\epsilon}(F\cap A_{i})}{\pi_{\epsilon}(A_{i})}\left(a_{i}\,-\nu_{i}(\epsilon)\right)-\frac{\pi_{\epsilon}(F\cap(B\setminus\bigcup_{i}A_{i}))}{\pi_{\epsilon}(B\setminus\bigcup_{i}A_{i})}\left(1-\sum_{i=1}^{n}\nu_{i}(\epsilon)\right)\,,
\end{align*}
where $\nu_{i}(\epsilon)=\pi_{\epsilon}(A_{i})/\pi_{\epsilon}(B)$.
Then, In order to make it as large as possible, one can take
\[
F_{\epsilon}=\bigcup_{i:a_{i}>\nu_{i}(\epsilon)}A_{i}\,.
\]
On the other hand, to make it as small as possible, one can take
\[
F_{\epsilon}=\bigcup_{i:a_{i}<\nu_{i}(\epsilon)}A_{i}\cup(B\setminus\bigcup_{i=1}^{n}A_{i})\,.
\]
Therefore, we have
\[
d_{{\rm TV}}\left(\sum_{i=1}^{n}a_{i}\,\pi_{\epsilon}^{A_{i}},\,\pi_{\epsilon}^{B}\right)=\max\left\{ \,\sum_{i:a_{i}>\nu_{i}(\epsilon)}\left(a_{i}-\nu_{i}(\epsilon)\right),\,\sum_{i:a_{i}<\nu_{i}(\epsilon)}\left(\nu_{i}(\epsilon)-a_{i}\right)+1-\sum_{i=1}^{n}\nu_{i}(\epsilon)\,\right\} \,.
\]
Additionally, since $\sum_{i=1}^{n}a_{i}=1$,
\begin{align*}
\sum_{i:a_{i}<\nu_{i}(\epsilon)}\left(\nu_{i}(\epsilon)-a_{i}\right)+1-\sum_{i=1}^{n}\nu_{i}(\epsilon) & =\sum_{i:a_{i}<\nu_{i}(\epsilon)}\left(\nu_{i}(\epsilon)-a_{i}\right)+\sum_{i=1}^{n}\left(a_{i}-\nu_{i}(\epsilon)\right)\\
& =\sum_{i:a_{i}>\nu_{i}(\epsilon)}\left(a_{i}-\nu_{i}(\epsilon)\right)
\end{align*}
so that
\[
d_{{\rm TV}}\left(\sum_{i=1}^{n}a_{i}\,\pi_{\epsilon}^{A_{i}},\,\pi_{\epsilon}^{B}\right)=\sum_{i:a_{i}>\nu_{i}(\epsilon)}\left(a_{i}-\nu_{i}(\epsilon)\right)\,.
\]
Finally, by \eqref{e: TV_density},
\[
\lim_{\epsilon\to0}d_{{\rm TV}}\left(\sum_{i=1}^{n}a_{i}\,\pi_{\epsilon}^{A_{i}},\,\pi_{\epsilon}^{B}\right)=\sum_{i:a_{i}>\nu_{i}}(a_{i}-\nu_{i})\,.
\]
If $\sum_{i=1}^{n}\nu_{i}=1$, the last sum is equal to $\frac{1}{2}\sum_{i=1}^{n}|\,a_{i}-\nu_{i}\,|$.
\end{proof}
Let $\{X(t):t\ge0\}$ be a continuous-time Markov process on $E$.
Let $C_{0}(E)$ be the space of continuous functions vanishing at
infinity if $E=\mathbb{R}^{d}$. If $E$ is finite, this is just the
set of functions on $E$. Let $\mathbf{P}_{t}$, $t\ge0$, be a semigroup
on $C_{0}(E)$ associated to the Markov process $\{X(t):t\ge0\}$.
By the following lemma, we have monotonicity of total variation distance.
\begin{lemma}
\label{l: d_TV}Let $\mu,\,\nu$ be probability measures on $E$.
Then, for any $t>0$,
\[
d_{{\rm TV}}(\mu,\,\nu)\ge d_{{\rm TV}}(\mu\mathbf{P}_{t},\,\nu\mathbf{P}_{t})\ .
\]
\end{lemma}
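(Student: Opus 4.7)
The plan is to apply the dual/test-function characterization of total variation distance, exploiting the fact that $\mathbf{P}_t$ sends indicator functions to $[0,1]$-valued measurable functions. First I would fix an arbitrary measurable set $A\subset E$ and rewrite
\[
(\mu\mathbf{P}_{t})(A)-(\nu\mathbf{P}_{t})(A)=\int_{E}\mathbf{P}_{t}\mathcal{X}_{A}(x)\,d\mu(x)-\int_{E}\mathbf{P}_{t}\mathcal{X}_{A}(x)\,d\nu(x)=\int_{E}f(x)\,d(\mu-\nu)(x),
\]
where $f(x):=\mathbf{P}_{t}\mathcal{X}_{A}(x)=\mathbb{P}_{x}[X(t)\in A]$ is a measurable function with $0\le f\le 1$.

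Next I would use the Jordan (Hahn) decomposition of the signed measure $\mu-\nu$: write $\mu-\nu=(\mu-\nu)_{+}-(\mu-\nu)_{-}$, and recall that since $\mu,\nu$ are probability measures, $(\mu-\nu)_{+}(E)=(\mu-\nu)_{-}(E)=d_{\rm TV}(\mu,\nu)$. Because $0\le f\le 1$, both $\int f\,d(\mu-\nu)_{+}$ and $\int f\,d(\mu-\nu)_{-}$ lie in $[0,d_{\rm TV}(\mu,\nu)]$, so
\[
\bigl|(\mu\mathbf{P}_{t})(A)-(\nu\mathbf{P}_{t})(A)\bigr|=\Bigl|\int f\,d(\mu-\nu)_{+}-\int f\,d(\mu-\nu)_{-}\Bigr|\le d_{\rm TV}(\mu,\nu).
\]
Taking the supremum over $A\subset E$ yields $d_{\rm TV}(\mu\mathbf{P}_{t},\nu\mathbf{P}_{t})\le d_{\rm TV}(\mu,\nu)$.

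There is no real obstacle here; the argument is classical (non-expansivity of Markov kernels in total variation) and model-independent, requiring only that $\mathbf{P}_{t}$ is a Markov semigroup, so that $\mathbf{P}_{t}\mathcal{X}_{A}$ is a measurable function with values in $[0,1]$ and $\mu\mathbf{P}_{t}$, $\nu\mathbf{P}_{t}$ remain probability measures. The only minor point to address, if one wishes to be careful in the diffusion setting where $\mathbf{P}_{t}$ is defined on $C_{0}(E)$, is to verify that the identity $(\mu\mathbf{P}_{t})(A)=\int \mathbf{P}_{t}\mathcal{X}_{A}\,d\mu$ extends to indicators by the standard monotone-class argument; this is routine. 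Alternatively, one could present a one-line coupling proof: any coupling $(X,Y)$ of $(\mu,\nu)$ can be extended to a coupling of $(\mu\mathbf{P}_{t},\nu\mathbf{P}_{t})$ by running the two processes together (e.g.\ synchronously), so $d_{\rm TV}(\mu\mathbf{P}_{t},\nu\mathbf{P}_{t})\le \mathbb{P}(X\ne Y)$, and infimizing over couplings gives the claim via the coupling characterization of $d_{\rm TV}$.
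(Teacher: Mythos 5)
Your proof is correct and follows the same core idea as the paper: rewrite the difference of measures applied to $A$ as $\int \mathbf{P}_{t}\mathcal{X}_{A}\,d(\mu-\nu)$ and exploit $0\le\mathbf{P}_{t}\mathcal{X}_{A}\le1$. In fact your version is \emph{more} careful than the paper's. The paper bounds the integral by $|\mu-\nu|(E)$, the total mass of the total-variation measure, and then asserts $|\mu-\nu|(E)=d_{{\rm TV}}(\mu,\nu)$; with the $\sup_{A}|\mu(A)-\nu(A)|$ definition of $d_{{\rm TV}}$ that the paper uses throughout, this is off by a factor of two, since in fact $|\mu-\nu|(E)=2\,d_{{\rm TV}}(\mu,\nu)$, so the paper's chain of inequalities as written only yields $d_{{\rm TV}}(\mu\mathbf{P}_{t},\nu\mathbf{P}_{t})\le 2\,d_{{\rm TV}}(\mu,\nu)$. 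Your Jordan-decomposition step repairs this cleanly: because $\mu$ and $\nu$ are probability measures the positive and negative parts $(\mu-\nu)_{\pm}$ both have total mass $d_{{\rm TV}}(\mu,\nu)$, and since $0\le f\le1$ both $\int f\,d(\mu-\nu)_{+}$ and $\int f\,d(\mu-\nu)_{-}$ lie in $[0,d_{{\rm TV}}(\mu,\nu)]$, so their difference is bounded in absolute value by $d_{{\rm TV}}(\mu,\nu)$, giving the sharp constant. The synchronous-coupling alternative you sketch at the end is a genuinely different (and equally standard) route to non-expansivity; the paper does not use it, but either proof is acceptable.
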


\begin{proof}
Denote by $\mathbb{P}_{\pi}$ the law of $X(\cdot)$ with initial
distribution $\pi$. By definition, we have
\begin{align*}
d_{{\rm TV}}(\mu\mathbf{P}_{t},\,\nu\mathbf{P}_{t}) & =\sup_{A\subset E}\left|\,\mathbb{P}_{\mu}\left[X(t)\in A\right]-\mathbb{P}_{\nu}\left[X(t)\in A\right]\,\right|\\
 & =\sup_{A\subset E}\left|\,\int_{E}\mathbb{P}_{x}\left[X(t)\in A\right]\,\left(\mu(dx)-\nu(dx)\right)\,\right|\,.
\end{align*}
Let $\left|\,\mu-\nu\,\right|$ be a total variation measure of the singed
measure $\mu-\nu$. Then, we obtain
\[
\int_{E}\mathbb{P}_{x}\left[X(t)\in A\right]\,\left(\mu(dx)-\nu(dx)\right)\le\int_{E}\mathbb{P}_{x}\left[X(t)\in A\right]\,\left|\,\mu-\nu\,\right|(dx)\le\left|\,\mu-\nu\,\right|(E)\,,
\]
so that
\[
d_{{\rm TV}}(\mu\mathbf{P}_{t},\,\nu\mathbf{P}_{t})\le\left|\,\mu-\nu\,\right|(E)\,.
\]
Since$\left|\,\mu-\nu\,\right|(E)=d_{{\rm TV}}(\mu,\,\nu)$, the proof
is completed.
\end{proof}
If $E$ is finite and the Markov chain has one irreducible class,
the inequality in the previous lemma is indeed strict. The following
is crucial to prove the upper bound for the mixing time below (cf.
Lemma \ref{l: mix_up}).
\begin{lemma}
\label{l: d_TV-2}Suppose that $E$ is finite and that the Markov
chain $X(\cdot)$ has only one irreducible class. Let $\mu,\,\nu$
be probability measures on $E$ such that $\mu\ne\nu$. Then, for
any $t>0$,
\[
d_{{\rm TV}}(\mu,\,\nu)>d_{{\rm TV}}(\mu\mathbf{P}_{t},\,\nu\mathbf{P}_{t})\,.
\]
In particular, if $\nu$ is a stationary distribution,
\[
d_{{\rm TV}}(\mu,\,\nu)>d_{{\rm TV}}(\mu\mathbf{P}_{t},\,\nu)\,.
\]
\end{lemma}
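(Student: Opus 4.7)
The plan is to work with the explicit formula $d_{\mathrm{TV}}(\mu, \nu) = \frac{1}{2} \sum_{x} |f(x)|$ with $f(x) := \mu(x) - \nu(x)$, and to identify the place where the weak monotonicity of Lemma \ref{l: d_TV} loses information. Since $\mu \neq \nu$ while $\sum_x f(x) = 0$, there exist $x_+, x_- \in E$ with $f(x_+) > 0 > f(x_-)$; I will use these two states to produce a strict gain.

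The weak monotonicity comes from the chain
\[
2\,d_{\mathrm{TV}}(\mu \mathbf{P}_t, \nu \mathbf{P}_t) \;=\; \sum_y \Bigl|\sum_x f(x)\, \mathbf{P}_t(x, y)\Bigr| \;\le\; \sum_y \sum_x |f(x)|\, \mathbf{P}_t(x, y) \;=\; \sum_x |f(x)| \;=\; 2\,d_{\mathrm{TV}}(\mu, \nu),
\]
where the only inequality is a pointwise (in $y$) triangle inequality and the third identity uses $\sum_y \mathbf{P}_t(x, y) = 1$. My plan is to show that this triangle inequality is strict at at least one $y_0$; a strict inequality at a single $y_0$ together with weak inequalities at all other $y$ then yields the strict inequality after summation. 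Splitting $f = f^+ - f^-$, the inequality at $y_0$ is strict as soon as both $\sum_x f^+(x)\,\mathbf{P}_t(x, y_0) > 0$ and $\sum_x f^-(x)\,\mathbf{P}_t(x, y_0) > 0$, which is guaranteed as soon as $\mathbf{P}_t(x_+, y_0) > 0$ and $\mathbf{P}_t(x_-, y_0) > 0$ simultaneously.

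To produce such a $y_0$, I will invoke the one-irreducible-class hypothesis together with finiteness of $E$. Let $R \subset E$ denote the unique closed irreducible class and pick any $y_0 \in R$. For a continuous-time Markov chain on a finite state space, $\mathbf{P}_t(x, y) > 0$ for all $t > 0$ whenever $y$ is accessible from $x$ in the jump graph of the generator $Q$ (a standard consequence of $\mathbf{P}_t = e^{tQ}$). Since $E$ is finite and $R$ is the only closed irreducible class, every $x \in E$ either lies in $R$ or is transient with a path leading into $R$; combined with the irreducibility of $R$, this makes $y_0$ accessible from every $x \in E$. In particular $\mathbf{P}_t(x_\pm, y_0) > 0$, giving the desired strict inequality. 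The ``in particular'' assertion for stationary $\nu$ is immediate from $\nu \mathbf{P}_t = \nu$. I do not foresee a real obstacle; the main point deserving care is the accessibility claim, which relies essentially on finiteness of $E$ and the unique-irreducible-class hypothesis (without which one could split $f^+$ and $f^-$ across two disjoint closed classes and lose the strict improvement).
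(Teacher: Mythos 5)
Your proof is correct, and it follows a genuinely different route from the paper's. The paper argues via coupling: it invokes \cite[Proposition 4.7]{LPW} to realize $d_{\rm TV}(\mu,\nu)$ as $\pi(D^c)$ for an optimal coupling $\pi$ on $E\times E$, explicitly constructs a Markovian coupling $\widetilde P$ of the dynamics with itself (so that $\pi\widetilde P$ couples $\mu\mathbf{P}_t$ and $\nu\mathbf{P}_t$), and then shows $\pi\widetilde P(D) > \pi(D)$ by using $p_t(y,x_0)>0$ for $x_0$ in the unique irreducible class. You instead work directly with the identity $2\,d_{\rm TV}(\mu,\nu)=\sum_x|f(x)|$ and locate the slack in the pointwise triangle inequality $\bigl|\sum_x f(x)\mathbf{P}_t(x,y)\bigr|\le\sum_x|f(x)|\mathbf{P}_t(x,y)$: since $\mu\neq\nu$ and $\sum_x f(x)=0$, both $f^+$ and $f^-$ are nonzero, and any $y_0$ in the unique closed irreducible class is accessible from every state (finiteness plus the one-class hypothesis), hence $\mathbf{P}_t(x_\pm,y_0)>0$ makes the inequality strict at $y_0$. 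Your version is shorter and more elementary — it avoids the optimal-coupling machinery and the somewhat fiddly construction of $\widetilde P$ (in particular the auxiliary bijection $c_\cdot$ the paper needs to make $\widetilde P$ well-defined), at the cost of being less ``structural''; the paper's coupling argument has the advantage of making the mechanism of contraction visible as a genuine probabilistic coupling, which can be useful if one later wants quantitative rates. Both proofs use the same key positivity fact and the same role of the single-irreducible-class hypothesis. One small point worth making explicit in your write-up is the justification of $\mathbf{P}_t(x,y_0)>0$ for all $t>0$ from accessibility: the cleanest route is uniformization, writing $Q=\lambda(P-I)$ so $\mathbf{P}_t=e^{-\lambda t}\sum_{n\ge0}\frac{(\lambda t)^n}{n!}P^n$ with all terms nonnegative and $P^k(x,y_0)>0$ for some $k$.
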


\begin{proof}
Let us consider a space $E\times E$ and let
\[
D:=\{\,(x,\,x)\,:\,x\in E\,\}\,.
\]
By \cite[Proposition 4.7]{LPW}, we have the following formula for
the total variation:
\[
d_{{\rm TV}}(\mu,\,\nu)=\inf\{\alpha(D^{c}):\alpha\,\text{is a probability measure on}\,E\times E\,\text{whose marginals}\,\text{are}\,\mu\,\text{and}\,\nu\,\}\,.
\]
By the proof of \cite[Proposition 4.7]{LPW}, there exists a minimizer
of the previous formula. Let $\pi$ be the minimizer. By definition,
\[
\pi(D^{c})=d_{{\rm TV}}(\mu,\,\nu)\,,\ \sum_{y\in E}\pi(x,\,y)=\mu(x),\ \sum_{y\in E}\pi(y,\,x)=\nu(x)\,.
\]

Fix $t>0$ and let $p_{t}:E\times E\to[0,\,1]$ be a transition probability
of $X(\cdot)$. Then, by the definition of $\pi$,
\begin{equation}
\begin{aligned}
\mu\mathbf{P}_{t}(x) & =\sum_{y}\mu(y)p_{t}(y,\,x)=\sum_{y}\sum_{z}\pi(y,\,z)p_{t}(y,\,x)\,,\\
\nu\mathbf{P}_{t}(x) & =\sum_{y}\nu(y)p_{t}(y,\,x)=\sum_{y}\sum_{z}\pi(z,\,y)p_{t}(y,\,x)\,.
\end{aligned}
\label{e: pf_l_d_TV-2-0}
\end{equation}
Let $n:=|E|$ and $E=\{x_1,\, \dots,\, x_n\}$ be a numeration of $E$. Define $c_{\cdot}:E \to E$ as $c_{x_i}=c_{x_{i+1}}$ where $x_{n+1}=x_1$. Then, $c_{\cdot}$ is one-to-one correspondence and satisfies
\begin{equation}
(c_x,\, x) \ne (y,\, c_y) \  \text{for all} \ x,\, y \in E \, .
\label{e: pf_l_d_TV-c}
\end{equation}
Now, define a transition matrix $\widetilde{P}:(E\times E)\times(E\times E)\to[0,\,1]$ as follows.
For $z,\, w,\, x \in E$, if $p_{t}(z,\,x)\le p_{t}(w,\,x)$, define
\[
\widetilde{P}\Big((z,\,w),\,(a,\,b)\Big):=\begin{cases}
\min\{p_{t}(z,\,x),\,p_{t}(w,\,x)\} & a=b=x\,,\\
p_{t}(w,\,x)-p_{t}(z,\,x) & a=c_{x},\,b=x\,,\\
0 & a=x,\,b=c_x\,;
\end{cases}
\]
otherwise, if $p_{t}(w,\,x) < p_{t}(z,\,x)$, define
\[
\widetilde{P}\Big((z,\,w),\,(a,\,b)\Big):=\begin{cases}
\min\{p_{t}(z,\,x),\,p_{t}(w,\,x)\} & a=b=x\,,\\
0 & a=c_x,\,b=x\,,\\
p_{t}(z,\,x)-p_{t}(w,\,x) & a=x,\,b=c_x\,.
\end{cases}
\]
For other cases, define $\widetilde{P}=0$. By \eqref{e: pf_l_d_TV-c}, $\widetilde{P}$ is well-defined. Also, by definition, $\widetilde{P}$ is a transition matrix, $\sum_{y}\widetilde{P}((z,\,w),\,(x,\,y))=p_{t}(z,\,x)$, and $\sum_{y}\widetilde{P}((z,\,w),\,(y,\,x))=p_{t}(w,\,x)$ so that by \eqref{e: pf_l_d_TV-2-0},
\begin{align*}
\sum_{y}\pi\widetilde{P}(x,\,y) & =\sum_{y}\sum_{z,\,w}\pi(z,\,w)\,\widetilde{P}((z,\,w),\,(x,\,y))\\
 & =\sum_{z,\,w}\sum_{y}\pi(z,\,w)\,\widetilde{P}((z,\,w),\,(x,\,y))\\
 & =\sum_{z}\sum_{w}\pi(z,\,w)p_{t}(z,\,x)\\
 & =\mu\mathbf{P}_{t}(x)\,,
\end{align*}
and
\begin{align*}
\sum_{y}\pi\widetilde{P}(y,\,x) & =\sum_{y}\sum_{z,\,w}\pi(z,\,w)\,\widetilde{P}((z,\,w),\,(y,\,x))\\
 & =\sum_{z,\,w}\sum_{y}\pi(z,\,w)\,\widetilde{P}((z,\,w),\,(y,\,x))\\
 & =\sum_{z}\sum_{w}\pi(z,\,w)p_{t}(w,\,x)\\
 & =\nu\mathbf{P}_{t}(x)\,.
\end{align*}
Therefore, 
\[
\pi\widetilde{P}\in\{\,\text{probability measures on}\,E\times E\,\text{whose marginals}\,\text{are}\,\mu\mathbf{P}_{t}\,\text{and}\,\nu\mathbf{P}_{t}\,\}
\]
so that by \cite[Proposition 4.7]{LPW},
\[
d_{{\rm TV}}(\mu\mathbf{P}_{t},\,\nu\mathbf{P}_{t})\le\pi\widetilde{P}(D^{c})\,.
\]

Since $\pi(D^{c})=d_{{\rm TV}}(\mu,\,\nu)$, it suffices to prove
$\pi\widetilde{P}(D^{c})<\pi(D^{c})$, i.e.,
\[
\pi\widetilde{P}(D)>\pi(D)\,.
\]
First, we obtain
\begin{align*}
\pi\widetilde{P}(x,\,x) & =\sum_{z,\,w}\pi(z,\,w)\,\widetilde{P}((z,\,w),\,(x,\,x))\\
 & =\sum_{z,\,w}\pi(z,\,w)\,(p_{t}(z,\,x)\wedge p_{t}(w,\,x))\\
 & =\sum_{z\ne w}\pi(z,\,w)\,(p_{t}(z,\,x)\wedge p_{t}(w,\,x))+\sum_{y}\pi(y,\,y)\,p_{t}(y,\,x)\\
 & \ge\sum_{y}\pi(y,\,y)\,p_{t}(y,\,x)
\end{align*}
so that
\begin{align*}
\sum_{x}\pi\widetilde{P}(x,\,x) & \ge\sum_{x}\sum_{y}\pi(y,\,y)\,p_{t}(y,\,x)\\
 & =\sum_{y}\pi(y,\,y)\sum_{x}\,p_{t}(y,\,x)\\
 & =\sum_{y}\pi(y,\,y)\,,
\end{align*}
which implies
\begin{equation}
\pi\widetilde{P}(D)\ge\pi(D)\,.\label{e: pf_l_d_TV-2}
\end{equation}
Suppose that the equality holds. Then, for all $x\in E$
\[
\sum_{z\ne w}(\pi(z,\,w)+\pi(w,\,z))\,(p_{t}(z,\,x)\wedge p_{t}(w,\,x))=0\,.
\]
Since all terms are positive, for all $x\in E$ and $z\ne w\in E$,
\[
\pi(z,\,w)\,(p_{t}(z,\,x)\wedge p_{t}(w,\,x))=0\,.
\]
Pick an element $x_{0}\in E$ of irreducible class of $X$. Since
there is only one irreducible class, $p_{t}(y,\,x_{0})>0$ for all
$y\in E$. Therefore, by the above displayed equation, $\pi(z,\,w)=0$
for all $z\ne w$, which implies $\pi(D^{c})=0$. However, since $\mu\ne\nu$,
we have 
\[
\pi(D^{c})=d_{{\rm TV}}(\mu,\,\nu)>0\,,
\]
which is a contradiction. In conclusion, the inequality \eqref{e: pf_l_d_TV-2}
is indeed strict and this completes the proof.
\end{proof}

\section{\label{sec: pf_t_main}Proof of Theorem \ref{t: main}}

In this section, we prove Theorem \ref{t: main}.

\subsection{Preliminaries}

First, we present preliminaries on Markov chains which are not obtained
in \cite{LLS-1st,LLS-2nd}. The first one is an equivalent definition
of the probability measure $\mathfrak{a}_{\bm{m}}^{(p)}$ on $\mathscr{V}^{(p+1)}$
(cf. display \eqref{e: def_a}). For simplicity, write
\[
{\color{blue}\mathscr{R}^{(p)}}:=\bigcup_{i\in\llbracket1,\,\mathfrak{n}_{p}\rrbracket}\mathscr{R}_{i}^{(p)}\,,
\]
and recall from \eqref{e: M_j-R_j} the definition of $\mathcal{M}_{i}^{(p+1)}\in\mathscr{V}^{(p+1)}$
for $p\in\llbracket1,\,\mathfrak{q}\rrbracket$ and $i\in\llbracket1,\,\mathfrak{n}_{p}\rrbracket$.
The following is \cite[Lemma 7.3]{LLS-2nd}.
\begin{lemma}[{\cite[Lemma 7.3]{LLS-2nd}}]
\label{l: 44-0}Fix $p\in\llbracket1,\,\mathfrak{q}-1\rrbracket$.
For all $\mathcal{M}\in\mathscr{N}^{(p+1)}$ and $i\in\llbracket1,\,\mathfrak{n}_{p}\rrbracket$,
\[
\widehat{\mathcal{Q}}_{\mathcal{M}}^{(p+1)}\left[\,\tau_{\mathscr{V}^{(p+1)}}=\tau_{\mathcal{M}_{i}^{(p+1)}}\,\right]=\widehat{\mathcal{Q}}_{\mathcal{M}}^{(p)}\left[\,\tau_{\mathscr{R}^{(p)}}=\tau_{\mathscr{R}_{i}^{(p)}}\,\right]\,.
\]
\end{lemma}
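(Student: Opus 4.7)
The plan is to realize $\widehat{\mathbf{y}}^{(p+1)}$ on $\mathscr{N}^{(p+1)}$ as the lumping of $\widehat{\mathbf{y}}^{(p)}$ in which each closed class $\mathscr{R}_j^{(p)}\subset\mathscr{V}^{(p)}$ is collapsed to the single absorbing state $\mathcal{M}_j^{(p+1)}$, and then to identify the two hitting probabilities via a first-step analysis. Fix $i\in\llbracket 1,\mathfrak{n}_p\rrbracket$ and define, on the state spaces of the respective chains,
\[
u(\mathcal{M}) := \widehat{\mathcal{Q}}_{\mathcal{M}}^{(p+1)}\left[\tau_{\mathscr{V}^{(p+1)}}=\tau_{\mathcal{M}_i^{(p+1)}}\right],\qquad v(\mathcal{M}) := \widehat{\mathcal{Q}}_{\mathcal{M}}^{(p)}\left[\tau_{\mathscr{R}^{(p)}}=\tau_{\mathscr{R}_i^{(p)}}\right].
\]
A routine application of the strong Markov property at the first jump shows that $u$ is discrete-harmonic on $\mathscr{N}^{(p+1)}$ for the generator of $\widehat{\mathbf{y}}^{(p+1)}$, with boundary values $u(\mathcal{M}_i^{(p+1)})=1$ and $u(\mathcal{M}_j^{(p+1)})=0$ for $j\ne i$; similarly $v$ is harmonic on $\mathscr{N}^{(p)}\cup\mathscr{T}^{(p)}=\mathscr{N}^{(p+1)}$ for the generator of $\widehat{\mathbf{y}}^{(p)}$, with $v\equiv 1$ on $\mathscr{R}_i^{(p)}$ and $v\equiv 0$ on $\mathscr{R}_j^{(p)}$ for $j\ne i$. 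Both linear systems are uniquely solvable because from any $\mathcal{M}\in\mathscr{N}^{(p+1)}$ the corresponding absorbing set is almost surely reached in finite time.

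The comparison step then amounts to verifying that the two linear systems coincide under the natural identification $\mathcal{M}_j^{(p+1)}\leftrightarrow\mathscr{R}_j^{(p)}$. Unfolding the construction in Appendix \ref{app: Tree}, the rates of $\widehat{\mathbf{y}}^{(p+1)}$ are built precisely so that, up to an overall time rescaling that is irrelevant for hitting probabilities,
\[
r^{(p+1)}(\mathcal{M},\mathcal{M}') = r^{(p)}(\mathcal{M},\mathcal{M}')\qquad\text{for } \mathcal{M},\mathcal{M}'\in\mathscr{N}^{(p+1)},
\]
and
\[
r^{(p+1)}(\mathcal{M},\mathcal{M}_j^{(p+1)}) = \sum_{\mathcal{M}''\in\mathscr{R}_j^{(p)}} r^{(p)}(\mathcal{M},\mathcal{M}'') \qquad\text{for } \mathcal{M}\in\mathscr{N}^{(p+1)},\; j\in\llbracket 1,\mathfrak{n}_p\rrbracket.
\]
Plugging these identities into the harmonic equation for $u$ yields exactly the harmonic equation satisfied by the restriction of $v$ to $\mathscr{N}^{(p+1)}$, and the boundary conditions match by construction. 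Uniqueness therefore forces $u(\mathcal{M})=v(\mathcal{M})$ on $\mathscr{N}^{(p+1)}$, which is the claim.

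The main obstacle is the bookkeeping around the precise definition of the rates $r^{(p+1)}$, since at level $p$ the closed classes $\mathscr{R}_j^{(p)}$ live inside the trace chain $\mathbf{y}^{(p)}$ rather than inside $\widehat{\mathbf{y}}^{(p)}$ itself, while at level $p+1$ the merged state $\mathcal{M}_j^{(p+1)}$ already belongs to the auxiliary chain $\widehat{\mathbf{y}}^{(p+1)}$. An elegant way to avoid this is a direct coupling: run $\widehat{\mathbf{y}}^{(p)}$ until $\tau_{\mathscr{R}^{(p)}}$ and push the trajectory forward through the quotient map $\mathscr{S}^{(p)}\to\mathscr{S}^{(p+1)}$ that sends every element of $\mathscr{R}_j^{(p)}$ to $\mathcal{M}_j^{(p+1)}$ and fixes $\mathscr{N}^{(p+1)}$; one then verifies, again by strong Markov and a first-step comparison of rates, that the pushforward is distributed as $\widehat{\mathbf{y}}^{(p+1)}$ stopped at $\tau_{\mathscr{V}^{(p+1)}}$, from which the equality of hitting distributions follows immediately.
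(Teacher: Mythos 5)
Your argument is correct, but note that the paper does not actually prove this statement at all: its ``proof'' consists of the single observation that, in the notation of \cite[Lemma 7.3]{LLS-2nd}, one has $\mathscr{V}^{(p)}(\mathcal{M}_{i}^{(p+1)})=\mathscr{R}_{i}^{(p)}$, after which the result is a direct citation. You instead give a genuine, self-contained derivation. Your two rate identities are precisely Cases 1 and 2 of the construction of $\widehat{r}^{(p+1)}$ in Appendix~\ref{subsec: MC2} (with $\mathscr{V}^{(p)}(\mathcal{M}_{j}^{(p+1)})=\mathscr{R}_{j}^{(p)}$, the very identification the paper records), and once those are in hand the first-step harmonic equations for $u$ on $\mathscr{N}^{(p+1)}$ and for $v$ on $\mathscr{N}^{(p)}\cup\mathscr{T}^{(p)}$ coincide term by term, with matching boundary data on $\mathscr{V}^{(p+1)}$ versus $\mathscr{R}^{(p)}$. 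One small point you state without proof is that the absorbing sets are reached almost surely from every starting state in $\mathscr{N}^{(p+1)}$, which is what makes the linear systems uniquely solvable; this does hold by the construction (it is exactly what Lemma~\ref{l: F_assu_trace} guarantees for the trace to be well-defined), but it deserves a sentence. The upshot of your approach is that it makes the lemma independent of the external reference, at the cost of re-deriving a bookkeeping fact that \cite{LLS-2nd} already records; the paper's approach is economical but opaque to a reader who has not internalized that reference's notation. Your alternative coupling sketch at the end is also sound and, arguably, the cleanest formulation: the quotient map $\mathscr{S}^{(p)}\to\mathscr{S}^{(p+1)}$ intertwines $\widehat{\mathbf{y}}^{(p)}$ stopped at $\tau_{\mathscr{R}^{(p)}}$ with $\widehat{\mathbf{y}}^{(p+1)}$ stopped at $\tau_{\mathscr{V}^{(p+1)}}$ precisely because of the two rate identities you isolate.
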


\begin{proof}
It suffices to recall the notation. In \cite[Lemma 7.3]{LLS-2nd},
by the construction, $\mathscr{V}^{(p)}(\mathcal{M}_{i}^{(p+1)})=\{\mathcal{M}'\in\mathscr{V}^{(p)}:\mathcal{M}'\subset\mathcal{M}_{i}^{(p+1)}\}=\mathscr{R}_{i}^{(p)}$.
\end{proof}
\begin{lemma}
\label{l: 44}Fix $p\in\llbracket1,\,\mathfrak{q}-1\rrbracket$. For
all $\bm{m}\in\mathcal{M}_{0}$ and $i\in\llbracket1,\,\mathfrak{n}_{p}\rrbracket$,
\[
\mathfrak{a}_{\bm{m}}^{(p)}(\mathcal{M}_{i}^{(p+1)})=\mathcal{Q}_{\mathfrak{a}_{\bm{m}}^{(p-1)}}^{(p)}\left[\tau_{\mathscr{R}^{(p)}}=\tau_{\mathscr{R}_{i}^{(p)}}\right]\,.
\]
\end{lemma}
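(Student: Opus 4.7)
The plan is to unfold the definition $\mathfrak{a}_{\bm{m}}^{(p)}(\mathcal{M}_i^{(p+1)})=\widehat{\mathcal{Q}}_{\mathcal{M}(p+1,\,\bm{m})}^{(p+1)}[\tau_{\mathscr{V}^{(p+1)}}=\tau_{\mathcal{M}_i^{(p+1)}}]$, apply Lemma \ref{l: 44-0} to drop down to layer $p$, and then use the fact that $\mathbf{y}^{(p)}(\cdot)$ is the trace of $\widehat{\mathbf{y}}^{(p)}(\cdot)$ on $\mathscr{V}^{(p)}$ to convert the auxiliary hitting probabilities into ones for $\mathbf{y}^{(p)}(\cdot)$ started from $\mathfrak{a}_{\bm{m}}^{(p-1)}$.

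First I would split on where $\mathcal{M}(p,\,\bm{m})$ sits in the tree. If $\mathcal{M}(p,\,\bm{m})\in\mathscr{R}_j^{(p)}$ for some $j$, then by the construction one has $\mathcal{M}(p+1,\,\bm{m})=\mathcal{M}_j^{(p+1)}\in\mathscr{V}^{(p+1)}$, so $\tau_{\mathscr{V}^{(p+1)}}=0$ under $\widehat{\mathcal{Q}}_{\mathcal{M}(p+1,\,\bm{m})}^{(p+1)}$ and $\mathfrak{a}_{\bm{m}}^{(p)}(\mathcal{M}_i^{(p+1)})=\delta_{i,j}$; on the other side, $\mathfrak{a}_{\bm{m}}^{(p-1)}=\delta_{\mathcal{M}(p,\,\bm{m})}$ and $\mathscr{R}_j^{(p)}$ is closed for $\mathbf{y}^{(p)}(\cdot)$, so $\mathcal{Q}_{\mathfrak{a}_{\bm{m}}^{(p-1)}}^{(p)}[\tau_{\mathscr{R}^{(p)}}=\tau_{\mathscr{R}_i^{(p)}}]=\delta_{i,j}$ as well. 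This degenerate case is routine.

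In the remaining case $\mathcal{M}(p,\,\bm{m})\in\mathscr{N}^{(p)}\cup\mathscr{T}^{(p)}$, the construction yields $\mathcal{M}(p+1,\,\bm{m})=\mathcal{M}(p,\,\bm{m})\in\mathscr{N}^{(p+1)}$, so Lemma \ref{l: 44-0} applies directly and gives
\[
\mathfrak{a}_{\bm{m}}^{(p)}(\mathcal{M}_i^{(p+1)})=\widehat{\mathcal{Q}}_{\mathcal{M}(p,\,\bm{m})}^{(p)}\bigl[\,\tau_{\mathscr{R}^{(p)}}=\tau_{\mathscr{R}_i^{(p)}}\,\bigr]\,.
\]
Now I would apply the strong Markov property of $\widehat{\mathbf{y}}^{(p)}(\cdot)$ at the stopping time $\tau_{\mathscr{V}^{(p)}}$ (which is finite a.s.\ since every orbit must eventually enter $\mathscr{V}^{(p)}$): decomposing on the first state $\mathcal{M}'\in\mathscr{V}^{(p)}$ visited,
\[
\widehat{\mathcal{Q}}_{\mathcal{M}(p,\,\bm{m})}^{(p)}\bigl[\,\tau_{\mathscr{R}^{(p)}}=\tau_{\mathscr{R}_i^{(p)}}\,\bigr]=\sum_{\mathcal{M}'\in\mathscr{V}^{(p)}}\widehat{\mathcal{Q}}_{\mathcal{M}(p,\,\bm{m})}^{(p)}\bigl[\tau_{\mathscr{V}^{(p)}}=\tau_{\mathcal{M}'}\bigr]\,\widehat{\mathcal{Q}}_{\mathcal{M}'}^{(p)}\bigl[\tau_{\mathscr{R}^{(p)}}=\tau_{\mathscr{R}_i^{(p)}}\bigr]\,.
\]
By the definition of $\mathfrak{a}_{\bm{m}}^{(p-1)}$ in \eqref{e: def_a}, the first factor is exactly $\mathfrak{a}_{\bm{m}}^{(p-1)}(\mathcal{M}')$. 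For the second, since $\mathbf{y}^{(p)}(\cdot)$ is the trace of $\widehat{\mathbf{y}}^{(p)}(\cdot)$ on $\mathscr{V}^{(p)}$ and $\mathscr{R}_i^{(p)}\subset\mathscr{V}^{(p)}$, the hitting distribution of $\mathscr{R}^{(p)}$ starting from $\mathcal{M}'\in\mathscr{V}^{(p)}$ is the same for both processes, giving $\widehat{\mathcal{Q}}_{\mathcal{M}'}^{(p)}[\tau_{\mathscr{R}^{(p)}}=\tau_{\mathscr{R}_i^{(p)}}]=\mathcal{Q}_{\mathcal{M}'}^{(p)}[\tau_{\mathscr{R}^{(p)}}=\tau_{\mathscr{R}_i^{(p)}}]$. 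Summing over $\mathcal{M}'$ then reconstructs $\mathcal{Q}_{\mathfrak{a}_{\bm{m}}^{(p-1)}}^{(p)}[\tau_{\mathscr{R}^{(p)}}=\tau_{\mathscr{R}_i^{(p)}}]$, which is the claimed identity.

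The only genuine subtlety is bookkeeping: tracking which layer each chain lives on, and in particular verifying the identification $\mathcal{M}(p+1,\,\bm{m})=\mathcal{M}(p,\,\bm{m})$ whenever $\mathcal{M}(p,\,\bm{m})\notin\mathscr{R}^{(p)}$, together with the implicit fact that the trace relation preserves hitting distributions on target sets contained in $\mathscr{V}^{(p)}$. The rest is a direct application of the strong Markov property and the previous lemma.
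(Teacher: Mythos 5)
Your proposal is correct and follows essentially the same route as the paper: split according to whether $\mathcal{M}(p,\,\bm{m})$ lies in $\mathscr{R}^{(p)}$ (equivalently whether $\mathcal{M}(p+1,\,\bm{m})\in\mathscr{V}^{(p+1)}$), handle the degenerate case by indicator bookkeeping, and in the main case apply Lemma~\ref{l: 44-0}, the strong Markov property at $\tau_{\mathscr{V}^{(p)}}$, and the trace identification $\widehat{\mathcal{Q}}_{\mathcal{M}'}^{(p)}=\mathcal{Q}_{\mathcal{M}'}^{(p)}$ for hitting events of sets inside $\mathscr{V}^{(p)}$. The only cosmetic difference is that you treat the degenerate case first and phrase the split in terms of $\mathcal{M}(p,\,\bm{m})$ rather than $\mathcal{M}(p+1,\,\bm{m})$.
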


\begin{proof}
By definition \eqref{e: def_a},
\begin{equation}
\mathfrak{a}_{\bm{m}}^{(p)}(\mathcal{M}_{i}^{(p+1)})=\widehat{\mathcal{Q}}_{\mathcal{M}(p+1,\,\boldsymbol{m})}^{(p+1)}\left[\,\tau_{\mathscr{V}^{(p+1)}}=\tau_{\mathcal{M}_{i}^{(p+1)}}\,\right]\,.\label{e: l_44}
\end{equation}
First, suppose that $\mathcal{M}(p+1,\,\bm{m})\in\mathscr{N}^{(p+1)}$.
Then, by the construction,$\mathcal{M}(p+1,\,\bm{m})\in\mathscr{S}^{(p)}$ so that $\mathcal{M}(p+1,\,\bm{m})=\mathcal{M}(p,\,\bm{m})$.
By Lemma \ref{l: 44-0},
\[
\widehat{\mathcal{Q}}_{\mathcal{M}(p+1,\,\boldsymbol{m})}^{(p+1)}\left[\,\tau_{\mathscr{V}^{(p+1)}}=\tau_{\mathcal{M}_{i}^{(p+1)}}\,\right]=\widehat{\mathcal{Q}}_{\mathcal{M}(p,\,\boldsymbol{m})}^{(p)}\left[\,\tau_{\mathscr{R}^{(p)}}=\tau_{\mathscr{R}_{i}^{(p)}}\,\right]\,.
\]
Since $\mathscr{R}^{(p)}\subset\mathscr{V}^{(p)}$, the last probability
can be decomposed as
\begin{align*}
 & \widehat{\mathcal{Q}}_{\mathcal{M}(p,\,\boldsymbol{m})}^{(p)}\left[\,\tau_{\mathscr{R}^{(p)}}=\tau_{\mathscr{R}_{i}^{(p)}}\,\right]\\
 & =\sum_{\mathcal{M}'\in\mathscr{V}^{(p)}}\widehat{\mathcal{Q}}_{\mathcal{M}(p,\,\boldsymbol{m})}^{(p)}\left[\,\tau_{\mathscr{R}^{(p)}}=\tau_{\mathscr{R}_{i}^{(p)}}\Big|\tau_{\mathscr{V}^{(p)}}=\tau_{\mathcal{M}'}\,\right]\,\widehat{\mathcal{Q}}_{\mathcal{M}(p,\,\boldsymbol{m})}^{(p)}\left[\,\tau_{\mathscr{V}^{(p)}}=\tau_{\mathcal{M}'}\,\right]\\
 & =\sum_{\mathcal{M}'\in\mathscr{V}^{(p)}}\widehat{\mathcal{Q}}_{\mathcal{M}'}^{(p)}\left[\,\tau_{\mathscr{R}^{(p)}}=\tau_{\mathscr{R}_{i}^{(p)}}\,\right]\,\widehat{\mathcal{Q}}_{\mathcal{M}(p,\,\boldsymbol{m})}^{(p)}\left[\,\tau_{\mathscr{V}^{(p)}}=\tau_{\mathcal{M}'}\,\right]\,,
\end{align*}
where the last equality holds by the strong Markov property. By the
definition of $\mathfrak{a}_{\bm{m}}^{(p-1)}$, the last sum can be
written as
\[
\sum_{\mathcal{M}'\in\mathscr{V}^{(p)}}\widehat{\mathcal{Q}}_{\mathcal{M}'}^{(p)}\left[\,\tau_{\mathscr{R}^{(p)}}=\tau_{\mathscr{R}_{i}^{(p)}}\,\right]\mathfrak{a}_{\bm{m}}^{(p-1)}(\mathcal{M}')\,.
\]
Finally, since ${\bf y}^{(p)}$ is the trace process of $\widehat{{\bf y}}^{(p)}$
on $\mathscr{V}^{(p)}$ and $\mathscr{R}^{(p)}\subset\mathscr{V}^{(p)}$,
we obtain
\begin{align*}
\sum_{\mathcal{M}'\in\mathscr{V}^{(p)}}\widehat{\mathcal{Q}}_{\mathcal{M}'}^{(p)}\left[\,\tau_{\mathscr{R}^{(p)}}=\tau_{\mathscr{R}_{i}^{(p)}}\,\right]\mathfrak{a}_{\bm{m}}^{(p-1)}(\mathcal{M}') & =\sum_{\mathcal{M}'\in\mathscr{V}^{(p)}}\mathcal{Q}_{\mathcal{M}'}^{(p)}\left[\,\tau_{\mathscr{R}^{(p)}}=\tau_{\mathscr{R}_{i}^{(p)}}\,\right]\mathfrak{a}_{\bm{m}}^{(p-1)}(\mathcal{M}')\\
 & =\mathcal{Q}_{\mathfrak{a}_{\bm{m}}^{(p-1)}}^{(p)}\left[\,\tau_{\mathscr{R}^{(p)}}=\tau_{\mathscr{R}_{i}^{(p)}}\,\right]\,,
\end{align*}
which completes the proof of the lemma for $\mathcal{M}(p+1,\,\bm{m})\in\mathscr{N}^{(p+1)}$.

Next, suppose that $\mathcal{M}(p+1,\,\bm{m})\in\mathscr{V}^{(p+1)}$.
Then, $\mathcal{M}(p+1,\,\bm{m})=\bigcup_{\mathcal{M}\in\mathscr{R}_{j}^{(p)}}\mathcal{M}$
for some $j\in\llbracket1,\,\mathfrak{n}_{p}\rrbracket$, $\mathcal{M}(p,\,\bm{m})\subset\mathcal{M}(p+1,\,\bm{m})$, and $\mathcal{M}(p,\,\bm{m})\in\mathscr{V}^{(p)}$
so that we have
\[
\widehat{\mathcal{Q}}_{\mathcal{M}(p+1,\,\boldsymbol{m})}^{(p+1)}\left[\,\tau_{\mathscr{V}^{(p+1)}}=\tau_{\mathcal{M}_{i}^{(p+1)}}\,\right]={\bf 1}\{\,\mathcal{M}(p+1,\,\bm{m})=\mathcal{M}_{i}^{(p+1)}\,\}={\bf 1}\{\,\mathcal{M}(p,\,\bm{m})\in\mathscr{R}_{i}^{(p)}\,\}
\]
and
\[
\mathfrak{a}_{\bm{m}}^{(p-1)}(\mathcal{M}')=\widehat{\mathcal{Q}}_{\mathcal{M}(p,\,\boldsymbol{m})}^{(p)}\left[\,\tau_{\mathscr{V}^{(p)}}=\tau_{\mathcal{M}'}\,\right]={\bf 1}\{\,\mathcal{M}'=\mathcal{M}(p,\,\bm{m})\,\}\,.
\]
Therefore, by the above two displayed equations and , we obtain
\begin{align*}
\mathcal{Q}_{\mathfrak{a}_{\bm{m}}^{(p-1)}}^{(p)}\left[\,\tau_{\mathscr{R}^{(p)}}=\tau_{\mathscr{R}_{i}^{(p)}}\,\right] & =\mathcal{Q}_{\mathcal{M}(p,\,\bm{m})}^{(p)}\left[\,\tau_{\mathscr{R}^{(p)}}=\tau_{\mathscr{R}_{i}^{(p)}}\,\right]\\
 & ={\bf 1}\{\,\mathcal{M}(p,\,\bm{m})\in\mathscr{R}_{i}^{(p)}\,\}\\
 & =\widehat{\mathcal{Q}}_{\mathcal{M}(p+1,\,\boldsymbol{m})}^{(p+1)}\left[\,\tau_{\mathscr{V}^{(p+1)}}=\tau_{\mathcal{M}_{i}^{(p+1)}}\,\right]\,.
\end{align*}
Finally, by \eqref{e: l_44}, the proof is completed.
\end{proof}
For $p\in\llbracket1,\,\mathfrak{q}\rrbracket$, define
\[
{\color{blue}\mathscr{R}_{\star}^{(p)}}:=\{i\in\llbracket1,\,\mathfrak{n}_{p}\rrbracket:\mathscr{R}_{i}^{(p)}\subset\mathscr{V}_{\star}^{(p)}\}\,.
\]
By the next lemma, there is one-to-one correspondence between $\mathscr{R}_{\star}^{(p)}$
and $\mathscr{V}_{\star}^{(p+1)}$.
\begin{lemma}
\label{l: R^p_V^p+1}Fix $p\in\llbracket1,\,\mathfrak{q}\rrbracket$
and define $\varphi:\llbracket1,\,\mathfrak{n}_{p}\rrbracket\to\mathscr{V}^{(p+1)}$
by
\[
\varphi(i)=\mathcal{M}_{i}^{(p+1)}\,.
\]
Then, $\varphi$ is one-to-one correspondence such that $\varphi(\mathscr{R}_{\star}^{(p)})=\mathscr{V}_{\star}^{(p+1)}$.
\end{lemma}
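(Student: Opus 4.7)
The plan is to prove the two assertions (bijectivity of $\varphi$, and the equality $\varphi(\mathscr{R}_\star^{(p)}) = \mathscr{V}_\star^{(p+1)}$) separately, both essentially by unwinding definitions.

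\textbf{Bijectivity.} Surjectivity is immediate from the definition \eqref{e: M_j-R_j}, which says $\mathscr{V}^{(p+1)} = \{\mathcal{M}_1^{(p+1)},\ldots,\mathcal{M}_{\mathfrak{n}_p}^{(p+1)}\}$ for $p < \mathfrak{q}$, and similarly in the extended construction for $p = \mathfrak{q}$. For injectivity, I would argue that the sets $\mathcal{M}_i^{(p+1)}$ are pairwise disjoint and non-empty. Non-emptiness is clear since each closed irreducible class $\mathscr{R}_i^{(p)}$ is a non-empty family of non-empty subsets of $\mathcal{M}_0$. For disjointness, recall from the structural properties listed after \eqref{e: M_j-R_j} that the elements of $\mathscr{V}^{(p)}\subset \mathscr{S}^{(p)}$ are pairwise disjoint (they form part of a partition of $\mathcal{M}_0$), and the classes $\mathscr{R}_1^{(p)},\ldots,\mathscr{R}_{\mathfrak{n}_p}^{(p)}$ are disjoint subfamilies of $\mathscr{V}^{(p)}$. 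Taking unions over disjoint subfamilies of a disjoint family yields disjoint sets; hence $i\neq j$ implies $\mathcal{M}_i^{(p+1)} \neq \mathcal{M}_j^{(p+1)}$.

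\textbf{The identification $\varphi(\mathscr{R}_\star^{(p)}) = \mathscr{V}_\star^{(p+1)}$.} Fix $i\in\llbracket 1,\mathfrak{n}_p\rrbracket$. I will show the chain of equivalences
\[
i\in\mathscr{R}_\star^{(p)} \;\Longleftrightarrow\; \mathscr{R}_i^{(p)}\subset \mathscr{V}_\star^{(p)} \;\Longleftrightarrow\; \mathcal{M}_i^{(p+1)}\subset \mathcal{M}_\star \;\Longleftrightarrow\; \mathcal{M}_i^{(p+1)}\in\mathscr{V}_\star^{(p+1)}.
\]
The first and the last are definitions. For the middle equivalence, the forward direction follows by taking unions: if every $\mathcal{M}\in\mathscr{R}_i^{(p)}$ satisfies $\mathcal{M}\subset\mathcal{M}_\star$, then $\mathcal{M}_i^{(p+1)} = \bigcup_{\mathcal{M}\in\mathscr{R}_i^{(p)}}\mathcal{M} \subset \mathcal{M}_\star$. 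Conversely, if $\mathcal{M}_i^{(p+1)}\subset \mathcal{M}_\star$, then for each $\mathcal{M}\in\mathscr{R}_i^{(p)}\subset \mathscr{V}^{(p)}$ we have $\mathcal{M}\subset \mathcal{M}_i^{(p+1)}\subset\mathcal{M}_\star$, so $\mathcal{M}\in\mathscr{V}_\star^{(p)}$.

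\textbf{Main obstacle.} There is no deep obstacle; the only delicate point is bookkeeping at the boundary case $p=\mathfrak{q}$, where $\mathfrak{n}_\mathfrak{q}=1$ and $\mathscr{V}^{(\mathfrak{q}+1)} = \{\mathcal{M}_1^{(\mathfrak{q}+1)}\}$. Here I would invoke \eqref{e: min} to identify $\mathcal{M}_1^{(\mathfrak{q}+1)} = \mathcal{M}_\star$, so $\mathscr{V}_\star^{(\mathfrak{q}+1)} = \mathscr{V}^{(\mathfrak{q}+1)}$, and note that every $\mathcal{M}\in\mathscr{R}_1^{(\mathfrak{q})}$ is trivially contained in $\mathcal{M}_\star$, so $\mathscr{R}_\star^{(\mathfrak{q})}=\{1\}$, making the statement consistent at this level too.
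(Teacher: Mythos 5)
Your proposal is correct and follows essentially the same approach as the paper's proof: both arguments unwind the definitions of $\mathscr{R}_\star^{(p)}$, $\mathcal{M}_i^{(p+1)}$ and $\mathscr{V}_\star^{(p+1)}$, with bijectivity of $\varphi$ coming directly from the construction. Your version is somewhat more explicit — you spell out the disjointness argument for injectivity (the paper dispenses with it as ``by the construction''), phrase the main identification as a chain of equivalences rather than two containments, and add the boundary check at $p=\mathfrak{q}$ — but the logical content matches; the only minor divergence is that the paper also invokes simplicity of $\mathcal{M}_i^{(p+1)}$ at one point, which your argument correctly shows is not actually needed for this particular step.
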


\begin{proof}
$\varphi$ is one-to-one correspondence by the construction. If $i\in\mathscr{R}_{\star}^{(p)}$,
$\mathcal{M}\subset\mathcal{M}_{\star}$ for all $\mathcal{M}\in\mathscr{R}_{i}^{(p)}$
so that $\mathcal{M}_{i}^{(p+1)}\subset\mathscr{V}_{\star}^{(p+1)}$
since $\mathcal{M}_{i}^{(p+1)}$ is simple. Therefore, $\varphi(\mathscr{R}_{\star}^{(p)})\subset\mathscr{V}_{\star}^{(p+1)}$.
Let $\mathcal{M}\in\mathscr{V}_{\star}^{(p+1)}$ and let $i\in\llbracket1,\,\mathfrak{n}_{p}\rrbracket$
be such that $\mathcal{M}=\mathcal{M}_{i}^{(p+1)}$. Since $\mathcal{M}\subset\mathcal{M}_{\star}$,
$\mathcal{M}'\subset\mathcal{M}_{\star}$ for $\mathcal{M}'\in\mathscr{R}_{i}^{(p)}$
as well. Therefore, $i\in\mathscr{R}_{\star}^{(p)}$ so that $\varphi(\mathscr{R}_{\star}^{(p)})\supset\mathscr{V}_{\star}^{(p+1)}$,
which completes the proof. 
\end{proof}
Recall the notation \eqref{e: P_t-Q}. Now, we have the following
long time behavior of ${\bf y}^{(p)}$.
\begin{lemma}
\label{l: 46}For $p\in\llbracket1,\,\mathfrak{q}\rrbracket$, $\bm{m}\in\mathcal{M}_{0}$
and $t>0$,
\[
\lim_{t\to\infty}P_{t}^{(p),\,\bm{m}}(\mathcal{M})=\begin{cases}
\mathfrak{a}_{\bm{m}}^{(p)}(\mathcal{M}_{i}^{(p+1)})\frac{\nu(\mathcal{M})}{\nu(\mathcal{M}_{i}^{(p+1)})} & \mathcal{M}\in\mathscr{R}_{i}^{(p)},\,i\in\llbracket1,\,\mathfrak{n}_{p}\rrbracket\,,\\
0 & \mathcal{M}\in\mathscr{T}^{(p)}\,,
\end{cases}
\]
where $\mathfrak{a}_{\bm{m}}^{(\mathfrak{q})}=\delta_{\mathcal{M}_{1}^{(\mathfrak{q}+1)}}$.
\end{lemma}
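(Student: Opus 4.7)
The statement is a pure finite-state Markov-chain fact about $\mathbf{y}^{(p)}(\cdot)$ on $\mathscr{V}^{(p)} = \mathscr{T}^{(p)} \cup \mathscr{R}^{(p)}$; no small-noise analysis is needed, only classical convergence for irreducible continuous-time chains. The plan proceeds in three steps corresponding to the two cases of the conclusion.

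\emph{Step 1 (transient states).} For $\mathcal{M} \in \mathscr{T}^{(p)}$, every trajectory is absorbed into $\mathscr{R}^{(p)}$ in almost surely finite time and never returns, so
\[
P_{t}^{(p),\,\bm{m}}(\mathcal{M}) \le \mathcal{Q}_{\mathfrak{a}_{\bm{m}}^{(p-1)}}^{(p)}\bigl[\tau_{\mathscr{R}^{(p)}} > t\bigr] \xrightarrow[t\to\infty]{} 0\,,
\]
which gives the second case.

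\emph{Step 2 (decomposition on closed classes).} Fix $\mathcal{M} \in \mathscr{R}_{i}^{(p)}$. Since $\mathscr{R}_{i}^{(p)}$ is closed, the event $\{\mathbf{y}^{(p)}(t) = \mathcal{M}\}$ forces $\tau_{\mathscr{R}^{(p)}} = \tau_{\mathscr{R}_{i}^{(p)}} \le t$. I would condition on this entrance event and apply the strong Markov property at $\tau_{\mathscr{R}_{i}^{(p)}}$. The chain restricted to the closed class $\mathscr{R}_{i}^{(p)}$ is a continuous-time irreducible Markov chain on a finite state space (hence automatically aperiodic), so its transition semigroup converges exponentially fast to the unique invariant probability $\pi_{i}$ on $\mathscr{R}_{i}^{(p)}$. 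Sending $t \to \infty$ one obtains
\[
\lim_{t \to \infty} P_{t}^{(p),\,\bm{m}}(\mathcal{M}) = \mathcal{Q}_{\mathfrak{a}_{\bm{m}}^{(p-1)}}^{(p)}\bigl[\tau_{\mathscr{R}^{(p)}} = \tau_{\mathscr{R}_{i}^{(p)}}\bigr]\cdot \pi_{i}(\mathcal{M})\,.
\]
Lemma \ref{l: 44} identifies the first factor as $\mathfrak{a}_{\bm{m}}^{(p)}(\mathcal{M}_{i}^{(p+1)})$.

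\emph{Step 3 (identification of $\pi_{i}$).} It remains to show $\pi_{i}(\mathcal{M}) = \nu(\mathcal{M})/\nu(\mathcal{M}_{i}^{(p+1)})$; this is the step that is not a generic Markov-chain statement and is the main obstacle of the argument. I would verify it from the explicit construction of the jump rates of $\mathbf{y}^{(p)}$ in Appendix \ref{app: Tree} by induction on $p$. For $p = 1$, the rates of $\mathbf{y}^{(1)}$ are inherited from the diffusion $\bm{x}_{\epsilon}$ through capacity-type formulas (as illustrated by the explicit rates computed in the double-well corollary), and the Gibbs form of $\mu_{\epsilon}$ makes $\nu$ an invariant measure on each closed class of $\mathbf{y}^{(1)}$. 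For the inductive step, note that $\mathcal{M}_{i}^{(p+1)} = \bigcup_{\mathcal{M} \in \mathscr{R}_{i}^{(p)}} \mathcal{M}$, so $\nu(\mathcal{M}_{i}^{(p+1)}) = \sum_{\mathcal{M} \in \mathscr{R}_{i}^{(p)}} \nu(\mathcal{M})$; combined with the fact that both (i) grouping the states of $\widehat{\mathbf{y}}^{(p+1)}$ according to the partition $\mathscr{R}^{(p)}$ and (ii) passing to the trace on $\mathscr{V}^{(p+1)}$ preserve invariant measures, this propagates $\nu$-invariance from layer $p$ to layer $p+1$. Once $\pi_{i}(\mathcal{M}) = \nu(\mathcal{M})/\nu(\mathcal{M}_{i}^{(p+1)})$ is established, combining with Steps 1 and 2 completes the proof.
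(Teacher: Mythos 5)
Your Steps 1 and 2 match the paper's argument in structure: split off the transient states via almost-sure absorption, then condition on the first closed class hit and use the ergodic theorem for the chain restricted to that class. The paper makes exactly this decomposition, writing $P_{t}^{(p),\,\bm m}(\mathcal{M}) = \sum_{\mathcal{M}'\in\mathscr{R}_{i}^{(p)}} \mathcal{Q}_{\mathcal{M}'}^{(p)}[\mathbf{y}^{(p)}(t)=\mathcal{M}]\,\mathcal{Q}_{\mathfrak{a}_{\bm m}^{(p-1)}}^{(p)}[\tau_{\mathscr{R}^{(p)}}=\tau_{\mathcal{M}'}]$, sending $t\to\infty$, and then applying Lemma~\ref{l: 44}. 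So far the two proofs coincide.

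The difference is in your Step 3, the identification $\pi_{i}(\mathcal{M})=\nu(\mathcal{M})/\nu(\mathcal{M}_{i}^{(p+1)})$. The paper does not re-derive this: it simply cites \cite[Proposition 12.7]{LLS-2nd}, which establishes that the restriction of $\nu$ to each closed class $\mathscr{R}_{i}^{(p)}$, normalized, is the stationary distribution of $\mathbf{y}^{(p)}$ there. You instead propose an induction, and this is where the sketch has a gap. The inductive step ``grouping the states of $\widehat{\mathbf{y}}^{(p+1)}$ according to the partition $\mathscr{R}^{(p)}$ \dots preserves invariant measures'' is not a standard lumping fact here: the Case-3 rates $\widehat{r}^{(p+1)}(\mathcal{M},\mathcal{M}')=\omega_{p+1}(\mathcal{M},\mathcal{M}')/\nu(\mathcal{M})$ are freshly defined from the level-$(p+1)$ landscape (new depths, new Eyring--Kramers constants), not obtained by aggregating the rates $\widehat{r}^{(p)}$ of the previous layer. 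Consequently $\nu$-stationarity for $\mathbf{y}^{(p+1)}$ does not follow from $\nu$-stationarity for $\mathbf{y}^{(p)}$ by a lumping argument alone; one must verify directly that the explicit rates in \eqref{eq:rate_3} are in detailed balance with $\nu$ within each closed class (using the symmetry $\omega(\mathcal{M},\mathcal{M}')=\omega(\mathcal{M}',\mathcal{M})$ when both $\Xi(\mathcal{M})$ and $\Xi(\mathcal{M}')$ equal $d^{(p+1)}$, and suitable bookkeeping through the auxiliary chain and the trace). That verification is precisely the content of the cited proposition. If you want a self-contained proof, you should carry out that computation explicitly; as written, the ``grouping preserves stationarity'' claim does not stand on its own, whereas the trace half of your inductive step is fine.
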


\begin{proof}
Fix $p\in\llbracket1,\,\mathfrak{q}\rrbracket$, $\bm{m}\in\mathcal{M}_{0}$
and $t>0$. If $\mathcal{M}\in\mathscr{T}^{(p)}$, by finite Markov
chain theory,
\[
\lim_{t\to\infty}P_{t}^{(p),\,\bm{m}}(\mathcal{M})=0\,.
\]

Now, let $\mathcal{M}\in\mathscr{R}_{i}^{(p)}$ for some $i\in\llbracket1,\,\mathfrak{n}_{p}\rrbracket$.
Then,
\begin{align*}
P_{t}^{(p),\,\bm{m}}(\mathcal{M}) & =\sum_{\mathcal{M}'\in\mathscr{R}^{(p)}}\mathcal{Q}_{\mathcal{M}'}^{(p)}\left[\,{\bf y}^{(p)}(t)=\mathcal{M}\,\right]\mathcal{Q}_{\mathfrak{a}_{\bm{m}}^{(p-1)}}^{(p)}\left[\,\tau_{\mathscr{R}^{(p)}}=\tau_{\mathcal{M}'}\,\right]\\
 & =\sum_{\mathcal{M}'\in\mathscr{R}_{i}^{(p)}}\mathcal{Q}_{\mathcal{M}'}^{(p)}\left[\,{\bf y}^{(p)}(t)=\mathcal{M}\,\right]\mathcal{Q}_{\mathfrak{a}_{\bm{m}}^{(p-1)}}^{(p)}\left[\,\tau_{\mathscr{R}^{(p)}}=\tau_{\mathcal{M}'}\,\right]\,.
\end{align*}
By \cite[Proposition 12.7]{LLS-2nd}, ${\bf y}^{(p)}$ restricted on $\mathscr{R}_{i}^{(p)}$ is irreducible
and a probability measure on $\mathscr{R}_{i}^{(p)}$
\[
\frac{\nu(\mathcal{M})}{\nu(\mathcal{M}_{i}^{(p+1)})}\ ;\ \mathcal{M}\in\mathscr{R}_{i}^{(p)}
\]
is the stationary distribution. Therefore, by finite Markov chain theory, we
obtain for $\mathcal{M}'\in\mathscr{R}_{i}^{(p)}$,
\[
\lim_{t\to\infty}\mathcal{Q}_{\mathcal{M}'}^{(p)}\left[\,{\bf y}^{(p)}(t)=\mathcal{M}\,\right]=\frac{\nu(\mathcal{M})}{\nu(\mathcal{M}_{i}^{(p+1)})}\,.
\]
Hence, we have
\[
\lim_{t\to\infty}P_{t}^{(p),\,\bm{m}}(\mathcal{M})=\sum_{\mathcal{M}'\in\mathscr{R}_{i}^{(p)}}\,\frac{\nu(\mathcal{M})}{\nu(\mathcal{M}_{i}^{(p+1)})}\,\mathcal{Q}_{\mathfrak{a}_{\bm{m}}^{(p-1)}}^{(p)}\left[\,\tau_{\mathscr{R}^{(p)}}=\tau_{\mathcal{M}'}\,\right]\,.
\]
Now, we have
\[
\sum_{\mathcal{M}'\in\mathscr{R}_{i}^{(p)}}\mathcal{Q}_{\mathfrak{a}_{\bm{m}}^{(p-1)}}^{(p)}\left[\,\tau_{\mathscr{R}^{(p)}}=\tau_{\mathcal{M}'}\,\right]=\mathcal{Q}_{\mathfrak{a}_{\bm{m}}^{(p-1)}}^{(p)}\left[\,\tau_{\mathscr{\mathscr{R}}^{(p)}}=\tau_{\mathscr{R}_{i}^{(p)}}\,\right]\,,
\]
so that
\[
\lim_{t\to\infty}P_{t}^{(p),\,\bm{m}}(\mathcal{M})=\frac{\nu(\mathcal{M})}{\nu(\mathcal{M}_{i}^{(p+1)})}\,\mathcal{Q}_{\mathfrak{a}_{\bm{m}}^{(p-1)}}^{(p)}\left[\,\tau_{\mathscr{\mathscr{R}}^{(p)}}=\tau_{\mathscr{R}_{i}^{(p)}}\,\right]\,.
\]
Therefore, if $p<\mathfrak{q}$, by Lemma \ref{l: 44}, we obtain
\[
\lim_{t\to\infty}\,P_{t}^{(p),\,\bm{m}}(\mathcal{M})=\frac{\nu(\mathcal{M})}{\nu(\mathcal{M}_{i}^{(p+1)})}\,\mathfrak{a}_{\bm{m}}^{(p)}(\mathcal{M}_{i}^{(p+1)})\,.
\]
If $p=\mathfrak{q}$, since $\mathfrak{n}_{\mathfrak{q}}=1$ (so that
$i=1$ and $\mathscr{R}^{(\mathfrak{q})}=\mathscr{R}_{1}^{(\mathfrak{q})}$),
we obtain
\[
\lim_{t\to\infty}P_{t}^{(p),\,\bm{m}}(\mathcal{M})=\frac{\nu(\mathcal{M})}{\nu(\mathcal{M}_{1}^{(\mathfrak{q}+1)})}\mathcal{Q}_{\mathfrak{a}_{\bm{m}}^{(\mathfrak{q}-1)}}^{(\mathfrak{q})}\left[\,\tau_{\mathscr{\mathscr{R}}^{(\mathfrak{q})}}=\tau_{\mathscr{R}_{1}^{(\mathfrak{q})}}\,\right]=\frac{\nu(\mathcal{M})}{\nu(\mathcal{M}_{1}^{(\mathfrak{q}+1)})}\,.
\]
\end{proof}

\subsection{Total variation distance of reduced processes}

By Theorem \ref{t: main_TV}, the distribution of $\bm{x}_{\epsilon}(\cdot)$
in time scale $\theta_{\epsilon}^{(p)}$ is reduced to that of Markov
chain ${\bf y}^{(p)}(\cdot)$. Therefore, by the triangle inequality,
we may compute the total variation distance between the distribution
of Markov chain ${\bf y}^{(p)}(\cdot)$ and $\mu_{\epsilon}$ instead
of the distance between that of $\bm{x}_{\epsilon}(\cdot)$ and $\mu_{\epsilon}$.
Recall from \eqref{e: nu^p} the definition of the measure $\nu^{(p)}$.
\begin{lemma}
\label{l: 45}For $p\in\llbracket1,\,\mathfrak{q}\rrbracket$, $\bm{m}\in\mathcal{M}_{0}$
and $t>0$,
\[
\lim_{\epsilon\to0}d_{{\rm TV}}\left(\sum_{\mathcal{M}\in\mathscr{V}^{(p)}}P_{t}^{(p),\,\bm{m}}(\mathcal{M})\,\mu_{\epsilon}^{\mathcal{E}(\mathcal{M})},\,\mu_{\epsilon}\right)=d_{{\rm TV}}\left({\bf y}^{(p)}\left(t;\,\mathfrak{a}_{\bm{m}}^{(p-1)}\right),\,\nu^{(p)}\right)\,.
\]
\end{lemma}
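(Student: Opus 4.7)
The plan is to reduce the left-hand side to an explicit finite sum via Lemma \ref{l: TV_density} and then to recognize the sum as the total-variation distance between two probability measures on the finite set $\mathscr{V}^{(p)}$. Apply Lemma \ref{l: TV_density} with $\pi_\epsilon=\mu_\epsilon$, $B=\mathbb{R}^{d}$, $\{A_{\mathcal{M}}\}_{\mathcal{M}\in\mathscr{V}^{(p)}}=\{\mathcal{E}(\mathcal{M})\}_{\mathcal{M}\in\mathscr{V}^{(p)}}$, and coefficients $a_{\mathcal{M}}=P_{t}^{(p),\bm{m}}(\mathcal{M})$. The sets $\mathcal{E}(\mathcal{M})$ are pairwise disjoint because the valleys $\mathcal{W}^{r_{0}}(\bm m)$ are disjoint for distinct local minima (by the choice of $r_0$ in \eqref{e: r_0-0}-\eqref{e: r_0}), and they are contained in $\mathbb{R}^{d}$; the weights $a_{\mathcal{M}}$ are nonnegative and sum to $1$ since $\mathbf{y}^{(p)}(t;\mathfrak{a}_{\bm m}^{(p-1)})$ is a probability distribution on $\mathscr{V}^{(p)}$.

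Next I would compute the limiting weights $\nu_{\mathcal{M}}:=\lim_{\epsilon\to0}\mu_{\epsilon}(\mathcal{E}(\mathcal{M}))$ by Laplace's method, exploiting the Morse hypothesis: around each local minimum $\bm m$,
\[
\mu_{\epsilon}(\mathcal{E}(\bm m))=\frac{(2\pi\epsilon)^{d/2}}{Z_{\epsilon}}\,\nu(\bm m)\,e^{-U(\bm m)/\epsilon}(1+o(1)),
\]
while $Z_{\epsilon}=(2\pi\epsilon)^{d/2}\,\nu_{\star}\,e^{-\min U/\epsilon}(1+o(1))$. Hence $\mu_{\epsilon}(\mathcal{E}(\bm m))\to\nu(\bm m)/\nu_{\star}$ when $\bm m\in\mathcal{M}_{\star}$, and decays exponentially to $0$ otherwise. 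Since every $\mathcal{M}\in\mathscr{V}^{(p)}$ is simple (by Proposition \ref{p: tree}-(1)), the elements of $\mathcal{M}$ share a common value $U(\mathcal{M})$, so either $\mathcal{M}\subset\mathcal{M}_{\star}$ or $\mathcal{M}\cap\mathcal{M}_{\star}=\varnothing$. Summing over $\bm m\in\mathcal{M}$ then yields $\nu_{\mathcal{M}}=\nu^{(p)}(\mathcal{M})$ as defined in \eqref{e: nu^p}. The identity \eqref{e: M_star-V_star} gives $\sum_{\mathcal{M}\in\mathscr{V}^{(p)}}\nu^{(p)}(\mathcal{M})=1$, so the hypothesis of the ``in addition'' clause of Lemma \ref{l: TV_density} is satisfied.

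Applying that clause yields
\[
\lim_{\epsilon\to0}d_{{\rm TV}}\!\left(\sum_{\mathcal{M}\in\mathscr{V}^{(p)}}P_{t}^{(p),\bm{m}}(\mathcal{M})\,\mu_{\epsilon}^{\mathcal{E}(\mathcal{M})},\,\mu_{\epsilon}\right)=\frac{1}{2}\sum_{\mathcal{M}\in\mathscr{V}^{(p)}}\left|\,P_{t}^{(p),\bm{m}}(\mathcal{M})-\nu^{(p)}(\mathcal{M})\,\right|.
\]
Finally, since $P_{t}^{(p),\bm{m}}(\mathcal{M})=\mathcal{Q}_{\mathfrak{a}_{\bm m}^{(p-1)}}^{(p)}[\mathbf{y}^{(p)}(t)=\mathcal{M}]$ is the probability mass function of $\mathbf{y}^{(p)}(t;\mathfrak{a}_{\bm m}^{(p-1)})$ on the finite set $\mathscr{V}^{(p)}$, the right-hand side equals $d_{{\rm TV}}(\mathbf{y}^{(p)}(t;\mathfrak{a}_{\bm m}^{(p-1)}),\nu^{(p)})$ by the standard finite-state formula for total variation, completing the proof.

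The only non-mechanical step is the Laplace asymptotic identification $\nu_{\mathcal{M}}=\nu^{(p)}(\mathcal{M})$; however, this is a textbook consequence of the Morse assumption and the Gibbs form \eqref{e_Gibbs}, and in particular it is implicit in the prefactors that already appear in the Eyring--Kramers formula stated as Theorem \ref{t_EK}. The remaining ingredients are purely formal manipulations of Lemma \ref{l: TV_density} and \eqref{e: M_star-V_star}.
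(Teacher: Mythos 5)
Your proof is correct and takes essentially the same route as the paper: apply Lemma \ref{l: TV_density} with $\pi_\epsilon=\mu_\epsilon$, $A_{\mathcal{M}}=\mathcal{E}(\mathcal{M})$, $B=\mathbb{R}^d$, compute the limiting weights $\lim_{\epsilon\to 0}\mu_\epsilon(\mathcal{E}(\mathcal{M}))=\nu^{(p)}(\mathcal{M})$ via Laplace asymptotics, and identify the resulting half-$\ell^1$ sum as the finite-state total variation distance. The paper's proof is terser (it simply cites Lemma \ref{l: TV_density} and states the resulting split into $\mathscr{V}_\star^{(p)}$ and its complement); you supply the intermediate Laplace calculation and the check that $\sum_{\mathcal{M}}\nu^{(p)}(\mathcal{M})=1$ via \eqref{e: M_star-V_star}, but the underlying argument is identical.
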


\begin{proof}
Fix $p\in\llbracket1,\,\mathfrak{q}\rrbracket$, $\bm{m}\in\mathcal{M}_{0}$
and $t>0$. Then, by Lemma \ref{l: TV_density} for $\pi_{\epsilon}=\mu_{\epsilon}$
and $A_{i}=\mathcal{E}(\mathcal{M})$ and $B=\mathbb{R}^{d}$,
\begin{align*}
 & \lim_{\epsilon\to0}d_{{\rm TV}}\left(\sum_{\mathcal{M}\in\mathscr{V}^{(p)}}P_{t}^{(p),\,\bm{m}}(\mathcal{M})\,\mu_{\epsilon}^{\mathcal{E}(\mathcal{M})},\,\mu_{\epsilon}\right)\\
 & =\frac{1}{2}\sum_{\mathcal{M}\in\mathscr{V}^{(p)}\setminus\mathscr{V}_{\star}^{(p)}}P_{t}^{(p),\,\bm{m}}(\mathcal{M})+\frac{1}{2}\sum_{\mathcal{M}\in\mathscr{V}_{\star}^{(p)}}\left|\,P_{t}^{(p),\,\bm{m}}(\mathcal{M})-\frac{\nu(\mathcal{M})}{\nu_{\star}}\,\right|\,.
\end{align*}
The right-hand side of the previous equation is equal to $d_{{\rm TV}}(P_{t}^{(p),\,\bm{m}},\,\nu^{(p)})$. 
\end{proof}
Now, we compute limit of the total variation distance of the Markov
chain as $t\to\infty$. This is related to the tree-structure of the
Markov chains.
\begin{lemma}
\label{l: 48}For all $p\in\llbracket1,\,\mathfrak{q}\rrbracket$
and $\bm{m}\in\mathcal{M}_{0}$, we have
\[
\lim_{t\to\infty}\,d_{{\rm TV}}\left({\bf y}^{(p)}\left(t;\,\mathfrak{a}_{\bm{m}}^{(p-1)}\right),\,\nu^{(p)}\right)=d_{{\rm TV}}(\mathfrak{a}_{\bm{m}}^{(p)},\,\nu^{(p+1)})\,.
\]
\end{lemma}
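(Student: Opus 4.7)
The plan is to identify the limiting distribution of ${\bf y}^{(p)}(t;\mathfrak{a}_{\bm{m}}^{(p-1)})$ as $t \to \infty$ using the already proved Lemma \ref{l: 46}, and then compute the total variation distance from $\nu^{(p)}$ by a direct splitting-of-sum calculation. Explicitly, define the probability measure $\pi^{(p),\,\bm{m}}$ on $\mathscr{V}^{(p)}$ by
\[
\pi^{(p),\,\bm{m}}(\mathcal{M})=\begin{cases}0 & \mathcal{M}\in\mathscr{T}^{(p)},\\[2pt]\mathfrak{a}_{\bm{m}}^{(p)}(\mathcal{M}_i^{(p+1)})\,\dfrac{\nu(\mathcal{M})}{\nu(\mathcal{M}_i^{(p+1)})} & \mathcal{M}\in\mathscr{R}_i^{(p)}.\end{cases}
\]
Lemma \ref{l: 46} states that $P_t^{(p),\,\bm{m}}(\mathcal{M})\to\pi^{(p),\,\bm{m}}(\mathcal{M})$ pointwise as $t\to\infty$. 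Since $\mathscr{V}^{(p)}$ is finite, pointwise convergence is equivalent to convergence in total variation, so it suffices to prove the identity
\[
d_{\rm TV}(\pi^{(p),\,\bm{m}},\,\nu^{(p)})=d_{\rm TV}(\mathfrak{a}_{\bm{m}}^{(p)},\,\nu^{(p+1)}).
\]

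For the computation, I would expand the left-hand side as $\tfrac12\sum_{\mathcal{M}\in\mathscr{V}^{(p)}}|\pi^{(p),\,\bm{m}}(\mathcal{M})-\nu^{(p)}(\mathcal{M})|$ and split the sum according to the structure $\mathscr{V}^{(p)}=\mathscr{T}^{(p)}\sqcup\bigsqcup_{i=1}^{\mathfrak{n}_p}\mathscr{R}_i^{(p)}$. The contribution from $\mathscr{T}^{(p)}$ vanishes: $\pi^{(p),\,\bm{m}}$ is zero there by construction, and $\nu^{(p)}$ is also zero there because if $\mathcal{M}\in\mathscr{T}^{(p)}$ then (iterating the tree construction) every $\bm{m}\in\mathcal{M}$ satisfies $\mathcal{M}(\mathfrak{q}+1,\bm{m})\in\mathscr{N}^{(\mathfrak{q}+1)}$, hence $\mathcal{M}\cap\mathcal{M}_{\star}=\varnothing$ and therefore $\mathcal{M}\notin\mathscr{V}_\star^{(p)}$.

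On each irreducible class $\mathscr{R}_i^{(p)}$, I would further split according to whether $i\in\mathscr{R}_\star^{(p)}$ or not. Using the simple but key observation that $\mathcal{M}\in\mathscr{V}_\star^{(p)}$ iff $\mathcal{M}_i^{(p+1)}\in\mathscr{V}_\star^{(p+1)}$ whenever $\mathcal{M}\in\mathscr{R}_i^{(p)}$, and the telescoping $\sum_{\mathcal{M}\in\mathscr{R}_i^{(p)}}\nu(\mathcal{M})=\nu(\mathcal{M}_i^{(p+1)})$, each block collapses cleanly:
\[
\sum_{\mathcal{M}\in\mathscr{R}_i^{(p)}}\bigl|\pi^{(p),\,\bm{m}}(\mathcal{M})-\nu^{(p)}(\mathcal{M})\bigr|=\bigl|\mathfrak{a}_{\bm{m}}^{(p)}(\mathcal{M}_i^{(p+1)})-\nu^{(p+1)}(\mathcal{M}_i^{(p+1)})\bigr|,
\]
where for $i\in\mathscr{R}_\star^{(p)}$ one factors out $\nu(\mathcal{M})/\nu(\mathcal{M}_i^{(p+1)})$ from the absolute value (the fractions sum to $1$), and for $i\notin\mathscr{R}_\star^{(p)}$ one has $\nu^{(p+1)}(\mathcal{M}_i^{(p+1)})=0$ so the absolute value is just $\mathfrak{a}_{\bm{m}}^{(p)}(\mathcal{M}_i^{(p+1)})$.

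Finally, by the bijection $\varphi\colon i\mapsto\mathcal{M}_i^{(p+1)}$ between $\llbracket 1,\mathfrak{n}_p\rrbracket$ and $\mathscr{V}^{(p+1)}$ from Lemma \ref{l: R^p_V^p+1}, summing over $i$ produces exactly $\tfrac12\sum_{\mathcal{M}\in\mathscr{V}^{(p+1)}}|\mathfrak{a}_{\bm{m}}^{(p)}(\mathcal{M})-\nu^{(p+1)}(\mathcal{M})|=d_{\rm TV}(\mathfrak{a}_{\bm{m}}^{(p)},\nu^{(p+1)})$, and the case $p=\mathfrak{q}$ is consistent since $\mathfrak{a}_{\bm{m}}^{(\mathfrak{q})}=\nu^{(\mathfrak{q}+1)}=\delta_{\mathcal{M}_1^{(\mathfrak{q}+1)}}$. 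There is no real obstacle here; the only point requiring a brief justification is that transient states at level $p$ lie outside $\mathcal{M}_\star$ (so that $\mathscr{V}_\star^{(p)}\subset\mathscr{R}^{(p)}$), which is precisely what makes the $\mathscr{T}^{(p)}$ contribution vanish and allows the reorganization via $\varphi$.
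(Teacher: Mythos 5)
Your proposal is correct and follows essentially the same route as the paper: both invoke Lemma \ref{l: 46} to pass to the $t\to\infty$ limit, then reorganize the TV sum over the decomposition $\mathscr{V}^{(p)}=\mathscr{T}^{(p)}\cup\bigcup_i\mathscr{R}_i^{(p)}$, telescope using $\sum_{\mathcal{M}\in\mathscr{R}_i^{(p)}}\nu(\mathcal{M})=\nu(\mathcal{M}_i^{(p+1)})$, and finish via the bijection of Lemma \ref{l: R^p_V^p+1}. The only cosmetic difference is that you first identify the limiting measure $\pi^{(p),\bm{m}}$ and then compute its TV distance to $\nu^{(p)}$, whereas the paper takes the $t\to\infty$ limit inside the TV formula directly; you also make explicit the observation (used implicitly in the paper) that $\nu^{(p)}$ vanishes on $\mathscr{T}^{(p)}$.
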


\begin{proof}
Fix $p\in\llbracket1,\,\mathfrak{q}\rrbracket$ and $\bm{m}\in\mathcal{M}_{0}$.
Since $\mathscr{V}^{(p)}$ is finite, by definition \eqref{e: P_t-Q}
of $P_{t}^{(p),\,\bm{m}}$, for $t>0$,
\[
d_{{\rm TV}}\left({\bf y}^{(p)}\left(t;\,\mathfrak{a}_{\bm{m}}^{(p-1)}\right),\,\nu^{(p)}\right)=\frac{1}{2}\sum_{\mathcal{M}\notin\mathscr{V}_{\star}^{(p)}}P_{t}^{(p),\,\bm{m}}(\mathcal{M})+\frac{1}{2}\sum_{\mathcal{M}\in\mathscr{V}_{\star}^{(p)}}\left|\,P_{t}^{(p),\,\bm{m}}(\mathcal{M})-\frac{\nu(\mathcal{M})}{\nu_{\star}}\,\right|\,.
\]
 Since $\sum_{\mathcal{M}\in\mathscr{R}_{i}^{(p)}}\nu(\mathcal{M})=\nu(\mathcal{M}_{i}^{(p+1)})$
for all $i\in\llbracket1,\,\mathfrak{n}_{p}\rrbracket$, by Lemma
\ref{l: 46}, we obtain
\begin{align*}
\lim_{t\to\infty}\sum_{\mathcal{M}\notin\mathscr{V}_{\star}^{(p)}}P_{t}^{(p),\,\bm{m}}(\mathcal{M}) & =\lim_{t\to\infty}\sum_{i\in\llbracket1,\,\mathfrak{n}_{p}\rrbracket\setminus\mathscr{R}_{\star}^{(p)}}\sum_{\mathcal{M}\in\mathscr{R}_{i}^{(p)}}P_{t}^{(p),\,\bm{m}}(\mathcal{M})\\
 & =\sum_{i\in\llbracket1,\,\mathfrak{n}_{p}\rrbracket\setminus\mathscr{R}_{\star}^{(p)}}\sum_{\mathcal{M}\in\mathscr{R}_{i}^{(p)}}\mathfrak{a}_{\bm{m}}^{(p)}(\mathcal{M}_{i}^{(p+1)})\frac{\nu(\mathcal{M})}{\nu(\mathcal{M}_{i}^{(p+1)})}\\
 & =\sum_{i\in\llbracket1,\,\mathfrak{n}_{p}\rrbracket\setminus\mathscr{R}_{\star}^{(p)}}\mathfrak{a}_{\bm{m}}^{(p)}(\mathcal{M}_{i}^{(p+1)})\,,
\end{align*}
and
\begin{align*}
\lim_{t\to\infty}\sum_{\mathcal{M}\in\mathscr{V}_{\star}^{(p)}}\left|\,P_{t}^{(p),\,\bm{m}}(\mathcal{M})-\frac{\nu(\mathcal{M})}{\nu_{\star}}\,\right| & =\lim_{t\to\infty}\sum_{i\in\mathscr{R}_{\star}^{(p)}}\sum_{\mathcal{M}\in\mathscr{R}_{i}^{(p)}}\left|\,P_{t}^{(p),\,\bm{m}}(\mathcal{M})-\frac{\nu(\mathcal{M})}{\nu_{\star}}\,\right|\\
 & =\sum_{i\in\mathscr{R}_{\star}^{(p)}}\sum_{\mathcal{M}\in\mathscr{R}_{i}^{(p)}}\left|\,\mathfrak{a}_{\bm{m}}^{(p)}(\mathcal{M}_{i}^{(p+1)})\frac{\nu(\mathcal{M})}{\nu(\mathcal{M}_{i}^{(p+1)})}-\frac{\nu(\mathcal{M})}{\nu_{\star}}\,\right|\\
 & =\sum_{i\in\mathscr{R}_{\star}^{(p)}}\left|\,\mathfrak{a}_{\bm{m}}^{(p)}(\mathcal{M}_{i}^{(p+1)})-\frac{\nu(\mathcal{M}_{i}^{(p+1)})}{\nu_{\star}}\,\right|\,.
\end{align*}
Therefore,
\begin{align*}
 & \lim_{t\to\infty}\,d_{{\rm TV}}\left({\bf y}^{(p)}\left(t;\,\mathfrak{a}_{\bm{m}}^{(p-1)}\right),\,\nu^{(p)}\right)\\
 & =\frac{1}{2}\sum_{i\in\llbracket1,\,\mathfrak{n}_{p}\rrbracket\setminus\mathscr{R}_{\star}^{(p)}}\mathfrak{a}_{\bm{m}}^{(p)}(\mathcal{M}_{i}^{(p+1)})+\frac{1}{2}\sum_{i\in\mathscr{R}_{\star}^{(p)}}\left|\,\mathfrak{a}_{\bm{m}}^{(p)}(\mathcal{M}_{i}^{(p+1)})-\frac{\nu(\mathcal{M}_{i}^{(p+1)})}{\nu_{\star}}\,\right|\,.
\end{align*}
Since there exists a one-to-one correspondence $\varphi:\llbracket1,\,\mathfrak{n}_{p}\rrbracket\to\mathscr{V}^{(p+1)}$
such that $\varphi(i)=\mathcal{M}_{i}^{(p+1)}$ and $\varphi(\mathscr{R}_{\star}^{(p)})=\mathscr{V}_{\star}^{(p+1)}$
by Lemma \ref{l: R^p_V^p+1}, the right-hand side of the previous
equation is equal to
\begin{align*}
 & \frac{1}{2}\sum_{\varphi(i)\in\mathscr{V}^{(p+1)}\setminus\mathscr{V}_{\star}^{(p+1)}}\mathfrak{a}_{\bm{m}}^{(p)}(\varphi(i))+\frac{1}{2}\sum_{\varphi(i)\in\mathscr{V}_{\star}^{(p+1)}}\left|\,\mathfrak{a}_{\bm{m}}^{(p)}(\varphi(i))-\frac{\nu(\varphi(i))}{\nu_{\star}}\,\right|\\
 & =\frac{1}{2}\sum_{\mathcal{M}\in\mathscr{V}^{(p+1)}\setminus\mathscr{V}_{\star}^{(p+1)}}\mathfrak{a}_{\bm{m}}^{(p)}(\mathcal{M})+\frac{1}{2}\sum_{\mathcal{M}\in\mathscr{V}_{\star}^{(p+1)}}\left|\,\mathfrak{a}_{\bm{m}}^{(p)}(\mathcal{M})-\frac{\nu(\mathcal{M})}{\nu_{\star}}\,\right|\\
 & =d_{{\rm TV}}(\mathfrak{a}_{\bm{m}}^{(p)},\,\nu^{(p+1)})\,,
\end{align*}
which completes the proof.
\end{proof}

\subsection{Proof of Theorem \ref{t: main}}

From now on, for $\bm{x}\in\mathbb{R}^{d}$ and $t>0$, we write
\[
{\color{blue}D_{\epsilon}(t,\,\bm{x})}:=d_{{\rm TV}}(\boldsymbol{x}_{\epsilon}(t;\,\bm{x}),\,\mu_{\epsilon})\,.
\]
First, we need the following elementary property.
\begin{lemma}
\label{l: d_TV-1}For all $\bm{x}\in\mathbb{R}^{d}$ and $t_{1}<t_{2}$,
we have
\[
D_{\epsilon}(t_{1},\,\bm{x})\ge D_{\epsilon}(t_{2},\,\bm{x})\,.
\]
\end{lemma}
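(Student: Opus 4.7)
The plan is to obtain this monotonicity as an immediate corollary of the general contraction property of the total variation distance under the semigroup, namely Lemma \ref{l: d_TV}, combined with the invariance of $\mu_{\epsilon}$.

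Let $\mathbf{P}_{s}$, $s\ge 0$, denote the semigroup associated with the diffusion $\bm{x}_{\epsilon}(\cdot)$, and set $s:=t_{2}-t_{1}>0$. By the Markov property, the distribution of $\bm{x}_{\epsilon}(t_{2};\,\bm{x})$ equals $\delta_{\bm{x}}\mathbf{P}_{t_{1}}\mathbf{P}_{s}$, i.e. the image of the distribution of $\bm{x}_{\epsilon}(t_{1};\,\bm{x})$ under $\mathbf{P}_{s}$. Since $\mu_{\epsilon}$ is the stationary distribution, we also have $\mu_{\epsilon}=\mu_{\epsilon}\mathbf{P}_{s}$.

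Applying Lemma \ref{l: d_TV} with $\mu=\delta_{\bm{x}}\mathbf{P}_{t_{1}}$, $\nu=\mu_{\epsilon}$, and time parameter $s$, I would get
\[
d_{{\rm TV}}\bigl(\delta_{\bm{x}}\mathbf{P}_{t_{1}},\,\mu_{\epsilon}\bigr)\;\ge\;d_{{\rm TV}}\bigl(\delta_{\bm{x}}\mathbf{P}_{t_{1}}\mathbf{P}_{s},\,\mu_{\epsilon}\mathbf{P}_{s}\bigr)\;=\;d_{{\rm TV}}\bigl(\delta_{\bm{x}}\mathbf{P}_{t_{2}},\,\mu_{\epsilon}\bigr),
\]
which is exactly $D_{\epsilon}(t_{1},\,\bm{x})\ge D_{\epsilon}(t_{2},\,\bm{x})$.

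There is no substantive obstacle here; the only point deserving a brief remark is that Lemma \ref{l: d_TV} is proved in the previous section for a Markov process acting on probability measures through its semigroup, and the diffusion $\bm{x}_{\epsilon}(\cdot)$ fits that framework, so the application is legitimate. Everything else is just bookkeeping using the stationarity $\mu_{\epsilon}\mathbf{P}_{s}=\mu_{\epsilon}$.
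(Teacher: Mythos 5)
Your proof is correct and takes essentially the same route as the paper: apply Lemma \ref{l: d_TV} with $\mu$ the law of $\bm{x}_{\epsilon}(t_{1};\,\bm{x})$, $\nu=\mu_{\epsilon}$, and time increment $t_{2}-t_{1}$, using stationarity of $\mu_{\epsilon}$. (Incidentally, the paper's own one-line proof writes $t=t_{1}-t_{2}$, which is a sign typo; your choice $s=t_{2}-t_{1}>0$ is the correct one.)
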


\begin{proof}
Let $\mu(d\bm{y})$ be the distribution of $\boldsymbol{x}_{\epsilon}(t_{1};\,\bm{x})$
and write $\nu(d\bm{y})=\mu_{\epsilon}(d\bm{y})$, $t=t_{1}-t_{2}$.
The proof is immediate from Lemma \ref{l: d_TV}.
\end{proof}
We need to compute the total variation distance $d_{{\rm TV}}(\boldsymbol{x}_{\epsilon}(t;\,\bm{x}),\,\mu_{\epsilon})$
in a time scale smaller than $\theta_{\epsilon}^{(1)}$ without Theorem
\ref{t: main_TV} since we do not have such result in time scale smaller
than $\theta_{\epsilon}^{(1)}$. The next lemma is the total variation
distance in the smallest time scale.
\begin{lemma}
\label{l: theta^0}Fix $\bm{m}\in\mathcal{M}_{0}$. For $\bm{x}\in\mathcal{D}(\bm{m})$,
we have
\[
\lim_{\epsilon\to0}D_{\epsilon}(2\epsilon^{-1},\,\bm{x})=\begin{cases}
1-\frac{\nu(\bm{m})}{\nu_{\star}} & \bm{m}\in\mathcal{M}_{\star}\,,\\
1 & \bm{m}\notin\mathcal{M}_{\star}\,.
\end{cases}
\]
\end{lemma}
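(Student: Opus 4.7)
The plan is to insert the intermediate measure $\mu_{\epsilon}^{\mathcal{E}(\bm{m})}$ via the triangle inequality, reducing the problem to \textbf{(i)} showing $\bm{x}_{\epsilon}(2\epsilon^{-1};\bm{x}) \to \mu_{\epsilon}^{\mathcal{E}(\bm{m})}$ in total variation, and \textbf{(ii)} computing $\lim_{\epsilon\to0} d_{{\rm TV}}(\mu_{\epsilon}^{\mathcal{E}(\bm{m})},\mu_{\epsilon})$. The split $2\epsilon^{-1} = \epsilon^{-1} + \epsilon^{-1}$ separates a ``descent phase'' (the process enters $\mathcal{E}(\bm{m})$) from an ``in-well mixing phase''; since $2\epsilon^{-1} \prec \theta_{\epsilon}^{(1)}$, no metastable transition can occur.

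For \textbf{(i)}, Proposition \ref{p: hit_min} gives, for $\bm{x}\in\mathcal{D}(\bm{m})$, $\mathbb{P}_{\bm{x}}^{\epsilon}[\tau_{\mathcal{E}(\mathcal{M}_{0})} > \epsilon^{-1}] + \mathbb{P}_{\bm{x}}^{\epsilon}[\tau_{\mathcal{E}(\mathcal{M}_{0})} \neq \tau_{\mathcal{E}(\bm{m})}] \to 0$. Applying the strong Markov property at $\tau_{\mathcal{E}(\bm{m})}$ on the good event, the task reduces to
\[
\lim_{\epsilon\to0}\ \sup_{\bm{y}\in\mathcal{E}(\bm{m})}\ \sup_{s\in[\epsilon^{-1},\,2\epsilon^{-1}]}\ d_{{\rm TV}}(\bm{x}_{\epsilon}(s;\bm{y}),\mu_{\epsilon}^{\mathcal{E}(\bm{m})}) = 0\,.
\]
At $s = \epsilon^{-1}$ this is Lemma \ref{l: conv_TV} combined with $\mathfrak{M}^{(1)}$ (Proposition \ref{p: TV-mix}-(2)). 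To extend to $s \in (\epsilon^{-1},\,2\epsilon^{-1}]$, I would rerun the coupling argument of Lemma \ref{l: conv_TV}: the coupling error between $\bm{x}_{\epsilon}$ and the reflected process $\bm{x}_{\epsilon}^{\mathcal{U}^{(1)}(\bm{m})}$ is controlled by \eqref{e: FW_U}, since $2\epsilon^{-1} \prec e^{A/\epsilon}$ for $A < R^{(1)} - r_{0}$. Since $\mu_{\epsilon}^{\mathcal{U}^{(1)}(\bm{m})}$ is invariant for the normally reflected dynamics (a standard integration by parts using $\nabla\cdot\bm{\ell} = 0$, $\nabla U\cdot\bm{\ell} = 0$, and the fact that the outer normal to the level set $\mathcal{U}^{(1)}(\bm{m})$ is parallel to $\nabla U$), the monotonicity Lemma \ref{l: d_TV} transfers the $o(1)$ bound at $s = \epsilon^{-1}$ (given by $\mathfrak{M}^{(1)}$-(2)) to all larger $s$. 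Lemma \ref{l: U_M} finally gives $d_{{\rm TV}}(\mu_{\epsilon}^{\mathcal{U}^{(1)}(\bm{m})},\mu_{\epsilon}^{\mathcal{E}(\bm{m})}) = o(1)$.

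For \textbf{(ii)}, the identity $d_{{\rm TV}}(\mu_{\epsilon}^{\mathcal{E}(\bm{m})},\mu_{\epsilon}) = 1 - \mu_{\epsilon}(\mathcal{E}(\bm{m}))$ follows by a direct supremum calculation (it is the case $n=1$, $a_{1}=1$, $B = \mathbb{R}^{d}$ of Lemma \ref{l: TV_density}). A Laplace expansion of $\mu_{\epsilon}(\mathcal{E}(\bm{m})) = Z_{\epsilon}^{-1}\int_{\mathcal{E}(\bm{m})} e^{-U/\epsilon}\,d\bm{x}$, using the Morse assumption and the fact from \eqref{e: r_0} that $\bm{m}$ is the unique critical point in $\mathcal{E}(\bm{m}) = \mathcal{W}^{r_{0}}(\bm{m})$, together with the analogous expansion of $Z_{\epsilon}$ around $\mathcal{M}_{\star}$, yields
\[
\mu_{\epsilon}(\mathcal{E}(\bm{m})) \;=\; \frac{\nu(\bm{m})}{\nu_{\star}}\,\exp\!\Bigl(-\frac{U(\bm{m}) - U(\mathcal{M}_{\star})}{\epsilon}\Bigr)\,(1 + o(1))\,,
\]
so the limit equals $\nu(\bm{m})/\nu_{\star}$ if $\bm{m}\in\mathcal{M}_{\star}$ and $0$ otherwise, matching both cases of the claim.

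The main obstacle is the uniform-in-$s$ extension in \textbf{(i)}: the single-time bound from $\mathfrak{M}^{(1)}$ must be promoted to the interval $[\epsilon^{-1},\,2\epsilon^{-1}]$. The monotonicity plus reflected-process route sketched above works cleanly, but depends crucially on the invariance of $\mu_{\epsilon}^{\mathcal{U}^{(1)}(\bm{m})}$ under Neumann reflection, which is where the decomposition hypothesis $\bm{b} = -(\nabla U + \bm{\ell})$ with divergence- and gradient-orthogonal $\bm{\ell}$ enters substantively.
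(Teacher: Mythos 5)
Your proposal is correct and reaches the conclusion through essentially the same ingredients as the paper's proof: enter $\mathcal{E}(\bm{m})$ within time $O(\epsilon^{-1})$, couple with the diffusion reflected in $\mathcal{U}^{(1)}(\bm{m})$ (coupling error controlled by \eqref{e: FW_U}), apply the in-well mixing condition $\mathfrak{M}^{(1)}$-(2) from Proposition \ref{p: TV-mix}-(2), and finish with the total-variation formula of Lemma \ref{l: TV_density}. Two minor variations are worth flagging. First, the paper descends at a \emph{fixed} deterministic time $T$ (via \cite[Theorem 2.1.2]{FW}) and then applies the Markov property of the reflected process at time $\epsilon^{-1}$ together with the stationarity of $\mu_{\epsilon}^{\mathcal{U}^{(1)}(\bm{m})}$; you descend at the random hitting time $\tau_{\mathcal{E}(\bm{m})}$ via Proposition \ref{p: hit_min} and promote the $s=\epsilon^{-1}$ bound to the whole range $s\in[\epsilon^{-1},2\epsilon^{-1}]$ via the monotonicity Lemma \ref{l: d_TV}. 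These are equivalent maneuvers --- monotonicity is Markov property plus stationarity in disguise --- and your version is clean once the uniform-in-$s$ reduction is spelled out as you do; the paper's fixed-$T$ route avoids that uniformity issue outright. Second, you take $\mu_{\epsilon}^{\mathcal{E}(\bm{m})}$ as the pivot measure where the paper uses $\mu_{\epsilon}^{\mathcal{U}^{(1)}(\bm{m})}$; Lemma \ref{l: U_M} makes the two interchangeable in the limit. Your explicit justification for why $\mu_{\epsilon}^{\mathcal{U}^{(1)}(\bm{m})}$ is invariant under Neumann reflection --- the outward normal on $\partial\mathcal{U}^{(1)}(\bm{m})$ is parallel to $\nabla U$ while the stationary current $-\bm{\ell}\,e^{-U/\epsilon}$ is tangential since $\bm{\ell}\cdot\nabla U\equiv 0$ --- is left implicit in the paper and correctly identifies where the structural assumption \eqref{e_decb} is doing work.
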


\begin{proof}
Fix $\bm{m}\in\mathcal{M}_{0}$ and $\bm{x}\in\mathcal{D}(\bm{m})$.
Recall the definition of $\mathcal{U}^{(1)}(\bm{m})$ after Lemma
\ref{l: U_M-0}. By triangle inequality, we obtain
\begin{equation}
\left|\,D_{\epsilon}(2\epsilon^{-1},\,\bm{x})-d_{{\rm TV}}\left(\mu_{\epsilon}^{\mathcal{U}^{(1)}(\bm{m})},\,\mu_{\epsilon}\right)\,\right|\le d_{{\rm TV}}\left(\bm{x}_{\epsilon}(2\epsilon^{-1};\,\bm{x}),\,\mu_{\epsilon}^{\mathcal{U}^{(1)}(\bm{m})}\right)\,.\label{e: theta^0}
\end{equation}
By \cite[Theorem 2.1.2]{FW}, there exists $T>0$ such that
\[
\lim_{\epsilon\to0}\mathbb{P}_{\bm{x}}^{\epsilon}\left[\bm{x}_{\epsilon}(T)\notin\mathcal{E}(\bm{m})\right]=0\,.
\]
Therefore, by the Markov property, for $\mathcal{A}\subset\mathbb{R}^{d}$,
we have
\[
\mathbb{P}_{\bm{x}}^{\epsilon}\left[\bm{x}_{\epsilon}(2\epsilon^{-1})\in\mathcal{A}\right]=\mathbb{E}_{\bm{x}}^{\epsilon}\left[\mathbb{P}_{\bm{x}_{\epsilon}(T)}^{\epsilon}\left[\bm{x}_{\epsilon}(2\epsilon^{-1}-T)\in\mathcal{A}\right],\,{\bf 1}\{\bm{x}_{\epsilon}(T)\in\mathcal{E}(\bm{m})\}\right]+R_{\epsilon}^{(1)}\,,
\]
where
\[
\limsup_{\epsilon\to0}|\,R_{\epsilon}^{(1)}\,|=0\ .
\]
For $\bm{y}\in\mathcal{E}(\bm{m})$, we obtain
\begin{equation}
\begin{aligned} & \mathbb{P}_{\bm{y}}^{\epsilon}\left[\bm{x}_{\epsilon}(2\epsilon^{-1}-T)\in\mathcal{A}\right]\\
 & =\mathbb{P}_{\bm{y}}^{\epsilon}\left[\bm{x}_{\epsilon}(2\epsilon^{-1}-T)\in\mathcal{A},\,\tau_{\partial\mathcal{U}^{(1)}(\bm{m})}>2\epsilon^{-1}-T\right]+R_{\epsilon}^{(2)}(\bm{y})\\
 & =\mathbb{P}_{\bm{y}}^{\epsilon,\,\mathcal{U}^{(1)}(\bm{m})}\left[\bm{x}_{\epsilon}^{\mathcal{U}^{(1)}(\bm{m})}(2\epsilon^{-1}-T)\in\mathcal{A},\,\tau_{\partial\mathcal{U}^{(1)}(\bm{m})}>2\epsilon^{-1}-T\right]+R_{\epsilon}^{(2)}(\bm{y})\\
 & =\mathbb{P}_{\bm{y}}^{\epsilon,\,\mathcal{U}^{(1)}(\bm{m})}\left[\bm{x}_{\epsilon}^{\mathcal{U}^{(1)}(\bm{m})}(2\epsilon^{-1}-T)\in\mathcal{A}\right]+R_{\epsilon}^{(2)}(\bm{y})+R_{\epsilon}^{(3)}(\bm{y})
\end{aligned}
\label{e: theta^0-2}
\end{equation}
where
\[
|\,R_{\epsilon}^{(2)}(\bm{y})\,|\le\mathbb{P}_{\bm{y}}^{\epsilon}\left[\tau_{\partial\mathcal{U}^{(1)}(\bm{m})}\le2\epsilon^{-1}-T\right]\ \ ,\ \ |\,R_{\epsilon}^{(3)}(\bm{y})\,|\le\mathbb{P}_{\bm{y}}^{\epsilon,\,\mathcal{U}^{(1)}(\bm{m})}\left[\tau_{\partial\mathcal{U}^{(1)}(\bm{m})}\le2\epsilon^{-1}-T\right]\,,
\]
and by \eqref{e: FW_U},
\[
\limsup_{\epsilon\to0}\sup_{\bm{y}\in\mathcal{E}(\bm{m})}|\,R_{\epsilon}^{(2)}(\bm{y})\,|=\limsup_{\epsilon\to0}\sup_{\bm{y}\in\mathcal{E}(\bm{m})}|\,R_{\epsilon}^{(3)}(\bm{y})\,|=0\,.
\]
By the Markov property and Proposition \ref{p: TV-mix}-(2), the last
probability in \eqref{e: theta^0-2} can be written as
\begin{equation}
\begin{aligned} & \mathbb{E}_{\bm{y}}^{\epsilon,\,\mathcal{U}^{(1)}(\bm{m})}\left[\mathbb{P}_{\bm{x}_{\epsilon}^{\mathcal{U}^{(1)}(\bm{m})}(\epsilon^{-1})}^{\epsilon,\,\mathcal{U}^{(1)}(\bm{m})}\left[\bm{x}_{\epsilon}^{\mathcal{U}^{(1)}(\bm{m})}(\epsilon^{-1}-T)\in\mathcal{A}\right]\right]\\
 & =\mathbb{P}_{\mu_{\epsilon}^{\mathcal{U}^{(1)}(\bm{m})}}^{\epsilon,\,\mathcal{U}^{(1)}(\bm{m})}\left[\bm{x}_{\epsilon}^{\mathcal{U}^{(1)}(\bm{m})}(\epsilon^{-1}-T)\in\mathcal{A}\right]+R_{\epsilon}^{(4)}(\bm{y})\,,
\end{aligned}
\label{e: theta^0-3}
\end{equation}
where
\[
\limsup_{\epsilon\to0}\sup_{\bm{y}\in\mathcal{E}(\bm{m})}|\,R_{\epsilon}^{(4)}(\bm{y})\,|=0\,.
\]
Since $\mu_{\epsilon}^{\mathcal{U}(\bm{m})}$ is the stationary measure
of $\bm{x}_{\epsilon}^{\mathcal{U}(\bm{m})}$, the last probability
in \eqref{e: theta^0-3} is equal to $\mu_{\epsilon}^{\mathcal{U}^{(1)}(\bm{m})}(\mathcal{A})$.

So far, we obtain
\[
\mathbb{P}_{\bm{x}}^{\epsilon}\left[\bm{x}_{\epsilon}(2\epsilon^{-1})\in\mathcal{A}\right]=\mu_{\epsilon}^{\mathcal{U}^{(1)}(\bm{m})}(\mathcal{A})+R_{\epsilon}(\mathcal{A})\,,
\]
where
\[
\limsup_{\epsilon\to0}\sup_{\mathcal{A}\subset\mathbb{R}^{d}}|\,R_{\epsilon}(\mathcal{A})\,|=0\,.
\]
Therefore, the right-hand side of \eqref{e: theta^0} vanishes
as $\epsilon\to0$. Consequently,
\[
\lim_{\epsilon\to0}D_{\epsilon}(2\epsilon^{-1},\,\bm{x})=\lim_{\epsilon\to0}d_{{\rm TV}}\left(\mu_{\epsilon}^{\mathcal{U}^{(1)}(\bm{m})},\,\mu_{\epsilon}\right)
\]
if the limit of the right-hand side exists. Indeed, by Lemma \ref{l: TV_density},
\[
\lim_{\epsilon\to0}d_{{\rm TV}}\left(\mu_{\epsilon}^{\mathcal{U}^{(1)}(\bm{m})},\,\mu_{\epsilon}\right)=\begin{cases}
1-\frac{\nu(\bm{m})}{\nu_{\star}} & \bm{m}\in\mathcal{M}_{\star}\,,\\
1 & \bm{m}\notin\mathcal{M}_{\star}\,,
\end{cases}
\]
which completes the proof.
\end{proof}
Now, we are ready to prove Theorem \ref{t: main}.
\begin{proof}[Proof of Theorem \ref{t: main}]
Fix $p\in\llbracket1,\,\mathfrak{q}\rrbracket$, $\bm{m}\in\mathcal{M}_{0}$,
and $\bm{x}\in\mathcal{D}(\bm{m})$. By triangle inequality,
\begin{align*}
 & \left|\,D_{\epsilon}(\theta_{\epsilon}^{(p)}\,t,\,\bm{x})-d_{{\rm TV}}\left(\sum_{\mathcal{M}\in\mathscr{V}^{(p)}}P_{t}^{(p),\,\bm{m}}(\mathcal{M})\,\mu_{\epsilon}^{\mathcal{E}(\mathcal{M})},\,\mu_{\epsilon}\right)\,\right|\\
 & \le d_{{\rm TV}}\left(\bm{x}_{\epsilon}(\theta_{\epsilon}^{(p)}\,t;\,\bm{x}),\,\sum_{\mathcal{M}\in\mathscr{V}^{(p)}}P_{t}^{(p),\,\bm{m}}(\mathcal{M})\,\mu_{\epsilon}^{\mathcal{E}(\mathcal{M})}\right)\,.
\end{align*}
Therefore, by Theorem \ref{t: main_TV} and Lemma \ref{l: 45}, we
obtain
\begin{equation}
\lim_{\epsilon\to0}D_{\epsilon}(\theta_{\epsilon}^{(p)}\,t,\,\bm{x})=d_{{\rm TV}}\left({\bf y}^{(p)}\left(t;\,\mathfrak{a}_{\bm{m}}^{(p-1)}\right),\,\nu^{(p)}\right)\,.\label{e: pf_t_main-1}
\end{equation}

For the second assertion, let $\theta_{\epsilon}^{(p-1)}\prec\varrho_{\epsilon}\prec\theta_{\epsilon}^{(p)}$.
By Lemma \ref{l: d_TV-1}, we have
\[
D_{\epsilon}(\theta_{\epsilon}^{(p-1)}t,\,\bm{x})\ge D_{\epsilon}(\varrho_{\epsilon},\,\bm{x})\ge D_{\epsilon}(\theta_{\epsilon}^{(p)}s,\,\bm{x})
\]
for all $t,\,s>0$ and sufficiently small $\epsilon>0$. First, let
$p\ge2$. Then,
\[
\lim_{t\to\infty}\,\lim_{\epsilon\to0}\,D_{\epsilon}(\theta_{\epsilon}^{(p-1)}t,\,\bm{x})\ge\lim_{\epsilon\to0}\,D_{\epsilon}(\varrho_{\epsilon},\,\bm{x})\ge\lim_{s\to0}\lim_{\epsilon\to0}\,D_{\epsilon}(\theta_{\epsilon}^{(p)}s,\,\bm{x})\,.
\]
By Lemma \ref{l: 48} and \eqref{e: pf_t_main-1}, we obtain
\[
\lim_{t\to\infty}\,\lim_{\epsilon\to0}\,D_{\epsilon}(\theta_{\epsilon}^{(p-1)}t,\,\bm{x})=d_{{\rm TV}}(\mathfrak{a}_{\bm{m}}^{(p-1)},\,\nu^{(p)})\,.
\]
On the other hand, by \eqref{e: pf_t_main-1}, we have
\[
\lim_{s\to0}\lim_{\epsilon\to0}\,D_{\epsilon}(\theta_{\epsilon}^{(p)}s,\,\bm{x})=d_{{\rm TV}}\left(\mathfrak{a}_{\bm{m}}^{(p-1)},\,\nu^{(p)}\right)
\]
so that
\[
\lim_{\epsilon\to0}\,D_{\epsilon}(\varrho_{\epsilon},\,\bm{x})=d_{{\rm TV}}\left(\mathfrak{a}_{\bm{m}}^{(p-1)},\,\nu^{(p)}\right)\,.
\]
For $p=1$, letting $t=2$, we have
\[
D_{\epsilon}(\varrho_{\epsilon},\,\bm{x})\le D_{\epsilon}(2\epsilon^{-1},\,\bm{x})\,,
\]
so that by Lemma \ref{l: theta^0},
\[
\lim_{\epsilon\to0}\,D_{\epsilon}(\varrho_{\epsilon},\,\bm{x})\le\begin{cases}
1-\frac{\nu(\bm{m})}{\nu_{\star}} & \bm{m}\in\mathcal{M}_{\star}\,,\\
1 & \bm{m}\notin\mathcal{M}_{\star}\,.
\end{cases}
\]
Also, since $\mathfrak{a}_{\bm{m}}^{(0)}=\delta_{\bm{m}}$, we obtain
\[
d_{{\rm TV}}\left(\mathfrak{a}_{\bm{m}}^{(0)},\,\nu^{(1)}\right)=\begin{cases}
1-\frac{\nu(\bm{m})}{\nu_{\star}} & \bm{m}\in\mathcal{M}_{\star}\,,\\
1 & \bm{m}\notin\mathcal{M}_{\star}\,.
\end{cases}
\]
By the same argument in the proof of the first assertion, we obtain
\[
\lim_{\epsilon\to0}\,D_{\epsilon}(\varrho_{\epsilon},\,\bm{x})\ge\lim_{s\to0}\lim_{\epsilon\to0}\,D_{\epsilon}(\theta_{\epsilon}^{(1)}s,\,\bm{x})=d_{{\rm TV}}\left(\mathfrak{a}_{\bm{m}}^{(0)},\,\nu^{(1)}\right)\,,
\]
which completes the proof for the second assertion.

For the last assertion, let $\varrho_{\epsilon}\succ\theta_{\epsilon}^{(\mathfrak{q})}$
and fix $\bm{x}\in\mathbb{R}^{d}$. Let $\mathcal{A}\subset\mathbb{R}^{d}$
be a measurable subset. By Lemma \ref{l: main2}, for all $t>0$,
\begin{equation*}
\begin{aligned}
& \mathbb{P}_{\bm{x}}^{\epsilon}\left[\bm{x}_{\epsilon}(\theta_{\epsilon}^{(\mathfrak{q})}\,t)\in\mathcal{A}\right]\\
& =\sum_{\mathcal{M}\in\mathscr{V}^{(\mathfrak{q})}}\sum_{\bm{m}'\in\mathcal{M}_{0}}\mathbb{P}_{\bm{x}}^{\epsilon}\left[\tau_{\mathcal{E}(\mathcal{M}_{0})}=\tau_{\mathcal{E}(\bm{m}')}\right]P_{t}^{(\mathfrak{q}),\,\bm{m}'}(\mathcal{M})\,\mu_{\epsilon}^{\mathcal{E}(\mathcal{M})}(\mathcal{A})+R_{\epsilon}^{(1)}(t,\,\mathcal{A})
\end{aligned}
\end{equation*}
where
\[
\limsup_{\epsilon\to0}\sup_{\mathcal{A}\subset\mathbb{R}^{d}}|\,R_{\epsilon}^{(1)}(t,\,\mathcal{A})\,|=0\,.
\]
Fix $\eta>0$. Since ${\bf y}^{(\mathfrak{q})}$ is ergodic, there
exists $T_{\eta}>0$ such that
\[
d_{{\rm TV}}\left({\bf y}^{(\mathfrak{q})}(T_{\eta};\,\mathfrak{a}_{\bm{m}}^{(\mathfrak{q}-1)}),\,\nu^{(\mathfrak{q})}\right)\le\frac{1}{|\mathcal{M}_{0}|}\eta\ .
\]
Then,
\begin{align*}
 & \mathbb{P}_{\bm{x}}^{\epsilon}\left[\bm{x}_{\epsilon}(\theta_{\epsilon}^{(\mathfrak{q})}\,T_{\eta})\in\mathcal{A}\right]\\
 & =\sum_{\mathcal{M}\in\mathscr{R}_{1}^{(\mathfrak{q})}}\sum_{\bm{m}'\in\mathcal{M}_{0}}\mathbb{P}_{\bm{x}}^{\epsilon}\left[\tau_{\mathcal{E}(\mathcal{M}_{0})}=\tau_{\mathcal{E}(\bm{m}')}\right]\,\nu^{(\mathfrak{q})}(\mathcal{M})\,\mu_{\epsilon}^{\mathcal{E}(\mathcal{M})}(\mathcal{A})+R_{\epsilon}^{(2)}(\mathcal{A},\,\eta)\\
 & =\sum_{\mathcal{M}\in\mathscr{R}_{1}^{(\mathfrak{q})}}\frac{\nu(\mathcal{M})}{\nu_{\star}}\,\mu_{\epsilon}^{\mathcal{E}(\mathcal{M})}(\mathcal{A})+R_{\epsilon}^{(2)}(\mathcal{A},\,\eta)\,,
\end{align*}
where
\[
|\,R_{\epsilon}^{(2)}(\mathcal{A},\,\eta)\,|\le|\,R_{\epsilon}^{(1)}(T_{\eta},\,\mathcal{A})\,|+\eta\,.
\]
Hence, we obtain
\[
\limsup_{\epsilon\to0}d_{{\rm TV}}\left(\bm{x}_{\epsilon}(\theta_{\epsilon}^{(\mathfrak{q})}T_{\eta};\,\bm{x}),\,\sum_{\mathcal{M}\in\mathscr{R}_{1}^{(\mathfrak{q})}}\frac{\nu(\mathcal{M})}{\nu_{\star}}\,\mu_{\epsilon}^{\mathcal{E}(\mathcal{M})}\right)\le\eta\,.
\]
Therefore, by triangle inequality,
\[
\limsup_{\epsilon\to0}D_{\epsilon}(\theta_{\epsilon}^{(\mathfrak{q})}T_{\eta},\,\bm{x})\le\eta+\limsup_{\epsilon\to0}d_{{\rm TV}}\left(\sum_{\mathcal{M}'\in\mathscr{R}_{1}^{(\mathfrak{q})}}\frac{\nu(\mathcal{M}')}{\nu_{\star}}\,\mu_{\epsilon}^{\mathcal{E}(\mathcal{M}')},\,\mu_{\epsilon}\right)\,.
\]
Since $\sum_{\mathcal{M}'\in\mathscr{R}_{1}^{(\mathfrak{q})}}\frac{\nu(\mathcal{M}')}{\nu_{\star}}=1$
and $\lim_{\epsilon\to0}\mu_{\epsilon}(\mathcal{E}(\mathcal{M}'))=\frac{\nu(\mathcal{M}')}{\nu_{\star}}$
for $\mathcal{M}'\in\mathscr{R}_{1}^{(\mathfrak{q})}=\mathscr{R}_{\star}^{(\mathfrak{q})}$,
by Lemma \ref{e: TV_density}, we obtain
\[
\limsup_{\epsilon\to0}d_{{\rm TV}}(\sum_{\mathcal{M}'\in\mathscr{R}_{1}^{(\mathfrak{q})}}\frac{\nu(\mathcal{M}')}{\nu_{\star}}\,\mu_{\epsilon}^{\mathcal{E}(\mathcal{M}')},\,\mu_{\epsilon})=0
\]
so that by Lemma \ref{l: d_TV-1},
\[
0\le\limsup_{\epsilon\to0}\,D_{\epsilon}(\varrho_{\epsilon},\,\bm{x})\le\limsup_{\epsilon\to0}\,D_{\epsilon}(\theta_{\epsilon}^{(\mathfrak{q})}T_{\eta},\,\bm{x})\le\eta\,.
\]
Since $\eta>0$ is arbitrary, we conclude
\[
\lim_{\epsilon\to0}D_{\epsilon}(\varrho_{\epsilon},\,\bm{x})=0\,.
\]
\end{proof}

\section{\label{sec: pf_mixing_time}Mixing time}

In this section, we prove Theorem \ref{t: mixing_time}. Fix $H\ge H_{0}$
and $\delta>0$. Also, for $\mathcal{M}\in\mathscr{V}^{(\mathfrak{q})}$
and $t>0$, denote by
\[
{\color{blue}d(t,\,\mathcal{M})}:=d_{{\rm TV}}({\bf y}^{(\mathfrak{q})}(t;\,\mathcal{M}),\,\nu^{(\mathfrak{q})})\,.
\]

\subsection{Lower bound}
\begin{proposition}
\label{p: mix_low}We have
\[
\liminf_{\epsilon\to0}\frac{T_{\epsilon}^{{\rm mix}}(\delta;\,H)}{\theta_{\epsilon}^{(\mathfrak{q})}}\ge\mathfrak{T}^{{\rm mix}}(\delta)\,.
\]
\end{proposition}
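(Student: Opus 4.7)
The plan is to show that every $s<\mathfrak{T}^{\mathrm{mix}}(\delta)$ is asymptotically a lower bound for $T_\epsilon^{\mathrm{mix}}(\delta;H)/\theta_\epsilon^{(\mathfrak{q})}$, by producing a starting point in $\mathcal{K}(H)$ that refuses to mix by time $\theta_\epsilon^{(\mathfrak{q})}s$. Supremizing over such $s$ then yields the claim. Only two ingredients are needed: the identification in Theorem \ref{t: main}-(1) at the top time scale, and the monotonicity from Lemma \ref{l: d_TV-1}.

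To produce the witness, I would first unpack the definition of $\mathfrak{T}^{\mathrm{mix}}$: since $s<\mathfrak{T}^{\mathrm{mix}}(\delta)$, there exists $\mathcal{M}^*\in\mathscr{V}^{(\mathfrak{q})}$ with $d(s,\mathcal{M}^*)>\delta$. Then I would pick any $\bm{m}^*\in\mathcal{M}^*$ and verify two easy points: (a) $\bm{m}^*\in\mathcal{K}(H)$, which holds because every critical point lies in $\{U<H_0\}\subset\mathcal{K}(H)$; and (b) $\mathfrak{a}_{\bm{m}^*}^{(\mathfrak{q}-1)}=\delta_{\mathcal{M}^*}$, which follows from the partition property of $\mathscr{S}^{(\mathfrak{q})}=\mathscr{V}^{(\mathfrak{q})}\cup\mathscr{N}^{(\mathfrak{q})}$ (giving $\mathcal{M}(\mathfrak{q},\bm{m}^*)=\mathcal{M}^*\in\mathscr{V}^{(\mathfrak{q})}$) together with the remark after \eqref{e: def_a}. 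Since also $\bm{m}^*\in\mathcal{D}(\bm{m}^*)$ trivially, applying Theorem \ref{t: main}-(1) with $p=\mathfrak{q}$, $t=s$, $\bm{x}=\bm{m}^*$ yields
\[
\lim_{\epsilon\to 0}D_\epsilon(\theta_\epsilon^{(\mathfrak{q})}s,\bm{m}^*)=d_{\mathrm{TV}}\!\left({\bf y}^{(\mathfrak{q})}(s;\mathcal{M}^*),\nu^{(\mathfrak{q})}\right)=d(s,\mathcal{M}^*)>\delta.
\]

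The concluding step promotes this pointwise lower bound at a single time into a lower bound for the mixing time. For all sufficiently small $\epsilon$ one has $\sup_{\bm{x}\in\mathcal{K}(H)}D_\epsilon(\theta_\epsilon^{(\mathfrak{q})}s,\bm{x})\ge D_\epsilon(\theta_\epsilon^{(\mathfrak{q})}s,\bm{m}^*)>\delta$, and by Lemma \ref{l: d_TV-1} the map $t\mapsto\sup_{\bm{x}\in\mathcal{K}(H)}D_\epsilon(t,\bm{x})$ is nonincreasing. Hence the infimum defining $T_\epsilon^{\mathrm{mix}}(\delta;H)$ cannot be attained at any time $\le\theta_\epsilon^{(\mathfrak{q})}s$, giving $T_\epsilon^{\mathrm{mix}}(\delta;H)\ge\theta_\epsilon^{(\mathfrak{q})}s$. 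Dividing by $\theta_\epsilon^{(\mathfrak{q})}$, taking $\liminf_{\epsilon\to 0}$, and then $s\nearrow\mathfrak{T}^{\mathrm{mix}}(\delta)$ closes the argument. No serious obstacle is anticipated: the heavy lifting was done in Theorem \ref{t: main}, and this proposition is essentially bookkeeping, with the only mild subtlety being the verification that $\mathfrak{a}_{\bm{m}^*}^{(\mathfrak{q}-1)}$ is a Dirac mass at $\mathcal{M}^*$.
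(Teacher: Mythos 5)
Your proof is correct and uses the same two ingredients as the paper's (Theorem \ref{t: main}-(1) applied at the top time scale with $\mathfrak{a}_{\bm{m}^*}^{(\mathfrak{q}-1)}=\delta_{\mathcal{M}^*}$, and the monotonicity of Lemma \ref{l: d_TV-1}); the paper merely phrases the same argument as a proof by contradiction along a subsequence, while you run the contrapositive directly by showing $T_\epsilon^{\mathrm{mix}}(\delta;H)\ge\theta_\epsilon^{(\mathfrak{q})}s$ for every $s<\mathfrak{T}^{\mathrm{mix}}(\delta)$. Your version also makes explicit the verifications that $\bm{m}^*\in\mathcal{K}(H)$ and that $\mathfrak{a}_{\bm{m}^*}^{(\mathfrak{q}-1)}$ is a Dirac mass, which the paper leaves implicit, but the underlying route is the same.
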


\begin{proof}
Suppose not. Then, there exist $a>0$ and decreasing sequence $(\epsilon_{n})_{n\ge1}$
of positive number such that $\lim_{n\to\infty}\epsilon_{n}=0$ and
\[
\frac{T_{\epsilon_{n}}^{{\rm mix}}(\delta;\,H)}{\theta_{\epsilon_{n}}^{(\mathfrak{q})}}<\mathfrak{T}^{{\rm mix}}(\delta)-a\,.
\]
Since
\[
\theta_{\epsilon_{n}}^{(\mathfrak{q})}(\mathfrak{T}^{{\rm mix}}(\delta)-a)>T_{\epsilon_{n}}^{{\rm mix}}(\delta;\,H)\,,
\]
by Lemma \ref{l: d_TV-1}, we have
\begin{equation}
\sup_{\bm{x}\in\mathcal{K}(H)}D_{\epsilon_{n}}(\theta_{\epsilon_{n}}^{(\mathfrak{q})}(\mathfrak{T}^{{\rm mix}}(\delta)-a),\,\bm{x})\le\sup_{\bm{x}\in\mathcal{K}(H)}D_{\epsilon_{n}}(T_{\epsilon_{n}}^{{\rm mix}}(\delta;\,H),\,\bm{x})\le\delta\label{e: p_mix_low}
\end{equation}
for all $n\in\mathbb{N}$. By definition of $\mathfrak{T}^{{\rm mix}}(\delta)$,
there exists $\mathcal{M}\in\mathscr{V}^{(\mathfrak{q})}$ such that
\[
d(\mathfrak{T}^{{\rm mix}}(\delta)-a,\,\mathcal{M})>\delta\,.
\]
Fix $\bm{m}\in\mathcal{M}$ and $\bm{x}\in\mathcal{D}(\bm{m})\cap\mathcal{K}(H)$.
By Theorem \ref{t: main}-(1),
\[
\lim_{n\to\infty}D_{\epsilon_{n}}(\theta_{\epsilon_{n}}^{(\mathfrak{q})}(\mathfrak{T}^{{\rm mix}}(\delta)-a),\,\bm{x})=d(\mathfrak{T}^{{\rm mix}}(\delta)-a,\,\mathcal{M})>\delta\,,
\]
which contradicts to \eqref{e: p_mix_low}.
\end{proof}

\subsection{Upper bound}

Since the mixing time is defined in terms of the worst-case scenario,
deriving the lower bound is straightforward. However, for the upper
bound, we need the following strict inequality uniform over $\bm{x}\in\mathcal{K}(H)$.
\begin{lemma}
\label{l: mix_up}For all $a>0$,
\[
\limsup_{\epsilon\to0}\sup_{\bm{x}\in\mathcal{K}(H)}D_{\epsilon}(\theta_{\epsilon}^{(\mathfrak{q})}\mathfrak{T}^{{\rm mix}}(\delta+a),\,\bm{x})<\delta\,.
\]
\end{lemma}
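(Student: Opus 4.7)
The plan is to reduce the diffusion's total-variation distance at time $\theta_\epsilon^{(\mathfrak{q})}\, t_a$, with $t_a := \mathfrak{T}^{{\rm mix}}(\delta+a)$, to a TV distance for the finite ergodic reduced chain $\mathbf{y}^{(\mathfrak{q})}$, and then to harvest the strict gain from Lemma~\ref{l: d_TV-2}. The fundamental input will be the uniform density expansion provided by Lemma~\ref{l: main2}.

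I would first apply Lemma~\ref{l: main2} with $p=\mathfrak{q}$, compact set $\mathcal{K}=\mathcal{K}(H)$, and time $t=t_a$ to obtain, uniformly in $\bm{x} \in \mathcal{K}(H)$ and Borel $\mathcal{A} \subset \mathbb{R}^d$,
\[
\mathbb{P}^{\epsilon}_{\bm{x}}\bigl[\bm{x}_\epsilon(\theta_\epsilon^{(\mathfrak{q})}\, t_a) \in \mathcal{A}\bigr] = \sum_{\mathcal{M}\in\mathscr{V}^{(\mathfrak{q})}} c_\epsilon(\bm{x},\mathcal{M})\, \mu_\epsilon^{\mathcal{E}(\mathcal{M})}(\mathcal{A}) + o(1),
\]
where $c_\epsilon(\bm{x},\mathcal{M}) := \sum_{\bm{m}\in\mathcal{M}_0} \mathbb{P}^\epsilon_{\bm{x}}[\tau_{\mathcal{E}(\mathcal{M}_0)}=\tau_{\mathcal{E}(\bm{m})}]\,P_{t_a}^{(\mathfrak{q}),\bm{m}}(\mathcal{M})$. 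Since $\sum_\mathcal{M} c_\epsilon(\bm{x},\mathcal{M})=\mathbb{P}^\epsilon_\bm{x}[\tau_{\mathcal{E}(\mathcal{M}_0)}<\infty] \to 1$ uniformly on $\mathcal{K}(H)$ by Proposition~\ref{p: hit_min}-(1), the right-hand side is, up to a uniform $o(1)$ remainder, a convex combination of the valley-conditioned measures $\mu_\epsilon^{\mathcal{E}(\mathcal{M})}$.

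Combining this expansion with Lemma~\ref{l: TV_density} (applied with $\pi_\epsilon=\mu_\epsilon$, $A_\mathcal{M}=\mathcal{E}(\mathcal{M})$, $B=\mathbb{R}^d$, using $\mu_\epsilon(\mathcal{E}(\mathcal{M}))/\mu_\epsilon(\mathbb{R}^d)\to\nu^{(\mathfrak{q})}(\mathcal{M})$ as in the proof of Lemma~\ref{l: 45}) and the triangle inequality yields, uniformly in $\bm{x}$,
\[
D_\epsilon(\theta_\epsilon^{(\mathfrak{q})} t_a,\bm{x}) = d_{{\rm TV}}\bigl(\mathbf{y}^{(\mathfrak{q})}(t_a;\beta^\epsilon_{\bm{x}}),\,\nu^{(\mathfrak{q})}\bigr) + o(1),
\]
with $\beta^\epsilon_{\bm{x}} := \sum_{\bm{m}\in\mathcal{M}_0}\mathbb{P}^\epsilon_\bm{x}[\tau_{\mathcal{E}(\mathcal{M}_0)}=\tau_{\mathcal{E}(\bm{m})}]\,\mathfrak{a}_{\bm{m}}^{(\mathfrak{q}-1)}$ a sub-probability measure on $\mathscr{V}^{(\mathfrak{q})}$ whose total mass tends to $1$. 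Convexity of TV in its first argument then bounds the right-hand side by $\max_{\mathcal{M}\in\mathscr{V}^{(\mathfrak{q})}} d(t_a,\mathcal{M}) + o(1)$.

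The concluding and hardest step is to upgrade the weak estimate $\max_\mathcal{M} d(t_a,\mathcal{M}) \leq \delta+a$ coming from the definition of $\mathfrak{T}^{{\rm mix}}(\delta+a)$ into the strict inequality $<\delta$ required by the lemma. This is precisely the role of Lemma~\ref{l: d_TV-2}: since $\mathbf{y}^{(\mathfrak{q})}$ is finite-state ergodic, each map $t\mapsto d(t,\mathcal{M})$ is continuous and strictly decreasing, hence so is the finite maximum $t\mapsto \max_\mathcal{M} d(t,\mathcal{M})$, and this strict contraction must be exploited together with the small but positive effective time increment that the diffusion accumulates while equilibrating within valleys before the inter-valley dynamics kicks in (absorbed into the $o(1)$ of Step~1) to produce a genuinely positive gap below $\delta$ in the $\epsilon\to0$ limit. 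Uniformity over $\bm{x}\in\mathcal{K}(H)$ is controlled throughout by Lemma~\ref{l: main2} and Proposition~\ref{p: hit_min}-(1).
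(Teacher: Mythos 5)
Your Steps~1--3 --- applying Lemma~\ref{l: main2} with $p=\mathfrak{q}$ to expand the transition density uniformly over $\bm{x}\in\mathcal{K}(H)$, applying Lemma~\ref{l: TV_density} to identify the limit with a TV distance for $\mathbf{y}^{(\mathfrak{q})}$, and then bounding by $\max_{\mathcal{M}} d(t_a,\mathcal{M}) + o(1)$ via convexity --- are correct and match the paper's reduction up to cosmetic details (the paper carries the double sum explicitly rather than forming the sub-probability $\beta^\epsilon_{\bm{x}}$, but the estimate is the same).

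The genuine gap is in your final step, and it stems from a misreading of the time. You set $t_a := \mathfrak{T}^{{\rm mix}}(\delta+a)$, taking the displayed formula literally, but this makes the asserted bound unreachable: since $\delta+a>\delta$ one has $\mathfrak{T}^{{\rm mix}}(\delta+a)\le\mathfrak{T}^{{\rm mix}}(\delta)$, and by continuity of $t\mapsto d(t,\mathcal{M})$ for the finite chain the quantity $\max_{\mathcal{M}} d(t_a,\mathcal{M})$ actually \emph{equals} $\delta+a$ at the mixing time $t_a$ (whenever the chain is not already mixed at $t=0$), which strictly exceeds $\delta$. Your proposed mechanism for bridging from $\delta+a$ down to $<\delta$ --- a ``small but positive effective time increment~$\dots$~absorbed into the $o(1)$'' --- is not available: the intra-valley equilibration shifts the macroscopic time only by $\rho_\epsilon/\theta_\epsilon^{(\mathfrak{q})}\to 0$, so it produces no fixed gap in the $\epsilon\to0$ limit, and the $o(1)$ error itself vanishes. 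With $t_a=\mathfrak{T}^{{\rm mix}}(\delta+a)$ the conclusion is in fact false.

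The displayed formula contains a typo: the intended time is $\theta_\epsilon^{(\mathfrak{q})}\bigl(\mathfrak{T}^{{\rm mix}}(\delta)+a\bigr)$, which is the form used in the paper's proof and in the invocation of the lemma inside Proposition~\ref{p: mix_up}. With this reading your Steps~1--3 go through unchanged, and the conclusion is immediate from the monotonicity in Lemma~\ref{l: d_TV} together with the strict contraction in Lemma~\ref{l: d_TV-2}: since $a>0$ is a fixed macroscopic time increment,
\[
\max_{\mathcal{M}\in\mathscr{V}^{(\mathfrak{q})}} d\bigl(\mathfrak{T}^{{\rm mix}}(\delta)+a,\,\mathcal{M}\bigr) \;<\; \max_{\mathcal{M}\in\mathscr{V}^{(\mathfrak{q})}} d\bigl(\mathfrak{T}^{{\rm mix}}(\delta),\,\mathcal{M}\bigr) \;\le\; \delta\,,
\]
and the uniform $o(1)$ from Step~1 is harmless (the edge case where the middle maximum is already $0$ is trivial since $\delta>0$). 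No appeal to an effective time increment hidden in the error term is needed.
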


\begin{proof}
For $t>0$, $\bm{m}\in\mathcal{M}_{0}$, and $\mathcal{M},\,\mathcal{M}'\in\mathscr{V}^{(\mathfrak{q})}$,
let us write
\begin{align*}
P_{t}(\mathcal{M},\,\mathcal{M}') & :=\mathcal{Q}_{\mathcal{M}}^{(\mathfrak{q})}\left[\,{\bf y}^{(\mathfrak{q})}(t)=\mathcal{M}'\,\right]\,,\\
P_{t}(\bm{m},\,\mathcal{M}') & :=\sum_{\mathcal{M}\in\mathscr{V}^{(\mathfrak{q})}}\mathfrak{a}_{\bm{m}}^{(\mathfrak{q}-1)}(\mathcal{M})P_{t}(\mathcal{M},\,\mathcal{M}')\,.
\end{align*}
By Lemma \ref{l: main2}, for all $\bm{x}\in\mathcal{K}(H)$ and $\mathcal{A}\subset\mathbb{R}^{d}$,
we have
\begin{align*}
 & \mathbb{P}_{\bm{x}}^{\epsilon}\left[\bm{x}_{\epsilon}(\theta_{\epsilon}^{(\mathfrak{q})}\,t)\in\mathcal{A}\right]\\
 & =\sum_{\mathcal{M}'\in\mathscr{V}^{(\mathfrak{q})}}\sum_{\bm{m}\in\mathcal{M}_{0}}\mathbb{P}_{\bm{x}}^{\epsilon}\left[\tau_{\mathcal{E}(\mathcal{M}_{0})}=\tau_{\mathcal{E}(\bm{m})}\right]P_{t}(\bm{m},\,\mathcal{M}')\,\mu_{\epsilon}^{\mathcal{E}(\mathcal{M}')}(\mathcal{A})+R_{\epsilon}^{(1)}(\bm{x},\,\mathcal{A})\,,
\end{align*}
where 
\[
\limsup_{\epsilon\to0}\sup_{\bm{x}\in\mathcal{K}(H)}\sup_{\mathcal{A}\subset\mathbb{R}^{d}}\,|\,R_{\epsilon}^{(1)}(\bm{x},\,\mathcal{A})\,|=0\,.
\]
Now, we have
\begin{align*}
 & \sum_{\mathcal{M}'\in\mathscr{V}^{(\mathfrak{q})}}\sum_{\bm{m}\in\mathcal{M}_{0}}\mathbb{P}_{\bm{x}}^{\epsilon}\left[\tau_{\mathcal{E}(\mathcal{M}_{0})}=\tau_{\mathcal{E}(\bm{m})}\right]P_{t}(\bm{m},\,\mathcal{M}')\,\mu_{\epsilon}^{\mathcal{E}(\mathcal{M}')}(\mathcal{A})-\mu_{\epsilon}(\mathcal{A})\\
 & =\sum_{\bm{m}\in\mathcal{M}_{0}}\mathbb{P}_{\bm{x}}^{\epsilon}\left[\tau_{\mathcal{E}(\mathcal{M}_{0})}=\tau_{\mathcal{E}(\bm{m})}\right]\left\{ \sum_{\mathcal{M}'\in\mathscr{V}^{(\mathfrak{q})}}\sum_{\mathcal{M}\in\mathscr{V}^{(\mathfrak{q})}}\mathfrak{a}_{\bm{m}}^{(\mathfrak{q}-1)}(\mathcal{M})P_{t}(\mathcal{M},\,\mathcal{M}')\,\mu_{\epsilon}^{\mathcal{E}(\mathcal{M}')}(\mathcal{A})-\mu_{\epsilon}(\mathcal{A})\right\} \\
 & =\sum_{\bm{m}\in\mathcal{M}_{0}}\mathbb{P}_{\bm{x}}^{\epsilon}\left[\tau_{\mathcal{E}(\mathcal{M}_{0})}=\tau_{\mathcal{E}(\bm{m})}\right]\sum_{\mathcal{M}\in\mathscr{V}^{(\mathfrak{q})}}\mathfrak{a}_{\bm{m}}^{(\mathfrak{q}-1)}(\mathcal{M})\left\{ \sum_{\mathcal{M}'\in\mathscr{V}^{(\mathfrak{q})}}P_{t}(\mathcal{M},\,\mathcal{M}')\,\mu_{\epsilon}^{\mathcal{E}(\mathcal{M}')}(\mathcal{A})-\mu_{\epsilon}(\mathcal{A})\right\} \,.
\end{align*}
Therefore, since $\sum_{\bm{m}\in\mathcal{M}_{0}}\mathbb{P}_{\bm{x}}^{\epsilon}\left[\tau_{\mathcal{E}(\mathcal{M}_{0})}=\tau_{\mathcal{E}(\bm{m})}\right]=\sum_{\mathcal{M}\in\mathscr{V}^{(\mathfrak{q})}}\mathfrak{a}_{\bm{m}}^{(\mathfrak{q}-1)}(\mathcal{M})=1$,
\begin{equation}
\begin{aligned} & D_{\epsilon}(\theta_{\epsilon}^{(\mathfrak{q})}\,t,\,\bm{x})\\
 & =\sup_{\mathcal{A}\subset\mathbb{R}^{d}}\left\{ \mathbb{P}_{\bm{x}}^{\epsilon}\left[\bm{x}_{\epsilon}(\theta_{\epsilon}^{(\mathfrak{q})}\,t)\in\mathcal{A}\right]-\mu_{\epsilon}(\mathcal{A})\right\} \\
 & \le\sup_{\mathcal{M}\in\mathscr{V}^{(\mathfrak{q})}}\sup_{\mathcal{A}\subset\mathbb{R}^{d}}\left\{ \sum_{\mathcal{M}'\in\mathscr{V}^{(\mathfrak{q})}}P_{t}(\mathcal{M},\,\mathcal{M}')\,\mu_{\epsilon}^{\mathcal{E}(\mathcal{M}')}(\mathcal{A})-\mu_{\epsilon}(\mathcal{A})\right\} +\sup_{\mathcal{A}\subset\mathbb{R}^{d}}\,|\,R_{\epsilon}^{(1)}(\bm{x},\,\mathcal{A})\,|\\
 & =\sup_{\mathcal{M}\in\mathscr{V}^{(\mathfrak{q})}}d_{{\rm TV}}\left(\sum_{\mathcal{M}'\in\mathscr{V}^{(\mathfrak{q})}}P_{t}(\mathcal{M},\,\mathcal{M}')\,\mu_{\epsilon}^{\mathcal{E}(\mathcal{M}')},\,\mu_{\epsilon}\right)+\sup_{\mathcal{A}\subset\mathbb{R}^{d}}\,|\,R_{\epsilon}^{(1)}(\bm{x},\,\mathcal{A})\,|\,.
\end{aligned}
\label{e: l_mix_up-1}
\end{equation}
For $\mathcal{M}\in\mathscr{V}^{(\mathfrak{q})}$, by Lemma \ref{l: TV_density},
\begin{equation}
\begin{aligned} & \lim_{\epsilon\to0}d_{{\rm TV}}\left(\sum_{\mathcal{M}'\in\mathscr{V}^{(\mathfrak{q})}}P_{t}(\mathcal{M},\,\mathcal{M}')\,\mu_{\epsilon}^{\mathcal{E}(\mathcal{M}')},\,\mu_{\epsilon}\right)\\
 & =\frac{1}{2}\sum_{\mathcal{M}'\in\mathscr{V}^{(\mathfrak{q})}\setminus\mathscr{V}_{\star}^{(\mathfrak{q})}}P_{t}(\mathcal{M},\,\mathcal{M}')+\sum_{\mathcal{M}'\in\mathscr{R}_{1}^{(\mathfrak{q})}}\left|\,P_{t}(\mathcal{M},\,\mathcal{M}')-\frac{\nu(\mathcal{M}')}{\nu_{\star}}\,\right|\\
 & =d(t;\,\mathcal{M})\,.
\end{aligned}
\label{e: l_mix_up-2}
\end{equation}
Since $\mathscr{V}^{(\mathfrak{q})}$ is finite set, by \eqref{e: l_mix_up-1}
and \eqref{e: l_mix_up-2},
\[
\limsup_{\epsilon\to0}\sup_{\bm{x}\in\mathcal{K}(H)}D_{\epsilon}(\theta_{\epsilon}^{(\mathfrak{q})}\,t,\,\bm{x})\le\max_{\mathcal{M}\in\mathscr{V}^{(\mathfrak{q})}}d(t,\,\mathcal{M})\,.
\]
Therefore, by Lemma \ref{l: d_TV-2},
\begin{align*}
\limsup_{\epsilon\to0}\sup_{\bm{x}\in\mathcal{K}(H)}D_{\epsilon}(\theta_{\epsilon}^{(\mathfrak{q})}(\mathfrak{T}^{{\rm mix}}(\delta)+a)),\,\bm{x}) & \le\max_{\mathcal{M}\in\mathscr{V}^{(\mathfrak{q})}}d(\mathfrak{T}^{{\rm mix}}(\delta)+a,\,\mathcal{M})\\
 & <\max_{\mathcal{M}\in\mathscr{V}^{(\mathfrak{q})}}d(\mathfrak{T}^{{\rm mix}}(\delta),\,\mathcal{M})\\
 & \le\delta\,.
\end{align*}
\end{proof}
\begin{proposition}
\label{p: mix_up}For all $\eta>0$, we have
\[
\limsup_{\epsilon\to0}\frac{T_{\epsilon}^{{\rm mix}}(\delta;\,H)}{\theta_{\epsilon}^{(\mathfrak{q})}}\le\mathfrak{T}^{{\rm mix}}(\delta)\,.
\]
\end{proposition}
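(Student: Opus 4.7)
The plan is to deduce the upper bound from Lemma \ref{l: mix_up} via a short quantitative argument; all genuine work has already been performed there. Fix $a>0$. By Lemma \ref{l: mix_up}, the \emph{strict} inequality
\[
\limsup_{\epsilon\to0}\sup_{\bm{x}\in\mathcal{K}(H)}D_{\epsilon}(\theta_{\epsilon}^{(\mathfrak{q})}\mathfrak{T}^{\rm mix}(\delta+a),\,\bm{x})<\delta
\]
holds, and this strictness can be promoted into a pointwise bound for all sufficiently small $\epsilon$: there exists $\epsilon_{0}=\epsilon_{0}(a)>0$ such that for every $\epsilon\in(0,\epsilon_{0})$,
\[
\sup_{\bm{x}\in\mathcal{K}(H)}D_{\epsilon}(\theta_{\epsilon}^{(\mathfrak{q})}\mathfrak{T}^{\rm mix}(\delta+a),\,\bm{x})\le\delta.
\]
By the defining infimum of $T_{\epsilon}^{\rm mix}(\delta;\,H)$, this gives $T_{\epsilon}^{\rm mix}(\delta;\,H)\le\theta_{\epsilon}^{(\mathfrak{q})}\mathfrak{T}^{\rm mix}(\delta+a)$ for all such $\epsilon$.

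Dividing by $\theta_{\epsilon}^{(\mathfrak{q})}$ and taking $\limsup_{\epsilon\to0}$ yields
\[
\limsup_{\epsilon\to0}\frac{T_{\epsilon}^{\rm mix}(\delta;\,H)}{\theta_{\epsilon}^{(\mathfrak{q})}}\le\mathfrak{T}^{\rm mix}(\delta+a).
\]
I would then conclude by the direct monotonicity $\mathfrak{T}^{\rm mix}(\delta+a)\le\mathfrak{T}^{\rm mix}(\delta)$, which follows from the inclusion $\{t\ge0:\max_{\mathcal{M}}d(t,\mathcal{M})\le\delta\}\subset\{t\ge0:\max_{\mathcal{M}}d(t,\mathcal{M})\le\delta+a\}$ together with the defining infimum of $\mathfrak{T}^{\rm mix}$. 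Alternatively, one may let $a\to0^{+}$ and invoke right-continuity of $\mathfrak{T}^{\rm mix}$ at $\delta$, which is in turn a consequence of the strict monotonicity (Lemma \ref{l: d_TV-2}) and continuity of $t\mapsto\max_{\mathcal{M}}d(t,\mathcal{M})$.

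I do not anticipate any obstacle in this step, and the whole content of the proposition resides in Lemma \ref{l: mix_up}. The only subtlety worth flagging is that the argument genuinely requires the \emph{strict} inequality $<\delta$ in Lemma \ref{l: mix_up}, i.e., a definite margin between the limiting value and the threshold; without it, the $\limsup$ bound could not be converted to a pointwise bound on $\sup D_{\epsilon}$ at the critical time $\theta_{\epsilon}^{(\mathfrak{q})}\mathfrak{T}^{\rm mix}(\delta+a)$, and $T_{\epsilon}^{\rm mix}$ would remain uncontrolled at precisely the threshold scale.
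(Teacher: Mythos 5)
Your proposal is correct and is essentially the contrapositive formulation of the paper's argument: the paper negates the conclusion and derives $\limsup_{\epsilon\to0}\sup_{\bm{x}\in\mathcal{K}(H)}D_{\epsilon}(\theta_{\epsilon}^{(\mathfrak{q})}(\mathfrak{T}^{\rm mix}(\delta)+a),\,\bm{x})\ge\delta$, contradicting Lemma~\ref{l: mix_up}, whereas you run the implications forward. In both cases the entire content resides in Lemma~\ref{l: mix_up}, and you correctly flag that the \emph{strict} inequality $<\delta$ is indispensable — without it the $\limsup$ bound would not convert to a pointwise bound at the critical time. One bookkeeping remark: you quote Lemma~\ref{l: mix_up} with the argument $\mathfrak{T}^{\rm mix}(\delta+a)$ and then invoke $\mathfrak{T}^{\rm mix}(\delta+a)\le\mathfrak{T}^{\rm mix}(\delta)$; the paper's proof of that lemma (and its use in the proposition) actually works with $\mathfrak{T}^{\rm mix}(\delta)+a$, in which case your step yields $\limsup_{\epsilon\to0}T_{\epsilon}^{\rm mix}(\delta;H)/\theta_{\epsilon}^{(\mathfrak{q})}\le\mathfrak{T}^{\rm mix}(\delta)+a$ and one simply sends $a\to0^{+}$ without any monotonicity or right-continuity input. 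Either route closes the argument, so the difference is cosmetic.
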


\begin{proof}
Suppose not. Then, there exist $a>0$ and decreasing sequence $(\epsilon_{n})_{n\ge1}$
of positive number such that $\lim_{n\to\infty}\epsilon_{n}=0$ and
\[
\frac{T_{\epsilon_{n}}^{{\rm mix}}(\delta;\,H)}{\theta_{\epsilon_{n}}^{(\mathfrak{q})}}\ge\mathfrak{T}^{{\rm mix}}(\delta)+a\,.
\]
Since
\[
\theta_{\epsilon_{n}}^{(\mathfrak{q})}(\mathfrak{T}^{{\rm mix}}(\delta)+a)\le T_{\epsilon_{n}}^{{\rm mix}}(\delta;\,H)
\]
we have
\[
\sup_{\bm{x}\in\mathcal{K}(H)}D_{\epsilon_{n}}(\theta_{\epsilon_{n}}^{(\mathfrak{q})}(\mathfrak{T}^{{\rm mix}}(\delta)+a),\,\bm{x})\ge\delta
\]
for all $n\in\mathbb{N}$. Therefore, we obtain
\[
\limsup_{\epsilon\to0}\sup_{\bm{x}\in\mathcal{K}(H)}D_{\epsilon}(\theta_{\epsilon}^{(\mathfrak{q})}(\mathfrak{T}^{{\rm mix}}(\delta)+a),\,\bm{x})\ge\limsup_{n\to\infty}\sup_{\bm{x}\in\mathcal{K}(H)}D_{\epsilon_{n}}(\theta_{\epsilon_{n}}^{(\mathfrak{q})}(\mathfrak{T}^{{\rm mix}}(\delta)+a),\,\bm{x})\ge\delta\,,
\]
which contradicts to Lemma \ref{l: mix_up}.
\end{proof}

\subsection*{Proof of Theorem \ref{t: mixing_time}}

Theorem \ref{t: mixing_time} is a direct consequence of Propositions
\ref{p: mix_low} and \ref{p: mix_up}.

\section{\label{sec: pf_p_TV-mix}Proof of Proposition \ref{p: TV-mix}}

We first prove the first assertion of Proposition \ref{p: TV-mix}.
\begin{proof}[Proof of Proposition \ref{p: TV-mix}-(1)]
Fix $p\in\llbracket1,\,\mathfrak{q}-1\rrbracket$, $\mathcal{M}\in\mathscr{V}^{(p+1)}$
and assume $\mathfrak{C}_{{\rm TV}}^{(p)}$ holds. Let $i\in\llbracket1,\,\mathfrak{n}_{p}\rrbracket$
be such that $\mathcal{M}=\mathcal{M}_{i}^{(p+1)}=\bigcup_{\mathcal{M}''\in\mathscr{R}_{i}^{(p)}}\mathcal{M}''$
and fix $\mathcal{M}'\in\mathscr{R}_{i}^{(p)}$. Let
\[
d^{(p)}<A<R^{(p+1)}-r_{0}
\]
and $\rho_{\epsilon}=e^{A/\epsilon}$. Then, by \eqref{e: FW_U},
$\mathfrak{M}^{(p+1)}$-(1) holds with $\rho_{\epsilon}\prec\theta_{\epsilon}^{(p+1)}$.

By triangle inequality, for $\bm{y}\in\mathcal{E}(\mathcal{M}')$,
\begin{equation}
\begin{aligned}d_{{\rm TV}}\left(\bm{x}_{\epsilon}^{\mathcal{U}^{(p+1)}(\mathcal{M})}(\rho_{\epsilon};\,\bm{y}),\,\mu_{\epsilon}^{\mathcal{U}^{(p+1)}(\mathcal{M})}\right) & \le d_{{\rm TV}}\left(\bm{x}_{\epsilon}^{\mathcal{U}^{(p+1)}(\mathcal{M})}(\rho_{\epsilon};\,\bm{y}),\,\bm{x}_{\epsilon}(\rho_{\epsilon};\,\bm{y})\right)\\
 & +d_{{\rm TV}}\left(\bm{x}_{\epsilon}(\rho_{\epsilon};\,\bm{y}),\,\mu_{\epsilon}^{\mathcal{U}^{(p+1)}(\mathcal{M})}\right)
\end{aligned}
\label{e: p_TV-mix-1}
\end{equation}
First, we claim that
\begin{equation}
\lim_{\epsilon\to0}\sup_{\bm{y}\in\mathcal{E}(\mathcal{M}')}d_{{\rm TV}}\left(\bm{x}_{\epsilon}^{\mathcal{U}^{(p+1)}(\mathcal{M})}(\rho_{\epsilon};\,\bm{y}),\,\bm{x}_{\epsilon}(\rho_{\epsilon};\,\bm{y})\right)=0\,.\label{e: p_TV-mix-2}
\end{equation}
Let $\mathcal{A}\subset\mathbb{R}^{d}$. Then, we have
\begin{equation}
\begin{aligned} & \mathbb{P}_{\bm{y}}^{\epsilon,\,\mathcal{U}^{(p+1)}(\mathcal{M})}\left[\bm{x}_{\epsilon}^{\mathcal{U}^{(p+1)}(\mathcal{M})}(\rho_{\epsilon})\in\mathcal{A}\right]\\
 & =\mathbb{P}_{\bm{y}}^{\epsilon,\,\mathcal{U}^{(p+1)}(\mathcal{M})}\left[\bm{x}_{\epsilon}^{\mathcal{U}^{(p+1)}(\mathcal{M})}(\rho_{\epsilon})\in\mathcal{A}\cap\mathcal{U}^{(p+1)}(\mathcal{M}),\,\tau_{\partial\mathcal{U}^{(p+1)}(\mathcal{M})}>\rho_{\epsilon}\right]+R_{\epsilon}^{(1)}(\bm{y},\,\mathcal{A})
\end{aligned}
\label{e: p_TV-mix-3}
\end{equation}
where
\[
|\,R_{\epsilon}^{(1)}(\bm{y},\,\mathcal{A})\,|\le\mathbb{P}_{\bm{y}}^{\epsilon,\,\mathcal{U}^{(p+1)}(\mathcal{M})}\left[\tau_{\partial\mathcal{U}^{(p+1)}(\mathcal{M})}\le\rho_{\epsilon}\right]=\mathbb{P}_{\bm{y}}^{\epsilon}\left[\tau_{\partial\mathcal{U}^{(p+1)}(\mathcal{M})}\le\rho_{\epsilon}\right]\,.
\]
By the coupling argument, the last probability in \eqref{e: p_TV-mix-3}
can be written as
\begin{equation}
\begin{aligned}
& \mathbb{P}_{\bm{y}}^{\epsilon}\left[\bm{x}_{\epsilon}(\rho_{\epsilon})\in\mathcal{A}\cap\mathcal{U}^{(p+1)}(\mathcal{M}),\,\tau_{\partial\mathcal{U}^{(p+1)}(\mathcal{M})}>\rho_{\epsilon}\right]\\
& =\mathbb{P}_{\bm{y}}^{\epsilon}\left[\bm{x}_{\epsilon}(\rho_{\epsilon})\in\mathcal{A}\cap\mathcal{U}^{(p+1)}(\mathcal{M})\right]+R_{\epsilon}^{(2)}(\bm{y},\,\mathcal{A})
\end{aligned}
\label{e: p_TV-mix-4}
\end{equation}
where
\[
|\,R_{\epsilon}^{(2)}(\bm{y},\,\mathcal{A})\,|\le\mathbb{P}_{\bm{y}}^{\epsilon}\left[\tau_{\mathcal{U}^{(p+1)}(\mathcal{M})}\le\rho_{\epsilon}\right]\,.
\]
Now, by \eqref{e: p_TV-mix-3}, \eqref{e: p_TV-mix-4}, and the following
inequality
\begin{align*}
\left|\,\mathbb{P}_{\bm{y}}^{\epsilon}\left[\bm{x}_{\epsilon}(\rho_{\epsilon})\in\mathcal{A}\cap\mathcal{U}^{(p+1)}(\mathcal{M})\right]-\mathbb{P}_{\bm{y}}^{\epsilon}\left[\bm{x}_{\epsilon}(\rho_{\epsilon})\in\mathcal{A}\right]\,\right| & \le\mathbb{P}_{\bm{y}}^{\epsilon}\left[\bm{x}_{\epsilon}(\rho_{\epsilon})\notin\mathcal{U}^{(p+1)}(\mathcal{M})\right]\\
 & \le\mathbb{P}_{\bm{y}}^{\epsilon}\left[\tau_{\partial\mathcal{U}^{(p+1)}(\mathcal{M})}\le\rho_{\epsilon}\right]\,,
\end{align*}
we obtain
\begin{align*}
 & d_{{\rm TV}}(\bm{x}_{\epsilon}^{\mathcal{U}^{(p+1)}(\mathcal{M})}(\rho_{\epsilon};\,\bm{y}),\,\bm{x}_{\epsilon}(\rho_{\epsilon};\,\bm{y}))\\
 & =\left|\,\mathbb{P}_{\bm{y}}^{\epsilon,\,\mathcal{U}^{(p+1)}(\mathcal{M})}\left[\bm{x}_{\epsilon}^{\mathcal{U}^{(p+1)}(\mathcal{M})}(\rho_{\epsilon})\in\mathcal{A}\right]-\mathbb{P}_{\bm{y}}^{\epsilon}\left[\bm{x}_{\epsilon}(\rho_{\epsilon})\in\mathcal{A}\right]\,\right|\\
 & \le3\mathbb{P}_{\bm{y}}^{\epsilon}\left[\tau_{\partial\mathcal{U}^{(p+1)}(\mathcal{M})}\le\rho_{\epsilon}\right]\,.
\end{align*}
Hence, by \eqref{e: FW_U}, we obtain \eqref{e: p_TV-mix-2}.

We turn to the second term of \eqref{e: p_TV-mix-1}. Since $\rho_{\epsilon}\succ\theta_{\epsilon}^{(p)}$,
by Lemma \ref{l: d_TV-1}, the second term of the right-hand side
of \eqref{e: p_TV-mix-1} is bounded by
\[
d_{{\rm TV}}\left(\bm{x}_{\epsilon}(\theta_{\epsilon}^{(p)}t;\,\bm{y}),\,\mu_{\epsilon}^{\mathcal{U}^{(p+1)}(\mathcal{M})}\right)
\]
for all $t>0$ and sufficiently small $\epsilon>0$. By triangle inequality,
this is bounded by
\begin{align*}
 & d_{{\rm TV}}\left(\bm{x}_{\epsilon}(\theta_{\epsilon}^{(p)}t;\,\bm{y}),\,\sum_{\mathcal{M}''\in\mathscr{R}_{i}^{(p)}}P_{t}(\mathcal{M}',\,\mathcal{M}'')\,\mu_{\epsilon}^{\mathcal{E}(\mathcal{M}'')}\right)\\
 & \ \ +d_{{\rm TV}}\left(\sum_{\mathcal{M}''\in\mathscr{R}_{i}^{(p)}}P_{t}(\mathcal{M}',\,\mathcal{M}'')\,\mu_{\epsilon}^{\mathcal{E}(\mathcal{M}'')},\,\mu_{\epsilon}^{\mathcal{U}^{(p+1)}(\mathcal{M})}\right)\,,
\end{align*}
where $P_{t}(\mathcal{M}',\,\mathcal{M}'')=\mathcal{Q}_{\mathcal{M}'}^{(p)}\left[\,{\bf y}^{(\mathfrak{q})}(t)=\mathcal{M}''\,\right]$.
Since $\mathcal{M}'\in\mathscr{R}_{i}^{(p)}$ and $\mathscr{R}_{i}^{(p)}$
is an irreducible class of ${\bf y}^{(p)}$, the first term converges
to zero uniformly in $\bm{y}\in\mathcal{E}(\mathcal{M}')$ by $\mathfrak{C}_{{\rm TV}}^{(p)}$.
On the other hand, by Lemma \ref{l: TV_density}, 
\[
\lim_{\epsilon\to0}d_{{\rm TV}}\left(\sum_{\mathcal{M}''\in\mathscr{R}_{i}^{(p)}}\,P_{t}(\mathcal{M}',\,\mathcal{M}'')\,\mu_{\epsilon}^{\mathcal{E}(\mathcal{M}'')},\,\mu_{\epsilon}^{\mathcal{U}^{(p+1)}(\mathcal{M})}\right)=\frac{1}{2}\sum_{\mathcal{M}''\in\mathscr{R}_{i}^{(p)}}\left|\,P_{t}(\mathcal{M}',\,\mathcal{M}'')-\frac{\nu(\mathcal{M}'')}{\nu(\mathcal{M})}\,\right|\,,
\]
so that by the ergodicity of ${\bf y}^{(p)}$ in $\mathscr{R}_{i}^{(p)}$,
\[
\lim_{t\to\infty}\lim_{\epsilon\to0}d_{{\rm TV}}\left(\sum_{\mathcal{M}''\in\mathscr{R}_{i}^{(p)}}\,P_{t}(\mathcal{M}',\,\mathcal{M}'')\,\mu_{\epsilon}^{\mathcal{E}(\mathcal{M}'')},\,\mu_{\epsilon}^{\mathcal{U}^{(p+1)}(\mathcal{M})}\right)=0\,.
\]
Therefore, we have
\[
\lim_{t\to\infty}\lim_{\epsilon\to0}\sup_{\bm{y}\in\mathcal{E}(\mathcal{M}')}d_{{\rm TV}}\left(\bm{x}_{\epsilon}(\theta_{\epsilon}^{(p)}t;\,\bm{y}),\,\mu_{\epsilon}^{\mathcal{U}^{(p+1)}(\mathcal{M})}\right)=0\,.
\]
By Lemma \ref{l: d_TV-1} and the above displayed equation, we obtain
\begin{equation}
\limsup_{\epsilon\to0}\sup_{\bm{y}\in\mathcal{E}(\mathcal{M}')}d_{{\rm TV}}\left(\bm{x}_{\epsilon}(\rho_{\epsilon};\,\bm{y}),\,\mu_{\epsilon}^{\mathcal{U}^{(p+1)}(\mathcal{M})}\right)=0\,.\label{e: p_TV-mix-5}
\end{equation}

By \eqref{e: p_TV-mix-1}, \eqref{e: p_TV-mix-2}, and \eqref{e: p_TV-mix-5},
\[
\lim_{\epsilon\to0}\sup_{\bm{y}\in\mathcal{E}(\mathcal{M}')}d_{{\rm TV}}\left(\bm{x}_{\epsilon}^{\mathcal{U}^{(p+1)}(\mathcal{M})}(\rho_{\epsilon};\,\bm{y}),\,\mu_{\epsilon}^{\mathcal{U}^{(p+1)}(\mathcal{M})}\right)=0\ \ ;\ \ \mathcal{M}'\in\mathscr{R}_{i}^{(p)}\,.
\]
Since $\mathscr{R}_{i}^{(p)}$ is finite and $\mathcal{M}=\bigcup_{\mathcal{M}''\in\mathscr{R}_{i}^{(p)}}\mathcal{M}''$,
we obtain
\begin{align*}
 & \lim_{\epsilon\to0}\sup_{\bm{y}\in\mathcal{E}(\mathcal{M})}d_{{\rm TV}}\left(\bm{x}_{\epsilon}^{\mathcal{U}^{(p+1)}(\mathcal{M})}(\rho_{\epsilon};\,\bm{y}),\,\mu_{\epsilon}^{\mathcal{U}^{(p+1)}(\mathcal{M})}\right)\\
 & =\lim_{\epsilon\to0}\sup_{\mathcal{M}''\in\mathscr{R}_{i}^{(p)}}\sup_{\bm{y}\in\mathcal{E}(\mathcal{M}'')}d_{{\rm TV}}\left(\bm{x}_{\epsilon}^{\mathcal{U}^{(p+1)}(\mathcal{M})}(\rho_{\epsilon};\,\bm{y}),\,\mu_{\epsilon}^{\mathcal{U}^{(p+1)}(\mathcal{M})}\right)=0\,,
\end{align*}
which implies $\mathfrak{M}^{(p+1)}$-(2)
\end{proof}
Next, we prove the initial condition $\mathfrak{M}^{(1)}$.
\begin{proof}[Proof of Proposition \ref{p: TV-mix}-(2)]
Let $\rho_{\epsilon}=\epsilon^{-1}$. By \eqref{e: FW_U}, $\mathfrak{M}^{(1)}$-(1)
holds. Let $\bm{x}_{\epsilon}^{{\rm F}}$ be the diffusion process
defined in Appendix \ref{subsec: mix_F}. By triangle inequality,
$d_{{\rm TV}}\left(\bm{x}_{\epsilon}^{\mathcal{U}^{(1)}(\bm{m})}(\rho_{\epsilon};\,\bm{y}),\,\mu_{\epsilon}^{\mathcal{U}^{(1)}(\bm{m})}\right)$
is bounded by
\[
d_{{\rm TV}}\left(\bm{x}_{\epsilon}^{\mathcal{U}^{(1)}(\bm{m})}(\rho_{\epsilon};\,\bm{y}),\,\bm{x}_{\epsilon}^{{\rm F}}(\rho_{\epsilon};\,\bm{y})\right)+d_{{\rm TV}}\left(\bm{x}_{\epsilon}^{{\rm F}}(\rho_{\epsilon};\,\bm{y}),\,\mu_{\epsilon}^{{\rm F}}\right)+d_{{\rm TV}}\left(\mu_{\epsilon}^{{\rm F}},\,\mu_{\epsilon}^{\mathcal{U}^{(1)}(\bm{m})}\right)\,.
\]
Note that $\mu_{\epsilon}^{\mathcal{U}(\bm{m})}$ is the same as $\mu_{\epsilon}^{{\rm R}}$
defined in the first paragraph after \cite[Lemma 4.3]{LLS-1st}. Then,
by Theorem \ref{t: mix_F} and \cite[display (4.10)]{LLS-1st}, the
last two terms vanish as $\epsilon\to0$ uniformly over $\bm{y}\in\mathcal{E}(\bm{m})$.

Let $\mathcal{A}\subset\mathbb{R}^{d}$. Then, we have
\begin{equation}
\begin{aligned} & \mathbb{P}_{\bm{y}}^{\epsilon,\,\mathcal{U}^{(1)}(\bm{m})}\left[\bm{x}_{\epsilon}^{\mathcal{U}^{(1)}(\bm{m})}(\rho_{\epsilon})\in\mathcal{A}\right]\\
 & =\mathbb{P}_{\bm{y}}^{\epsilon,\,\mathcal{U}^{(1)}(\bm{m})}\left[\bm{x}_{\epsilon}^{\mathcal{U}^{(1)}(\bm{m})}(\rho_{\epsilon})\in\mathcal{A}\cap\mathcal{U}^{(1)}(\bm{m}),\,\tau_{\partial\mathcal{U}^{(1)}(\bm{m})}>\rho_{\epsilon}\right]+R_{\epsilon}^{(1)}(\bm{y},\,\mathcal{A})
\end{aligned}
\label{e: pf_TV-mix-2-1}
\end{equation}
where
\[
|\,R_{\epsilon}^{(1)}(\bm{y},\,\mathcal{A})\,|\le\mathbb{P}_{\bm{y}}^{\epsilon,\,\mathcal{U}^{(1)}(\bm{m})}\left[\tau_{\partial\mathcal{U}^{(1)}(\bm{m})}\le\rho_{\epsilon}\right]\,.
\]
By the coupling of $\bm{x}_{\epsilon}^{\mathcal{U}^{(1)}(\bm{m})}$
and $\bm{x}_{\epsilon}^{{\rm F}}$, the last probability in \eqref{e: pf_TV-mix-2-1}
can be written as
\begin{equation}
\mathbb{P}_{\bm{y}}^{\epsilon,\,{\rm F}}\left[\bm{x}_{\epsilon}^{{\rm F}}(\rho_{\epsilon})\in\mathcal{A}\cap\mathcal{U}^{(1)}(\bm{m}),\,\tau_{\partial\mathcal{U}^{(1)}(\bm{m})}>\rho_{\epsilon}\right]=\mathbb{P}_{\bm{y}}^{\epsilon,\,{\rm F}}\left[\bm{x}_{\epsilon}^{{\rm F}}(\rho_{\epsilon})\in\mathcal{A}\cap\mathcal{U}^{(1)}(\bm{m})\right]+R_{\epsilon}^{(2)}(\bm{y},\,\mathcal{A})\label{e: pf_TV-mix-2-2}
\end{equation}
where $\mathbb{P}_{\bm{y}}^{\epsilon,\,{\rm F}}$ is the law of $\bm{x}_{\epsilon}^{{\rm F}}$
starting at $\bm{y}$ and
\[
|\,R_{\epsilon}^{(2)}(\bm{y},\,\mathcal{A})\,|\le\mathbb{P}_{\bm{y}}^{\epsilon,\,{\rm F}}\left[\tau_{\partial\mathcal{U}^{(1)}(\bm{m})}\le\rho_{\epsilon}\right]\,.
\]
Now, we have
\begin{equation}
\begin{aligned}\left|\,\mathbb{P}_{\bm{y}}^{\epsilon,\,{\rm F}}\left[\bm{x}_{\epsilon}^{{\rm F}}(\rho_{\epsilon})\in\mathcal{A}\cap\mathcal{U}(\bm{m})\right]-\mathbb{P}_{\bm{y}}^{\epsilon,\,{\rm F}}\left[\bm{x}_{\epsilon}^{{\rm F}}(\rho_{\epsilon})\in\mathcal{A}\right]\,\right| & \le\mathbb{P}_{\bm{y}}^{\epsilon,\,{\rm F}}\left[\bm{x}_{\epsilon}^{{\rm F}}(\rho_{\epsilon})\notin\mathcal{U}(\bm{m})\right]\\
 & \le\mathbb{P}_{\bm{y}}^{\epsilon,\,{\rm F}}\left[\tau_{\partial\mathcal{U}(\bm{m})}\le\rho_{\epsilon}\right]
\end{aligned}
\label{e: pf_TV-mix-2-3}
\end{equation}
so that by \eqref{e: pf_TV-mix-2-1}, \eqref{e: pf_TV-mix-2-2}, and
\eqref{e: pf_TV-mix-2-3},
\begin{align*}
 & \left|\,\mathbb{P}_{\bm{y}}^{\epsilon}\left[\bm{x}_{\epsilon}^{\mathcal{U}^{(1)}(\bm{m})}(\rho_{\epsilon})\in\mathcal{A}\right]-\mathbb{P}_{\bm{y}}^{\epsilon,\,{\rm F}}\left[\bm{x}_{\epsilon}^{{\rm F}}(\rho_{\epsilon})\in\mathcal{A}\right]\,\right|\\
 & \le\mathbb{P}_{\bm{y}}^{\epsilon,\,\mathcal{U}^{(1)}(\bm{m})}\left[\tau_{\partial\mathcal{U}^{(1)}(\bm{m})}\le\rho_{\epsilon}\right]+2\mathbb{P}_{\bm{y}}^{\epsilon,\,{\rm F}}\left[\tau_{\partial\mathcal{U}^{(1)}(\bm{m})}\le\rho_{\epsilon}\right]\\
 & =3\mathbb{P}_{\bm{y}}^{\epsilon}\left[\tau_{\partial\mathcal{U}^{(1)}(\bm{m})}\le\rho_{\epsilon}\right]\,.
\end{align*}
Hence, by \eqref{e: FW_U}, we have
\[
\lim_{\epsilon\to0}\sup_{\bm{y}\in\mathcal{E}(\mathcal{M}')}\sup_{\mathcal{A}\subset\mathbb{R}^{d}}\left|\,\mathbb{P}_{\bm{y}}^{\epsilon}\left[\bm{x}_{\epsilon}^{\mathcal{U}^{(1)}(\bm{m})}(\rho_{\epsilon})\in\mathcal{A}\right]-\mathbb{P}_{\bm{y}}^{\epsilon}\left[\bm{x}_{\epsilon}^{{\rm F}}(\rho_{\epsilon})\in\mathcal{A}\right]\,\right|=0\,,
\]
which completes the proof.
\end{proof}

\appendix

\section{\label{app: Tree}Construction of tree-structure}

In this appendix, we recall the rigorous construction of the tree-structure
presented in \cite[Section 4]{LLS-2nd}. The construction of tree-structure
consists of a positive integer $\mathfrak{q}\in\mathbb{N}$ and quintuples
\[
{\color{blue}\Lambda^{(n)}}:=\left(d^{(n)},\,\mathscr{V}^{(n)},\,\mathscr{N}^{(n)},\,\mathbf{\widehat{y}}^{(n)},\,\mathbf{y}^{(n)}\right)\;,\;\;n\in\llbracket1,\,\mathfrak{q}\rrbracket\,.
\]

\subsection{\label{subsec: MC1}The first layer}

Before we start, we recall several notions for the landscape of $U$
which were introduced in \cite[Section 4.1]{LLS-2nd}.
\begin{itemize}
\item For each pair $\boldsymbol{m}'\neq\boldsymbol{m}''\in\mathcal{M}_{0}$,
denote by $\Theta(\boldsymbol{m}',\,\boldsymbol{m}'')$ the \emph{communication
height} between $\boldsymbol{m}'$ and $\boldsymbol{m}''$: 
\[
{\color{blue}\Theta(\boldsymbol{m}',\,\boldsymbol{m}'')}:=\inf_{\substack{\boldsymbol{z}:[0\,1]\rightarrow\mathbb{R}^{d}}
}\max_{t\in[0,\,1]}U(\boldsymbol{z}(t))\,,
\]
where the infimum is carried over all continuous paths $\boldsymbol{z}(\cdot)$
such that $\boldsymbol{z}(0)=\boldsymbol{m}'$ and $\boldsymbol{z}(1)=\boldsymbol{m}''$.
Clearly, $\Theta(\boldsymbol{m}',\,\boldsymbol{m}'')=\Theta(\boldsymbol{m}'',\,\boldsymbol{m}')$.
\item Denote by $\nabla^{2}U(\bm{x})$ and $D\boldsymbol{\ell}(\bm{x})$
the Hessian of $U$ and the Jacobian of $\boldsymbol{\ell}$ at $\bm{x}\in\mathbb{R}^{d}$,
respectively. By \cite[Lemma 3.3]{LS-22}, for each saddle point $\boldsymbol{\sigma}\in\mathcal{S}_{0}$,
the matrix $(\nabla^{2}U)(\boldsymbol{\sigma})+(D\boldsymbol{\ell})(\boldsymbol{\sigma})$
has one negative eigenvalue, represented by ${\color{blue}-\mu_{\boldsymbol{\sigma}}<0}$.
For $\boldsymbol{\sigma}\in\mathcal{S}_{0}$, let the weight $\omega(\boldsymbol{\sigma})$,
so-called \textit{Eyring--Kramers constant}, be defined by 
\[
{\color{blue}\omega(\boldsymbol{\sigma})}:=\frac{\mu(\boldsymbol{\sigma})}{2\pi\sqrt{-\,\det(\nabla^{2}U)(\boldsymbol{\sigma})}}\,.
\]
\end{itemize}
Let
\[
\mathscr{S}^{(1)}=\mathscr{V}^{(1)}:=\left\{ \,\{\boldsymbol{m}\}:\boldsymbol{m}\in\mathcal{M}_{0}\,\right\} \;\;\text{and}\;\;\mathscr{N}^{(1)}:=\varnothing\,.
\]
For $\bm{m}\in\mathscr{V}^{(1)}$\footnote{For $\bm{m}\in\mathcal{M}_{0}$, we abuse notation as $\bm{m}=\{\bm{m}\}$
without confusion.}, define
\[
{\color{blue}\Xi(\boldsymbol{m})}:=\inf\left\{ \Theta(\boldsymbol{m},\,\boldsymbol{m}'):\boldsymbol{m}'\in\mathcal{M}_{0}\setminus\{\boldsymbol{m}\}\text{ such that }U(\boldsymbol{m}')\le U(\boldsymbol{m})\right\} 
\]
and
\[
d^{(1)}:=\min_{\bm{m}\in\mathscr{V}^{(1)}}\Xi(\bm{m})\,.
\]
Mind that since $|\mathcal{M}_{0}|\ge2$, there exists $\bm{m}\in\mathcal{M}_{0}$
such that $\Xi(\bm{m})<\infty$ so that $d^{(1)}<\infty$. For $\bm{m}\in\mathscr{V}^{(1)}$,
let $\mathcal{S}^{(1)}(\boldsymbol{m})$ be the set of saddle points
connected to the local minimum $\boldsymbol{m}$:
\begin{gather*}
{\color{blue}\mathcal{S}^{(1)}(\boldsymbol{m})}:=\big\{\,\bm{\sigma}\in\mathcal{S}_{0}:\boldsymbol{\sigma}\curvearrowright\boldsymbol{m}\ ,\ \ U(\bm{\sigma})=U(\bm{m})+\Xi(\bm{m})\,\big\}\,.
\end{gather*}
Denote by $\mathcal{S}(\boldsymbol{m},\boldsymbol{m}')$, $\boldsymbol{m}'\neq\boldsymbol{m}$,
the set of saddle points which separate $\boldsymbol{m}$ from $\boldsymbol{m}'$:
\[
{\color{blue}\mathcal{S}(\boldsymbol{m},\boldsymbol{m}')}:=\big\{\,\boldsymbol{\sigma}\in\mathcal{S}^{(1)}(\boldsymbol{m}):\boldsymbol{\sigma}\curvearrowright\boldsymbol{m}\;,\;\;\boldsymbol{\sigma}\curvearrowright\boldsymbol{m}'\,\big\}\,,
\]
and denote by $\omega(\boldsymbol{m},\boldsymbol{m}')$ the sum of
the weights of the saddle points in $\mathcal{S}(\boldsymbol{m},\boldsymbol{m}')$:
\[
{\color{blue}\omega(\boldsymbol{m},\,\boldsymbol{m}')}:=\sum_{\boldsymbol{\sigma}\in\mathcal{S}(\boldsymbol{m},\boldsymbol{m}')}\,\omega(\boldsymbol{\sigma})\;,\ \ {\color{blue}\omega_{1}(\bm{m},\,\bm{m}')}:=\omega(\bm{m},\,\bm{m}')\,\boldsymbol{1}\left\{ \,\Xi(\boldsymbol{m})=d^{(1)}\,\right\} \,.
\]
Define
\[
{\color{blue}r^{(1)}(\boldsymbol{m},\,\boldsymbol{m}')}:=\frac{1}{\nu(\boldsymbol{m})}\,\omega_{1}(\boldsymbol{m},\boldsymbol{m}')
\]
and let $\widehat{\bf y}^{(1)}(\cdot)={\bf y}^{(1)}(\cdot)$ be a Markov chain
on $\mathscr{V}^{(1)}$ with jump rates $r^{(1)}(\cdot,\,\cdot)$.
If there exists only one irreducible class of ${\bf y}^{(1)}$, the
construction is complete.

\subsection{\label{subsec: MC2}The upper level}

First, we recall several notions defined in \cite[Section 4.2]{LLS-2nd}.
\begin{itemize}
\item Recall that $\mathcal{M}\subset\mathcal{M}_0$ is \textit{\textcolor{blue}{simple}} if
\[
U(\bm{m})=U(\bm{m}')\ \ \text{for all}\ \bm{m},\,\bm{m}'\in\mathcal{M}\,,
\]
and the common value is denoted by \textcolor{blue}{$U(\mathcal{M})$}.
\item For two disjoint non-empty subsets $\mathcal{M}$ and $\mathcal{M}'$
of $\mathcal{M}_{0}$, let $\Theta(\mathcal{M},\,\mathcal{M}')$ be
the \textit{communication height} between the two sets: 
\[
{\color{blue}\Theta(\mathcal{M},\,\mathcal{M}')}:=\min_{\boldsymbol{m}\in\mathcal{M},\,\boldsymbol{m}'\in\mathcal{M}'}\Theta(\boldsymbol{m},\,\boldsymbol{m}')\,,
\]
with the convention that $\Theta(\mathcal{\mathcal{M}},\,\varnothing)=+\infty$.\smallskip{}
\item For a simple set $\mathcal{\mathcal{M}}\subset\mathcal{M}_{0}$, denote
by $\widetilde{\mathcal{M}}$ the set of local minima of $U$ which
do not belong to $\mathcal{M}$ and which have lower or equal energy
than $\mathcal{M}$: 
\[
{\color{blue}\widetilde{\mathcal{M}}}:=\big\{\,\boldsymbol{m}\in\mathcal{M}_{0}\setminus\mathcal{M}:U(\boldsymbol{m})\le U(\mathcal{M})\,\big\}\,.
\]
Note that $\widetilde{\mathcal{\mathcal{M}}}=\varnothing$ if and
only if $\mathcal{M}$ contains all the global minima of $U$.\smallskip{}
\item For a saddle point $\boldsymbol{\sigma}\in\mathcal{S}_{0}$ and local
minimum $\boldsymbol{m}\in\mathcal{M}_{0}$, we write\textit{\textcolor{blue}{{}
$\boldsymbol{\sigma}\rightsquigarrow\boldsymbol{m}$}} if $\boldsymbol{\sigma}\curvearrowright\boldsymbol{m}$
or if there exist $n\ge1$, $\boldsymbol{\sigma}_{1},\,\dots,\,\boldsymbol{\sigma}_{n}\in\mathcal{S}_{0}$
and $\boldsymbol{m}_{1}\,\,\dots,\,\boldsymbol{m}_{n}\in\mathcal{M}_{0}$
such that 
\[
\max\{U(\boldsymbol{\sigma}_{1}),\,\dots,\,U(\boldsymbol{\sigma}_{n})\,\}<U(\boldsymbol{\sigma})\;\;\;\text{and\;\;\;}\boldsymbol{\sigma}\curvearrowright\boldsymbol{m}_{1}\curvearrowleft\boldsymbol{\sigma}_{1}\curvearrowright\cdots\curvearrowright\boldsymbol{m}_{n}\curvearrowleft\boldsymbol{\sigma}_{n}\curvearrowright\boldsymbol{m}\,.
\]
For $\mathcal{\mathcal{M}}\subset\mathcal{M}_{0}$, write ${\color{blue}\boldsymbol{\sigma}\rightsquigarrow\mathcal{\mathcal{M}}}$
and \textcolor{blue}{$\bm{\sigma}\curvearrowright\mathcal{M}$} if
for some $\boldsymbol{m}\in\mathcal{\mathcal{M}}$, $\boldsymbol{\sigma}\rightsquigarrow\boldsymbol{m}$
and $\bm{\sigma}\curvearrowright\bm{m}$ , respectively.\smallskip{}
\item Fix a non-empty simple set $\mathcal{\mathcal{M}}\subset\mathcal{M}_{0}$
such that $\widetilde{\mathcal{M}}\neq\varnothing$. For a set $\mathcal{\mathcal{M}}'\subset\mathcal{M}_{0}$
such that $\mathcal{M}'\cap\mathcal{M}=\varnothing$, we write\textit{\textcolor{blue}{{}
$\mathcal{M}\rightarrow\mathcal{M}'$}} if there exists $\boldsymbol{\sigma}\in\mathcal{S}_{0}$
such that\textcolor{red}{{} }
\begin{equation}
U(\boldsymbol{\sigma})=\Theta(\mathcal{M},\,\widetilde{\mathcal{M}})=\Theta(\mathcal{M},\,\mathcal{M}')\;\;\text{and}\;\;\mathcal{M}'\,\curvearrowleft\,\boldsymbol{\sigma}\,\rightsquigarrow\,\mathcal{M}\,.\label{eq:con_gate}
\end{equation}
To emphasize the saddle point $\boldsymbol{\sigma}$ between $\mathcal{M}$
and $\mathcal{M}'$ we sometimes write ${\color{blue}\mathcal{M}\rightarrow_{\boldsymbol{\sigma}}\mathcal{M}'}$.
\smallskip{}
\item Denote by $\mathcal{S}(\mathcal{M},\,\mathcal{M}')$ the set of saddle
points $\boldsymbol{\sigma}\in\mathcal{S}_{0}$ satisfying \eqref{eq:con_gate},
\[
{\color{blue}\mathcal{S}(\mathcal{M},\,\mathcal{M}')}:=\{\,\boldsymbol{\sigma}\in\mathcal{S}_{0}:\mathcal{M}\to_{\boldsymbol{\sigma}}\mathcal{M}'\,\}\,.
\]
The set $\mathcal{S}(\mathcal{M},\,\mathcal{M}')$ represents the
collection of lowest connection points which separate $\mathcal{M}$
from $\mathcal{M}'$.
\end{itemize}
Now, fix $k\ge1$ and suppose that $\Lambda^{(n)}$, $n\in\llbracket1,\,k\rrbracket$,
have been defined and there exist more than one irreducible class
of the Markov chain ${\bf y}^{(k)}$. Denote by $\mathfrak{n}_{k}$
the number of irreducible classes, by $\mathscr{R}_{1}^{(k)},\,\dots,\,\mathscr{R}_{\mathfrak{n}_{k}}^{(k)}$
such irreducible classes and by $\mathscr{T}^{(k)}$ the collection
of transient states of ${\bf y}^{(k)}$, respectively. Define
\begin{equation*}
\mathcal{M}_{i}^{(k+1)}:=\bigcup_{\mathcal{M}\in\mathscr{R}_{i}^{(k)}}\mathcal{M}\ ;\ i\in\llbracket 1,\, \mathfrak{n}_k\rrbracket\, ,
\end{equation*}
and
\begin{equation}
\mathscr{V}^{(k+1)}:=\left\{\mathcal{M}_{1}^{(k+1)},\, \dots,\, \mathcal{M}_{\mathfrak{n}_k}^{(k+1)} \right\},\, \mathscr{N}^{(k+1)}:=\mathscr{N}^{(k)}\cup\mathscr{T}^{(k)},\, \mathscr{S}^{(k+1)}:=\mathscr{V}^{(k+1)}\cup\mathscr{N}^{(k+1)}\,.
\label{e: V^k+1,N^k+1}
\end{equation}
By Proposition \ref{p: tree}-(1) below, all $\mathcal{M}\in\mathscr{S}^{(k+1)}$
are simple. For $\mathcal{M}\in\mathscr{V}^{(k+1)}$, define
\[
\Xi(\mathcal{M}):=\Theta(\mathcal{M},\,\widetilde{\mathcal{M}})-U(\mathcal{M})\ \ \text{and}\ \ d^{(k+1)}:=\min_{\mathcal{M}\in\mathscr{V}^{(k+1)}}\Xi(\mathcal{M})\,.
\]
Mind that since $\mathfrak{n}_{k}\ge2$, there exists $\mathcal{M}\in\mathscr{V}^{(k+1)}$
such that $\Xi(\mathcal{M})<\infty$ so that $d^{(k+1)}<\infty$.
Denote by $\mathscr{S}^{(k+1)}:=\mathscr{V}^{(k+1)}\cup\mathscr{N}^{(k+1)}$.

Denote by $\widehat{r}^{(k)}(\cdot,\,\cdot)$ the jump rates of the
$\mathscr{S}^{(k)}$-valued Markov chain $\widehat{\mathbf{y}}^{(k)}(\cdot)$.
Since $\mathscr{S}^{(k+1)}=\mathscr{V}^{(k+1)}\cup\mathscr{N}^{(k+1)}$,
we can divide the definition of the jump rate \textcolor{blue}{\emph{$\widehat{r}^{(k+1)}(\cdot,\,\cdot)$
}}of $\widehat{\mathbf{y}}^{(k+1)}(\cdot)$ into three cases: 
\begin{itemize}
\item {[}\textbf{Case }1: $\mathcal{M}\in\mathscr{N}^{(k+1)}$ and $\mathcal{M}'\in\mathscr{N}^{(k+1)}${]}
By \eqref{e: V^k+1,N^k+1}, $\mathscr{N}^{(k+1)}\subset\mathscr{S}^{(k)}$ so that in this case, $\mathcal{M},\,\mathcal{M}'\in\mathscr{S}^{(k)}$. Now, we set 
\[
\widehat{r}^{(k+1)}(\mathcal{M},\,\mathcal{M}'):=\ \widehat{r}^{(k)}(\mathcal{M},\,\mathcal{M}')\,.
\]
\item {[}\textbf{Case }2: $\mathcal{M}\in\mathscr{N}^{(k+1)}$ and $\mathcal{M}'\in\mathscr{V}^{(k+1)}${]}
By \eqref{e: V^k+1,N^k+1}, as in the first case, $\mathcal{M}\in\mathscr{S}^{(k)}$. Since $\mathcal{M}'$
is the union of elements (may be just one) in $\mathscr{V}^{(k)}$,
we set 
\[
\widehat{r}^{(k+1)}(\mathcal{M},\,\mathcal{M}'):=\sum_{\mathcal{M}''\in\mathscr{V}^{(k)}(\mathcal{M}')}\,\widehat{r}^{(k)}(\mathcal{M},\,\mathcal{M}'')\,.
\]
\item {[}\textbf{Case 3}: $\mathcal{M}\in\mathscr{V}^{(k+1)}$ and $\mathcal{M}'\in\mathscr{S}^{(k+1)}${]}
Let 
\[
\boldsymbol{\omega}(\mathcal{M},\,\mathcal{M}')=\sum_{\boldsymbol{\sigma}\in\mathcal{S}(\mathcal{M},\,\mathcal{M}')}\omega(\boldsymbol{\sigma})\;,\ \ {\color{blue}\omega_{k+1}(\mathcal{M},\,\mathcal{M}')}:=\omega(\mathcal{M},\,\mathcal{M}')\,\boldsymbol{1}\{\,\Xi(\mathcal{M})=d^{(k+1)}\,\}\,.
\]
It is understood here that $\omega(\mathcal{M},\,\mathcal{M}')=0$
if the set $\mathcal{S}(\mathcal{M},\,\mathcal{M}')$ is empty, that
is if $\mathcal{M}'$ is not adjacent to $\mathcal{M}$. Then, we
set
\begin{equation}
\widehat{r}^{(k+1)}(\mathcal{M},\,\mathcal{M}'):=\frac{1}{\nu(\mathcal{M})}\,\omega_{k+1}(\mathcal{M},\,\mathcal{M}')\,.\label{eq:rate_3}
\end{equation}
\end{itemize}
Define $\widehat{\mathbf{y}}^{(k+1)}(\cdot)$ as the $\mathscr{S}^{(k+1)}$-valued,
continuous-time Markov process with jump rates $\widehat{r}^{(k+1)}(\cdot,\,\cdot)$.
By Lemma \ref{l: F_assu_trace}, the trace process of $\widehat{\mathbf{y}}^{(k+1)}$
on $\mathscr{V}^{(k+1)}$ is well defined. Denote by $\mathbf{y}^{(k+1)}(\cdot)$
the trace process. From now on the quintuples $\Lambda^{(1)},\,\dots,\,\Lambda^{(k+1)}$
have been defined. If $\mathfrak{n}_{k+1}$, the number of irreducible
classes of ${\bf y}^{(k+1)}$, is $1$, the construction is complete
and $\mathfrak{q}=k+1$. If $\mathfrak{n}_{k+1}>1$, we add a new
layer as in this subsection.

Finally, we have the following proposition on tree-structure. We recall
from \cite[display (4.14)]{LLS-2nd} that $\mathcal{M}\subset\mathcal{M}_{0}$
is said to be \textit{\textcolor{blue}{bound}} if
\[
\max_{\bm{m},\,\bm{m}'}\Theta(\bm{m},\,\bm{m}')<\Theta(\mathcal{M},\,\widetilde{\mathcal{M}})\,.
\]

\begin{proposition}
\label{p: tree}We have the following.
\begin{enumerate}
\item If $\mathfrak{n}_{n}>1$, $\mathfrak{n}_{n}>\mathfrak{n}_{n+1}$.
In particular, there exists $\mathfrak{q}\in\mathbb{N}$ such that
$\mathfrak{n}_{1}>\cdots>\mathfrak{n}_{\mathfrak{q}}=1$.
\item For all $n\in\llbracket1,\,\mathfrak{q}\rrbracket$ and $\mathcal{M}\in\mathscr{S}^{(n)}$,
$\mathcal{M}$ is simple and bound.
\item $0<d^{(1)}<\cdots<d^{(\mathfrak{q})}<\infty$.
\end{enumerate}
\end{proposition}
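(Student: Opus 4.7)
The plan is to establish (1), (2), and (3) simultaneously by a single strong induction on the layer index $n$, since the three properties reinforce one another at each level. The base case $n=1$ is immediate: elements of $\mathscr{V}^{(1)}$ are singletons, hence trivially simple and bound; $d^{(1)}>0$ because $U$ is a Morse function so every saddle strictly exceeds the heights of the local minima it connects; and $d^{(1)}<\infty$ because $|\mathcal{M}_{0}|\ge 2$ combined with the properness of $U$ produces at least one finite $\Xi(\bm{m})$.

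For the inductive step, assume (2) and (3) hold up through layer $n$ and that $\mathfrak{n}_{n}>1$. To verify simplicity of each $\mathcal{M}_{i}^{(n+1)}$, I observe that the rate $\widehat{r}^{(n)}(\mathcal{M},\mathcal{M}')$ is nonzero only when $\Xi(\mathcal{M})=d^{(n)}$ and $\mathcal{M}\to_{\bm{\sigma}}\mathcal{M}'$, which by the definition of the arrow forces $\mathcal{M}'\subset\widetilde{\mathcal{M}}$ and in particular $U(\mathcal{M}')\le U(\mathcal{M})$. Irreducibility of $\mathscr{R}_{i}^{(n)}$ provides the reverse transition, yielding $U(\mathcal{M})=U(\mathcal{M}')$, and iterating through the class gives a common $U$-value on all of $\mathcal{M}_{i}^{(n+1)}$. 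Boundness of $\mathcal{M}_{i}^{(n+1)}$ then follows from the depth inequality below: any two minima in the union are linked by an alternating chain of intra-element paths (of communication height strictly below $U(\mathcal{M})+d^{(n)}$ by the inductive boundness) and transition saddles of height exactly $U(\mathcal{M})+d^{(n)}$, so $\Theta(\bm{m},\bm{m}')\le U(\mathcal{M}_{i}^{(n+1)})+d^{(n)}<U(\mathcal{M}_{i}^{(n+1)})+d^{(n+1)}=\Theta(\mathcal{M}_{i}^{(n+1)},\widetilde{\mathcal{M}_{i}^{(n+1)}})$.

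The depth inequality $d^{(n)}<d^{(n+1)}<\infty$ is established as follows. Fix $\mathcal{M}\in\mathscr{V}^{(n+1)}$, corresponding to an irreducible class $\mathscr{R}_{i}^{(n)}$. Reaching $\widetilde{\mathcal{M}}$ requires leaving the class, and by the very meaning of ``closed irreducible'' at level $n$, no transition of height $U(\mathcal{M})+d^{(n)}$ exits the class; hence any saddle realizing $\Theta(\mathcal{M},\widetilde{\mathcal{M}})$ has strictly greater height, giving $\Xi(\mathcal{M})>d^{(n)}$. Minimizing over $\mathscr{V}^{(n+1)}$ yields $d^{(n+1)}>d^{(n)}$. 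Finiteness uses that $\mathfrak{n}_{n}>1$ implies $\widetilde{\mathcal{M}}\ne\varnothing$ for some $\mathcal{M}\in\mathscr{V}^{(n+1)}$ (take any element not of maximal $U$-value among those in $\mathscr{V}^{(n+1)}$), combined with the growth condition \eqref{e: growth} ensuring all minima lie in a common sublevel set.

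The hard part, and what I expect to be the main obstacle, is the strict descent $\mathfrak{n}_{n+1}<\mathfrak{n}_{n}$. Since $|\mathscr{V}^{(n+1)}|=\mathfrak{n}_{n}$, it suffices to exhibit one element of $\mathscr{V}^{(n+1)}$ that is not a singleton closed class of $\mathbf{y}^{(n+1)}$. Choose $\mathcal{M}^{*}\in\mathscr{V}^{(n+1)}$ realizing $\Xi(\mathcal{M}^{*})=d^{(n+1)}$. By Case~3 of the definition of $\widehat{r}^{(n+1)}$, the saddle attaining $\Theta(\mathcal{M}^{*},\widetilde{\mathcal{M}^{*}})$ produces a strictly positive rate $\widehat{r}^{(n+1)}(\mathcal{M}^{*},\mathcal{M}')$ for some $\mathcal{M}'\in\mathscr{S}^{(n+1)}\setminus\{\mathcal{M}^{*}\}$. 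The delicate point is to pass this positivity from $\widehat{\mathbf{y}}^{(n+1)}$ to its trace $\mathbf{y}^{(n+1)}$ on $\mathscr{V}^{(n+1)}$: if $\mathcal{M}'\in\mathscr{V}^{(n+1)}$, the outgoing rate is inherited directly; if $\mathcal{M}'\in\mathscr{N}^{(n+1)}$, one uses that every state of $\mathscr{N}^{(n+1)}$ is transient in $\widehat{\mathbf{y}}^{(n+1)}$ (by its construction as the accumulated transient states of earlier layers) and is therefore absorbed into $\mathscr{V}^{(n+1)}$, with the absorption distribution reaching some element $\ne \mathcal{M}^{*}$ with positive probability (here the inductive simplicity and boundness guarantee that $\mathscr{N}^{(n+1)}$ cannot ``trap'' mass at $\mathcal{M}^{*}$ alone). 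Consequently $\mathcal{M}^{*}$ is not a closed singleton class of $\mathbf{y}^{(n+1)}$, forcing $\mathfrak{n}_{n+1}<\mathfrak{n}_{n}$. Iterating yields a finite $\mathfrak{q}$ with $\mathfrak{n}_{\mathfrak{q}}=1$.
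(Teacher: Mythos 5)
The paper does not actually prove this proposition from scratch: its ``proof'' consists entirely of citations to \cite[Theorem 4.7-(3), Definition 4.4, Corollary 4.8]{LLS-2nd}, which carry out the multi-layer induction you are attempting to reconstruct. Your overall strategy --- a simultaneous strong induction on the layer, tracking simplicity and boundness as the loop invariant --- is the right one and matches the architecture of the cited reference. However, two key steps in your sketch contain genuine gaps.

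First, the simplicity argument: you assert that $\widehat{r}^{(n)}(\mathcal{M},\mathcal{M}')>0$ forces $U(\mathcal{M}')\le U(\mathcal{M})$ because $\mathcal{M}\to_{\bm\sigma}\mathcal{M}'$ ``forces $\mathcal{M}'\subset\widetilde{\mathcal{M}}$.'' That is not what the definition gives. The relation $\mathcal{M}\to_{\bm\sigma}\mathcal{M}'$ requires $U(\bm\sigma)=\Theta(\mathcal{M},\widetilde{\mathcal{M}})=\Theta(\mathcal{M},\mathcal{M}')$ and $\mathcal{M}'\curvearrowleft\bm\sigma\rightsquigarrow\mathcal{M}$; the saddle $\bm\sigma$ at that height certainly has one heteroclinic orbit reaching $\mathcal{M}'$, but nothing a priori prevents $U(\mathcal{M}')>U(\mathcal{M})$ (the orbit just needs to descend from $\bm\sigma$, not below $U(\mathcal{M})$). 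Showing that outgoing transitions from an irreducible-class element never increase the $U$-level, and therefore that closed classes have constant $U$, is a non-trivial lemma about the landscape $\leftsquigarrow\!/\!\rightsquigarrow$ combinatorics, not a one-line consequence of the definition.

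Second, and more seriously, the strict descent $\mathfrak{n}_{n+1}<\mathfrak{n}_n$: you correctly identify the delicate point and then do not resolve it. Exhibiting $\mathcal{M}^*\in\mathscr{V}^{(n+1)}$ with $\widehat{r}^{(n+1)}(\mathcal{M}^*,\mathcal{M}')>0$ for some $\mathcal{M}'\in\mathscr{N}^{(n+1)}$ does \emph{not} show $\mathcal{M}^*$ fails to be an absorbing singleton for the trace $\mathbf{y}^{(n+1)}$. If every excursion of $\widehat{\mathbf{y}}^{(n+1)}$ into $\mathscr{N}^{(n+1)}$ started from $\mathcal{M}^*$ returns to $\mathcal{M}^*$ almost surely, the trace process never leaves $\mathcal{M}^*$ and $\mathcal{M}^*$ is a closed singleton class. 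The phrase ``the inductive simplicity and boundness guarantee that $\mathscr{N}^{(n+1)}$ cannot trap mass at $\mathcal{M}^*$ alone'' is not an argument. What you actually need is that the rate graph of $\widehat{\mathbf{y}}^{(n+1)}$ correctly mirrors the communication structure of $U$: that since $\widetilde{\mathcal{M}^*}\ne\varnothing$ there is a minimum $\bm m''\in\widetilde{\mathcal{M}^*}$ contained in some set $\mathcal{M}''\in\mathscr{S}^{(n+1)}$ distinct from $\mathcal{M}^*$, and that $\widehat{\mathbf{y}}^{(n+1)}$ started at $\mathcal{M}^*$ reaches $\mathcal{M}''$ with positive probability. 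Establishing that correspondence is exactly the content of the cited Theorem 4.7 of \cite{LLS-2nd}; you cannot use it implicitly while proving the proposition that it implies. Relatedly, you tacitly use that the trace $\mathbf{y}^{(n+1)}$ is well defined (the condition in Lemma~\ref{l: F_assu_trace}); that too must be carried along in the induction.
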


\begin{proof}
The first property is \cite[Theorem 4.7-(3)]{LLS-2nd}. The last two properties are $\mathfrak{P}_{1}$ and $\mathfrak{P}_{2}$ defined
in \cite[Definition 4.4]{LLS-2nd} and it was proved in \cite[Corollary 4.8]{LLS-2nd}
that properties $\mathfrak{P}_{1}$ and $\mathfrak{P}_{2}$ hold. 
\end{proof}

\subsection{\label{subsec: example}Example of tree-structure}

\begin{figure}
\center
\includegraphics[scale=0.25]{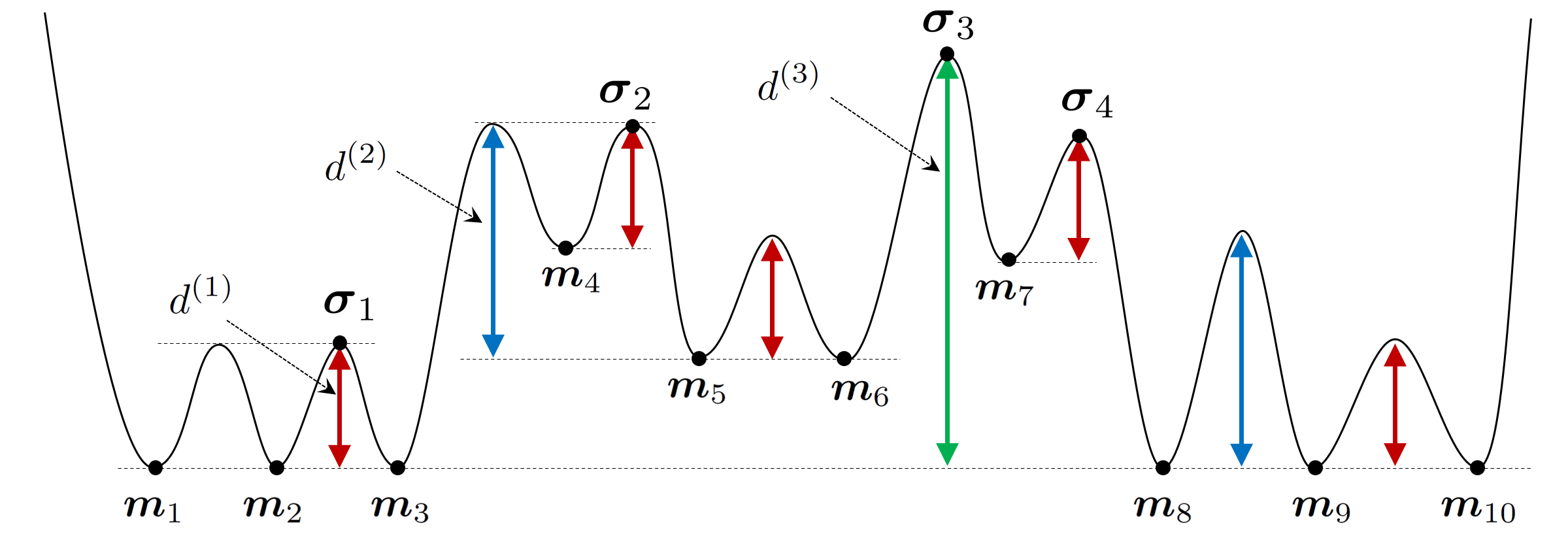}
\caption{An example of potential function $U$}
\label{fig:potential}
\end{figure}

In this appendix, we briefly explain the tree-structure with a simple
one-dimensional example. Suppose that we have a potential $U:\mathbb{R}\to\mathbb{R}$
whose shape is given as in Figure \ref{fig:potential}. This example
was given in \cite{LLS-2nd}. In this example, there are three depths
$d^{(1)}<d^{(2)}<d^{(3)}$. Since there are ten local minima, we have$\mathscr{V}^{(1)}=\mathcal{M}_{0}=\{\bm{m}_{1},\,\dots,\,\bm{m}_{10}\}$
and $\mathscr{N}^{(1)}=\varnothing$. The first Markov chain ${\bf y}^{(1)}(\cdot)$
describing the metastable behavior in the smallest time scale $e^{d^{(1)}/\epsilon}$,
has the following jump diagram:

\[
\bm{m}_{1}\longleftrightarrow\bm{m}_{2}\longleftrightarrow\bm{m}_{3}\leftarrow\bm{m}_{4}\rightarrow\bm{m}_{5}\longleftrightarrow\bm{m}_{6}\ \Big|\ \bm{m}_{7}\rightarrow\bm{m}_{8}\ \Big|\ \bm{m}_{9}\longleftrightarrow\bm{m}_{10}
\]
Therefore, there are four irreducible classes

\[
\mathscr{R}_{1}^{(1)}=\{\bm{m}_{1},\,\bm{m}_{2},\,\bm{m}_{3}\}\ ,\ \ \mathscr{R}_{2}^{(1)}=\{\bm{m}_{5},\,\bm{m}_{6}\}\ ,\ \ \mathscr{R}_{3}^{(1)}=\{\bm{m}_{8}\}\ ,\ \ \mathscr{R}_{4}^{(1)}=\{\bm{m}_{9},\,\bm{m}_{10}\}\,,
\]
and two transient states
\[
\mathscr{T}^{(1)}=\{\bm{m}_{4},\,\bm{m}_{7}\}\ .
\]
Based on the decomposition of $\mathscr{V}^{(1)}$, we get $\mathscr{V}^{(2)}=\{\mathscr{R}_{1}^{(1)},\,\mathscr{R}_{2}^{(1)},\,\mathscr{R}_{3}^{(1)},\,\mathscr{R}_{4}^{(1)}\}$
and $\mathscr{N}^{(2)}=\mathscr{T}^{(1)}$ so that
\[
\mathscr{V}^{(2)}=\left\{ \{\bm{m}_{1},\,\bm{m}_{2},\,\bm{m}_{3}\},\,\{\bm{m}_{5},\,\bm{m}_{6}\},\,\{\bm{m}_{8}\},\,\{\bm{m}_{9},\,\bm{m}_{10}\}\right\} \ ,\ \ \mathscr{N}^{(2)}=\{\bm{m}_{4},\,\bm{m}_{7}\}\,.
\]
In the second time scale $e^{d^{(2)}/\epsilon}$, since $\mathscr{N}^{(2)}\ne\varnothing$,
we need a auxiliary Markov chain $\widehat{{\bf y}}^{(2)}(\cdot)$
whose jump diagram is given by

\[
\{\bm{m}_{1},\,\bm{m}_{2},\,\bm{m}_{3}\}\leftarrow\{\bm{m}_{4}\}\longleftrightarrow\{\bm{m}_{5},\,\bm{m}_{6}\}\ \Big|\ \{\bm{m}_{7}\}\rightarrow\{\bm{m}_{8}\}\longleftrightarrow\{\bm{m}_{9},\,\bm{m}_{10}\}
\]
Therefore, the second Markov chain ${\bf y}^{(2)}(\cdot)$, which
is a trace process of $\widehat{{\bf y}}^{(2)}(\cdot)$, has the following
jump diagram:

\[
\{\bm{m}_{1},\,\bm{m}_{2},\,\bm{m}_{3}\}\leftarrow\{\bm{m}_{5},\,\bm{m}_{6}\}\ \Big|\ \{\bm{m}_{8}\}\longleftrightarrow\{\bm{m}_{9},\,\bm{m}_{10}\}
\]
In the second layer, the state space $\mathscr{V}^{(2)}$ is decomposed
into $\mathscr{V}^{(2)}=\mathscr{R}_{1}^{(2)}\cup\mathscr{R}_{2}^{(2)}\cup\mathscr{T}^{(2)}$
where

\[
\mathscr{R}_{1}^{(2)}=\left\{ \{\bm{m}_{1},\,\bm{m}_{2},\,\bm{m}_{3}\}\right\} \ ,\ \ \mathscr{R}_{2}^{(2)}=\left\{ \{\bm{m}_{8}\},\,\{\bm{m}_{9},\,\bm{m}_{10}\}\right\} \ ,\ \ \mathscr{T}^{(2)}=\left\{ \{\bm{m}_{5},\,\bm{m}_{6}\}\right\} \,.
\]
For the last layer, we have $\mathscr{V}^{(3)}=\{\bigcup_{\mathcal{M}\in\mathscr{R}_{1}^{(2)}}\mathcal{M},\,\bigcup_{\mathcal{M}\mathscr{R}_{2}^{(2)}}\mathcal{M}\}$
and $\mathscr{N}^{(3)}=\mathscr{N}^{(2)}\cup\mathscr{T}^{(2)}$ so
that
\[
\mathscr{V}^{(3)}=\left\{ \{\bm{m}_{1},\,\bm{m}_{2},\,\bm{m}_{3}\},\,\{\bm{m}_{8},\,\bm{m}_{9},\,\bm{m}_{10}\}\right\} \ ,\ \ \mathscr{N}^{(3)}=\left\{ \bm{m}_{4},\,\{\bm{m}_{5},\,\bm{m}_{6}\},\,\bm{m}_{7}\right\} \,.
\]
The third Markov chain $\widehat{{\bf y}}^{(3)}(\cdot)$ has the following
jump diagram 

\[
\{\bm{m}_{1},\,\bm{m}_{2},\,\bm{m}_{3}\}\leftarrow\{\bm{m}_{4}\}\longleftrightarrow\{\bm{m}_{5},\,\bm{m}_{6}\}\leftarrow\{\bm{m}_{8},\,\bm{m}_{9},\,\bm{m}_{10}\}
\]
and
\[
\{\bm{m}_{1},\,\bm{m}_{2},\,\bm{m}_{3}\}\rightarrow\{\bm{m}_{7}\}\rightarrow\{\bm{m}_{8},\,\bm{m}_{9},\,\bm{m}_{10}\}
\]
Hence, the last Markov chain ${\bf y}^{(3)}(\cdot)$, which describes
the metastable behavior between global minima in the last time scale,
has the following diagram:

\[
\{\bm{m}_{1},\,\bm{m}_{2},\,\bm{m}_{3}\}\longleftrightarrow\{\bm{m}_{8},\,\bm{m}_{9},\,\bm{m}_{10}\}
\]
Finally, we obtain $\mathscr{R}_{1}^{(3)}=\left\{ \{\bm{m}_{1},\,\bm{m}_{2},\,\bm{m}_{3}\},\,\{\bm{m}_{8},\,\bm{m}_{9},\,\bm{m}_{10}\}\right\} $,
$\mathscr{T}^{(3)}=\varnothing$, and $\mathcal{M}_{\star}=\mathcal{M}_{1}^{(3+1)}=\{\bm{m}_{1},\,\bm{m}_{2},\,\bm{m}_{3},\,\bm{m}_{8},\,\bm{m}_{9},\,\bm{m}_{10}\}$.
We can illustrate the tree-structure associated to our example as
in Figure \ref{fig:example_tree}.

\begin{figure}
\center
\includegraphics[scale=0.07]{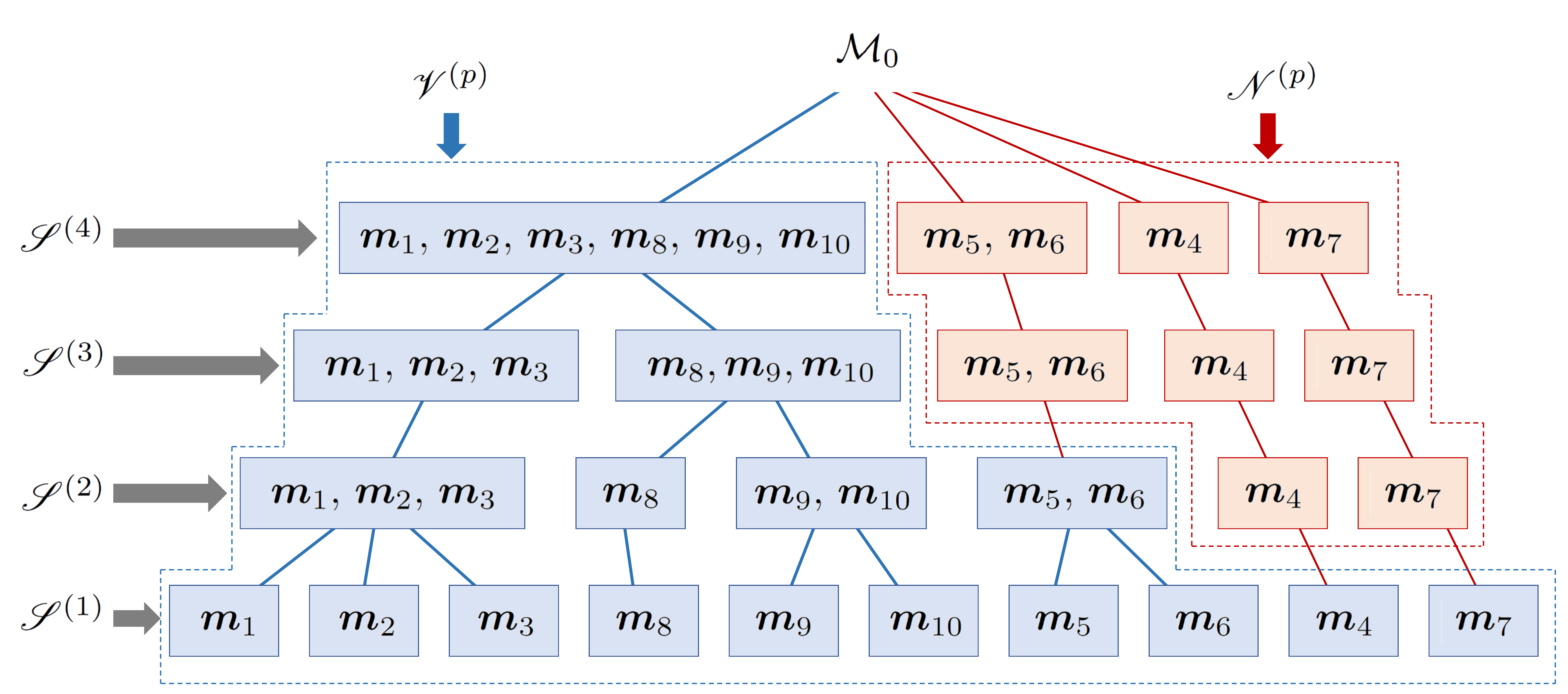}
\caption{Tree-structure associated with the potential given in Figure \ref{fig:potential}}
\label{fig:example_tree}
\end{figure}

\section{Preliminaries from \cite{LLS-1st,LLS-2nd}}

\subsection{\label{subsec: mix_F}Local ergodicity}

Fix $\bm{m}\in\mathcal{M}_{0}$ and assume $\bm{m}=\bm{0}$. In this
subsection, we recall the definition of $\bm{x}_{\epsilon}^{{\rm F}}(\cdot)$,
which is a content of \cite[Sections 3, 4 and Appendix B]{LLS-1st}.
This process is needed to prove the initial mixing condition $\mathfrak{M}^{(1)}$.
Denote by $D\bm{a}$ the Jacobian matrix of a vector
field $\bm{a}:\mathbb{R}^{d}\to\mathbb{R}^{d}$ and by $\|\mathbb{M}\|:=\sup_{|\bm{y}|=1}|\mathbb{M}\bm{y}|$
the matrix norm of $\mathbb{M}$. By \cite[Proposition B.1]{LLS-1st},
there exist $r_{3}>0$ and vector field $\bm{b}_{0}\in C^{1}(\mathbb{R}^{d};\,\mathbb{R}^{d})$
satisfying
\begin{enumerate}
\item $\bm{b}_{0}(\bm{x})=\bm{b}(\bm{x})=-\nabla U(\bm{x})-\bm{\ell}(\bm{x})$
for $\bm{x}\in B(\bm{0},\,r_{3})$.
\item $\bm{b}_{0}(\bm{x})=\bm{0}$ if and only if $\bm{x}=\bm{0}$.
\item There exists $R>0$ and $C>0$ such that for all $|\bm{x}|\ge R$,
\[
|\bm{b}_{0}(\bm{x})|\le C_{1}\bm{x}\ \ \text{and}\ \ \|D\bm{b}_{0}(\bm{x})\|\le C_{1}|\bm{x}|\,.
\]
\item For all $\bm{x}\in\mathbb{R}^{d}$,
\[
-\bm{b}_{0}(\bm{x})\cdot\mathbb{H}\bm{x}\,\ge\,\frac{1}{2}|\mathbb{H}\bm{x}|^{2}
\]
where $\mathbb{H}:=\nabla^{2}U(\bm{0})$.
\end{enumerate}
For $r>0$, let
\[
\mathcal{D}_{r}\,:=\,\{\,\bm{x}\in\mathbb{R}^{d}\,:\,\bm{x}\cdot\mathbb{H}\bm{x}\le r^{2}\,\}\,.
\]
By \cite[display (3.9)]{LLS-1st}, there exists $r_{4}>0$ such that
$\mathcal{D}_{2r_{4}}\subset B(\bm{0},\,r_{3})$.

Let $\bm{x}_{\epsilon}^{{\rm F}}(\cdot)$ be a diffusion process described
by the following SDE:
\[
d\bm{x}_{\epsilon}^{{\rm F}}(t)=\bm{b}_{0}(\bm{x}_{\epsilon}^{{\rm F}}(t))\,dt\,+\,\sqrt{2\epsilon}\,dW_{t}\,.
\]
Let $\mu_{\epsilon}^{{\rm F}}$ be a unique stationary distribution
of $\bm{x}_{\epsilon}^{{\rm F}}(\cdot)$. Then, we have the following.
\begin{theorem}
\label{t: mix_F}We have
\[
\lim_{\epsilon\to0}\sup_{\bm{y}\in\mathcal{W}^{2r_{0}}(\bm{m})}d_{{\rm TV}}(\bm{x}_{\epsilon}^{{\rm F}}(\epsilon^{-1};\,\bm{y}),\,\mu_{\epsilon}^{{\rm F}})=0\,.
\]
\end{theorem}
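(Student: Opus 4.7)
The plan is a two-stage argument exploiting the global dissipativity (4) of $\bm{b}_0$. In the first stage I show that the law of $\bm{x}_{\epsilon}^{{\rm F}}(t;\bm{y})$ is concentrated in a $\sqrt{\epsilon}\log\epsilon^{-1}$-tube around $\bm{m}$ by time $T_\epsilon:=C\log\epsilon^{-1}$, uniformly over $\bm{y}\in\mathcal{W}^{2r_0}(\bm{m})$. I take $V(\bm{x}):=\bm{x}\cdot\mathbb{H}\bm{x}$ as a Lyapunov function: by It\^o's formula together with property (4) and $|\mathbb{H}\bm{x}|^{2}\ge\lambda_{\min}(\mathbb{H})\,V(\bm{x})$,
\[
2\,\bm{b}_0(\bm{x})\cdot\mathbb{H}\bm{x} \,+\, 2\epsilon\,{\rm tr}(\mathbb{H}) \,\le\, -|\mathbb{H}\bm{x}|^{2}+2\epsilon\,{\rm tr}(\mathbb{H}) \,\le\, -c_1 V(\bm{x})+c_2\epsilon,
\]
with $c_1:=\lambda_{\min}(\mathbb{H})>0$, so Gronwall's inequality yields $\mathbb{E}_{\bm{y}}V(\bm{x}_{\epsilon}^{{\rm F}}(t)) \le e^{-c_1 t}V(\bm{y})+(c_2/c_1)\epsilon$. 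Since $V$ is bounded on $\mathcal{W}^{2r_0}(\bm{m})$, Markov's inequality gives $\mathbb{P}_{\bm{y}}(|\bm{x}_{\epsilon}^{{\rm F}}(T_\epsilon)-\bm{m}| \ge \sqrt{\epsilon}\log\epsilon^{-1}) = o(1)$ uniformly in $\bm{y}$.

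In the second stage I use that $\bm{b}_0 = -\nabla U - \bm{\ell}$ on $B(\bm{m},r_3)$, so Taylor expansion at $\bm{m}$ gives $\bm{b}_0(\bm{m}+\bm{z}) = -\mathbb{A}\bm{z}+O(|\bm{z}|^{2})$ with $\mathbb{A}:=\nabla^{2}U(\bm{m})+D\bm{\ell}(\bm{m})$, whose spectrum lies in the open right half-plane because $\bm{m}$ is a stable equilibrium of the ODE \eqref{e: ODE}. Consider the Ornstein--Uhlenbeck reference process $d\bm{Z}_t = -\mathbb{A}(\bm{Z}_t-\bm{m})\,dt + \sqrt{2\epsilon}\,dW_t$: its one-time marginals are explicit Gaussians converging in total variation to the stationary Gaussian $\mathcal{N}(\bm{m},\epsilon\Sigma_\infty)$ at an $\epsilon$-independent exponential rate. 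A Laplace expansion of $\mu_{\epsilon}^{{\rm F}}$ around its unique equilibrium $\bm{m}$ shows that $\mathcal{N}(\bm{m},\epsilon\Sigma_\infty)$ agrees with $\mu_{\epsilon}^{{\rm F}}$ up to $o(1)$ in total variation. Consequently, the mixing of $\bm{x}_{\epsilon}^{{\rm F}}$ to $\mu_{\epsilon}^{{\rm F}}$ reduces to comparing $\bm{x}_{\epsilon}^{{\rm F}}$ with $\bm{Z}$ during a window $[T_\epsilon,\,T_\epsilon+T']$ of length $T':=C'\log\epsilon^{-1}$, which fits comfortably inside $[0,\epsilon^{-1}]$.

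The main obstacle is to upgrade this comparison to the total variation norm, since a synchronous coupling argument would only yield a Wasserstein bound. I would handle this via Girsanov's theorem on $[T_\epsilon,\,T_\epsilon+T']$. On the high-probability event $\Omega_\epsilon$ that $\bm{x}_{\epsilon}^{{\rm F}}$ stays in $B(\bm{m},\sqrt{\epsilon}(\log\epsilon^{-1})^{2})$ throughout this window --- which follows by iterating Stage 1 together with (4) --- the drift discrepancy $|\bm{b}_0(\bm{x})+\mathbb{A}(\bm{x}-\bm{m})|$ is $O(\epsilon(\log\epsilon^{-1})^{4})$, so the Girsanov exponent is bounded by $(2\epsilon)^{-1}\int_{T_\epsilon}^{T_\epsilon+T'}O(\epsilon^{2}(\log\epsilon^{-1})^{8})\,ds = O(\epsilon(\log\epsilon^{-1})^{9}) = o(1)$. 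By Pinsker's inequality the path laws of $\bm{x}_{\epsilon}^{{\rm F}}$ and $\bm{Z}$ on this window are $o(1)$-close in total variation, hence so are their marginals at time $T_\epsilon + T'$. Combining this with the OU mixing, the Laplace approximation of $\mu_{\epsilon}^{{\rm F}}$, and finally the monotonicity of total variation along the semigroup (Lemma \ref{l: d_TV}) to pass from $T_\epsilon + T'$ up to $\epsilon^{-1}$, yields the desired uniform convergence.
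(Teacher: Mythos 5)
Your proposal takes a genuinely different route from the paper. The paper's proof of Theorem~\ref{t: mix_F} is essentially a citation: it invokes \cite[Theorem 3.1]{LLS-1st}, which already gives the same total-variation convergence at the time scale $\epsilon^{-A}$ (for $A\in(0,1/3)$) uniformly over $\bm{y}\in\mathcal{D}_{r_4}$, then observes that $\mathcal{W}^{2r_0}(\bm{m})\subset\mathcal{D}_{r_4}$ by the choice of $r_0$, and finally applies the monotonicity of total variation (Lemma~\ref{l: d_TV}) to pass from $\epsilon^{-A}$ to $\epsilon^{-1}$. Your proposal instead sketches a self-contained re-derivation of the underlying fast-mixing mechanism---a Lyapunov contraction to establish concentration on the $\sqrt{\epsilon}\log\epsilon^{-1}$-scale, a Girsanov comparison with a linearized Ornstein--Uhlenbeck reference process, identification of the limit, and monotonicity to bridge up to $\epsilon^{-1}$. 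The skeleton is sound and is in fact close in spirit to the argument in \cite{LLS-1st} and to \cite{BarJara20}, and it buys a stronger quantitative picture (mixing already at $O(\log\epsilon^{-1})$ rather than $\epsilon^{-A}$) at the cost of being much longer than a citation.

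There is, however, one genuine gap and a couple of unjustified claims. The gap: you write that a ``Laplace expansion of $\mu_\epsilon^{\rm F}$'' shows $\mathcal{N}(\bm{m},\epsilon\Sigma_\infty)$ agrees with $\mu_\epsilon^{\rm F}$ up to $o(1)$ in total variation. But $\bm{x}_\epsilon^{\rm F}(\cdot)$ is a non-gradient diffusion whose drift $\bm{b}_0$ equals $-\nabla U-\bm{\ell}$ only on $B(\bm{0},r_3)$ and is merely dissipative outside; its invariant measure $\mu_\epsilon^{\rm F}$ does not have an explicit density of the form $e^{-V/\epsilon}$, so Laplace asymptotics are not directly available. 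To close this you would need either to run your Lyapunov bound with the process started from $\mu_\epsilon^{\rm F}$ itself (showing $\mu_\epsilon^{\rm F}$ concentrates on the same $\sqrt{\epsilon}\log\epsilon^{-1}$-tube) and then apply the Girsanov comparison starting from stationarity, which yields $d_{\rm TV}(\mu_\epsilon^{\rm F},\mathcal{N}(\bm{m},\epsilon\Sigma_\infty))=o(1)$ directly, or else compare $\mu_\epsilon^{\rm F}$ with the Gibbs measure conditioned on a neighborhood of $\bm{m}$ as in \cite[display (4.10)]{LLS-1st}. Separately, the claim that the spectrum of $\mathbb{A}=\nabla^2 U(\bm{m})+D\bm{\ell}(\bm{m})$ lies in the open right half-plane is not an automatic consequence of stability of the ODE; it is established in \cite[Lemma 3.3]{LS-22} and should be cited. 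Likewise, the identity $\Sigma_\infty=\mathbb{H}^{-1}$ (needed so that the OU stationary covariance matches the local Gaussian shape of $e^{-U/\epsilon}$) requires the antisymmetry of $D\bm{\ell}(\bm{m})\mathbb{H}^{-1}$, which follows from $\nabla U\cdot\bm{\ell}\equiv 0$ and $\bm{\ell}(\bm{m})=\bm{0}$ but must be stated; without it your $\mathcal{N}(\bm{m},\epsilon\Sigma_\infty)$ would not be the correct target. These are fixable, but as written the identification step is asserted rather than proved.
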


\begin{proof}
By \cite[Theorem 3.1]{LLS-1st}, for $A\in(0,\,1/3)$, we have
\[
\lim_{\epsilon\to0}\sup_{\bm{y}\in\mathcal{D}_{r_{4}}}d_{{\rm TV}}(\bm{x}_{\epsilon}^{{\rm F}}(\epsilon^{-A};\,\bm{y}),\,\mu_{\epsilon}^{{\rm F}})=0\,.
\]
By the condition (e) above \cite[display (2.12)]{LLS-1st}, we have
$\mathcal{W}^{2r_{0}}(\bm{m})\subset\mathcal{D}_{r_{4}}$ (Mind that
we assumed $\bm{m}=\bm{0}$). In addition, by Lemma \ref{l: d_TV},
$d_{{\rm TV}}(\bm{x}_{\epsilon}^{{\rm F}}(\epsilon^{-1};\,\bm{y}),\,\mu_{\epsilon}^{{\rm F}})\le d_{{\rm TV}}(\bm{x}_{\epsilon}^{{\rm F}}(\epsilon^{-A};\,\bm{y}),\,\mu_{\epsilon}^{{\rm F}})$.
Therefore,
\[
\lim_{\epsilon\to0}\sup_{\bm{y}\in\mathcal{W}^{2r_{0}}(\bm{m})}d_{{\rm TV}}(\bm{x}_{\epsilon}^{{\rm F}}(\epsilon^{-1};\,\bm{y}),\,\mu_{\epsilon}^{{\rm F}})=0\,.
\]
\end{proof}

\subsection{Hitting time}

The following proposition is from \cite[Corollary 6.2 and Lemma 6.7]{LLS-1st}.
\begin{proposition}
\label{p: hit_min}We have the following
\begin{enumerate}
\item For all $\bm{x}\in\mathbb{R}^{d}$ and $C>0$,
\[
\limsup_{\epsilon\to0}\mathbb{P}_{\bm{x}}^{\epsilon}\left[\tau_{\mathcal{E}(\mathcal{M}_{0})}>\frac{C}{\epsilon}\right]=0\,.
\]
\item For all $\bm{m}\in\mathcal{M}_{0}$ and $\bm{x}\in\mathcal{D}(\bm{m})$,
\[
\liminf_{\epsilon\to0}\mathbb{P}_{\bm{x}}^{\epsilon}\left[\tau_{\mathcal{E}(\mathcal{M}_{0})}=\tau_{\mathcal{E}(\bm{m})}\right]=1\,.
\]
\end{enumerate}
\end{proposition}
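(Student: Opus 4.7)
Both assertions follow from Freidlin--Wentzell large deviations \cite{FW}; part (2) is essentially a direct application, while part (1) additionally requires a Lyapunov reduction to a compact starting set and a local escape estimate at the unstable critical points.

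For (2), let $\phi(\cdot)$ be the ODE solution \eqref{e: ODE} starting at $\bm{x}\in\mathcal{D}(\bm{m})$. Since $\phi(t)\to\bm{m}$, choose $T>0$ with $\phi(T)\in\mathrm{int}\,\mathcal{E}(\bm{m})$; the segment $\phi([0,T])$ lies in the open set $\mathcal{D}(\bm{m})$, which is disjoint from every $\mathcal{E}(\bm{m}')$ with $\bm{m}'\ne\bm{m}$ because $\mathcal{E}(\bm{m}')=\mathcal{W}^{r_{0}}(\bm{m}')\subset\mathcal{D}(\bm{m}')$ by \eqref{e: r_0} and monotonicity of $U$ along $\bm{b}$. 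Hence there exists $\delta>0$ such that any continuous path within distance $\delta$ of $\phi(\cdot)$ on $[0,T]$ reaches $\mathcal{E}(\bm{m})$ by time $T$ without first meeting $\mathcal{E}(\mathcal{M}_{0}\setminus\{\bm{m}\})$. By \cite[Theorem 2.1.2]{FW} this tube event has $\mathbb{P}_{\bm{x}}^{\epsilon}$-probability tending to $1$, which proves (2).

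For (1), I first reduce to a compact starting set. The decomposition \eqref{e_decb} together with $\bm{\ell}\cdot\nabla U\equiv 0$ yields $L_{\epsilon}U=-|\nabla U|^{2}+\epsilon\Delta U$. The growth condition \eqref{e: growth} forces both $|\nabla U|\to\infty$ and $|\nabla U|>2\Delta U$ outside a large ball, whence $L_{\epsilon}U\le -\tfrac{1}{2}|\nabla U|^{2}$ for all small $\epsilon$ outside some compact $\mathcal{K}_{0}$. A supermartingale/Dynkin argument then yields, for each fixed $\bm{x}$, a time $T_{1}=T_{1}(\bm{x})$ and a compact $\mathcal{K}\supset\mathcal{K}_{0}$ such that $\lim_{\epsilon\to 0}\mathbb{P}_{\bm{x}}^{\epsilon}[\bm{x}_{\epsilon}(T_{1})\in\mathcal{K}]=1$. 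By the strong Markov property it remains to establish the uniform bound
\[
\limsup_{\epsilon\to 0}\sup_{\bm{y}\in\mathcal{K}}\mathbb{P}_{\bm{y}}^{\epsilon}\big[\tau_{\mathcal{E}(\mathcal{M}_{0})}>C/\epsilon\big]=0.
\]
To prove this, cover $\mathcal{K}$ by the balls $B(\bm{c},\eta)$, $\bm{c}\in\mathcal{C}_{0}\setminus\mathcal{M}_{0}$, together with $\mathcal{K}_{\eta}:=\mathcal{K}\setminus\bigcup_{\bm{c}}B(\bm{c},\eta)$. On $\mathcal{K}_{\eta}$ the deterministic flow reaches $\mathcal{E}(\mathcal{M}_{0})$ in a uniform time $T_{2}=T_{2}(\eta)$, and the tube argument from part (2) applies uniformly by compactness and continuity of the flow. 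Inside each $B(\bm{c},\eta)$, since $U$ is Morse and $\bm{c}\notin\mathcal{M}_{0}$ the Hessian $\nabla^{2}U(\bm{c})$ has at least one negative eigenvalue, so Hartman--Grobman linearization and a standard Kramers-type escape argument (cf.\ \cite{BEGK,LS-22}) give exit into $\mathcal{K}_{\eta}$ within time $O(\log(1/\epsilon))$ with probability tending to $1$; Assumption \ref{assu: hetero} then guarantees that the subsequent flow from the exit point reaches $\mathcal{E}(\mathcal{M}_{0})$. Combining, $\tau_{\mathcal{E}(\mathcal{M}_{0})}\le T_{1}+T_{2}+O(\log(1/\epsilon))\ll C/\epsilon$ with high probability.

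The main obstacle is the uniform escape estimate from $B(\bm{c},\eta)$: although the required bound $C/\epsilon$ is far weaker than the true escape time $O(\log(1/\epsilon))$, one must still implement the Hartman--Grobman linearization together with a moment control on the Brownian perturbation to get an estimate valid uniformly in the starting point within each $B(\bm{c},\eta)$. Fortunately, since only a qualitative $o(C/\epsilon)$ bound is needed rather than sharp asymptotics, the analysis can be kept elementary and need not approach the precision of \cite{BEGK,LS-22}.
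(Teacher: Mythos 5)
The paper does not actually prove this proposition: it is imported verbatim from \cite[Corollary 6.2 and Lemma 6.7]{LLS-1st}, so there is no in-paper argument to compare against, and your attempt must be judged on its own. Your proof of part (2) is correct and is the standard Freidlin--Wentzell tube argument: $\phi([0,T])$ is a compact subset of the open set $\mathcal{D}(\bm{m})$, which has positive distance from the closed sets $\mathcal{E}(\bm{m}')$, $\bm{m}'\ne\bm{m}$ (your observation that $\mathcal{E}(\bm{m}')\subset\mathcal{D}(\bm{m}')$ follows from \eqref{e: r_0} and the fact that $U$ is a strict Lyapunov function for \eqref{e: ODE}), and on the tube event the process enters $\mathcal{E}(\bm{m})$ by time $T$ without first meeting any other valley. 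The Lyapunov reduction to a compact set in part (1) is also sound.

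Part (1) has a genuine gap beyond that point. The assertion that ``on $\mathcal{K}_{\eta}$ the deterministic flow reaches $\mathcal{E}(\mathcal{M}_{0})$ in a uniform time $T_{2}(\eta)$'' is false: the stable manifolds of the saddle points and of the higher-index critical points intersect $\mathcal{K}_{\eta}$, and the flow started on them converges to the critical point and never reaches $\mathcal{E}(\mathcal{M}_{0})$. What is true is that the flow from $\mathcal{K}_{\eta}$ reaches $\mathcal{E}(\mathcal{M}_{0})\cup\bigcup_{\bm{c}}B(\bm{c},\eta)$ in uniform time; one must then iterate (escape from $B(\bm{c},\eta)$, follow the flow, possibly approach another unstable critical point, escape again), with termination guaranteed by the strict decrease of $U$ at each escape and the finiteness of $\mathcal{C}_{0}$. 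You never set up this iteration, and your appeal to Assumption \ref{assu: hetero} covers only index-one saddles, not local maxima or higher-index critical points whose unstable manifolds may themselves flow into saddles. Moreover, the escape from $B(\bm{c},\eta)$ in time $O(\log(1/\epsilon))$ with probability tending to one is the exit-from-an-unstable-equilibrium problem (Kifer-type), not the Eyring--Kramers problem: \cite{BEGK} and \cite{LS-22} concern escape from \emph{stable} wells and do not yield this estimate. The estimate is true, and indeed only a qualitative $o(\epsilon^{-1})$ bound is needed, but it is precisely the nontrivial content of Section 6 of \cite{LLS-1st} and cannot simply be asserted. As written, part (1) does not close.
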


The following proposition is from \cite[Proposition 4.2]{LLS-1st}.
\begin{proposition}
\label{p_FW}Fix $h<H$, and denote by $\mathcal{A}$, $\mathcal{B}$
connected components of the set $\{\boldsymbol{x}\in\mathbb{R}^{d}:U(\boldsymbol{x})<h\}$,
$\{\boldsymbol{x}\in\mathbb{R}^{d}:U(\boldsymbol{x})<H\}$, respectively.
Assume that $\mathcal{A}\subset\mathcal{B}$. Suppose that all critical
points $\boldsymbol{c}$ of $U$ in $\mathcal{A}$ are such that $U(\boldsymbol{c})\le h_{0}$
for some $h_{0}<h$. Then, for all $\eta>0$, 
\[
\limsup_{\epsilon\rightarrow0}\,\sup_{\boldsymbol{x}\in\mathcal{A}}\,\mathbb{P}_{\boldsymbol{x}}^{\epsilon}\left[\,\tau_{\partial\mathcal{B}}<e^{(H-h_{0}-\eta)/\epsilon}\,\right]=0\,.
\]
\end{proposition}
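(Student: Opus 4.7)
My plan is to invoke Freidlin--Wentzell large deviations theory in three steps: a quasipotential lower bound, a deterministic reduction to low-energy starting points, and the standard FW exit estimate. For Step 1, the Freidlin--Wentzell action functional for \eqref{e: SDE} is $S_T(\phi) = \frac{1}{4}\int_0^T |\dot\phi - \bm{b}(\phi)|^2\,dt$. For any absolutely continuous path $\phi : [0,T]\to\mathbb{R}^d$, using $\bm{b}=-\nabla U - \bm{\ell}$ and $\nabla U \cdot\bm{\ell}\equiv 0$ from \eqref{e_decb},
\[
U(\phi(T)) - U(\phi(0)) \;=\; \int_0^T \nabla U(\phi) \cdot (\dot\phi - \bm{b}(\phi))\,dt \;-\; \int_0^T |\nabla U(\phi)|^2\,dt.
\]
The Young inequality $\nabla U \cdot \bm{v} \le |\nabla U|^2 + \tfrac{1}{4}|\bm{v}|^2$ yields $S_T(\phi) \ge U(\phi(T)) - U(\phi(0))$, so the Freidlin--Wentzell quasipotential obeys $V(\bm{x},\partial\mathcal{B}) \ge H - U(\bm{x})$; in particular $V(\bm{x},\partial\mathcal{B}) \ge H - h_0 - \delta$ whenever $U(\bm{x}) \le h_0 + \delta$.

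For Step 2, along the ODE \eqref{e: ODE} one has $\frac{d}{dt}U(\bm{x}(t)) = -|\nabla U(\bm{x}(t))|^2$. By \eqref{e: growth}, $\overline{\mathcal{A}}$ is compact; since every critical point in $\mathcal{A}$ has $U \le h_0$ and $U$ is Morse, $|\nabla U|^2 \ge c_0 > 0$ on the compact set $\overline{\mathcal{A}}\cap\{h_0+\delta \le U \le h\}$. Every deterministic trajectory starting in $\mathcal{A}$ therefore enters $\mathcal{K}_\delta := \{U \le h_0+\delta\}\cap\overline{\mathcal{A}}$ by the fixed time $T_0 := (h-h_0)/c_0$, uniformly in the starting point, and remains in $\mathcal{A}$ forever. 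By the finite-time LDP, there exists $c_1 > 0$ with
\[
\sup_{\bm{x}\in\mathcal{A}}\mathbb{P}_{\bm{x}}^{\epsilon}\big[\,\bm{x}_\epsilon(T_0) \notin \mathcal{K}_{2\delta}\ \text{or}\ \tau_{\partial\mathcal{B}} \le T_0\,\big] \;\le\; e^{-c_1/\epsilon}.
\]

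For Step 3, the standard FW exit estimate (Freidlin--Wentzell, Chapter~4) gives, for any compact $\mathcal{K}\subset\mathcal{B}$ and any $V^* < \inf_{\bm{y}\in\mathcal{K}}V(\bm{y},\partial\mathcal{B})$, $\sup_{\bm{y}\in\mathcal{K}}\mathbb{P}_{\bm{y}}^{\epsilon}[\tau_{\partial\mathcal{B}}\le e^{V^*/\epsilon}]\to 0$. Apply this to $\mathcal{K}=\mathcal{K}_{2\delta}$, for which Step 1 yields $V(\bm{y},\partial\mathcal{B}) \ge H - h_0 - 2\delta$, with $V^* = H - h_0 - \eta$ and $\delta < \eta/2$; the strong Markov property at time $T_0$ combined with Step 2 then produces
\[
\sup_{\bm{x}\in\mathcal{A}}\mathbb{P}_{\bm{x}}^{\epsilon}\big[\tau_{\partial\mathcal{B}}<e^{(H-h_0-\eta)/\epsilon}\big] \;\le\; e^{-c_1/\epsilon} + \sup_{\bm{y}\in\mathcal{K}_{2\delta}}\mathbb{P}_{\bm{y}}^{\epsilon}\big[\tau_{\partial\mathcal{B}} \le e^{(H-h_0-\eta)/\epsilon}\big] \;\longrightarrow\; 0.
\]
The main technical obstacle lies in the FW exit bound of Step 3: since $\mathcal{B}$ may contain several basins of attraction (and even stable equilibria outside $\mathcal{A}$), this is not a one-basin Kramers estimate but requires the classical cycle/return argument, tracking excursions of $\bm{x}_\epsilon$ to the high-energy rim $\{U\approx H\}$ where the direct quasipotential bound degrades.
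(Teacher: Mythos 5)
The paper offers no proof of this proposition: it is cited verbatim from the companion work \cite[Proposition 4.2]{LLS-1st}, so there is no in-paper argument against which to compare. What you have done is attempt a self-contained Freidlin--Wentzell reconstruction, and the structure you propose is the natural one. Step 1 is clean and correct: using $\bm{b}=-\nabla U-\bm{\ell}$, $\nabla U\cdot\bm{\ell}\equiv 0$, and Young's inequality gives $S_T(\phi)\ge U(\phi(T))-U(\phi(0))$ for every absolutely continuous $\phi$, hence the quasipotential lower bound $V(\bm{x},\partial\mathcal{B})\ge H-U(\bm{x})$.

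Two gaps remain, however. In Step 2, the uniform descent time $T_0=(h-h_0)/c_0$ relies on $|\nabla U|^2\ge c_0>0$ on $\overline{\mathcal{A}}\cap\{h_0+\delta\le U\le h\}$, but the hypothesis excludes critical points only in the \emph{open} set $\mathcal{A}$. A critical point on $\partial\mathcal{A}$ at level $h$ (a saddle, say) is not ruled out by the statement, and near such a point the deterministic escape time from the $\{U<h\}$ cone is unbounded as the starting point approaches the saddle. You would either need to assume $h$ is not a critical value (which the paper arranges in its applications by choosing $r_0$ and $R^{(n)}$ away from critical levels) or replace the fixed-$T_0$ argument by an FW estimate that the process reaches a neighborhood of the attractors within a subexponential time $e^{\eta'/\epsilon}$ with probability tending to $1$.

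The more serious issue is that Step 3, which you correctly identify as the crux, is not actually carried out. The bound $\sup_{\bm{y}\in\mathcal{K}}\mathbb{P}_{\bm{y}}^{\epsilon}[\tau_{\partial\mathcal{B}}\le e^{V^*/\epsilon}]\to 0$ for $V^*<\inf_{\mathcal{K}}V(\cdot,\partial\mathcal{B})$ is not the single-basin exit estimate of Freidlin--Wentzell Chapter 4; when $\mathcal{B}$ contains several attractors it requires the full chain-of-cycles machinery of FW Chapter 6, tracking excursions between attractors and controlling the number of near-threshold attempts. Your Step 1 lower bound on the quasipotential is a necessary ingredient but does not by itself control the sojourn statistics. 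In a self-contained proof this step \emph{is} the proof, and pointing to ``the classical cycle/return argument'' without supplying it leaves the result unproven. That said, your outline correctly locates all the moving parts, and the approach is the same one a direct proof of \cite[Proposition 4.2]{LLS-1st} would follow.
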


\subsection{\label{app: trace}Trace processes}

In this subsection, we recall definition and condition of well-definedness
of trace process, which is a part of \cite[Appendix C]{LLS-2nd}.
Let $\{\bm{z}(t)\}_{t\ge0}$ be a non-explosive, continuous-time Markov
process on a certain state space $E$. Let $F$ be a non-empty proper
subset of $F$ and denote by $T^{F}:[0,\infty)\to[0,\infty)$ the
time spent by $\bm{z}(\cdot)$ staying in $F$:
\[
{\color{blue}T^{F}(t)}:=\int_{0}^{t}\,{\bf 1}\left\{ \,\bm{z}(s)\in F\,\right\} \,ds\,.
\]
Define generalized inverse $S^{F}:[0,\infty)\to[0,\infty)$ by
\[
{\color{blue}S^{F}(t)}:=\sup\,\left\{ \,s\ge0:T^{F}(s)\le t\,\right\} \,.
\]
Suppose that (cf. \cite[Section 2.2]{BeltranLandim1}). 
\begin{equation}
\lim_{t\to\infty}\,T^{F}(t)=\infty\;\ \text{almost surely}\,.\label{e_F_assu_trace}
\end{equation}
Then, the trace process $\bm{z}^{F}(\cdot)$ of $\boldsymbol{z}(\cdot)$
on the set $F$ is defined by
\[
\bm{z}^{F}(t):=\bm{z}(S^{F}(t))\,.
\]
Now, we have the following.
\begin{lemma}[{\cite[Lemma C.1]{LLS-2nd}}]
\label{l: F_assu_trace}Suppose that $\boldsymbol{z}(\cdot)$ is
a Markov chain on the finite set $E$ and that $F$ contains at least
one element of each irreducible class of $\boldsymbol{z}(\cdot)$.
Then, the condition \eqref{e_F_assu_trace} holds.
\end{lemma}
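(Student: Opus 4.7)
The plan is to invoke the standard decomposition theorem for finite-state Markov chains and then apply recurrence within each closed irreducible class. First, I would write $E=\mathcal{T}\sqcup C_1\sqcup\cdots\sqcup C_k$ where $\mathcal{T}$ is the (possibly empty) set of transient states and $C_1,\,\dots,\,C_k$ are the closed irreducible classes of $\boldsymbol{z}(\cdot)$. Since $E$ is finite, classical finite-state Markov chain theory guarantees that the absorption time $\tau:=\inf\{t\ge 0\,:\,\boldsymbol{z}(t)\in C_1\cup\cdots\cup C_k\}$ is almost surely finite, and on the event $\{\boldsymbol{z}(\tau)\in C_j\}$ the trajectory remains in $C_j$ for all $t\ge\tau$. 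It therefore suffices to prove $T^F(t)\to\infty$ almost surely on each event $\{\boldsymbol{z}(\tau)\in C_j\}$, $j\in\llbracket 1,\,k\rrbracket$.

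Fix such a class $C_j$ and, using the hypothesis that $F$ meets every irreducible class, pick $x^\star\in F\cap C_j$. I would split into two subcases. If $C_j=\{x^\star\}$, then $x^\star$ is absorbing and $\boldsymbol{z}(t)=x^\star\in F$ for all $t\ge\tau$, so $T^F(t)\ge t-\tau\to\infty$ trivially. Otherwise $x^\star$ has total exit rate $q_{x^\star}>0$, and the restricted chain on the finite irreducible set $C_j$ is positive recurrent; hence, by the strong Markov property, the successive return times to $x^\star$ are almost surely finite and the corresponding sojourn times at $x^\star$ form an i.i.d.\ sequence of $\mathrm{Exp}(q_{x^\star})$ random variables. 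The sum of infinitely many i.i.d.\ positive random variables diverges almost surely (e.g.\ by the strong law of large numbers), so $T^F(t)$ is bounded below by this divergent cumulative sojourn at $x^\star$, giving $T^F(t)\to\infty$.

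Combining the two subcases across the finitely many closed classes $C_1,\,\dots,\,C_k$ yields \eqref{e_F_assu_trace} on a set of full probability. The argument presents no genuine obstacle: it is essentially a repackaging of the classical dichotomy that transient states of a finite Markov chain are escaped in finite time and closed irreducible classes in a finite state space are positive recurrent, combined with the hypothesis that $F$ intersects each such class so that at least one recurrent state of the absorbing class lies in $F$.
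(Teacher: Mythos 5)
Your argument is correct. The paper does not reprove this lemma here (it is imported as \cite[Lemma C.1]{LLS-2nd}), but your proof is the standard one and matches the intended argument: decompose $E$ into transient states and closed irreducible classes, note that the chain is a.s.\ absorbed into some closed class $C_j$ in finite time, and use recurrence of the finite irreducible class together with the divergence of the i.i.d.\ exponential sojourn times at a chosen state $x^\star\in F\cap C_j$ to conclude $T^F(t)\to\infty$ a.s. Note also that your proof only uses that $F$ meets each \emph{closed} irreducible class, which is exactly the form in which the hypothesis is used in the construction of the trace chains $\mathbf{y}^{(k+1)}$.
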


\section{\label{app: pf_l_U_M}Proof of Lemmas \ref{l: U_M-0} and \ref{l: U_M}}
\begin{proof}[Proof of Lemma \ref{l: U_M-0}]
It is obvious when $n=1$. Fix $n\in\llbracket2,\,\mathfrak{q}\rrbracket$
and $\mathcal{M}\in\mathscr{V}^{(n)}$. By Proposition \ref{p: tree}-(2),
$\mathcal{M}$ is simple and bound. By \cite[Lemma 5.3]{LLS-2nd}
and Proposition \ref{p: tree}-(3),
\[
\max_{\bm{m},\,\bm{m}'}\Theta(\bm{m},\,\bm{m}')-U(\mathcal{M})\le d^{(n-1)}<R^{(n)}\,.
\]
Then, by \cite[Lemma B.9]{LLS-2nd}, there exists a connected component
of $\{U<U(\mathcal{M})+R^{(n)}\}$ containing $\mathcal{M}$.
\end{proof}
We recall from \cite[Definition 5.1]{LLS-2nd} that for a set $\mathcal{A\subset\mathbb{R}}^{d}$,
we write
\[
{\color{blue}\mathcal{M}^{*}(\mathcal{A})}:=\bigg\{\,\boldsymbol{m}\in\mathcal{M}_{0}:U(\boldsymbol{m})=\min_{\boldsymbol{x}\in\mathcal{A}}U(\boldsymbol{x})\,\bigg\}\,.
\]
Let $n\in\llbracket1,\,\mathfrak{q}\rrbracket$ and $\mathcal{M}\in\mathscr{V}^{(n)}$.
By \cite[Lemma 5.7]{LLS-2nd}, we have
\[
d^{(n)}\le\Theta(\mathcal{M},\,\widetilde{\mathcal{M}})-U(\mathcal{M})\,,
\]
which implies $R^{(n)}<\Theta(\mathcal{M},\,\widetilde{\mathcal{M}})-U(\mathcal{M})$.
Therefore, by \cite[Lemma B.9]{LLS-2nd}, 
\begin{equation}
\mathcal{M}^{*}(\mathcal{U}^{(n)}(\mathcal{M}))=\mathcal{M}\,.\label{e: M^*_U}
\end{equation}
Now, we are ready to prove Lemma \ref{l: U_M}.
\begin{proof}[Proof of Lemma \ref{l: U_M}]
Fix $n\in\llbracket1,\,\mathfrak{q}\rrbracket$ and $\mathcal{M},\,\mathcal{M}'\in\mathscr{V}^{(n)}$
such that $\mathcal{M}\ne\mathcal{M}'$. First, by definition, $\mathcal{E}(\bm{m})\subset\mathcal{U}^{(n)}(\mathcal{M})$
for all $\bm{m}\in\mathcal{M}$ so that $\mathcal{E}(\mathcal{M})\subset\mathcal{U}^{(n)}(\mathcal{M})$.
Suppose that there exists $\bm{x}\in\overline{\mathcal{U}^{(n)}(\mathcal{M})}\cap\overline{\mathcal{U}^{(n)}(\mathcal{M}')}$.
By \eqref{e: M^*_U},
\[
\mathcal{M}^{*}(\mathcal{U}^{(n)}(\mathcal{M}))\ne\mathcal{M}^{*}(\mathcal{U}^{(n)}(\mathcal{M}'))\,.
\]
If $\mathcal{U}^{(n)}(\mathcal{M})\cap\mathcal{U}^{(n)}(\mathcal{M}')\ne\varnothing$,
since they are connected components of $\{U<U(\mathcal{M})+R^{(n)}\}$,
$\mathcal{U}^{(n)}(\mathcal{M})=\mathcal{U}^{(n)}(\mathcal{M}')$
which contradicts to the above displayed equation. Therefore,
\[
\mathcal{U}^{(n)}(\mathcal{M})\cap\mathcal{U}^{(n)}(\mathcal{M}')=\varnothing\,.
\]
Then, by \cite[Lemma A.3]{LLS-1st}, $\bm{x}\in\partial\mathcal{U}^{(n)}(\mathcal{M})\cap\partial\mathcal{U}^{(n)}(\mathcal{M}')$
and $\bm{x}$ is a saddle point such that $U(\bm{x})=U(\mathcal{M})+R^{(n)}$.
However, this contradicts to the definition of $R^{(n)}$. Finally,
it is clear from Laplace asymptotics that $\mu_{\epsilon}(\mathcal{U}^{(n)}(\mathcal{M})\setminus\mathcal{E}(\mathcal{M}))\prec\mu_{\epsilon}(\mathcal{E}(\mathcal{M}))$
by \eqref{e: M^*_U}, which implies
\[
\frac{\mu_{\epsilon}(\mathcal{E}(\mathcal{M}))}{\mu_{\epsilon}(\mathcal{U}^{(n)}(\mathcal{M}))}=1\,.
\]
\end{proof}
\begin{acknowledgement*}
This work was supported by the KIAS Individual Grant (HP093101) at Korea Institute for Advanced Study and the National Research Foundation of Korea (NRF) grant funded by the Korea government(MSIT) (No. RS-2019-NR040050). We would like to thank Claudio Landim for providing the idea to prove Lemma 5.3. We also express our gratitude to IMPA for the invitation, which facilitated a fruitful discussion.
\end{acknowledgement*}

\end{document}